\setlist{noitemsep}
\numberwithin{figure}{section}
\numberwithin{equation}{subsection}
\newtheorem{theorem}[figure]{Theorem}
\newtheorem{lemma}[figure]{Lemma}
\newtheorem{corollary}[figure]{Corollary}
\newtheorem{proposition}[figure]{Proposition}
\theoremstyle{definition}
\newtheorem{definition}[figure]{Definition}
\newtheorem{notation}[figure]{Notation}
\theoremstyle{definition}
\newtheorem{remark}[figure]{Remark}
\theoremstyle{definition}
\newtheorem{example}[figure]{Example}
\theoremstyle{definition}
\newtheorem{construction}[figure]{Construction}
\DeclareSymbolFontAlphabet{\mathbb}{AMSb} 
\DeclareSymbolFontAlphabet{\mathbbl}{bbold}
\newcommand{\Prism}{{\mathlarger{\mathbbl{\Delta}}}}
\newcommand{\w}{\text}
\newcommand{\Fil}{\mathrm{Fil}}
\newcommand{\syn}{\mathrm{syn}}
\newcommand{\Nyg}{\mathrm{Nyg}}
\newcommand{\Spf}{\mathrm{Spf}}
\newcommand{\qrsp}{\mathrm{qrsp}}
\newcommand{\Ga}{\mathrm{Gauge}_{\Prism}}
\newcommand{\Te}{T_{\mathrm{\acute{e}t}}}
\newcommand{\BK}{\mathrm{BK}}
\newcommand{\qsyn}{\mathrm{qsyn}}
\newcommand{\Hom}{\mathrm{Hom}}
\newcommand{\gr}{\mathrm{gr}}
\newcolumntype{C}[1]{>{\centering\arraybackslash}p{#1}}
\numberwithin{figure}{subsection}
\numberwithin{equation}{subsection}
\title{{Dieudonn\'e theory via classifying stacks and prismatic $F$-gauges}}
\author{ Shubhodip Mondal}
\date{}
\address[Shubhodip Mondal]{\parbox{\linewidth}{Purdue University, West Lafayette, USA
} }
\email{mondalsh@purdue.edu}
\begin{document}

\begin{abstract}
In this paper, we apply stack theoretic ideas to the classification problem in Dieudonn\'e theory. First, we use crystalline cohomology of classifying stacks to directly reconstruct the classical Dieudonn\'e module of a finite, $p$-power rank, commutative group scheme $G$ over a perfect field $k$ of characteristic $p>0$. As a consequence, we give a new, much shorter proof of the isomorphism $\sigma^* M(G) \simeq \mathrm{Ext}^1 (G, \mathcal{O}^{\mathrm{crys}})$ due to Berthelot--Breen--Messing using stacky methods combined with the theory of de Rham--Witt complexes. Additionally, we show that finite locally free commutative group schemes of $p$-power rank over a quasisyntomic base can be classified in terms of ``prismatic Dieudonn\'e $F$-gauges", which we introduce by making constructions using (higher) classifying stacks. The latter generalizes the result of Ansch\"utz and Le Bras on classification of $p$-divisible groups, which we also reprove using our approach. Along the way, we prove a description of cohomology with coefficients in group schemes, compatibility with Cartier duality, and reconstruction of Galois representations in terms of our prismatic Dieudonn\'e $F$-gauges.
\end{abstract}

\maketitle

\tableofcontents

\newpage
\section{Introduction} Let $p$ be a fixed prime number. Let $k$ be a perfect field of characteristic $p.$ Classically, Dieudonn\'e theory offers a classification of arithmetic objects such as $p$-divisible groups and finite commutative group schemes of $p$-power rank in terms of certain linear algebraic data, called Dieudonn\'e modules. Grothendieck suggested that the Dieudonn\'e module should be viewed as a Dieudonn\'e crystal, which has been realized by the work of Messing \cite{mess1}, Mazur--Messing \cite{Mess2} and Berthelot--Breen--Messing \cite{bbm}, \cite{mess3}. This subject has received contributions from many authors and we refer to \cite{lau} for a survey. Recently, over mixed characteristic base rings, Ansch\"utz--Le Bras \cite{alb} defined a notion of prismatic Dieudonn\'e crystal associated to a $p$-divisible group and proved a classification result, using the theory of prismatic cohomology due to \cite{BS19}.

 In this paper, we revisit aspects of crystalline Dieudonn\'e theory \cite{bbm} from an entirely different perspective, using cohomology of classifying stacks, giving a short proof the main comparison result \cite[Thm.~4.2.14]{bbm}. Furthermore, using (higher) classifying stacks, we introduce a ``prismatic Dieudonn\'e $F$-gauge" $\mathcal{M}(G)$
to any $p$-divisible group or finite locally free commutative group scheme $G$ of $p$-power rank. Using our construction, we generalize the classification results in \cite{alb}, by giving a new classification in the case of finite locally free group schemes, which was not addressed in loc.~cit. Our approach also gives a different argument for the classification of $p$-divisible groups that appeared in \cite{alb}. We make use the notion of prismatic $F$-gauges, which gives a refined notion of coefficients for prismatic cohomology, recently introduced in the work of Drinfeld \cite{drinew}, and Bhatt--Lurie \cite{fg}\footnote{This may be regarded as a generalization of the notion of $F$-gauges due to Fontaine--Jannsen \cite{FJ13}.}. Our proofs are geometric in nature and circumvents certain technicalities that appear in many of the previously known cases. We also give an alternative approach to prismatic $F$-gauges, aiming to demystify them. 
 
Let us recall the classical construction (see \cite[\S III]{dem}) of Dieudonn\'e modules below.

\begin{definition}[Dieudonn\'e]\label{defofdieu}
  Let $G$ be a finite commutative group scheme of $p$-power rank over $k$. Then $G$ admits a canonical decomposition $G = G^{\mathrm{uni}}\oplus G^{\mathrm{mul}},$ where $G^{\mathrm{uni}}$ is unipotent and $G^{\mathrm{mul}}$ is a local group scheme whose Cartier dual $(G^{\mathrm{mul}})^\vee$ is \'etale (see \cite[\S II, p.~39]{dem}). One defines the (contravariant) Dieudonn\'e module of $G$ in the following manner. Let us first define $M(G^{\mathrm{uni}}):= \varinjlim_{n,V}\mathrm{Hom}(G,  W_n).$ One can now define
$M(G) := M(G^{\mathrm{uni}}) \oplus M((G^{\mathrm{mul}})^\vee)^*.$   
\end{definition}{}

A uniform construction without appealing to duality was first given by Fontaine \cite{fonta} using a more complicated formal group $CW$, which maybe realized as a completion of $\varinjlim_{n,V} W_n$ in a certain sense. Another uniform construction is due to the seminal work of Berthelot--Breen--Messing \cite{bbm} in terms of crystalline Dieudonn\'e theory; they proved that $\sigma^* M(G) \simeq \mathrm{Ext}^1 (G, \mathcal{O}^{\mathrm{crys}})$, where the last Ext group is computed in the large crystalline site and $\sigma$ is the Frobenous on $W(k)$. Their proof crucially relies on Fontaine's work and in particular certain explicit computations done in the crystalline site to understand the somewhat complicated object $CW$. In \cite[Thm.~1.2]{Mon1}, it was shown that $\sigma^* M(G) \simeq H^2_{\mathrm{crys}}(BG)$, where the proof relied on the work of Berthelot--Breen--Messing.

 In this paper, we directly prove that $\sigma^* M(G) \simeq H^2_{\mathrm{crys}}(BG)$ circumventing the use of \cite{fonta} or \cite{bbm}. Instead, our techniques use cohomology of algebraic stacks and the de Rham--Witt complex \cite{Luc1}. Broadly speaking, the main new ingredient is the usage of geometric techniques such as differential forms, deformation theory in the study of Dieudonn\'e modules by means of the classifying stack $BG.$
\begin{theorem}\label{die}
   Let $G$ be a finite commutative $p$-power rank group scheme over a perfect field $k$ of characteristic $p>0.$ We have a canonical isomorphism $\sigma^*M(G) \simeq H^2_{\mathrm{crys}}(BG).$ 
\end{theorem}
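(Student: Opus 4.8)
The plan is to compute $H^{2}_{\mathrm{crys}}(BG)$ by explicit means and match the answer, together with its Frobenius structure, against the classical Dieudonn\'e module, using the de Rham--Witt complex of the classifying stack as the principal tool.

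\emph{Reductions.} Both sides commute with extension of the perfect base field and with Galois descent, so one reduces to $k=\overline{k}$. Next, refine the canonical splitting $G=G^{\mathrm{uni}}\oplus G^{\mathrm{mul}}$ to the three-fold decomposition of $G$ into its local--local, multiplicative, and \'etale--local parts; then $BG$ is the product of the corresponding classifying stacks and $M(G)$ the direct sum of the associated pieces. A K\"unneth formula for $R\Gamma_{\mathrm{crys}}(B(-))$, together with the statements $H^{0}_{\mathrm{crys}}(BH)=W(k)$ and $H^{1}_{\mathrm{crys}}(BH)=0$ --- valid for every finite commutative $p$-group $H$ --- then reduce the computation of $H^{2}$ to each factor separately. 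Already here geometry enters: the cotangent complex $L_{BH/k}$ is perfect (being finite flat, $H$ is a complete intersection) with cohomology concentrated in degrees $0$ and $1$, whose pieces can be read off from $L_{H/k}$ as the co-Lie module $\omega_{H}$ in degree $1$ and a correction term coming from the conormal sheaf of $H$ in degree $0$; feeding this through the conjugate filtration on de Rham--Witt cohomology controls the degrees in which cohomology lives and yields the stated vanishing.

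\emph{The multiplicative and \'etale cases.} For $G=\mu_{p^{n}}$ --- to which the multiplicative part reduces over $\overline{k}$ via Cartier duality --- the sequence $1\to\mu_{p^{n}}\to\mathbb{G}_{m}\xrightarrow{\,p^{n}\,}\mathbb{G}_{m}\to 1$ yields a fibre sequence $B\mu_{p^{n}}\to B\mathbb{G}_{m}\xrightarrow{\,\cdot p^{n}\,}B\mathbb{G}_{m}$, and base change for crystalline cohomology exhibits $R\Gamma_{\mathrm{crys}}(B\mu_{p^{n}})$ as the derived quotient of $R\Gamma_{\mathrm{crys}}(B\mathbb{G}_{m})=W(k)[[c]]$ ($|c|=2$, $\phi(c)=pc$) by $p^{n}c$; this gives $H^{2}_{\mathrm{crys}}(B\mu_{p^{n}})\cong W(k)/p^{n}$, and inspecting the crystalline Frobenius identifies it with $\sigma^{*}M(\mu_{p^{n}})$. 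For $G=\mathbb{Z}/p^{n}$ --- the \'etale--local case over $\overline{k}$ --- the scheme $(\mathbb{Z}/p^{n})^{\times m}$ is a disjoint union of copies of $\operatorname{Spec} k$, so $R\Gamma_{\mathrm{crys}}(B(\mathbb{Z}/p^{n}))$ is the group cochain complex $R\Gamma_{\mathrm{grp}}(\mathbb{Z}/p^{n},W(k))$, giving $H^{2}\cong W(k)/p^{n}\cong\sigma^{*}M(\mathbb{Z}/p^{n})$ with matching operators.

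\emph{The local--local case and the main obstacle.} This is the substantive case: $G$ and $G^{\vee}$ are killed by powers of Frobenius and $M(G)=\varinjlim_{n,V}\Hom(G,W_{n})$. I would compute $R\Gamma_{\mathrm{crys}}(BG)=R\Gamma(BG,W\Omega^{\bullet})$ through the de Rham--Witt complex of the stack $BG$, using the identification $H^{1}(BG,W_{n}\mathcal{O})=\Hom(G,W_{n})$ of Witt-vector-valued characters of $BG$, together with the differential $d\colon W_{n}\mathcal{O}\to W_{n}\Omega^{1}$ and the Verschiebung on $W\Omega^{\bullet}$, to construct a natural map $\varinjlim_{n,V}\Hom(G,W_{n})\to H^{2}_{\mathrm{crys}}(BG)$ compatible with the crystalline Frobenius after the twist by $\sigma$. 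One then shows it is an isomorphism by d\'evissage along $0\to\ker(F_{G})\to G\to G/\ker(F_{G})\to 0$, reducing to products of $\alpha_{p}$ and, more generally, to Frobenius kernels of the $W_{m}$, where $H^{2}_{\mathrm{crys}}$ of the classifying stack is evaluated directly from the de Rham--Witt complex. I expect the hard part to be precisely this last step: controlling the de Rham--Witt cohomology of $BG$ when $G$ is non-smooth --- ensuring the relevant spectral sequences degenerate in low degrees so that $H^{2}$ is exactly the finite colimit $\varinjlim_{n,V}\Hom(G,W_{n})$, and checking that the resulting $W(k)$-module, with the crystalline Frobenius and its divided version, is $\sigma^{*}M(G)$ with its $F$ and $V$. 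This is where the geometric input does the work --- the structure of $L_{BG/k}$, the deformation theory of $BG$ over $W_{n}$ (which remains well-behaved even when $G$ itself does not lift), and the comparison between the de Rham--Witt complex and differential forms on $BG$.
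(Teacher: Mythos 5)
Your overall architecture (reduce to $\overline{k}$, build a natural map out of $\varinjlim_{n,V}\mathrm{Hom}(G,W_n)$ via the de Rham--Witt complex of $BG$, handle $\mu_{p^n}$ through $B\mathbb{G}_m$) genuinely overlaps with the paper, which constructs exactly such a map (\cref{mainconst1}) and identifies $H^2_{\mathrm{crys}}(B\mu_{p^m})\simeq \sigma^*(W(k)/p^m)$ via the Chern class of $B\mu_{p^m}\to B\mathbb{G}_m$ (\cref{impmu}). But there is a genuine gap at the point you yourself flag: the local--local case is not proved. You reduce it to "evaluating $H^2_{\mathrm{crys}}$ of classifying stacks of Frobenius kernels directly from the de Rham--Witt complex" and hope that "the relevant spectral sequences degenerate in low degrees," but no such degeneration argument exists as stated, and in fact the needed degeneration is \emph{false} mod $p$: for $B\alpha_p$ the conjugate spectral sequence has a nonzero differential $H^0(B\alpha_p,\mathbb{L}_{B\alpha_p})\to H^2(B\alpha_p,\mathcal{O})$, and the whole content of the base case $H^2_{\mathrm{crys}}(B\alpha_p)=k$ (\cref{compute}) is proving that this differential is \emph{injective}. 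The paper does this by deformation theory: since $\mathbb{L}_{B\alpha_p}=\mathcal{O}\oplus\mathcal{O}[-1]$ and $\alpha_p$ (hence $B\alpha_p$) does not lift to $\mathbb{Z}/p^2$, the obstruction class is nonzero, and functoriality along $B\alpha_p\to B\mathbb{G}_a$ kills its component in $H^3(B\alpha_p,\mathcal{O})$, forcing the $H^2$-component, i.e.\ the differential, to be nonzero. Without this (or an equivalent surjectivity argument for your de Rham--Witt map), the d\'evissage has no base case. Moreover, a d\'evissage along $0\to\ker F_G\to G\to G/\ker F_G\to 0$ also needs an exactness statement for $H^2_{\mathrm{crys}}(B(-))$ on short exact sequences of group schemes (the paper's \cref{rightexact}) plus injectivity of the comparison map (proved there from $\mathrm{Hom}(G,\varinjlim_V W)=0$) and a length count; none of this is supplied in your outline.

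Two smaller points. First, your justification of $H^1_{\mathrm{crys}}(BH)=0$ by "degree control" through the cotangent complex and conjugate filtration cannot work: $H^1_{\mathrm{dR}}(BH)$ is \emph{not} zero (it equals $k$ for $H=\mathbb{Z}/p,\mu_p,\alpha_p$), so no mod-$p$ degree argument gives the integral vanishing; the paper proves it as a torsion phenomenon via the universal coefficient sequence \cref{univcoeff} together with the fact that $H^{>0}_{\mathrm{crys}}(BG)$ is killed by a power of $p$. Second, since the theorem asserts a \emph{canonical} isomorphism and your reduction to $\overline{k}$ uses Galois descent, "inspecting the crystalline Frobenius" and matching abstract modules in the multiplicative and \'etale cases is not enough; you need a Galois-equivariant natural map, which is exactly what the paper manufactures from the canonical Chern class (and, for the unipotent part, from the de Rham--Witt construction). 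Your Eilenberg--Moore-style computation of $R\Gamma_{\mathrm{crys}}(B\mu_{p^n})$ from $B\mathbb{G}_m$ is plausible and canonical, but the base-change/convergence step it relies on would itself need justification; the paper avoids it by a direct cofiber estimate using the splitting of the conjugate filtration for $\mu_{p^m}$.
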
{}
As a corollary, we obtain a new, much shorter proof of

\begin{corollary}[{\cite[Thm.~4.2.14]{bbm}}]\label{coyi}
 Let $G$ be a finite commutative group scheme over $k$ of $p$-power rank. Then $\sigma^* M(G) \simeq \mathrm{Ext}^1_{(k/ W(k))_{\mathrm{Crys}}}(G, \mathcal{O}^{\mathrm{crys}}).$   
\end{corollary}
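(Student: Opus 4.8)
The plan is to deduce the statement from Theorem~\ref{die} by supplying one extra, \cite{bbm}-free ingredient: for every finite locally free commutative group scheme $H$ of $p$-power order over $k$ there is a natural isomorphism
\[
H^2_{\mathrm{crys}}(BH)\;\simeq\;\mathrm{Ext}^1_{(k/W(k))_{\mathrm{Crys}}}(H,\mathcal{O}^{\mathrm{crys}}),
\]
the relevant crystalline cohomology of the stack $BH$ being set up as in \cite{Mon1}. Granting this, the corollary follows by composing $\sigma^*M(G)\simeq H^2_{\mathrm{crys}}(BG)\simeq\mathrm{Ext}^1_{(k/W(k))_{\mathrm{Crys}}}(G,\mathcal{O}^{\mathrm{crys}})$, the first isomorphism being Theorem~\ref{die}.

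To prove the displayed comparison I would present $BG$ by the bar construction, i.e.\ as the geometric realization of the simplicial $k$-scheme $[\cdots\rightrightarrows G\times G\rightrightarrows G\rightrightarrows\mathrm{Spec}\,k]$, and use crystalline cohomological descent along the atlas $\mathrm{Spec}\,k\to BG$ to identify $R\Gamma_{\mathrm{crys}}(BG)$ with the totalization of $[R\Gamma_{\mathrm{crys}}(\mathrm{Spec}\,k)\to R\Gamma_{\mathrm{crys}}(G)\to R\Gamma_{\mathrm{crys}}(G\times G)\to\cdots]$. Since $R\Gamma_{\mathrm{crys}}(G^{\times n})\simeq R\mathrm{Hom}_{(k/W(k))_{\mathrm{Crys}}}(\mathbb{Z}[G^{\times n}],\mathcal{O}^{\mathrm{crys}})$ for the free abelian sheaf $\mathbb{Z}[G^{\times n}]$ on the big crystalline site, this rewrites as
\[
R\Gamma_{\mathrm{crys}}(BG)\;\simeq\;R\mathrm{Hom}_{(k/W(k))_{\mathrm{Crys}}}\!\bigl(C_\bullet(G),\mathcal{O}^{\mathrm{crys}}\bigr),
\qquad
C_\bullet(G)=[\cdots\to\mathbb{Z}[G\times G]\to\mathbb{Z}[G]\to\mathbb{Z}],
\]
the chain complex of the bar construction of the abelian sheaf $G$, whose homology sheaves are the group-homology sheaves $H_0=\mathbb{Z}$, $H_1=G$, $H_2=\wedge^2G$. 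As $R\Gamma_{\mathrm{crys}}(\mathrm{Spec}\,k)=W(k)$ sits in degree $0$ and the base point $\mathrm{Spec}\,k\to BG$ splits off this summand, it then suffices to feed the truncation triangle $\tau_{\ge2}\widetilde{C}_\bullet(G)\to\widetilde{C}_\bullet(G)\to G[1]$ for the reduced complex $\widetilde{C}_\bullet(G)$ into $R\mathrm{Hom}_{(k/W(k))_{\mathrm{Crys}}}(-,\mathcal{O}^{\mathrm{crys}})$: the associated long exact sequence gives $H^1_{\mathrm{crys}}(BG)\simeq\mathrm{Hom}_{(k/W(k))_{\mathrm{Crys}}}(G,\mathcal{O}^{\mathrm{crys}})$ and exactness of
\[
0\to\mathrm{Ext}^1_{(k/W(k))_{\mathrm{Crys}}}(G,\mathcal{O}^{\mathrm{crys}})\to H^2_{\mathrm{crys}}(BG)\to\mathrm{Hom}_{(k/W(k))_{\mathrm{Crys}}}(\wedge^2G,\mathcal{O}^{\mathrm{crys}}).
\]

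To finish, observe that $\wedge^2G$ is again finite locally free of $p$-power order, so the right-hand term is killed by the vanishing $\mathrm{Hom}_{(k/W(k))_{\mathrm{Crys}}}(H,\mathcal{O}^{\mathrm{crys}})=0$ for every such $H$; this is elementary, using only that $H$ is annihilated by a power of $p$ together with the existence of $p$-torsion-free PD thickenings of the affine schemes in play, and is independent of the deep comparison of \cite{bbm}. This yields the canonical isomorphism $H^2_{\mathrm{crys}}(BG)\simeq\mathrm{Ext}^1_{(k/W(k))_{\mathrm{Crys}}}(G,\mathcal{O}^{\mathrm{crys}})$ and hence, with Theorem~\ref{die}, the corollary. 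The steps demanding the most attention are essentially bookkeeping: justifying the derived crystalline descent along $\mathrm{Spec}\,k\to BG$ (so that $H^2_{\mathrm{crys}}(BG)$, as it occurs in Theorem~\ref{die}, really is the bar totalization), the sheaf-level computation $H_2(C_\bullet(G))=\wedge^2G$, and the vanishing $\mathrm{Hom}_{(k/W(k))_{\mathrm{Crys}}}(\wedge^2G,\mathcal{O}^{\mathrm{crys}})=0$, all kept natural in $G$ so that the final isomorphism is the expected one — none of which is substantial once Theorem~\ref{die} is in hand, which is precisely why the assertion is only a corollary.
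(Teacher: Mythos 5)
Your reduction to \cref{die} is reasonable, and the first two steps are sound and essentially identical to what the paper itself sets up: descent along $*\to BG$ gives the spectral sequence \cref{mainspec}, and since $\mathrm{Ext}^i_{(k/W(k))_{\mathrm{Crys}}}(\mathbb{Z},\mathcal{O}^{\mathrm{crys}})=H^i_{\mathrm{crys}}(\mathrm{Spec}\,k)=0$ for $i>0$ one gets the exact sequence $0\to\mathrm{Ext}^1(G,\mathcal{O}^{\mathrm{crys}})\to H^2_{\mathrm{crys}}(BG)\to\mathrm{Hom}(\wedge^2G,\mathcal{O}^{\mathrm{crys}})$. The gap is in your final step. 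First, $\wedge^2 G$ (the sheaf $H_2$ of the bar complex) is \emph{not} ``again finite locally free of $p$-power order'': it is just a $p$-power torsion abelian sheaf, in general not representable, so you cannot quote a vanishing statement for finite locally free group schemes; you would need to factor through the surjection $G\otimes G\to\wedge^2G$ and prove $\mathrm{Hom}(G\otimes G,\mathcal{O}^{\mathrm{crys}})=0$ (equivalently, argue with the internal $\mathcal{H}om(G,\mathcal{O}^{\mathrm{crys}})$, as the paper does in the analogous prismatic computations). Second, the justification you offer — the image is $p^n$-torsion plus ``existence of $p$-torsion-free PD thickenings'' — is not a proof: covers in the crystalline site of an object $(U,T)$ do not improve the torsion of $T$, so vanishing of a sheaf map on torsion-free thickenings does not control its values elsewhere. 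The vanishing is nevertheless true and can be proved without \cite{bbm}: a biadditive map $G\times G\to\mathcal{O}^{\mathrm{crys}}$ is determined by its value on the universal point of the representable sheaf $G\times_k G$, i.e.\ by a $p^n$-torsion class in $H^0_{\mathrm{crys}}((G\times_k G)/W(k))$, and this group is $p$-torsion-free because $G\times_k G$ is a finite, hence lci, $k$-scheme, so its crystalline cohomology is computed by the de Rham complex of a $p$-torsion-free completed PD envelope and $H^0$ is the kernel of $d$ on a torsion-free module. With that lemma supplied, naturally in $G$, your argument does close.

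Note that the paper's own proof avoids having to prove any such vanishing: it replaces $BG$ by $B^2G$ (\cref{2-stacks} and \cref{niceday}), where $H_{-1}(\mathbb{Z}[B^2G])=H_{-3}(\mathbb{Z}[B^2G])=0$, so the edge term you must kill simply does not appear, and the only extra input is $H^1_{\mathrm{crys}}(BG)=0$ (\cref{vanish}); this is exactly the device your route trades against the extra vanishing lemma, and the same device is what saves the mixed-characteristic sections, where the analogous $\mathrm{Hom}$-vanishing genuinely fails. So your approach is viable and genuinely different at this one step, but as written the key vanishing is both misstated (representability of $\wedge^2G$) and insufficiently justified.
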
{}
\begin{remark}
We point out that while the statement of \cref{die} appeared in the author's previous work \cite{Mon1}, the arguments for the proof in \cite{Mon1} relied crucially on the seminal work of Berthelot--Breen--Messing \cite{bbm}, and essentially directly followed from {\cite[Thm.~4.2.14]{bbm}} via a spectral sequence argument. In contrast, the proof of \cref{die} in our current paper is entirely different from \cite{Mon1}, and it does not rely on \cite{bbm}. In particular, the main ideas of the new proof involves a subtle combination of certain constructions involving de Rham--Witt complexes, some direct calculations of cohomology of classifying stacks (which uses the cotangent complex), and the usage of the $2$-stack $B^2 G$ (\cref{2-stacks}). These techniques and ideas did not appear in \cite{Mon21} (or \cite{bbm}). In fact, as a consequence of this new argument, we are able to recover one of the main theorems of \cite{bbm} as stated in \cref{coyi}. 
\end{remark}{}

Obtaining a shorter proof of the above result of Berthelot--Breen--Messing (\cref{coyi}) was one of our main motivations to reprove \cref{die} using the current approach. Based on the above two results, one may regard the classical Dieudonn\'e module as de Rham--Witt realization of $H^2_{\mathrm{crys}}(BG)$ and $\mathrm{Ext}^1_{(k/ W(k))_{\mathrm{Crys}}}(G, \mathcal{O}^{\mathrm{crys}})$ as its sheaf cohomological incarnation in the crystalline site. 

Our approach in \cref{section2} leads to further developments, where the notion of $2$-stacks invoked in \cref{2-stacks}, and the theory of prismatic cohomology (generalizing crystalline cohomology) play an important role. This is the subject of \cref{covidy}, where we study Dieudonn\'e theory in the mixed characteristic set up, for $p$-divisible groups or finite locally free commutative group schemes of $p$-power rank over a quasisyntomic ring $S$. Let us recall the definition of such rings.

\begin{definition}[{see \cite[\S~4]{BMS2}}]
A ring $S$ is called quasisyntomic if $S$ is $p$-complete with bounded $p^\infty$-torsion and if the cotangent complex $\mathbb{L}_{S/\mathbb{Z}_p}$ (see \cite{Ill72}) has $p$-complete Tor-amplitude in homological degrees $[0,1].$ \end{definition}{}

\begin{example}Quasisyntomic rings form a fairly large class for rings. Any $p$-complete local complete intersection Noetherian ring is quasisyntomic. Any perfectoid ring, or $p$-completion of a smooth algebra over a perfectoid ring is also quasisyntomic. The class of rings in \cref{greentea} below form an important class, as they form a basis for the quasisyntomic topology on quasisyntomic rings (see \cite[\S~4.4]{BMS2}). \end{example}{}

\begin{definition}[{see \cite[Rmk.~4.22]{BMS2}}]\label{greentea}
 A ring $R$ is called quasiregular semiperfectoid if it is quasisyntomic and a quotient of some perfectoid ring.   
\end{definition}{}

In \cite{alb}, the authors introduced the notion of admissible prismatic Dieudonn\'e modules over $S$ in order to obtain a classification of $p$-divisible groups over $S$ (see \cite[Thm.~1.4.4]{alb}). However, the category of admissible prismatic Dieudonn\'e modules end up not being flexible enough to obtain a classification of finite locally free group schemes. In order to obtain a classification of the latter, one needs to work with a suitable category. Motivated by work of Lau \cite{lau}, in the desired category, one would like to have a notion of divided Frobenius in an appropriately filtered set up, which does not appear directly in a prismatic Dieudonn\'e module. Further, in \cite[Thm.~0.3]{kisin}, Kisin obtained a classification of $p$-divisible groups over $\mathcal{O}_K$ in terms of lattices in crystalline Galois representations with Hodge--Tate weights in $[0,1].$ In their recent work \cite[Thm.~1.2]{bscam}, Bhatt and Scholze explained how to recover lattices in crystalline Galois representations from prismatic $F$-crystals, or from the more refined notion of prismatic $F$-gauges (see \cite[Thm.~6.6.13]{fg}). Unwrapping the notion of a prismatic $F$-gauge algebraically, one naturally obtains certain filtered objects, and maps that play the role of divided Frobenii (see \cref{lion}). Moreover, as explained in \cref{comppp}, one can view admissible prismatic Dieudonn\'e modules as prismatic $F$-gauges with Hodge--Tate weights in $[0,1].$ As we will show, it turns out that prismatic $F$-gauges also provide the desired category for classifying locally free commutative group schemes of $p$-power rank. 

The category of prismatic $F$-gauges over $S$ is a rather elaborate piece of structure: they are defined to be the derived category of quasicoherent sheaves on certain $p$-adic formal stacks introduced in \cite{drinew} and \cite{fg}, called the ``syntomification of $S$'' and denoted by $\Spf (S)^{\syn}.$ By quasisyntomic descent (see \cite[Lem.~4.28, Lem.~4.30]{BMS2}), in order to understand $\Spf (S)^{\mathrm{syn}}$ one can restrict attention to the case when $S = R$ for a quasiregular semiperfectoid algebra $R$ (see \cref{presen}). In this case, we concretely spell out the stack $\mathrm{Spf} (R)^{\mathrm{syn}}$ to give a sense of what kind of objects we are working with.
\begin{construction}[{see \cite[Rmk.~5.5.16]{fg}}]\label{sc}Let $R$ be a quasiregular semiperfectoid algebra (qrsp). Define $\mathrm{Spf}(R)^{\Prism}:= \mathrm{Spf}(\Prism_R),$ where $\Prism_R$ is the prism associated to $R.$ Define $$\mathrm{Spf}(R)^{\mathrm{Nyg}}:= \mathrm{Spf} \left(\bigoplus_{i \in \mathbb Z} \mathrm{Fil}^i_{\mathrm{Nyg}} \Prism_R \left \{ i \right \} \right)/\mathbb{G}_m,$$ where $\Prism_R \left \{i \right \}$ denotes the Breuil--Kisin twist. The Nygaard filtration provides a map of graded rings $$\bigoplus_{i \in \mathbb Z} \mathrm{Fil}^i_{\mathrm{Nyg}} \Prism_R \left \{ i \right \} \to \bigoplus_{i \in \mathbb Z} \Prism_R \left \{ i \right \},$$ which induces a map $$\mathrm{can}: \mathrm{Spf}(R)^{\Prism} \to \mathrm{Spf}(R)^{\mathrm{Nyg}}.$$ Also, the divided Frobenius defines a map of graded rings
$$\oplus_{i \in \mathbb Z} (\varphi_i):  \bigoplus_{i \in \mathbb Z} \mathrm{Fil}^i_{\mathrm{Nyg}} \Prism_R \left \{ i \right \} \to \bigoplus_{i \in \mathbb Z} \Prism_R \left \{ i \right \},$$ which induces a map $$\mathrm{\varphi}: \mathrm{Spf}(R)^{\Prism} \to \mathrm{Spf}(R)^{\mathrm{Nyg}}.$$One defines
$$\mathrm{Spf}(R)^{\mathrm{syn}}:=  \mathrm{coeq} \left(\xymatrix{
  \mathrm{Spf}(R)^\Prism\ar@<1ex>[r]^{\mathrm{can}}\ar@<-1ex>[r]_{\varphi} & \mathrm{Spf}(R)^{\mathrm{Nyg}}
 }\right).$$ In this set up, the graded module $\bigoplus_{i \in \mathbb Z} \mathrm{Fil}^{i-1}_{\mathrm{Nyg}} \Prism_R \left \{ i-1 \right \}$ defines a vector bundle on $\mathrm{Spf}(R)^{\w{Nyg}},$ which descends to a vector bundle on $\mathrm{Spf}(R)^{\w{syn}}$-- this will be called the Breuil--Kisin twist and will be denoted by $\mathcal{O}\left \{-1 \right \}.$ For any $M \in \mathrm{Spf}(S)^{\w{syn}},$ we use $M \left \{-n\right \}$ to denote $M \otimes_{\mathcal{O}} \mathcal{O}\left\{-1\right \}^{\otimes n}.$  
\end{construction}{}

\begin{remark}
    Note that in \cite[Prop.~5.5.2]{fg}, a variant of the above construction is given without the Breuil--Kisin twists. However, we prefer the version with the Breuil--Kisin twists for the introduction, since it makes the divided Frobenius operator, and the line bundle $\mathcal{O}\left \{-1\right \}$ more transparent. The equivalence of these two constructions is discussed in \cite[Rmk.~5.5.16]{fg}.
\end{remark}

\begin{definition}[Prismatic $F$-gauges, see \cite{fg}]
The category of prismatic $F$-gauges over $S$, denoted as $F\text{-}\mathrm{Gauge}^{\mathrm{}}_{\Prism}(S)$, is defined to be the derived category of quasicoherent sheaves on $\Spf (S)^\syn$.
\end{definition}{}

In \cref{korma1}, we will explain how to think of prismatic $F$-gauges as a certain filtered module over a filtered ring equipped with some extra structure in the quasisyntomic site of $S$; this approach is closer in spirit to the notion of $F$-gauges in \cite{FJ13} and has certain technical advantages. This approach will also crucially be used in our paper. However, for the introduction, it would be easier to explain our constructions from the stacky point of view.

In \cref{prismaticdie}, we begin by defining the prismatic Dieudonn\'e $F$-gauge $\mathcal{M}(G)$ associated to a $p$-divisible group $G$ over $S$. Our definition is directly motivated by \cref{die}, which we roughly explain below.

\begin{definition}[Prismatic Dieudonn\'e $F$-gauge of a $p$-divisible group]\label{pdfg}
Let $G$ be a $p$-divisible group over $S.$ By functoriality of the syntomification construction, we obtain a natural map
$$v: BG^{\mathrm{syn}} \to S^{\mathrm{syn}}.$$ The prismatic Dieudonn\'e $F$-gauge of $G$, denoted as $\mathcal{M}(G)$, is defined to be $R^2v_* \mathcal{O}_{BG^{\mathrm{syn}}} \in F\text{-}\mathrm{Gauge}^{\mathrm{}}_{\Prism}(S).$       
\end{definition}{}

\begin{remark}\label{introgreentea}
 Since the theory of $t$-structures on $p$-complete derived category of (derived) $p$-adic formal stacks is not well-behaved in general, the above definition of $\mathcal{M}(G)$ should be regarded as heuristic and should be interpreted in the sense of \cref{prelimdef} (also see \cref{introdd}). In fact, some work is necessary to prove that $\mathcal{M}(G)$ is well-defined as a prismatic $F$-gauge; this is proven in \cref{dualo1}. If $G$ is of height $h,$ then $\mathcal{M}(G)$ is a vector bundle of rank $h$ when viewed as a quasicoherent sheaf on $\Spf(S)^\syn.$  
\end{remark}
We show the following:
\begin{theorem}[\cref{mainthm2}]\label{sna}
Let $\mathrm{BT}(S)$ denote the category of $p$-divisible groups over a quasisyntomic ring $S$ and $F\text{-}\mathrm{Gauge}^{\mathrm{vect}}_{\Prism}(S)$ denote the category of vector bundles on $\Spf(S)^\syn.$ The prismatic Dieudonn\'e $F$-gauge functor$$\mathcal{M}:  \mathrm{BT}(S)^\mathrm{op} \to F\text{-}\mathrm{Gauge}^{\mathrm{vect}}_{\Prism}(S),$$ determined by $G \mapsto \mathcal{M}(G)$ is fully faithful. The essential image of $\mathcal{M}$ is the full subcategory of vector bundles with Hodge--Tate weights in $[0,1]$.   \end{theorem}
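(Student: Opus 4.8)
The plan is to reduce the theorem to the prismatic Dieudonn\'e theory of Ansch\"utz--Le Bras \cite{alb}, combined with the dictionary of \cref{comppp} identifying admissible prismatic Dieudonn\'e modules with prismatic $F$-gauges of Hodge--Tate weight in $[0,1]$; the genuinely new input will be a comparison, carried out via the classifying stack, between the prismatic $F$-crystal underlying $\mathcal{M}(G)$ and the prismatic Dieudonn\'e crystal of \cite{alb}.

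First I would reduce to the case where $S = R$ is quasiregular semiperfectoid. Both $\mathrm{BT}(-)$ and $F\text{-}\mathrm{Gauge}^{\mathrm{vect}}_{\Prism}(-)$ are sheaves for the quasisyntomic topology --- fppf descent of $p$-divisible groups on one side, and descent of quasicoherent sheaves along $\Spf(-)^{\syn}$ (via \cite[Lem.~4.28, Lem.~4.30]{BMS2} and \cref{presen}) on the other --- while the conditions ``vector bundle'' and ``Hodge--Tate weights in $[0,1]$'' are local for this topology, and $\mathcal{M}$ commutes with restriction along a qrsp cover by functoriality of syntomification and of cohomology. Hence both full faithfulness and the description of the essential image may be checked over a qrsp cover $S \to R$ and its \v{C}ech nerve, reducing everything to $\mathcal{M}\colon \mathrm{BT}(R)^{\mathrm{op}} \to F\text{-}\mathrm{Gauge}^{\mathrm{vect}}_{\Prism}(R)$, functorially in the qrsp ring $R$.

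Over $R$, the core step is to compute the underlying prismatic $F$-crystal of $\mathcal{M}(G)$, i.e.\ its restriction along $\Spf(R)^{\Prism} \hookrightarrow \Spf(R)^{\syn}$. Restricting $v\colon BG^{\syn} \to R^{\syn}$ over $\Spf(R)^{\Prism}$, this restriction is the degree-$2$ prismatic cohomology $H^2_{\Prism}(BG)$ of the classifying stack over $\Prism_R$. Writing $BG = [\ast/G]$ and running the resulting descent spectral sequence, the vanishing of the low-degree terms $\mathrm{Hom}(G, \mathbb{G}_a)$ and $\mathrm{Hom}(\wedge^2 G, \mathbb{G}_a)$ --- the same inputs that make $\mathcal{M}(G)$ a rank-$h$ vector bundle in \cref{dualo1} --- identifies $H^2_{\Prism}(BG)$, with its Frobenius, with $\mathrm{Ext}^1_{(R)_{\Prism}}(G, \mathcal{O}_{\Prism})$, which is precisely the prismatic Dieudonn\'e crystal $\mathbb{M}(G)$ of \cite{alb}, up to the expected Breuil--Kisin and Frobenius twists; this is the prismatic analogue of \cref{die} (compare \cite[Thm.~1.2]{Mon1}). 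Pinning down these twists, the Frobenius structure, and the variance compatibly is the step I expect to be the main obstacle.

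Granting this, the remainder is formal. First, $\mathcal{M}(G)$ is a vector-bundle $F$-gauge with Hodge--Tate weights in $[0,1]$: it is a vector bundle of rank $h$ by \cref{dualo1}, and the Hodge--Tate realization of $H^2(BG)$ has graded pieces governed by $\mathrm{Lie}\,G$ and $\omega_G$, hence concentrated in weights $\{0,1\}$ (again cf.\ \cref{die}). Next, $\mathcal{M}$ is identified with the composite of the antiequivalence of \cite{alb} between $\mathrm{BT}(R)^{\mathrm{op}}$ and the category of admissible prismatic Dieudonn\'e modules over $R$, $G \mapsto \mathbb{M}(G)$, followed by the equivalence of \cref{comppp} of that category with the subcategory of $F\text{-}\mathrm{Gauge}^{\mathrm{vect}}_{\Prism}(R)$ of weight in $[0,1]$ --- under which an $F$-gauge of weight in $[0,1]$ is reconstructed from its underlying $F$-crystal, the Nygaard filtration and divided Frobenius being forced. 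Indeed, by the previous paragraph $\mathcal{M}(G)$ has the same underlying $F$-crystal as the image of $G$ under this composite, and a weight-$[0,1]$ $F$-gauge is determined by that $F$-crystal. Hence $\mathcal{M}$ is fully faithful with essential image precisely the vector bundles of Hodge--Tate weight in $[0,1]$. Finally, both assertions descend back along the qrsp cover $S \to R$: full faithfulness because the relevant $\mathrm{Hom}$'s satisfy \v{C}ech descent on either side, and essential surjectivity because a weight-$[0,1]$ vector-bundle $F$-gauge over $S$ restricts to a compatible family $\mathcal{M}(G_i)$ with descent data over the \v{C}ech nerve, whose $p$-divisible groups $G_i$ glue (quasisyntomic descent for $\mathrm{BT}$, with descent data transported through full faithfulness of $\mathcal{M}$) to the desired $G$ over $S$.
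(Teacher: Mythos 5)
Your proposal is essentially sound, but it takes a genuinely different route from the paper, and the difference is the point of the theorem. You deduce full faithfulness by identifying the underlying prismatic $F$-crystal of $\mathcal{M}(G)$ with the Ansch\"utz--Le Bras Dieudonn\'e crystal and then invoking \cite[Thm.~1.4.4]{alb} together with \cref{comppp} (under which a weight-$[0,1]$ vector-bundle $F$-gauge is indeed recovered from its Breuil--Kisin module, by \cite[Cor.~2.53]{guoli}); this is legitimate, and the paper itself records the needed comparison in \cref{iasbeer}, where it is quoted from \cite[Thm.~1.6]{Mon1} rather than reproved --- so the step you flag as "the main obstacle" is real work (your spectral-sequence sketch gives $H^2_{\Prism}(BG)\simeq \mathcal{E}xt^1(G,\Prism_{(\cdot)})$, where the relevant vanishing is $\mathcal{H}om(G,\Prism_{(\cdot)})=\mathcal{H}om(G\wedge G,\Prism_{(\cdot)})=0$ coming from $p$-divisibility and $p$-completeness, not $\mathrm{Hom}(G,\mathbb{G}_a)$), but matching filtration, Frobenius and functoriality with \cite{alb} is exactly the content of that earlier result. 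The paper's own proof of \cref{mainthm2} deliberately does \emph{not} pass through \cite{alb}: full faithfulness is proved intrinsically on $\Spf(S)^{\syn}$ using dualizability of $\mathcal{M}(G)$, the Cartier-duality formula $\mathcal{M}(G)^*\{-1\}\simeq\mathcal{M}(G^\vee)$ (\cref{grading}), the adjunction of \cref{adjoinnn}, and the cohomology formula $R\Gamma_{\qsyn}(S,T_p(G))\simeq R\Gamma(\Spf(S)^{\syn},\mathcal{M}(G^\vee)\{1\})$ (\cref{jac}); this is what lets the paper advertise \cref{sna} as a \emph{new} proof of the fully faithfulness in \cite{alb}, whereas your argument, while proving the stated theorem, would be circular if offered as such a reproof. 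For the essential image the two routes largely coincide: the paper also uses \cref{dualo1} and the Hodge--Tate weight computation to land in weight $[0,1]$, and then (\cref{ias1}) reduces essential surjectivity through \cref{comppp} and the argument of \cite[Thm.~4.9.5]{alb} to the perfectoid case handled by \cite[Thm.~17.5.2]{berk}, rather than citing the antiequivalence of \cite{alb} wholesale as you do. Your descent reductions (quasisyntomic descent for $\mathrm{BT}(-)$ and for $F$-gauges, locality of the vector-bundle and weight conditions) are fine and match \cref{si}, \cref{pdivbase} and \cref{d}.
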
 

The above result gives a different proof of the fully faithfulness result of \cite[Thm.~1.4.4]{alb}. Our approach uses the formalism of quasi-coherent sheaves on $S^{\mathrm{syn}},$ and the dualizability of $\mathcal{M} (G)$ along with the compatibility with Cartier duality (\cref{grading}). In order to prove that $\mathcal{M}(G)$ is a vector bundle, we give a geometric argument by directly computing the cotangent complex $\mathbb{L}_{BG}$ (\cref{cotangent}). Under the equivalence (see \cref{comppp}) of admissible prismatic Dieudonn\'e modules \cite[Thm.~1.4.4]{alb} and vector bundles on $\Spf (S)^\syn$ with Hodge--Tate weights in $[0,1],$ \cref{sna} is equivalent to \cite[Thm.~1.4.4]{alb}.

We extend our approach using prismatic $F$-gauges to a classification of finite locally free commutative group schemes $G$ of $p$-power rank as well. However, in mixed charactertistic, under the presence of $p$-torsion, it turns out that $R^2v_*\mathcal{O}_{BG^\syn}$ as in \cref{pdfg} is not the right invariant. This is a ``pathology" that does not occur when $S$ has characteristic $p$ (see \cref{pathology}). To resolve this, we use the ``higher Eilenberg–MacLane stack" $B^2 G^\syn.$

Below, we let $\mathrm{FFG}(S)$ denote the category of finite locally free commutative group scheme of $p$-power rank over $S.$ Note that in order to describe a prismatic $F$-gauge over $S,$ one can restrict to attention to quasiregular semiperfectoid algberas; more precisely, we have (see \cref{si})
\begin{equation}\label{introtec}
    F\text{-Gauge}_{\Prism}(S) \simeq \lim_{\substack{S \to R; \\R~\text{is qrsp}}} F\text{-Gauge}_{\Prism}(R).
\end{equation}{}

\begin{definition}[Prismatic Dieudonn\'e $F$-gauge of locally free group schemes]\label{ecc222}
Let $S$ be a quasisyntomic ring and let $G \in \mathrm{FFG}(S).$ Let $S \to R$ be a map such that $R$ is qrsp. By functoriality of the syntomification construction, we have a natural map
$$v: B^2G_R^{\mathrm{syn}} \to R^{\mathrm{syn}}.$$
The association $$R \mapsto (\tau_{[-2,-3]} Rv_* \mathcal{O})[3] \in D_{\mathrm{qc}}(R^\syn)$$ determines (via \cref{introtec}) an object of $F\text{-Gauge}_{\Prism}(S)$ that we call prismatic Dieudonn\'e $F$-gauge of $G$ and denote it by $\mathcal{M}(G).$
\end{definition}{}
\begin{remark}
A description similar to \cref{ecc222} using $B^2 G^{\syn}$ in the case when $G$ is a $p$-divisible group is still compatible with \cref{pdfg}. See \cref{ecc22} and \cref{ecc2}. 
\end{remark}
\begin{remark}
 Similar to \cref{introgreentea}, the above construction in \cref{ecc222} should be regarded as heuristic, and the precise construction is given in \cref{cafe4} ({cf}.~\cref{stella1}). In fact, some work is necessary to prove that $\mathcal{M}(G)$ is well-defined as a prismatic $F$-gauge; this is proven in \cref{fffff}. In \cref{dualoo}, we show that $\mathcal{M}(G)$ is dualizable as a prismatic $F$-gauge and satisfies a compatibility formula with Cartier duality:
\begin{equation}\label{fd1}
 \mathcal{M}(G)^* \left \{-1 \right \} [1]\simeq  \mathcal{M}(G^\vee).    
\end{equation}{}

\end{remark}{}
We show the following:

\begin{theorem}[\cref{mainthm22}]\label{mi}
Let $S$ be a quasisyntomic ring. The prismatic Dieudonn\'e $F$-gauge functor $$\mathcal{M}:  \mathrm{FFG}(S)^{\mathrm{op}} \to F\text{-}\mathrm{Gauge}_{\Prism}(S),$$ 
determined by $G \mapsto \mathcal{M}(G)$ is fully faithful.\end{theorem}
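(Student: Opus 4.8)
\noindent\textbf{Proof plan for \cref{mi}.} The idea is to bootstrap from the $p$-divisible case (\cref{sna}) by dévissage along resolutions by $p$-divisible groups. Fix $G,H\in\mathrm{FFG}(S)$. Since $\mathrm{FFG}$ is an fppf stack and prismatic $F$-gauges satisfy flat descent (a fortiori the quasisyntomic descent of \cref{introtec}), both $S'\mapsto\Hom_{\mathrm{FFG}(S')}(G_{S'},H_{S'})$ and $S'\mapsto\Hom_{F\text{-}\mathrm{Gauge}_{\Prism}(S')}(\mathcal{M}(H_{S'}),\mathcal{M}(G_{S'}))$ are sheaves for the fppf topology, and the comparison map induced by $\mathcal{M}$ commutes with pullback; hence it suffices to prove it is an isomorphism after an fppf base change. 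After such a base change, by a theorem of Raynaud (see also \cite{alb}) we may assume $G\simeq\mathrm{fib}(H_1\xrightarrow{\alpha}H_2)$ is the kernel of an isogeny of $p$-divisible groups, and likewise $H\simeq\mathrm{fib}(H_1'\xrightarrow{\beta}H_2')$.

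The key input is the exactness of $\mathcal{M}$, which is part of the proof that $\mathcal{M}(G)$ is a well-defined, dualizable prismatic $F$-gauge (\cref{fffff,cafe4,dualoo}). Delooping the short exact sequence twice gives a fiber sequence of stacks $BH_1\to BH_2\to B^2G\to B^2H_1\to B^2H_2$, whose maps are torsors, resp.\ gerbes, for the preceding term, a structure preserved under syntomification; pushing $\mathcal{O}$ forward to $R^\syn$ along the structure maps $v$ and applying the truncation-and-shift of \cref{ecc222} then yields a distinguished triangle
$$\mathcal{M}(H_2)\xrightarrow{\mathcal{M}(\alpha)}\mathcal{M}(H_1)\longrightarrow\mathcal{M}(G)\xrightarrow{+1}$$
in $D_{\mathrm{qc}}(R^\syn)$, and likewise for $H$. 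It is crucial that for finite $G$ one uses $B^2G^\syn$ and not $BG^\syn$: the latter fails exactly because of the mixed-characteristic pathology of \cref{pathology}, whereas the $B^2$-description agrees with \cref{pdfg} for $p$-divisible groups by \cref{ecc22,ecc2}. As the $\mathcal{M}(H_i),\mathcal{M}(H_j')$ are vector bundles and $\mathcal{M}(G),\mathcal{M}(H)$ are perfect complexes in non-negative cohomological degrees (\cref{fffff}), this triangle is in fact a short exact sequence of sheaves on $R^\syn$ (and $\mathcal{M}(G),\mathcal{M}(H)$ are concentrated in degree $0$).

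Now compare the two $\Hom$-groups via the long exact sequences attached to these triangles. On the $\mathrm{FFG}$ side, $\Hom_{\mathrm{FFG}(R)}(G,H)=H^0R\Hom_{D(R_{\mathrm{fppf}})}(G,H)$ is, through the two-term presentations of $G$ and $H$, the abutment of a convergent spectral sequence whose $E_1$-terms are the $\mathrm{Hom}$- and $\mathrm{Ext}^1$-groups among the $p$-divisible groups $H_i,H_j'$ in $D(R_{\mathrm{fppf}})$ (only degrees $0$ and $1$ contribute to total degree $0$, and an extension of a $p$-divisible group by a $p$-divisible group is again one). On the $F$-gauge side, $\Hom_{F\text{-}\mathrm{Gauge}_{\Prism}(R)}(\mathcal{M}(H),\mathcal{M}(G))=H^0R\Hom_{D_{\mathrm{qc}}(R^\syn)}(\mathcal{M}(H),\mathcal{M}(G))$ is the abutment of the analogous spectral sequence built from the $\mathrm{Hom}$- and $\mathrm{Ext}$-groups among the $\mathcal{M}(H_i),\mathcal{M}(H_j')$; the negative $\mathrm{Ext}$-groups vanish because the $\mathcal{M}(H_i)$ are dualizable and the targets lie in non-negative degrees. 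By \cref{sna} these vector bundles are precisely those with Hodge--Tate weights in $[0,1]$, a subcategory of $D_{\mathrm{qc}}(R^\syn)^\heartsuit$ closed under extensions, so Yoneda $\mathrm{Ext}^1$ there coincides with $\mathrm{Ext}^1$ in $D_{\mathrm{qc}}(R^\syn)$ and is matched (in the appropriate variance) by $\mathcal{M}$ with $\mathrm{Ext}^1$ of $p$-divisible groups. Hence $\mathcal{M}$ induces an isomorphism of $E_1$-pages compatible with all differentials (by functoriality of $\mathcal{M}$ and of the resolutions), and therefore of abutments; unwinding the identifications, the resulting isomorphism is exactly the comparison map, which is thus bijective. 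This proves \cref{mi}.

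The step I expect to be the main obstacle is the exactness of $\mathcal{M}$ on short exact sequences with $p$-divisible outer terms --- the distinguished triangle above --- together with the compatibility of the $B^2G^\syn$- and $BG^\syn$-descriptions of $\mathcal{M}$ for $p$-divisible groups. This is the point at which the passage to $B^2$ is forced by the mixed-characteristic pathology of \cref{pathology}, and it rests on a careful analysis of $Rv_*\mathcal{O}$ along the gerbe $B^2H_1^\syn\to B^2H_2^\syn$ over $R^\syn$, using the cotangent-complex computation of \cref{cotangent}. The remaining ingredients --- the fppf descent, the existence of $p$-divisible resolutions, the vanishing of negative $\mathrm{Ext}$-groups, and the $\mathrm{Ext}^1$-bookkeeping --- are comparatively routine once \cref{sna} and this exactness are in hand.
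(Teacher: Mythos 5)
Your dévissage strategy has a genuine gap at its crucial step: the comparison of $E_1$-pages. Reducing (locally) to $G=\mathrm{fib}(H_1\to H_2)$ and $H=\mathrm{fib}(H_1'\to H_2')$ with $H_i,H_j'$ $p$-divisible and running the two spectral sequences requires not only that $\mathcal{M}$ matches $\mathrm{Hom}$-groups of $p$-divisible groups (which is \cref{sna}) but also that it matches $\mathrm{Ext}^1$-groups: $\mathrm{Ext}^1_{(S)_\qsyn}(H_i,H_j')\simeq \mathrm{Ext}^1_{D_{\mathrm{qc}}(S^\syn)}(\mathcal{M}(H_j'),\mathcal{M}(H_i))$. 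This is strictly stronger than \cref{sna} (which identifies only the truncated mapping spaces $\tau_{\ge 0}R\Hom$), and you do not prove it; you assert it via ``Yoneda $\mathrm{Ext}^1$ in the weight-$[0,1]$ subcategory of $D_{\mathrm{qc}}(R^\syn)^\heartsuit$, closed under extensions.'' But the paper explicitly warns (\cref{introgreentea}) that $t$-structures on $p$-complete derived categories of these $p$-adic formal stacks are not well behaved, so there is no heart to anchor this argument in; and even granting one, you would still need to show that extensions of weight-$[0,1]$ vector bundles by weight-$[0,1]$ vector bundles in $D_{\mathrm{qc}}(S^\syn)$ again lie in that subcategory, that Yoneda $\mathrm{Ext}^1$ there agrees with $H^1R\Hom$, and that $\mathcal{M}$ induces a bijection on these $\mathrm{Ext}^1$'s --- the last point is essentially a degree-$\le 1$ derived enhancement of full faithfulness, i.e.\ a statement of the same depth as the theorem you are trying to prove. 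Secondary issues: your descent reduction invokes fppf descent for prismatic $F$-gauges, which is not established in the paper (only quasisyntomic descent, \cref{introtec}); Zariski/quasisyntomic localization suffices for Raynaud's theorem and is what the paper uses (\cref{verys1}), so this is fixable, but as written it is another unsupported step. The exactness input you flag as the main obstacle is actually available (\cref{ecc57}, \cref{atias}), so that part is fine.

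For comparison, the paper's proof takes a different and more direct route that avoids dévissage and any $\mathrm{Ext}^1$-bookkeeping for the full faithfulness statement itself: working with effective filtered modules with Frobenius (where a canonical $t$-structure does exist, \cref{fl}, \cref{smallworagain1}), it rewrites $\Hom(\mathcal{M}(G),\mathcal{M}(H))$ via \cref{cocacola} and the adjunction of \cref{adjoinnn}, then uses dualizability of $\mathcal{M}(G)$, the Cartier-duality compatibility $\mathcal{M}(G)^*\{-1\}[1]\simeq\mathcal{M}(G^\vee)$ (\cref{dualoo}), and the cohomology formula $R\Gamma_{\qsyn}(S,G)\simeq R\Gamma(S^\syn,\mathcal{M}(G^\vee)\{1\})$ (\cref{earth}) to land directly on $\Hom_{(T)_\qrsp}(H,G)$. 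If you want to salvage your approach, you would need to first prove the $\mathrm{Ext}^1$-comparison for $p$-divisible groups by essentially the same adjunction-plus-duality mechanism, at which point the dévissage becomes redundant.
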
 

The proof of \cref{mi}, as well as the compatibility with Cartier duality (\cref{dualoo}) uses \cref{earth}, which expresses cohomology with coefficient in a group scheme $G$ as cohomology of $\mathcal{M}(G^\vee)\left \{1 \right \}$ viewed as a quasicoherent sheaf on $\Spf(S)^\syn,$ i.e.,
\begin{equation}\label{fckd}
    R\Gamma_{\mathrm{qsyn}}(S, G) \simeq R\Gamma (\mathrm{Spf}(S)^\syn, \mathcal{M}(G^\vee) \left \{1 \right \}).
\end{equation}{}
A formula as above is of independent interest, e.g., see \cite[1.1.6]{ches} for applications in purity. See \cref{jac} for a similar result for $p$-divisible groups. In \cref{laststatement}, we give a description of the essential image of the embedding of \cref{mi}.

In \cref{finalprop}, we show that the \'etale realization functor $\Te$ for prismatic $F$-gauges (\cite[Cons.~6.3.1]{fg}) can be used to recover Galois representations associated to $p$-divisible groups or finite locally free commutative group schemes of $p$-power rank $G$ from their prismatic Dieudonn\'e $F$-gauge $\mathcal{M}(G).$ More precisely, \begin{equation}\label{fd2}
\Te(\mathcal{M}(G)\left \{1 \right \}) \simeq G^\vee_{\eta}.    
\end{equation}Results in this spirit (but in different framework) appeared in \cite{gal3}, \cite{gal2}, \cite{gal1} (see also \cite{breuilgal}, \cite{zinkgal}, \cite{zinkgal2}, \cite{kisgal}, \cite{kisgal2}, \cite{alb}). Our proof uses the constructions of this paper as well as the almost purity theorem \cite[Thm.~1.10]{pspaces}.

\begin{remark}
Note that since every $G \in \mathrm{FFG}(S)$ can be Zariski locally expressed as a kernel of isogeny of $p$-divisible groups, \emph{a priori} it seems plausible (cf.~\cite{kisin} for the case of $S=\mathcal{O}_K$) that one can deduce that $\mathrm{FFG}(S)$ embeds in the $\infty$-category $F\text{-}\mathrm{Gauge}_{\Prism}(S)$ from \cref{sna} and \cref{comppp} by using descent. However, one runs into proving independence of certain choices of open covers made in this approach. Note that this issue does not arise in Kisin's argument in the case of $\mathcal{O}_K$, because one does not encounter nontrivial open covers; but it is an issue in our generality to even define a functor this way. More crucially, one does not get a direct, explicit description of the functor $\mathcal{M}(G)$ in this approach. 

One of the key aspects of our approach is to provide a direct construction of the prismatic Dieudonn\'e $F$-gauge $\mathcal{M}(G)$ (\cref{ecc222}), and give a direct approach to proving \cref{mi}. Additionally, proof of the other results, such as the Cartier duality formula \cref{fd1}, the expression of cohomology with coefficients in group schemes in terms of $\mathrm{Spf}(S)^\syn$ \cref{fckd}, and the construction of Galois representations from $\mathcal{M}(G)$ \cref{fd2} all directly rely on the construction of $\mathcal{M}(G)$ as in \cref{ecc222}.
\end{remark}{}

\begin{remark}
Since the stack $\mathrm{BT}^h_n$ of $n$-truncated Barsotti--Tate groups of height $h$ is a smooth stack (\cite{luci}, \cite[Thm.~2.1]{lauuu}), it seems plausible to use \cref{mi} to obtain a classification when the base is only assumed to be $p$-complete. In forthcoming work of Gardner--Madapusi \cite{madapusi}, the authors prove that the assignment that sends $X$ to vector bundles on $X^\syn \otimes \mathbb{Z}/p^n$ with Hodge--Tate weights in $[0,1]$ is representable by a \textit{smooth} $p$-adic formal stack over $\mathbb{Z}_p$-- this is used in their paper to obtain a classification of $n$-truncated Barsotti--Tate groups in a more general situation. In a joint work of Madapusi and the author \cite{madapusi2}, we hope to extend these to a more general classification of finite locally free commutative group schemes of $p$-power rank as well.   
\end{remark}{}

\subsection{Notations and conventions}\label{notconven}
\begin{enumerate}
    \item We will use the language of $\infty$-categories as in \cite{Lur09}, more specifically, the language of stable $\infty$-categories \cite{luriehigher}. For an ordinary commutative ring $R,$ we will let $D(R)$ denote the derived $\infty$-category of $R$-modules, so that it is naturally equipped with a $t$-structure and $D_{\ge 0}(R)$ (resp. $D_{\le 0} (R)$) denotes the connective (resp. coconnective) objects, following the homological convention. We will let $\mathcal{S}$ denote the $\infty$-category of spaces, or anima. All tensor products are assumed to be derived.

\item For any presentable, stable, symmetric monoidal $\infty$-category $\mathcal{C},$ we will denote its associated $\infty$-category of $(\mathbb Z, \ge )$-indexed filtered objects (i.e., one has \linebreak $ \cdots \to \Fil^{i+1} \to \Fil^{i} \to \cdots$) by $\Fil (\mathcal{C})$.  Note that $\Fil (\mathcal{C})$ is again a presentable stable $\infty$-category, which can be regarded as a symmetric monoidal $\infty$-category under the Day convolution (see e.g., \cite{fild}). Using \cite[\S~3]{luriehigher}, one can define the category $\mathrm{CAlg}(\mathrm{Fil}(\mathcal{C}))$, which may be called the $\infty$-category of filtered commutative algebra objects of $\mathcal{C}.$ Given any $A \in \mathrm{CAlg}(\mathrm{Fil}(\mathcal{C})),$ one can define the category $\mathrm{Mod}_A (\mathrm{Fil}(\mathcal{C}))$, which maybe called the $\infty$-category of filtered modules over the filtered commutative ring $A.$

\item Let $\mathcal{C}$ be any Grothendieck site and let $\mathcal{D}$ be any presentable $\infty$-category. Then one can define the category of ``sheaves on $\mathcal{C}$ with values in $\mathcal{D}$" denoted by $\mathrm{Shv}_{\mathcal D}(\mathcal C)$ as in \cite[Def.~1.3.1.4]{spectra}. This agrees with the usual notion of sheaves when $\mathcal{D}$ is a $1$-category. Let $\mathrm{PShv}_{\mathcal{D}}(\mathcal C):= \mathrm{Fun}(\mathcal{C}^{\mathrm{op}}, \mathcal{D}).$ There is a natural inclusion functor $\mathrm{Shv}_{\mathcal D}(\mathcal C) \to \mathrm{PShv}_{\mathcal{D}}(\mathcal C),$ that admits a left adjoint, which we will call sheafification and denote by $ \mathrm{PShv}_{\mathcal{D}}(\mathcal C) \ni \mathcal{F} \mapsto \mathcal{F}^\sharp.$ For an ordinary ring $R$, the classical (triangulated) derived category obtained from complexes of sheaves of $R$-modules on $\mathcal{C}$ is the homotopy category of the full subcategory of hypercomplete objects of $\mathrm{Shv}_{D(R)}(\mathcal{C})$ (see \cite[Cor.~2.1.2.3]{spectra}).

\item Let $\mathcal{C}$ be any Grothendieck site and $\mathcal{D}$ be any presentable, stable, symmetric monoidal $\infty$-category. Then $\mathrm{Shv}_{\mathcal{D}}(\mathcal{C})$ is naturally a presentable, stable, symmetric monoidal $\infty$-category. One can define the category $\mathrm{CAlg}(\mathrm{Fil}(\mathrm{Shv}_{\mathcal{D}}(\mathcal{C})))$, which may be called the $\infty$-category of sheaves on $\mathcal{C}$ with values in $\mathrm{CAlg}(\Fil (\mathcal{D})).$ For $A \in \mathrm{CAlg}(\mathrm{Fil}(\mathrm{Shv}_{\mathcal{D}}(\mathcal{C}))),$ one can define the category $\mathrm{Mod}_A(\mathrm{Fil}(\mathrm{Shv}_{\mathcal{D}}(\mathcal{C}))),$ which may be called sheaf of filtered modules over $A$ (on the site $\mathcal{C}$).

\item Let $R$ be any ordinary commutative rings. Then the $\infty$-category of $\mathcal{S}$-valued presheaves on the category of affine schemes over $R$ that satisfies fpqc descent will be called the $\infty$-category of (higher) stacks over $R.$
For a commutative group scheme $G$ over $R$, we let $BG$ denote the classifying stack and $B^n G$ denote the $n$-stack $K(G,n),$ which are all examples of (higher) stacks. The notion of $p$-adic formal stacks is slightly different and appears in \cref{nota1}, which uses the notion of animated rings; the latter being the $\infty$-category obtained from the category of simplicial commutative rings by inverting weak equivalences.

\item Let $R$ be an ordinary commutative ring and $I$ be a finitely generated ideal of $R.$ In this situation, one can define $D(R)^{\wedge}_I$ to be the full subcategory of $D(R)$ spanned by derived $I$-complete objects of $R$ (see \cite[\S~3.4]{proetale}). The embedding $D(R)^{\wedge}_I \to D(R)$ preserves limits, and therefore is a right adjoint. Its left adjoint will be called derived $I$-completion functor. For $M, N \in D(R)^{\wedge}_I,$ we let $M \widehat{\otimes}_R N$ denote the derived $I$-completion of $M \otimes_R N.$

\item We work with a fixed prime $p.$ We will let $R\Gamma_{\w{crys}}(\cdot)$ denote derived crystalline cohomology, which reduces to derived de Rham cohomology $\mathrm{dR}(\cdot)$ modulo $p$ (see \cite{Bha12}). Although the definition of derived crystalline cohomology is apriori quite different, it agrees with the site theoretic definition of crystalline cohomology for a large class of schemes (\cref{comparederordinary}). For this reason, and to simplify notions, we simply use $R\Gamma_{\w{crys}}$ to denote derived crystalline cohomology throughout this paper. We freely use the quasisyntomic descent techniques using quasiregular semiperfectoid algebras introduced in \cite{BMS2}. We also freely use the formalism of (absolute) prismatic cohomology developed by Bhatt--Scholze \cite{BS19}, as well as the stacky approach to prismatic cohomology developed by Drinfeld \cite{prismatization} and Bhatt--Lurie \cite{BhaLu}, \cite{BhaLur2}, \cite{blprep}. At the moment, our main reference for working with prismatic $F$-gauges is based on Bhatt's lecture notes \cite{fg}. We will give a presentation of this theory in \cref{korma1} from a slightly different perspective that is more similar to \cite[\S~1.4]{FJ13} (see also \cite[Rmk.~3.4.4]{fg}).

\item 
Let $S$ be a quasisyntomic ring and let $(S)_\mathrm{qsyn}$ denote the quasisyntomic site of $S$ (see \cite[Variant.~4.33]{BMS2}). For two abelian sheaves $\mathcal{F}$ and $\mathcal G$ on $(S)_\qsyn,$ we let $\mathcal{E}xt^i_{(S)_\qsyn} (\mathcal F, \mathcal G)$ denote the sheafified Ext groups. If $Y$ is a stack over $S,$ we let $\mathcal{H}^i_{(S)_\qsyn}(Y, \mathcal F)$ be sheafification of the abelian group valued functor that sends $$(S)_\qsyn \ni A \mapsto H^i(Y|_{(A)_\qsyn} , \mathcal F|_{(A)_\qsyn}).$$ When $\mathcal{F} = \Prism_{(\cdot)}$ (see \cref{useintronotation}), we use $\mathcal{H}^i_{\Prism} (Y)$ to denote $\mathcal{H}^i_{(S)_\qsyn}(Y, \Prism_{(\cdot)})$.

\item Let $G$ be a $p$-divisible group over a 
quasisyntomic ring $S.$ In this set up, by ($p$-completely) faithfully flat descent (see \cite[Rmk.~4.9]{BMS1} for $i=0$), for $n \ge 0,$ we can regard the group scheme of $p^n$-torsion of $G$, denoted by $G[p^n]$, as a representable abelian sheaf on $(S)_\qsyn.$ In this situation, the collection of $G[p^n]$'s naturally defines an ind-object, as well as a pro-object in $(S)_\qsyn.$ We will again use $G$ to denote $\mathrm{colim}\,G[p^n]$ as an object of $(S)_\qsyn.$ We will use $T_p(G)$ to denote $\lim G[p^n] \in (S)_\qsyn,$ and call it the Tate module of $G.$ In $(S)_\qsyn,$ one has an exact sequence of abelian sheaves $0 \to T_p(G) \to \lim_{p} G \to G \to 0.$ Note that since $G$ is $p$-divisible, the map $p: G \to G$ is a surjection on $(S)_\qsyn$ (e.g., see \cite[Rmk.~4.2.2]{alb}). Therefore, since the quasisyntomic topos is replete (see \cite[Def.~3.1.1]{proetale}, \cite[Ex.~1.5]{MR100}), it follows that $R\lim_p G$ (and consequently $R\lim G[p^n]$) is discrete. This implies that the derived $p$-completion of the abelian sheaf $G$ is isomorphic to $T_p(G)[1]$ in the derived category of abelian sheaves on $(S)_\qsyn.$ We let $\mathbb{Z}_p(1)$ denote the Tate module of of the $p$-divisible group $\mu_{p^\infty}$.

\item 
All group schemes appearing in this paper are commutative unless otherwise mentioned. We use $\mathrm{FFG}(S)$
 to denote the category of finite, locally free, commutative group schemes of $p$-power rank over a ring $S.$
 The category $\mathrm{FFG}(S)$ is self dual to itself, induced by the functor $G \mapsto G^\vee,$
 where $G^\vee$ denotes the Cartier dual of $G.$ We will let $\mathbb G_a$ denote the additive group scheme, $\mathbb G_m$ denote the multiplicative group scheme and $W$ denote the group scheme underlying $p$-typical Witt vectors.

\item Typically, we write $A^*$ to denote a (co)chain complex, $\Fil^\bullet B$ or $B^\bullet$ to denote a filtered object, and $C^{[\bullet]}$ to denote a (co)simplicial object. For a filtered object $B^\bullet,$ we let $\gr^\bullet B$ denote the associated graded object.
 \end{enumerate}{}

\subsection*{Acknowledgements} I would like to thank Johannes Anschütz, Bhargav Bhatt, Vladimir Drinfeld, Luc Illusie, Dmitry Kubrak, Arthur-César Le Bras, Shizhang Li, Jacob Lurie, Keerthi Madapusi, Akhil Mathew and Peter Scholze for helpful conversation related to the content of this paper. In particular, I am grateful to Bhargav Bhatt who suggested the use of classifying stacks in \cite{Mon1}, to Vladimir Drinfeld for an invitation to speak about \cite{Mon1} at Chicago, and to Luc Illusie for many detailed comments and suggestions on a first draft of this paper. Thanks are also due to an anonymous referee for many detailed comments on a previous version of this paper.

During the preparation of this article, I was supported by the University of British Columbia, Vancouver the Institute for Advanced Study, Princeton, and a start up grant from Purdue University.

\newpage

\section{{Crystalline Dieudonn\'e theory and classifying stacks}}\label{section2}
In this section, we prove \cref{die} (see \cref{mainthm1}) and deduce \cite[Thm.~4.2.14]{bbm} due to Berthelot--Breen--Messing (see \cref{yng}). One of the key ingredients in our proof of \cref{die} is \cref{mainconst1} based on the de Rham--Witt complex. \cref{mainconst1} below extends a construction from \cite[\S~6]{Luc1} to stacks, which was originally introduced by Illusie to produce torsion in crystalline cohomology by using the de Rham--Witt complex. 

\subsection{Some constructions involving classifying stacks} In this subsection, we will discuss \cref{mainconst1}, \cref{cons22} and \cref{multicons}, which play a very important role later on. We will also discuss certain calculations involving cohomology of classifying stacks (\cref{compute} and \cref{impmu}). We start by recalling some notations and definitions in the context of crystalline cohomology.

\begin{notation}
 In this section, we fix a prime $p$ and a perfect field $k$ of characteristic $p>0.$ For a ring $R,$ we let $W(R)$ denote the ring of $p$-typical Witt vectors and $W_n(R)$ denote its $n$-truncated variant. We let $\sigma: W(k) \to W(k)$ denote the Frobenius operator on Witt vectors. We let $(k/ W(k))_{\text{Crys}}$ denote the (big) crystalline topos of $k$ (for the Zariski topology) and $(k/ W_n(k))_{\text{Crys}}$ denote its $n$-truncated variant (see \cite[\S 1.1.1]{bbm}). For any scheme $X$ over $k,$ as in {\cite[1.1.4.5]{bbm}}, one can canonically associate an object in $(k/ W(k))_{\text{Crys}}$ or $(k/ W_n(k))_{\text{Crys}}$ that we may also denote by $X$, when no confusion is likely to arise.
\end{notation}

\begin{notation}
 The association $A \mapsto W(A)$ determines a contravariant functor from the category of affine schemes over $k$ to the category of abelian groups, which is represented by an affine group scheme over $k$ that we denote by $W.$ We let $W_n$ denote the $n$-truncated variant of the group scheme $W.$ We refer to \cite[\S III]{dem} for more details on these Witt group schemes.  
\end{notation}{}

\begin{definition}\label{defsheafy}
 In this section, we will work with derived crystalline cohomology (see \cite{Bha12}). In particular, let $(k)_\mathrm{qsyn}$ denote the quasisyntomic site of $k$ (see \cite[Variant.~4.33]{BMS2}). Using \cite[Ex.~5.12]{BMS2} (and arguing derived modulo $p$), it follows that the functor that sends $(k)_\mathrm{qsyn} \ni A \mapsto R\Gamma_{\mathrm{crys}}(A)$ is a \emph{sheaf} with values in the $\infty$-category $D(W(k))^{\wedge}_p.$ Therefore, it extends to a limit preserving functor $$ R\Gamma_{\mathrm{crys}}(\cdot): \mathrm{Shv}_{\mathcal{S}}((k)_\qsyn)^{\mathrm{op}} \to D( W(k))^{\wedge}_p,$$ where $\mathrm{Shv}_{\mathcal{S}} ((k)_\qsyn)$ denotes the category of sheaves on $(k)_\qsyn$ with values in the $\infty$-category $\mathcal{S}.$ This extends the formalism of derived crystalline cohomology to (higher) stacks over $k$, which will be used in this section. For any (higher) stack $\mathcal{Y},$ we will call $ R\Gamma_{\mathrm{crys}}(\mathcal{Y})$ the derived crystalline cohomology of $\mathcal{Y}.$ We let $R\Gamma_{\mathrm{crys}}(\mathcal{Y}/W_n(k))$ denote $R\Gamma_{\mathrm{crys}}(\mathcal{Y}) \otimes_{W(k)} W_n(k).$
\end{definition}{}

\begin{remark}\label{comparederordinary}
By \cite[Thm.~3.27]{Bha12}, for an lci $k$-algebra $A,$ the derived crystalline cohomology of $A$ agrees with crystalline cohomology computed in the topos $(k/ W(k))_{\text{Crys}}$.    
\end{remark}{}

\begin{construction}\label{mainconst1}
We discuss the construction of a map 
\begin{equation}\label{korma}
 T: \varinjlim_{n,V}R\Gamma_{\mathrm{}}(\mathcal{Y}, W_n\mathcal{O}) \to \sigma_*R\Gamma_{\mathrm{crys}} (\mathcal{Y})[1]   
\end{equation}for a stack $\mathcal{Y}$ over a perfect field $k$ of characteristic $p>0.$ 
To this end, suppose that $R$ is a finitely generated polynomial algebra over $k.$ 
Note that we have an exact sequence $$0 \to W(R) \xrightarrow{V^n} W(R)\to  W_n(R) \to 0.$$ Let $W\Omega^*_R$ denote the de Rham Witt complex of $R.$ As in \cite[\S6,~6.1.1]{Luc1} we have another complex $$W\Omega^*_R(n):= W(R) \xrightarrow{F^n d} W\Omega^1_R \xrightarrow{d} W\Omega^2_R \xrightarrow{d}  \ldots.$$ There is a natural map $W\Omega^*_R \to W\Omega^*_R(n)$ induced by $V^n$ (note that $FdV= d$). Therefore, we obtain an exact sequence of complexes
\begin{equation}\label{illusiepre}
   0 \to W\Omega^*_R \to  W\Omega^*_R (n) \to W_n(R) \to 0. 
\end{equation}Taking direct limits produce an exact sequence of complexes \begin{equation}\label{illusie}
   0 \to W\Omega^*_R \to \varinjlim_{n,V} W\Omega^*_R (n) \to \varinjlim_{n,V}W_n(R) \to 0. 
\end{equation}Passing to the derived category, using the comparison of de Rham--Witt and crystalline cohomology and keeping track of the $W(k)$-module structure, we obtain the map $$\varinjlim_{n,V}W_n(R) \to \sigma_*R\Gamma_{\mathrm{crys}} ({R})[1].$$Using \cite[Example~2.13]{sanath} and left Kan extension from the category of finitely generated polynomial algebras over $k$ to all $k$-algebras, we obtain a map 
\begin{equation}\label{triviaa}
\varinjlim_{n,V}W_n(R) \to \sigma_*R\Gamma_{\mathrm{crys}} ({R})[1]
\end{equation}for any $k$-algebra $A.$
Using \cref{triviaa} and universal property of colimits, for a stack $\mathcal{Y}$ over $k$, we obtain a natural map
$$\varinjlim_{n,V} R\Gamma(\mathcal{Y}, W_n\mathcal{O}) \to \sigma_* R\Gamma_{\mathrm{crys}}(\mathcal{Y})[1].$$
This gives the desired map \cref{korma}.
\end{construction}{}

\begin{remark}\label{conj}
Let us give further explanation regarding \cref{mainconst1}, which is based on the construction of the natural map $W_n \mathcal{O}_X \to R\Gamma_{\mathrm{crys}}(X)[1].$ Note that such a map arises from a map $W_n \mathcal{O}_X \to R\Gamma_{\mathrm{crys}}(X)/p^n$ via composition with $R\Gamma_{\mathrm{crys}}(X)/p^n \to R\Gamma_{\mathrm{crys}}(X)[1].$ The desired map $W_n \mathcal{O}_X \to R\Gamma_{\mathrm{crys}}(X)/p^n$ can be thought of as simply arising from the more general conjugate filtration $\mathrm{Fil}^{\bullet}_{\mathrm{conj}}R\Gamma_{\mathrm{crys}}(X)/p^n$ whose graded pieces can be understood by the $\varphi^n$-linear higher Cartier isomorphism 
\begin{equation}\label{illusiee}
 \mathbb{L}W_n \Omega^i_X[-i] \simeq \mathrm{gr}^{i}_{\mathrm{conj}}\left(R\Gamma_{\mathrm{crys}}(X)/p^n \right).   
\end{equation}Left hand side of \cref{illusiee} comes from animation of the de Rham--Witt theory as in \cite[Rmk.~Def.~2.11]{sanath}. The filtration $\mathrm{Fil}^{\bullet}_{\mathrm{conj}}R\Gamma_{\mathrm{crys}}(X)/p^n$ is obtained from animating the canonical filtration when $X$ is smooth and using \cite[Prop.~1.4]{IR}, which gives the desired description of the graded pieces. We thank Illusie for pointing out the isomorphism \cref{illusiee}.
\end{remark}{}

We will apply the map $T$ from \cref{mainconst1}~\cref{korma} in the case $\mathcal{Y}= BG.$ To this end, let us note a few general remarks about cohomology of $BG.$ Let $(k/ W(k))_{\mathrm{Crys}}$ denote the big crystalline topos, and $(k/W_n(k))_{\mathrm{Crys}}$ denote the $n$-truncated variant. Let $\mathcal{F}$ be any object in the derived category of quasisyntomic sheaves. By Cech descent along the effective epimorphism $* \to BG,$ we obtain $$ R \Gamma(BG, \mathcal{F}) \simeq  \varprojlim_{m \in \Delta} R\Gamma (G^{[m]}, \mathcal{F}),$$ where $G^{[\bullet]}$ is the Cech nerve of $* \to BG.$ By the sheafiness property from \cref{defsheafy}, applying this to $\mathcal{F} = R\Gamma_{\mathrm{crys}}(\cdot)$ (resp. $R\Gamma_{\mathrm{crys}}(\cdot)/p^n$), we obtain via Cech descent
$$R \Gamma_{\mathrm{crys}}(BG, \mathcal{F}) \simeq  \varprojlim_{m \in \Delta} R\Gamma_{\mathrm{crys}}(G^{[m]}, \mathcal{F})$$ $$ \simeq R\mathrm{Hom}_{(k/ W(k))_{\mathrm{Crys}}} \left(\varinjlim_{m \in \Delta^{op}}\mathbb{Z}[G^{[m]}], \mathcal{O}^{\mathrm{crys}}\right).$$ Let us define $\mathbb{Z}[BG]:=\varinjlim_{m \in \Delta^{op}}\mathbb{Z}[G^{[m]}].$ This way, we get a convergent spectral sequence with $E_2$-page (see \cite[Tag 07A9]{stacks}) \begin{equation}\label{mainspec}
    E_2^{i,j}= \text{Ext}_{(k/ W(k))_{\mathrm{Crys}}}^i (H^{-j}(\mathbb{Z}[B {G}]), \mathcal{O}^{\mathrm{crys}})\implies H^{i+j}_{\w{crys}}(BG)
\end{equation} (resp. a similar spectral sequence in $(k/W_n(k))_{\mathrm{Crys}}$ converging to $H^*_{\mathrm{crys}}(BG/W_n)$).

\begin{lemma}\label{torsion}
Let $G$ be a group scheme of order $p^m.$ Then for any $i>0,$ the group $H^i_{\mathrm{crys}}(BG)$ is killed by a power of $p.$
\end{lemma}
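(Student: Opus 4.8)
The plan is to reduce to a statement about the crystalline cohomology of the affine schemes $G^{[m]}$ appearing in the Čech nerve of $* \to BG$, and to exploit the fact that $G$ is killed by $p^m$ as a group scheme, hence $G^{[m]} = G^{\times m}$ is a finite $k$-scheme whose coordinate ring is a finite-dimensional (indeed finite-length, local-Artinian product) $k$-algebra. The key input is the spectral sequence \cref{mainspec}: it suffices to show that $\mathrm{Ext}^i_{(k/W(k))_{\mathrm{Crys}}}(H^{-j}(\mathbb{Z}[BG]), \mathcal{O}^{\mathrm{crys}})$ is killed by a fixed power of $p$ for every $(i,j)$ with $i+j > 0$, since then (the $E_2$-page being concentrated in finitely many relevant degrees at each total degree, or by a boundedness argument on $BG$ of a finite group scheme) each $H^n_{\mathrm{crys}}(BG)$ for $n>0$ is built from finitely many such subquotients and is therefore also torsion.

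First I would observe that multiplication by $p^m$ on the group scheme $G$ is the zero map, hence induces the zero map on the simplicial abelian group $\mathbb{Z}[BG] = \varinjlim_{m\in\Delta^{op}}\mathbb{Z}[G^{[m]}]$ in the appropriate sense — more precisely, each $H^{-j}(\mathbb{Z}[BG])$ for $j>0$ is a quotient/subquotient of the homology of a complex whose terms involve $\mathbb{Z}[G^{\times j}]$, and the augmentation ideal–type pieces are annihilated by a power of $p$ because $G$ has $p$-power order. (Concretely, $H_*(\mathbb{Z}[BG])$ is the group homology $H_*(BG,\mathbb{Z})$ computed as a sheaf, and for a finite flat group scheme of order $p^m$ over $k$ the reduced homology sheaves in positive degrees are killed by $p^m$; this is the standard fact that $\widetilde{H}_*(BG)$ is $p^m$-torsion.) Then $\mathrm{Ext}^i(H^{-j}(\mathbb{Z}[BG]),\mathcal{O}^{\mathrm{crys}})$ is killed by the same power of $p$ whenever $j>0$, handling all contributions with $j>0$. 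For the $j=0$ row, $H^0(\mathbb{Z}[BG]) = \mathbb{Z}$ (the constant sheaf), and $\mathrm{Ext}^i_{(k/W(k))_{\mathrm{Crys}}}(\mathbb{Z},\mathcal{O}^{\mathrm{crys}}) = H^i_{\mathrm{crys}}(\mathrm{Spec}\,k) = H^i(W(k))$, which vanishes for $i>0$ (as $k$ is perfect), so this row contributes nothing in positive total degree.

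The main obstacle I anticipate is making precise and uniform the claim that the relevant Ext-sheaves are killed by a \emph{single} power of $p$: a priori the spectral sequence \cref{mainspec} could have infinitely many nonzero entries along a fixed antidiagonal, and one must ensure there is no unbounded accumulation of torsion orders. I would address this by a finiteness/boundedness argument: since $G$ has order $p^m$, the coordinate ring $\mathcal{O}(G^{\times j})$ has $k$-dimension $p^{mj}$, and crystalline cohomology of such an Artinian $k$-algebra relative to $W(k)$ is a perfect complex; moreover one can bound the cohomological amplitude of $R\Gamma_{\mathrm{crys}}(BG)$ directly. An alternative, perhaps cleaner, route is to work modulo $p^n$: show that $H^i_{\mathrm{crys}}(BG/W_n)$ stabilizes as $n$ grows (again because $G$ is $p^m$-torsion, so $\cdot p^m$ acts nilpotently in a controlled way on the relevant pieces), and deduce that $H^i_{\mathrm{crys}}(BG) = \varprojlim_n H^i_{\mathrm{crys}}(BG/W_n)$ is annihilated by a fixed power of $p$. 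Either way, the conceptual content is: positive-degree (reduced) homology of $BG$ is uniformly $p$-power torsion, and $\mathrm{Spec}\,k$ has no higher crystalline cohomology, so all higher crystalline cohomology of $BG$ inherits the torsion.
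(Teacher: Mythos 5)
Your argument is essentially the paper's own proof: the paper deduces the lemma directly from the spectral sequence \cref{mainspec} together with the fact that for a $p^m$-torsion abelian group $T$ the group homology $H_i(T,\mathbb{Z})=H_i(\mathbb{Z}[BT])$ is $p^m$-torsion in positive degrees, the $j=0$ row contributing only $H^{>0}_{\mathrm{crys}}(\mathrm{Spec}\,k)=0$. Your extra discussion of uniform boundedness of the torsion along an antidiagonal is a refinement the paper leaves implicit, but the core mechanism is identical.
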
{}
\begin{proof}
 This is a consequence of the above (convergent, first quadrant) $E_2$-spectral sequence and the fact that an $n$-torsion ordinary abelian group $T,$ the group homology $H_i (T, \mathbb Z) = H_i (\mathbb Z [BT])$ is $n$-torsion.
\end{proof}{}
Note that, by definition, for any stack $\mathcal{Y}$, we have an exact sequence
\begin{equation}\label{univcoeff}
    0 \to H^i_{\mathrm{crys}} (\mathcal{Y})/p^n \to H^i_{\mathrm{crys}}(\mathcal{Y}/W_n) \to H^{i+1}_{\mathrm{crys}}(\mathcal{Y})[p^n] \to 0.\end{equation}

\begin{lemma}\label{vanish}
Let $G$ be a group scheme of order $p^m.$ Then $H^1_{\mathrm{crys}}(BG) = 0$    
\end{lemma}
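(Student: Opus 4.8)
The plan is to compute $H^1_{\mathrm{crys}}(BG)$ using the spectral sequence \cref{mainspec}, reducing the vanishing to an algebraic statement about group (co)homology together with the known structure of crystalline cohomology of the point. First I would observe that, since $H^0_{\mathrm{crys}}(G^{[m]})$ in low cosimplicial degrees assembles into $H^0(\mathbb{Z}[BG]) = \mathbb{Z}$ (the classifying stack is connected) and $H^{-1}(\mathbb{Z}[BG]) = H_1(G,\mathbb{Z})$ is the abelianization-type group homology of $G$ viewed as the constant sheaf, the relevant low-degree entries of the $E_2$-page are $E_2^{0,0} = \mathrm{Hom}_{(k/W(k))_{\mathrm{Crys}}}(\mathbb{Z}, \mathcal{O}^{\mathrm{crys}}) = W(k)$, $E_2^{1,0} = \mathrm{Ext}^1_{(k/W(k))_{\mathrm{Crys}}}(\mathbb{Z}, \mathcal{O}^{\mathrm{crys}})$, and $E_2^{0,1} = \mathrm{Hom}_{(k/W(k))_{\mathrm{Crys}}}(H_1(G,\mathbb{Z}), \mathcal{O}^{\mathrm{crys}})$. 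So $H^1_{\mathrm{crys}}(BG)$ receives contributions only from these two, and I must show both vanish.

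For $E_2^{1,0}$, the point is that $\mathrm{Ext}^i_{(k/W(k))_{\mathrm{Crys}}}(\mathbb{Z}, \mathcal{O}^{\mathrm{crys}}) = H^i_{\mathrm{crys}}(\mathrm{Spec}\,k) = W(k)$ concentrated in degree $0$; in particular $\mathrm{Ext}^1 = 0$. For $E_2^{0,1}$, I would use that $H_1(G,\mathbb{Z})$, as an abelian sheaf, is a finite group scheme (or more precisely the constant sheaf on the abstract group $H_1$ in the relevant site), and $\mathrm{Hom}(T, \mathcal{O}^{\mathrm{crys}})$ vanishes for $T$ a torsion group since $\mathcal{O}^{\mathrm{crys}}$ is a sheaf of $W(k)$-algebras which is $p$-torsion-free on the affine objects that compute the cohomology — more carefully, $\mathcal{O}^{\mathrm{crys}}$ evaluated on a PD-thickening is $p$-torsion-free, so any homomorphism from a $p$-power-torsion group must be zero. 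Hence $E_2^{0,1} = 0$, and combining, $H^1_{\mathrm{crys}}(BG) = 0$. Alternatively — and perhaps more cleanly — one can argue via \cref{torsion} and \cref{univcoeff}: $H^1_{\mathrm{crys}}(BG)$ is $p$-power torsion, so it suffices to show $H^1_{\mathrm{crys}}(BG/W_n) = 0$ for all $n$ (using the exact sequence $0 \to H^1_{\mathrm{crys}}(BG)/p^n \to H^1_{\mathrm{crys}}(BG/W_n) \to H^2_{\mathrm{crys}}(BG)[p^n] \to 0$, vanishing of $H^1_{\mathrm{crys}}(BG/W_n)$ forces $H^1_{\mathrm{crys}}(BG)/p^n = 0$, hence $H^1_{\mathrm{crys}}(BG) = 0$ as it is $p$-complete and torsion). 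Working modulo $p$, this reduces to de Rham cohomology: $H^1_{\mathrm{dR}}(BG) = 0$, which follows from the conjugate spectral sequence whose $E_2$-terms in total degree $1$ are $H^1(BG, \mathcal{O})$ and $H^0(BG, \Omega^1)$ — both of which vanish because $G$ is finite (so $BG$ has no nonzero global $1$-forms and its structure-sheaf cohomology in degree $1$ is $\mathrm{Hom}$ out of $H_1$, again torsion, hence zero over $k$).

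The main obstacle I anticipate is making precise the identification of the low-degree terms $H^0, H^{-1}$ of the complex $\mathbb{Z}[BG]$ with group homology and ensuring these are interpreted correctly as sheaves in the big crystalline topos — one needs that the relevant $\mathrm{Hom}$ and $\mathrm{Ext}$ computations can be carried out after restricting to affine PD-thickenings where $\mathcal{O}^{\mathrm{crys}}$ is flat (hence $p$-torsion-free) over $W(k)$, so that torsion source groups contribute nothing. Once that bookkeeping is set up, the vanishing is immediate. I would present the proof via the modulo-$p$ / de Rham reduction route, since it avoids subtleties about $\mathrm{Ext}$ in the crystalline topos and only uses that finite group schemes over a field have vanishing $H^0(\Omega^1)$ and $H^1(\mathcal{O})$, which is elementary.
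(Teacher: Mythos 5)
Both of your routes have problems, and the one you say you would actually present is false. The de Rham reduction fails at its last step: it is simply not true that $H^1_{\mathrm{dR}}(BG)=0$ for finite group schemes in characteristic $p$. Indeed, \cref{compute} computes $H^1_{\mathrm{dR}}(BH)=k$ for $H=\mathbb{Z}/p,\mu_p,\alpha_p$, and \cref{derham} identifies $H^1_{\mathrm{crys}}(BG/W_n)$ for $n\gg 0$ with $H^2_{\mathrm{crys}}(BG)\simeq\sigma^*M(G)$, i.e.\ with the whole Dieudonn\'e module; so your intermediate claim that $H^1_{\mathrm{crys}}(BG/W_n)=0$ contradicts the main result of the section, and in \cref{univcoeff} at $i=1$ the term $H^2_{\mathrm{crys}}(BG)[p^n]$ is nonzero, so the vanishing you want cannot hold. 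The underlying error is treating $G$ as an abstract torsion group: here $H^1(BG,\mathcal{O})=\mathrm{Hom}_{k\text{-gp}}(G,\mathbb{G}_a)$, which is nonzero already for $\alpha_p$ and even for the constant group $\mathbb{Z}/p$ (any $a\in k$ is killed by $p$), and the degree-one forms term involves $\omega_G\neq 0$ for $\mu_p,\alpha_p$.

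Your first route, via \cref{mainspec}, is closer to something workable ($E_2^{1,0}=\mathrm{Ext}^1(\mathbb{Z},\mathcal{O}^{\mathrm{crys}})=H^1_{\mathrm{crys}}(\mathrm{Spec}\,k)=0$ is fine), but the key vanishing $E_2^{0,1}=\mathrm{Hom}_{(k/W(k))_{\mathrm{Crys}}}(G,\mathcal{O}^{\mathrm{crys}})=0$ is not justified by what you wrote: $H^{-1}(\mathbb{Z}[BG])$ is the group scheme $G$ itself, not a constant sheaf on an abstract group, and $\mathcal{O}^{\mathrm{crys}}$ is certainly not $p$-torsion-free objectwise --- the trivial PD-thickening $(U,U)$ is an object of the site, where the sections are killed by $p$, and coverings inside the crystalline site cannot refine such objects away. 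Even granting that this $\mathrm{Hom}$ does vanish, it is a genuinely nontrivial statement (it is essentially part of the Berthelot--Breen--Messing analysis, or can be obtained on the quasisyntomic site from the fact that $\mathbb{A}_{\mathrm{crys}}$ of a quasiregular semiperfect ring is $p$-torsion-free), so invoking it without proof would also run against the point of giving an argument independent of that theory. The paper's proof is much simpler and avoids all of this: apply \cref{univcoeff} with $i=0$. Since $BG$ is connected, $H^0_{\mathrm{crys}}(BG)=W(k)$ and $H^0_{\mathrm{crys}}(BG/W_n)=W_n(k)$, so the injection $H^0_{\mathrm{crys}}(BG)/p^n\to H^0_{\mathrm{crys}}(BG/W_n)$ is an isomorphism and the sequence forces $H^1_{\mathrm{crys}}(BG)[p^n]=0$; combining with \cref{torsion} and taking $n\gg 0$ gives $H^1_{\mathrm{crys}}(BG)=0$.
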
{}

\begin{proof}
 We choose a large enough $n$ such that $H^1_{\mathrm{crys}}(BG)[p^n] = H^1_{\mathrm{crys}}(BG)$ and apply \cref{univcoeff} for $i=0.$   
\end{proof}{}

\begin{lemma}\label{derham}
  Let $G$ be a finite group scheme of $p$-power order. Then for all $n \gg 0,$ the map $H^1_{\mathrm{crys}} (BG/W_n) \to H^2_{\mathrm{crys}}(BG)$ is an isomorphism.
\end{lemma}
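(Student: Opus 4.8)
The plan is to read the statement off from the universal coefficient sequence \cref{univcoeff} together with the two preceding lemmas, so that essentially no new input is needed. First I would specialize \cref{univcoeff} to the stack $\mathcal{Y} = BG$ and the degree $i = 1$, which yields a short exact sequence
\[
0 \to H^1_{\mathrm{crys}}(BG)/p^n \to H^1_{\mathrm{crys}}(BG/W_n) \to H^2_{\mathrm{crys}}(BG)[p^n] \to 0.
\]
By \cref{vanish} the group $H^1_{\mathrm{crys}}(BG)$ vanishes, so the left-hand term is zero and the map $H^1_{\mathrm{crys}}(BG/W_n) \to H^2_{\mathrm{crys}}(BG)[p^n]$ is an isomorphism for every $n$.

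Next I would invoke \cref{torsion}: since $G$ has order a power of $p$, the group $H^2_{\mathrm{crys}}(BG)$ is killed by some fixed power $p^N$. Hence for all $n \ge N$ one has $H^2_{\mathrm{crys}}(BG)[p^n] = H^2_{\mathrm{crys}}(BG)$, and composing the isomorphism above with this identification produces an isomorphism $H^1_{\mathrm{crys}}(BG/W_n) \xrightarrow{\sim} H^2_{\mathrm{crys}}(BG)$ for all $n \gg 0$, which is the assertion.

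The only point I would spell out explicitly — and the closest thing to an obstacle — is the bookkeeping that the composite map just constructed agrees with the natural map $H^1_{\mathrm{crys}}(BG/W_n) \to H^2_{\mathrm{crys}}(BG)$ referred to in the statement, i.e. the boundary map attached to the cofiber sequence $R\Gamma_{\mathrm{crys}}(BG) \xrightarrow{p^n} R\Gamma_{\mathrm{crys}}(BG) \to R\Gamma_{\mathrm{crys}}(BG)/p^n$ (equivalently, the composition $R\Gamma_{\mathrm{crys}}(BG)/p^n \to R\Gamma_{\mathrm{crys}}(BG)[1]$ as in \cref{conj}). This is immediate from the way \cref{univcoeff} is derived from that cofiber sequence: the surjection onto $H^{i+1}_{\mathrm{crys}}(\mathcal{Y})[p^n]$ is precisely the connecting homomorphism of the associated long exact sequence, and the inclusion $H^2_{\mathrm{crys}}(BG)[p^n] \hookrightarrow H^2_{\mathrm{crys}}(BG)$ merely records its target. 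So the entire content of the lemma is carried by \cref{vanish} and \cref{torsion}, and there is no genuine difficulty beyond assembling these two facts.
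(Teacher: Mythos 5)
Your proof is correct and is essentially identical to the paper's: specialize the universal coefficient sequence \cref{univcoeff} to $\mathcal{Y}=BG$, $i=1$, kill the left term via \cref{vanish}, and use \cref{torsion} to identify $H^2_{\mathrm{crys}}(BG)[p^n]$ with $H^2_{\mathrm{crys}}(BG)$ for $n\gg 0$. The compatibility remark about the boundary map is a fine (if implicit in the paper) piece of bookkeeping and adds nothing problematic.
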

\begin{proof}
    We can choose $n$ large enough such that $H^2_{\mathrm{crys}}(BG)$ is killed by $p^n$ and apply \cref{univcoeff} for $i=1$ along with the previous lemma.   
\end{proof}{}

\begin{lemma}\label{basechange} Let $k'/k$ be an extension of perfect fields. Then $$R\Gamma_{\mathrm{crys}} (BG_{k'}) \simeq R\Gamma_{\mathrm{crys}}(BG) \otimes_{W(k)} W(k').$$
\end{lemma}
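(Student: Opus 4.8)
The plan is to reduce the statement about the stack $BG$ to the corresponding base-change statement for affine schemes, where it is standard. First I would recall that, via Čech descent along the effective epimorphism $\ast \to BG$, one has
\[
R\Gamma_{\mathrm{crys}}(BG) \simeq \varprojlim_{m \in \Delta} R\Gamma_{\mathrm{crys}}(G^{[m]}),
\]
and similarly $R\Gamma_{\mathrm{crys}}(BG_{k'}) \simeq \varprojlim_{m \in \Delta} R\Gamma_{\mathrm{crys}}((G_{k'})^{[m]})$, where $G^{[\bullet]}$ is the Čech nerve of $\ast \to BG$. Since $(G_{k'})^{[m]} = (G^{[m]})_{k'}$ is just the base change of the finite $k$-scheme $G^{[m]} = G^m$ along $k \to k'$, the key point becomes: for each fixed $m$, the natural map
\[
R\Gamma_{\mathrm{crys}}(G^{[m]}) \otimes_{W(k)} W(k') \longrightarrow R\Gamma_{\mathrm{crys}}((G^{[m]})_{k'})
\]
is an equivalence, compatibly in $m$. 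This is the base-change property of (derived) crystalline cohomology along the flat — indeed étale, in each finite level, or at worst ind-(étale)/filtered-colimit — morphism $W(k) \to W(k')$ of $p$-complete rings; concretely it follows from the behaviour of derived de Rham cohomology modulo $p$ under base change together with derived $p$-completeness, e.g.\ by reducing to the polynomial generators used to define derived crystalline cohomology and invoking \cite{Bha12}.

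Having this levelwise equivalence, I would then need to commute the base change $-\otimes_{W(k)} W(k')$ past the totalization $\varprojlim_{m \in \Delta}$. This is where one has to be slightly careful: tensoring does not commute with arbitrary limits. Here, however, the relevant inputs are $p$-complete and the cohomology of each $BG$ is, by \cref{torsion} and \cref{vanish}, concentrated in nonnegative degrees with each positive cohomology group killed by a fixed power of $p$; moreover by the spectral sequence \cref{mainspec} the objects involved have bounded, finitely generated cohomology in each degree (the group homology $H_\ast(\mathbb{Z}[BG])$ being $p^m$-torsion and finitely generated level by level). Thus $-\,\widehat{\otimes}_{W(k)}\,W(k')$ — the $p$-completed base change — does commute with the totalization in this bounded-torsion setting, and since everything in sight is already derived $p$-complete, the completed tensor product agrees with the ordinary one after completion; combining the levelwise equivalence above with this commutation yields the claim. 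Alternatively, and perhaps more cleanly, one can avoid the totalization issue entirely by working with $\mathbb{Z}[BG] = \varinjlim_{m \in \Delta^{\mathrm{op}}} \mathbb{Z}[G^{[m]}]$ and expressing $R\Gamma_{\mathrm{crys}}(BG)$ as $R\mathrm{Hom}$ out of this complex: base change along $W(k)\to W(k')$ then passes through the colimit defining $\mathbb{Z}[BG]$ and through the $R\mathrm{Hom}$ because $\mathbb{Z}[BG]$ has perfect (finitely generated, bounded) homology.

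The main obstacle I expect is the interchange of base change with the cosimplicial totalization: one genuinely needs the finiteness/bounded-torsion input coming from \cref{torsion} (together with \cref{mainspec}) to justify it, rather than a formal flatness argument, since $W(k) \to W(k')$ need not be of finite type. Once that is in hand the rest is bookkeeping — checking the map is the natural one, and that all identifications are compatible with the $W(k)$-, resp.\ $W(k')$-, module structures, which is exactly the sort of compatibility already used in \cref{mainconst1}.
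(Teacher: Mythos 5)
Your overall skeleton — Čech descent along $* \to BG$, levelwise base change for the affine terms $G^{[m]}$, then commuting the base change past the totalization — is the same as the paper's, but the justification you give for the key interchange step is where the proposal genuinely breaks. Properties of the cohomology of the \emph{totalization} ($H^{>0}_{\mathrm{crys}}(BG)$ being $p$-power torsion, via \cref{torsion} and \cref{vanish}) do not by themselves let you pull $-\otimes_{W(k)}W(k')$ (or its $p$-completion) out of $\varprojlim_{m\in\Delta}$: what is needed is control at the level of the cosimplicial terms, namely that each $R\Gamma_{\mathrm{crys}}(G^{[m]})$ is uniformly coconnective, together with the fact that $W(k')$ is flat over the discrete valuation ring $W(k)$ (resp.\ $k'$ is free over $k$ after reduction mod $p$), hence a filtered colimit of finite free modules, so that tensoring with it commutes with totalizations of uniformly bounded-below objects. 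Your explicit rejection of ``a formal flatness argument'' is exactly backwards: finite-typeness of $W(k)\to W(k')$ is irrelevant, and the flatness/filtered-colimit argument is precisely what the paper uses — it first reduces modulo $p$ by derived $p$-completeness, passes to de Rham cohomology via the comparison, and then invokes that totalizations of bounded-below complexes commute with filtered colimits. Moreover, the finiteness you lean on is not available at this point: \cref{torsion} and the spectral sequence \cref{mainspec} give torsion statements but no finite generation, and the finite-length statement for $H^2_{\mathrm{crys}}(BG)$ is proved \emph{later} in the paper using \cref{basechange} itself, so invoking it here would be circular.

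The proposed ``cleaner'' alternative is also incorrect: $\mathbb{Z}[BG]$ does not have bounded, finitely generated homology — already for $G=\mathbb{Z}/p$ the homology of $B\mathbb{Z}/p$ is nonzero in infinitely many degrees — and rewriting $R\Gamma_{\mathrm{crys}}(BG)$ as $R\mathrm{Hom}(\mathbb{Z}[BG],\mathcal{O}^{\mathrm{crys}})$ merely converts the geometric realization into a limit on the $R\mathrm{Hom}$ side, so the same interchange problem reappears and cannot be waved away by an appeal to perfectness. The levelwise base-change input and the descent strategy are fine; the argument becomes correct once the finiteness/torsion justification is replaced by the coconnectivity-plus-flatness (filtered colimit) argument, most easily after first reducing modulo $p$ and passing to de Rham cohomology as in the paper's proof.
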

\begin{proof}
 By derived $p$-completeness, one can reduce modulo $p.$ By the de Rham--crystalline comparison, it would be enough to prove that $R\Gamma_{\mathrm{dR}} (BG_{k'}) \simeq R\Gamma_{\mathrm{dR}}(BG) \otimes_{k} k'.$ Since $R\Gamma_{\mathrm{dR}}(G_{k'}) \simeq R\Gamma_{\mathrm{dR}}(G) \otimes_{k} k'$ the result follows from descent along $* \to BG$ and using the fact that totalization of bounded below cochain complexes commute with filtered colimits (see, e.g., \cite[Lem.~2.2.27]{MR4969359}).
\end{proof}{}

\begin{lemma}\label{rightexact}Let $0 \to G' \to G \to G'' \to 0$ be an exact sequence of finite group schemes over $k.$ Then we have an exact sequence $0 \to H^2_{\mathrm{crys}} (BG'') \to H^2_{\mathrm{crys}} (BG) \to H^2_{\mathrm{crys}} (BG')$ of $W(k)$-modules.
    
\end{lemma}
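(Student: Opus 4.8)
I want to produce the exact sequence $0 \to H^2_{\mathrm{crys}}(BG'') \to H^2_{\mathrm{crys}}(BG) \to H^2_{\mathrm{crys}}(BG')$ from a suitable long exact sequence in crystalline cohomology. The natural starting point is the fiber sequence of stacks $BG' \to BG \to BG''$ arising from the short exact sequence $0 \to G' \to G \to G'' \to 0$: since $G''$ acts on $BG'$ (or, more simply, since $BG \to BG''$ is a $G'$-gerbe, i.e.\ fppf-locally on $BG''$ it looks like $BG'' \times BG'$), we get $BG \simeq BG' \times^{?} $ — more precisely $BG$ is the total space of a fibration over $BG''$ with fiber $BG'$. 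Crystalline cohomology (extended to stacks as in the earlier definition, via $R\Gamma_{\mathrm{crys}}(\cdot)\colon \mathrm{Shv}_{\mathcal S}((k)_{\qsyn}) \to \Fil(D(W(k))^\wedge_p)$) sends this to a cosimplicial/totalization picture, so I would apply descent along $BG \to BG''$: the Čech nerve of this map has terms $BG \times_{BG''} \cdots \times_{BG''} BG$, and using the gerbe structure these identify with $BG'' \times (G')^{[\bullet-1]} \times \cdots$ in a way that lets one compute $R\Gamma_{\mathrm{crys}}(BG)$ as $R\Gamma_{\mathrm{crys}}(BG'')$-linear cohomology of $BG'$ along the fibers.

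**The key steps.** First, I would set up the descent spectral sequence (or Leray–Serre type spectral sequence) for $BG \to BG''$, of the form $E_2^{i,j} = H^i_{\mathrm{crys}}(BG'', \mathcal H^j_{\mathrm{crys}}(BG'/-)) \Rightarrow H^{i+j}_{\mathrm{crys}}(BG)$, where $\mathcal H^j$ denotes the crystalline cohomology of the fiber $BG'$ as a sheaf on the appropriate site over $BG''$. Second, I would use the already-established low-degree facts: $H^0_{\mathrm{crys}}(B\Gamma) = W(k)$ for any finite group scheme $\Gamma$ (structure sheaf, connected base), and $H^1_{\mathrm{crys}}(B\Gamma) = 0$ from \cref{vanish}. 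This kills the relevant off-diagonal terms: $E_2^{i,1} = H^i_{\mathrm{crys}}(BG'', \mathcal H^1) = 0$ since the coefficient sheaf $\mathcal H^1_{\mathrm{crys}}(BG'/-)$ vanishes (this needs the fiberwise vanishing to hold after arbitrary base change, which is fine because \cref{basechange} and the general machinery give base-change compatibility, at least over perfect fields, and more generally the vanishing is insensitive to the base by the same universal-coefficients argument). Third, the five-term exact sequence of the spectral sequence in total degree $\le 2$ then reads $0 \to H^2_{\mathrm{crys}}(BG'')=E_2^{2,0} \to H^2_{\mathrm{crys}}(BG) \to E_2^{0,2} = H^0_{\mathrm{crys}}(BG'', \mathcal H^2_{\mathrm{crys}}(BG'/-)) \to H^3_{\mathrm{crys}}(BG'')$; and $H^0_{\mathrm{crys}}(BG'', \mathcal H^2_{\mathrm{crys}}(BG'/-))$ receives an injection from, or maps to, $H^2_{\mathrm{crys}}(BG')$ — I would argue this global-sections term injects into $H^2_{\mathrm{crys}}(BG'_{k})$ or simply that the composite $H^2_{\mathrm{crys}}(BG) \to H^2_{\mathrm{crys}}(BG')$ (restriction along $BG' \to BG$) is the relevant edge map, giving exactness of $0 \to H^2_{\mathrm{crys}}(BG'') \to H^2_{\mathrm{crys}}(BG) \to H^2_{\mathrm{crys}}(BG')$.

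**Alternative / cross-check.** An alternative that avoids the fibration and stays closer to the $E_2$-spectral sequence \cref{mainspec} already in the paper: apply $R\mathrm{Hom}_{(k/W(k))_{\mathrm{Crys}}}(-, \mathcal O^{\mathrm{crys}})$ to a suitable resolution relating $\mathbb Z[BG']$, $\mathbb Z[BG]$, $\mathbb Z[BG'']$ (coming from the Lyndon–Hochschild–Serre picture in group homology for the exact sequence of group schemes), use that $H^0(\mathbb Z[B\Gamma]) = \mathbb Z$, $H^1(\mathbb Z[B\Gamma]) = \Gamma^{\mathrm{ab}}$-type term, and that $\mathrm{Hom}_{\mathrm{Crys}}$ and $\mathcal Ext^1_{\mathrm{Crys}}$ applied to these produce the desired three-term sequence after taking into account $H^1_{\mathrm{crys}}(B\Gamma) = 0$. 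I would present whichever is cleanest; the descent/fibration argument is the one I expect to write up.

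**Main obstacle.** The delicate point is controlling the coefficient sheaf $\mathcal H^1_{\mathrm{crys}}(BG'/-)$ over the base $BG''$ and confirming it vanishes after the base changes implicit in the descent spectral sequence — i.e.\ that \cref{vanish} is ``uniform in the base'' in the precise sense needed (the base here is not a perfect field but a quasisyntomic $k$-algebra, or a stack over one). This should follow from the universal-coefficients sequence \cref{univcoeff} together with the torsion bound \cref{torsion} applied fiberwise and the fact that these arguments are entirely functorial and compatible with base change along $k$-algebras (derived crystalline cohomology commutes with base change in the relevant sense), but making this rigorous is where the real work lies; the rest is formal manipulation of the spectral sequence.
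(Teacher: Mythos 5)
Your main route (the Leray--Serre/descent spectral sequence for $BG\to BG''$ with fiber $BG'$) is genuinely different from the paper's argument, and while its skeleton is salvageable, as written it has two real gaps. First, the input you need is not \cref{vanish} itself but the vanishing of the sheafified degree-one fiberwise cohomology over arbitrary quasisyntomic $k$-algebras (it suffices to check it on qrsp algebras), and your proposed justification does not deliver this: \cref{basechange} only treats extensions of perfect fields, and the proof of \cref{vanish} genuinely uses that the base is a perfect field (it rests on $H^0_{\mathrm{crys}}(BG)=W(k)$ together with \cref{torsion} and \cref{univcoeff}). Over a qrsp base $A$ the correct argument is different: identify the degree-one term with $\mathrm{Hom}_{(A)_{\mathrm{qsyn}}}(G',\mathcal{O}^{\mathrm{crys}})$ via the analogue of \cref{mainspec} and use that $\mathbb{A}_{\mathrm{crys}}(A)$ is $p$-torsion free while $G'$ is killed by a power of $p$ (the characteristic-$p$ phenomenon the paper records in \cref{pathology}); this is available but is not what you invoke. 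Second, exactness at the middle does not come for free from the five-term sequence: the third term there is $E_2^{0,2}=H^0(BG'',\mathcal{H}^2)$, the global sections of a \emph{sheafified} fiber cohomology, not $H^2_{\mathrm{crys}}(BG')$, and your text wavers on how to bridge the two. The bridge exists --- $H^0(BG'',\mathcal{H}^2)$ injects into the value of the sheaf on the cover $\mathrm{Spec}\,k\to BG''$, and by naturality of edge maps the composite with the edge map factors the restriction $H^2_{\mathrm{crys}}(BG)\to H^2_{\mathrm{crys}}(BG')$ through the sheafification map, so a class killed in $H^2_{\mathrm{crys}}(BG')$ has zero edge image and hence lies in the image of $H^2_{\mathrm{crys}}(BG'')$ --- but this is a step you must actually write, and it also requires $E_2^{i,1}=0$ for all $i$ (not just the five-term range), i.e. the full vanishing of the degree-one coefficient sheaf. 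A smaller point in the same vein: the fibers of $BG\to BG''$ over a test object are $G'$-gerbes, only fppf-locally of the form $BG'$, so the identification of $\mathcal{H}^j$ with the sheafification of $A\mapsto H^j_{\mathrm{crys}}(BG'_A)$ should be phrased locally.

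For comparison, the paper's proof is a two-line reduction: choose $n$ with $p^n$ killing all three $H^2$'s, use \cref{derham} to rewrite $H^2_{\mathrm{crys}}(BH)\simeq H^1_{\mathrm{crys}}(BH/W_n)=\mathrm{Hom}_{\mathrm{QSyn}}(H,R\Gamma_{\mathrm{crys}}(\cdot)/p^n)$, and then left-exactness of $\mathrm{Hom}(-,\mathcal{F})$ applied to $0\to G'\to G\to G''\to 0$ gives the three-term exact sequence immediately. Your ``alternative/cross-check'' (resolving via $\mathbb{Z}[B(-)]$ and applying $R\mathrm{Hom}(-,\mathcal{O}^{\mathrm{crys}})$, using $H^1_{\mathrm{crys}}=0$) is much closer in spirit to this and would be the cleaner write-up; the fibration route buys nothing here and imports exactly the base-change and local-coefficient issues you flagged.
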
{}

\begin{proof}
We pick an $n$ large enough so that $H^2_{\w{crys}}(BG), H^2_{\w{crys}}(BG'), H^2_{\w{crys}}(BG'')$ are all killed by $p^n.$ Then the desired exactness of the maps follow from \cref{derham} and the fact that $H^1_{\mathrm{crys}}(BH/W_n) \simeq \mathrm{Hom}_{(k/W_n(k))_{\mathrm{Crys}}}(H, \mathcal{O}^{\mathrm{crys}}).$ To see the last isomorphism, one may use the analogue of the spectral sequence \cref{mainspec} in $(k/W_n(k))_{\mathrm{Crys}}$ and the isomorphism $H^{-1}(\mathbb{Z}[BH]) \simeq H$. The latter isomorphism is a consequence of the fact that for every ordinary abelian group $A$, the group cohomology $H_1 (A, \mathbb{Z})$ is naturally isomorphic to $A$.
\end{proof}{}

\begin{lemma}
Let $G$ be a finite group scheme of $p$-power rank over a perfect field $k.$ Then $H^2_{\mathrm{crys}}(BG)$ is a finite length $W(k)$-module.    
\end{lemma}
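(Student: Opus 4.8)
The plan is to reduce to the case of a perfect base field and then combine finiteness of the relevant cohomology groups with the structure theory already established. Concretely, $H^2_{\mathrm{crys}}(BG)$ is a $W(k)$-module, and by \cref{torsion} it is killed by a power of $p$; so it suffices to show it is finitely generated over $W(k)$, equivalently finitely generated over $W_n(k)$ for $n \gg 0$ (using \cref{derham}, which identifies $H^2_{\mathrm{crys}}(BG)$ with $H^1_{\mathrm{crys}}(BG/W_n)$ for $n$ large). First I would use \cref{derham} together with the identification in the proof of \cref{rightexact}, namely $H^1_{\mathrm{crys}}(BG/W_n) = \mathrm{Ext}^0_{\mathrm{QSyn}}(G, R\Gamma_{\mathrm{crys}}(\cdot)/p^n)$, to get a concrete handle on the group: it is a group of homomorphisms (in an appropriate derived/sheafy sense) out of the finite group scheme $G$.

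Next I would devise a dévissage using \cref{rightexact}. Over a perfect field, a finite commutative $p$-power rank group scheme $G$ admits a filtration whose successive quotients are among $\alpha_p$, $\mathbb{Z}/p$, and $\mu_p$ (the simple objects in the category of finite commutative group schemes of order $p$ over $k$, possibly after a finite extension of $k$). By \cref{basechange}, the formation of $H^2_{\mathrm{crys}}(BG)$ commutes with perfect base field extension and $W(k') \otimes_{W(k)} -$ is faithfully flat, so finiteness of length can be checked after such an extension; hence I may assume the simple subquotients are exactly $\alpha_p$, $\mathbb{Z}/p$, $\mu_p$. Using the left-exact sequence of \cref{rightexact}, a short induction on the length of $G$ reduces the finiteness claim to the three cases $G = \alpha_p$, $\mathbb{Z}/p$, $\mu_p$: if $H^2_{\mathrm{crys}}(BG')$ and $H^2_{\mathrm{crys}}(BG'')$ are finite length, then $H^2_{\mathrm{crys}}(BG)$ embeds into an extension-type situation bounded by these, so it is finite length too. (One should be a little careful: \cref{rightexact} only gives left-exactness, so strictly I get that $H^2_{\mathrm{crys}}(BG)$ sits in $0 \to H^2_{\mathrm{crys}}(BG'') \to H^2_{\mathrm{crys}}(BG) \to H^2_{\mathrm{crys}}(BG')$, which is exactly enough: a submodule extension-wise sandwiched between two finite-length modules is finite length, since length is additive and sub/quotient-monotone.)

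Finally, for the three base cases I would compute $H^2_{\mathrm{crys}}(BG)$ directly, or at least bound it. For $G = \mu_p = \mathbb{G}_m[p]$, one can use the Kummer sequence $0 \to \mu_p \to \mathbb{G}_m \xrightarrow{p} \mathbb{G}_m \to 0$ and known computations of crystalline (equivalently, via the de Rham comparison, de Rham) cohomology of $B\mathbb{G}_m$; for $G = \mathbb{Z}/p$, one uses the exact sequence $0 \to \mathbb{Z}/p \to \mathbb{G}_a \xrightarrow{F - \mathrm{id}} \mathbb{G}_a \to 0$ (Artin--Schreier) and the computation of $R\Gamma_{\mathrm{crys}}(B\mathbb{G}_a)$; for $G = \alpha_p$, one uses $0 \to \alpha_p \to \mathbb{G}_a \xrightarrow{F} \mathbb{G}_a \to 0$. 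In each case the ambient group $B\mathbb{G}_a$ or $B\mathbb{G}_m$ has crystalline cohomology that is a polynomial-type $W(k)$-algebra in one generator in degree $2$, so the long exact sequences exhibit $H^2_{\mathrm{crys}}(BG)$ as a subquotient of finitely many copies of $W(k)$ and $W(k)/p$-type modules, hence finite length (indeed one expects length exactly $1$ in each case, matching the rank-one Dieudonn\'e module, though for this lemma I only need finiteness). The main obstacle I anticipate is handling $B\mathbb{G}_a$ carefully in the crystalline (integral, $p$-complete) setting rather than just in characteristic $p$: the de Rham cohomology $R\Gamma_{\mathrm{dR}}(B\mathbb{G}_a)$ over $k$ is well understood, but lifting this to an integral statement about $R\Gamma_{\mathrm{crys}}(B\mathbb{G}_a)$ over $W(k)$ — and controlling possible divided-power or torsion phenomena in the cohomology of the classifying stack of a non-finite (indeed non-flat-over-$\mathbb{Z}_p$) group — requires the de Rham--Witt machinery of \cref{mainconst1}, or alternatively a reduction-mod-$p$ argument combined with \cref{univcoeff} to climb back up. I would organize the write-up so that the dévissage via \cref{rightexact,basechange} is the main content and the base-case computations are either cited from the de Rham computations of classifying stacks or deferred to the subsequent sections where $H^2_{\mathrm{crys}}(BG)$ is pinned down exactly.
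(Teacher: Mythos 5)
Your overall skeleton is the same as the paper's: reduce to an algebraically closed field by \cref{basechange}, then induct on the length of $G$ using the left-exact sequence of \cref{rightexact} (your remark that left-exactness plus monotonicity of length suffices is correct), reducing everything to the simple group schemes $\mathbb{Z}/p$, $\mu_p$, $\alpha_p$. Where you diverge is in the treatment of the base cases, and there your proposed argument has a genuine gap.

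The gap is the claim that $B\mathbb{G}_a$ has crystalline (or de Rham) cohomology that is ``a polynomial-type $W(k)$-algebra in one generator in degree $2$,'' so that the Artin--Schreier and Frobenius sequences exhibit $H^2_{\mathrm{crys}}(B\mathbb{Z}/p)$ and $H^2_{\mathrm{crys}}(B\alpha_p)$ as subquotients of finitely many copies of $W(k)$. This is false in characteristic $p$, and not only because of integral divided-power subtleties: already $H^*(B\mathbb{G}_a,\mathcal{O})$ is the algebraic group cohomology $H^*(\mathbb{G}_a,k)$, which is infinite-dimensional in every positive degree (for $p>2$ it is $\Lambda(\lambda_0,\lambda_1,\dots)\otimes k[x_0,x_1,\dots]$ with the $\lambda_i$ in degree $1$ and the $x_i$ in degree $2$), so $H^2_{\mathrm{dR}}(B\mathbb{G}_a)$ is infinite-dimensional; integrally the situation is no better, since even $H^1_{\mathrm{crys}}(\mathbb{A}^1_k)$ has unbounded torsion. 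Hence reduction mod $p$ or de Rham--Witt input does not rescue the step: sandwiching $H^2_{\mathrm{crys}}(BH)$ inside the cohomology of $B\mathbb{G}_a$ cannot yield finiteness, and one would instead have to compute the effect of $F-1$ (resp.\ $F$) on this infinite-dimensional cohomology, a substantially harder task. (The Kummer route for $\mu_p$ is fine, since $B\mathbb{G}_m$ really does have rank-one cohomology in each even degree; compare \cref{impmu}.) The paper handles the base cases differently: using \cref{univcoeff}, \cref{vanish} and the $p$-torsionness from \cref{torsion} it reduces to showing $H^1_{\mathrm{dR}}(BH)=k$, which is proved in \cref{compute} by group cohomology for $\mathbb{Z}/p$, by splitting of the conjugate filtration (a lift with Frobenius) for $\mu_p$, and by a deformation-theoretic obstruction argument for $\alpha_p$ (non-liftability of $\alpha_p$ to $\mathbb{Z}/p^2$ played against the liftability of $B\mathbb{G}_a$, using only the map $B\alpha_p\to B\mathbb{G}_a$, not the size of its cohomology). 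You would need to replace your $B\mathbb{G}_a$ step by an argument of this kind.
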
{}

\begin{proof}
 By \cref{basechange}, one may assume that $k$ is algebraically closed. In that case, one can argue by induction using \cref{rightexact}, which reduces us to the statement for the simple group schemes $\mathbb{Z}/p, \alpha_p$ and $\mu_p$ (see \cite[Prop.~2, p.~39]{dem}). This follows from \cref{compute} below.
\end{proof}{}

\begin{proposition}[{cf.~\cite{totaro},~\cite{ben}}]\label{compute}
 $H^2_{\mathrm{crys}}(BH)=k$ when $H$ is either $\mathbb{Z}/p, \mu_p$ or $\alpha_p.$   
\end{proposition}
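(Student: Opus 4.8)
The plan is to compute $H^2_{\mathrm{crys}}(BH)$ directly for each of the three simple group schemes $\mathbb{Z}/p$, $\mu_p$, $\alpha_p$, by reducing (via the de Rham--crystalline comparison and derived $p$-completeness) to a computation of $R\Gamma_{\mathrm{dR}}(BH)$ and its mod-$p$ reduction. Since $H$ is killed by $p$, \cref{torsion} tells us $H^i_{\mathrm{crys}}(BH)$ is $p$-power torsion for $i>0$, and \cref{univcoeff} for $i=1$ together with \cref{vanish} shows $H^2_{\mathrm{crys}}(BH) = H^2_{\mathrm{crys}}(BH)[p^n]$ embeds into $H^1_{\mathrm{crys}}(BH/W_n)$; in fact for $n$ large enough \cref{derham} gives $H^1_{\mathrm{crys}}(BH/W_n)\simeq H^2_{\mathrm{crys}}(BH)$. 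So it suffices to identify $H^1_{\mathrm{crys}}(BH/W_n)$ for large $n$, or even more simply to pin down $H^2$ after reduction mod $p$; I would aim to show $H^2_{\mathrm{crys}}(BH)$ is killed by $p$ and one-dimensional over $k$.

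The key step is the explicit de Rham computation. For $H=\mu_p$: $B\mu_p$ fits in the Kummer sequence, $R\Gamma_{\mathrm{dR}}(B\mu_p)$ can be computed from the cosimplicial algebra $k[\mu_p^{[\bullet]}]$, i.e.\ from the de Rham cohomology of the affine group scheme $\mu_p = \mathrm{Spec}\,k[x]/(x^p-1)$ and its powers; alternatively one uses the fact that $\mu_p = \ker(\mathbb{G}_m \xrightarrow{F} \mathbb{G}_m)$ and the known $R\Gamma_{\mathrm{dR}}(B\mathbb{G}_m) = k[c]$ with $c$ in degree $2$, so that $R\Gamma_{\mathrm{dR}}(B\mu_p)$ is the cofiber of multiplication by (the class coming from) $F$ on $R\Gamma_{\mathrm{dR}}(B\mathbb{G}_m)$, yielding $H^2_{\mathrm{dR}}(B\mu_p) = k$. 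For $H=\alpha_p = \ker(\mathbb{G}_a \xrightarrow{F} \mathbb{G}_a)$ one argues similarly using $R\Gamma_{\mathrm{dR}}(B\mathbb{G}_a)$; and for $H=\mathbb{Z}/p$ one uses that $B(\mathbb{Z}/p)$ has de Rham cohomology computed by group cohomology of $\mathbb{Z}/p$ with coefficients in the de Rham complex of a point, again giving a one-dimensional $H^2$. In each case the torsion bookkeeping of \cref{univcoeff} upgrades the mod-$p$ statement to the integral (i.e.\ $W(k)$-linear) statement $H^2_{\mathrm{crys}}(BH)\simeq k$.

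I expect the main obstacle to be the $\alpha_p$ and $\mathbb{Z}/p$ cases, where—unlike $\mu_p$, whose $B\mu_p$ is close to a truncation of $B\mathbb{G}_m$—one must handle potential subtleties in the de Rham cohomology of $B\mathbb{G}_a$ in characteristic $p$ (which is not simply a polynomial ring, and where divided-power / Frobenius phenomena enter) and correctly track the conjugate filtration / Cartier isomorphism contributions in degree $2$. Concretely, the hard part is verifying that the relevant $E_2$-terms of the spectral sequence \cref{mainspec} (equivalently the conjugate-filtration graded pieces of $R\Gamma_{\mathrm{crys}}(BH)/p$) contribute exactly a single copy of $k$ in total degree $2$ and nothing spurious; this is presumably where the reference to \cite{ben} (cf.) is used, and I would lean on the explicit Hopf-algebra cohomology computations there, or on the known structure of $R\Gamma_{\mathrm{dR}}(BH)$ for these height-one group schemes.
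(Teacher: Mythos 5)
Your overall reduction (killing $H^1_{\mathrm{crys}}$, using \cref{torsion}, \cref{univcoeff}, \cref{derham}) and your treatment of $\mathbb{Z}/p$ via group cohomology agree with the paper, but the case $H=\alpha_p$ is a genuine gap: you flag it as the hard step and then defer to \cite{ben} or to ``the known structure of $R\Gamma_{\mathrm{dR}}(BH)$,'' which is exactly the nontrivial content the proposition asks you to prove. Concretely, what has to be shown is that the conjugate-filtration differential $k=H^0(B\alpha_p,\mathbb{L}_{B\alpha_p})\to H^2(B\alpha_p,\mathcal{O})$ is injective, so that $H^1_{\mathrm{dR}}(B\alpha_p)$ is only the one copy of $k$ coming from $\mathrm{Hom}(\alpha_p,\mathbb{G}_a)$. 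The paper does this by deformation theory: the obstruction class in $\mathrm{Ext}^2(\mathbb{L}_{B\alpha_p},\mathcal{O})=H^2(B\alpha_p,\mathcal{O})\oplus H^3(B\alpha_p,\mathcal{O})$ to lifting $B\alpha_p$ to $\mathbb{Z}/p^2$ is nonzero (because $\alpha_p$ does not lift and $B\alpha_p$ is a smooth stack, a lift of $B\alpha_p$ would yield a lift of $\alpha_p$), while its $H^3$-component vanishes by functoriality of obstruction classes along $B\alpha_p\to B\mathbb{G}_a$, since $B\mathbb{G}_a$ lifts; hence the $H^2$-component, i.e.\ the differential, is nonzero. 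Your alternative route through $\alpha_p=\ker(F:\mathbb{G}_a\to\mathbb{G}_a)$ would require both the (large, divided-power-type) de Rham cohomology of $B\mathbb{G}_a$ in characteristic $p$ with its Frobenius action and an Eilenberg--Moore/base-change identification for the fiber square defining $B\alpha_p$; neither is established. The same unproved base-change step underlies your $\mu_p$ computation, whereas the paper handles $\mu_p$ self-containedly by noting that $\mu_p$ lifts to $\mathbb{Z}/p^2$ together with a Frobenius lift, so the conjugate filtration on $R\Gamma_{\mathrm{dR}}(B\mu_p)$ splits and the claim follows from $\mathbb{L}_{B\mu_p}=\mathcal{O}\oplus\mathcal{O}[-1]$ and $H^{>0}(B\mu_p,\mathcal{O})=0$.

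A second, smaller problem is the endgame. Your computations are aimed at $H^2_{\mathrm{dR}}(BH)=k$, but that alone does not determine $H^2_{\mathrm{crys}}(BH)$: the degree-$2$ instance of \cref{univcoeff} (with $n=1$) reads $0\to H^2_{\mathrm{crys}}(BH)/p\to H^2_{\mathrm{dR}}(BH)\to H^3_{\mathrm{crys}}(BH)[p]\to 0$, so without controlling $H^3_{\mathrm{crys}}(BH)[p]$ the one-dimensional space could a priori sit entirely in the right-hand term. The correct bookkeeping, which the paper uses, is in degree $1$: since $H^1_{\mathrm{crys}}(BH/W_n)\simeq\mathrm{Hom}(H,R\Gamma(\cdot)/p^n)$ is killed by $p$ (as $H$ is), so is $H^2_{\mathrm{crys}}(BH)$ by \cref{derham}, and then \cref{univcoeff} with $i=1$, $n=1$ together with $H^1_{\mathrm{crys}}(BH)=0$ (\cref{vanish}) gives $H^2_{\mathrm{crys}}(BH)\simeq H^1_{\mathrm{dR}}(BH)$. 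So the quantity to compute in each case is $H^1_{\mathrm{dR}}(BH)$, not $H^2_{\mathrm{dR}}(BH)$; your $\mathbb{Z}/p$ and $\mu_p$ arguments can be rerouted to give this, but as written the deduction of the integral statement is not complete.
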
{}
\begin{proof}
By base change, it is enough to argue when $k = \mathbb F_p.$ First note that for $n \gg 0,$ we have isomorphisms $\mathrm{Hom}_{(k/W_n(k))_{\mathrm{Crys}}}(H, \mathcal{O}^{\mathrm{crys}}) \simeq H^1_{\w{crys}}(BH/W_n) \simeq H^2_{\mathrm{crys}}(BH).$ Since $H$ is $p$-torsion, it follows by looking at the left term above that $H^2_{\w{crys}}(BH)$ is $p$-torsion. Therefore (by \cref{univcoeff} and \cref{vanish}), we have an isomorphism 
\begin{equation}
 H^1_\mathrm{dR}(BH) \simeq H^2_{\mathrm{crys}}(BH).   
\end{equation}{}
The proof below will freely use the calculation of cotangent complex $\mathbb{L}_{BH/k}$, for which we refer to \cite[\S~VII, 3.1.2]{Ill72} and \cite[Thm.~4.1]{ben}.

\begin{enumerate}
    \item $H= \mathbb{Z}/p:$ Since $\mathbb{Z}/p$ is an \'etale group scheme, it follows that $\mathbb{L}_{B\mathbb{Z}/p/k} =0.$ Therefore, (e.g., by using the conjugate filtration) it follows that $R\Gamma_{\mathrm{dR}}(B\mathbb{Z}/p) \simeq R\Gamma (B\mathbb{Z}/p, \mathcal{O}).$ Using descent along $* \to B \mathbb{Z}/p$, one directly sees that $H^*(B \mathbb{Z}/p, \mathcal{O}) \simeq H^*(\mathbb Z/p, k)$, where the latter denotes group cohomology. Thus, $H^1_{\mathrm{dR}}(B\mathbb{Z}/p) \simeq H^1 (\mathbb{Z}/p, k) \simeq k.$

\item $H= \mu_p:$ Chern class of the $p$-torsion line bundle corresponding to the map $B\mu_p \to B\mathbb{G}_m$ gives a natural map $$c: k \to H^2_{\mathrm{crys}}(B\mu_p).$$ By the proof of \cite[Prop.~11.1]{totaro} (also see the proof of \cite[Thm.~4.6 (5)]{ben}), the map $c$ is an isomorphism.

\item $H= \alpha_p:$ see \cite[Prop.~4.12]{ben}.
\end{enumerate}This ends the proof.\end{proof}{}

\begin{proposition}\label{impmu}
 Let $k$ be a perfect field. Then as $W(k)$-modules, we have a canonical isomorphism $H^2_{\mathrm{crys}}(B\mu_{p^m}) \simeq \sigma^* (W(k)/p^m)$ (where the latter denotes the $W(k)$-module structure via extension of scalars).   
\end{proposition}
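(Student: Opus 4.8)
The plan is to reduce the computation to the crystalline cohomology of $B\mathbb{G}_m$ by means of the Kummer sequence $0 \to \mu_{p^m} \to \mathbb{G}_m \xrightarrow{[p^m]} \mathbb{G}_m \to 0$. Since $\mu_{p^m} = \ker([p^m])$ and $[p^m]$ is an fppf epimorphism, there is a fiber sequence of stacks $B\mu_{p^m} \to B\mathbb{G}_m \xrightarrow{B[p^m]} B\mathbb{G}_m$; equivalently, $B\mu_{p^m}$ is canonically the $\mathbb{G}_m$-torsor over $B\mathbb{G}_m$ of class $p^m[\mathcal{L}] \in \mathrm{Pic}(B\mathbb{G}_m)=\mathbb{Z}$ (where $\mathcal{L}$ is the universal line bundle), i.e.\ the complement of the zero section in the total space of $\mathcal{L}^{\otimes p^m}$.

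First I would pin down $R\Gamma_{\mathrm{crys}}(B\mathbb{G}_m)$ (cf.\ \cite{ben}). Working modulo $p$: since $\mathbb{G}_m$ is linearly reductive, $R\Gamma(B\mathbb{G}_m,\mathcal{O})=k$ concentrated in degree $0$, and since $\mathbb{L}_{B\mathbb{G}_m/k}\simeq\mathcal{O}[-1]$ one has $\wedge^i\mathbb{L}_{B\mathbb{G}_m/k}[-i]\simeq\mathcal{O}[-2i]$, so the conjugate spectral sequence for $R\Gamma_{\mathrm{dR}}(B\mathbb{G}_m)$ has at most one nonzero term in each total degree and degenerates; hence $H^{2i}_{\mathrm{dR}}(B\mathbb{G}_m)=k$ and $H^{\mathrm{odd}}_{\mathrm{dR}}(B\mathbb{G}_m)=0$. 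Lifting by derived $p$-completeness via the universal coefficient sequence \cref{univcoeff} then gives $H^{2i}_{\mathrm{crys}}(B\mathbb{G}_m)=W(k)$ and $H^{\mathrm{odd}}_{\mathrm{crys}}(B\mathbb{G}_m)=0$, with the crystalline first Chern class $c:=c_1(\mathcal{L})$ a generator of $H^2_{\mathrm{crys}}(B\mathbb{G}_m)$.

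Next I would feed this into the Gysin sequence for the $\mathbb{G}_m$-torsor $B\mu_{p^m}\to B\mathbb{G}_m$, which for derived crystalline cohomology follows from $\mathbb{A}^1$-invariance together with the projective bundle formula applied to $\mathbb{P}(\mathcal{L}^{\otimes p^m}\oplus\mathcal{O})$:
\[
\cdots \to H^{i-2}_{\mathrm{crys}}(B\mathbb{G}_m) \xrightarrow{\ \cup\, c_1(\mathcal{L}^{\otimes p^m})\ } H^i_{\mathrm{crys}}(B\mathbb{G}_m) \to H^i_{\mathrm{crys}}(B\mu_{p^m}) \to H^{i-1}_{\mathrm{crys}}(B\mathbb{G}_m) \to \cdots .
\]
Since $c_1$ is additive on line bundles, $c_1(\mathcal{L}^{\otimes p^m})=p^m c$, so for $i=2$ the first arrow is multiplication by $p^m$ on $W(k)=H^0_{\mathrm{crys}}(B\mathbb{G}_m)$ and $H^1_{\mathrm{crys}}(B\mathbb{G}_m)=0$; therefore $H^2_{\mathrm{crys}}(B\mu_{p^m}) \simeq \mathrm{coker}(W(k)\xrightarrow{p^m}W(k)) = W(k)/p^m$. (As a consistency check, the same sequence yields $H^{\mathrm{odd}}_{\mathrm{crys}}(B\mu_{p^m})=0$ and $H^{2i}_{\mathrm{crys}}(B\mu_{p^m})=W(k)/p^m$ for $i\ge 1$, compatible with \cref{vanish} and \cref{torsion}.) It remains to upgrade this abstract isomorphism to the canonical identification with $\sigma^*(W(k)/p^m)$ in the statement; this is bookkeeping, since $\sigma$ is an automorphism of $W(k)$ fixing $p$, so $\sigma^*(W(k)/p^m)=W(k)/p^m$ as $W(k)$-modules — the $\sigma^*$ is written to record that the natural generator comes from the Breuil–Kisin/Tate-twisted class $c\in H^2_{\mathrm{crys}}(B\mathbb{G}_m)$, and to match the classical $M(\mu_{p^m})=M(\mathbb{Z}/p^m)^*\simeq W(k)/p^m$; in other words this is the $G=\mu_{p^m}$ instance of \cref{die}.

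As a cross-check, and an alternative staying closer to the methods already used for \cref{compute}, one may argue by dévissage: \cref{rightexact} applied to $0\to\mu_{p^{m-1}}\to\mu_{p^m}\xrightarrow{x\mapsto x^{p^{m-1}}}\mu_p\to 0$ gives $0\to H^2_{\mathrm{crys}}(B\mu_p)\to H^2_{\mathrm{crys}}(B\mu_{p^m})\to H^2_{\mathrm{crys}}(B\mu_{p^{m-1}})$, and one concludes by induction from $H^2_{\mathrm{crys}}(B\mu_p)=k$ (\cref{compute}) after checking the last arrow is surjective and the extension is non-split — both transparent from the explicit lift of $\mu_{p^m}$ to $\mathbb{Z}/p^{m+1}$ (with Frobenius lifting to the zero map), which splits the conjugate filtration on $R\Gamma_{\mathrm{crys}}(B\mu_{p^m})$ exactly as for $\mu_p$. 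The main obstacle I expect is in the middle steps: carrying out the computation of $R\Gamma_{\mathrm{crys}}(B\mathbb{G}_m)$ and setting up the Gysin sequence (i.e.\ $\mathbb{A}^1$-invariance and the projective bundle formula) carefully within the formalism of derived crystalline cohomology of stacks, and making sure the resulting isomorphism is the canonical one carrying exactly the single Frobenius twist rather than an arbitrary one.
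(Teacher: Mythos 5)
Your main route breaks down at the Gysin step. Crystalline cohomology (including the derived version used in this paper) is \emph{not} $\mathbb{A}^1$-invariant: already $H^1_{\mathrm{crys}}(\mathbb{A}^1_k/W(k))$ is an infinitely generated torsion module (the classes $x^{p^r-1}dx$ are $p$-power torsion but nonzero), so the homotopy-invariance input you invoke is false, and the standard derivations of a Gysin sequence for the complement of a zero section — either localization with supports plus purity, or homotopy invariance plus the projective bundle formula as in oriented $\mathbb{A}^1$-homotopy theory — are not available for $R\Gamma_{\mathrm{crys}}$ of stacks in this framework. The projective bundle formula itself is fine, but it only computes $\mathbb{P}(\mathcal{L}^{\otimes p^m}\oplus\mathcal{O})$, not the open complement $B\mu_{p^m}$ inside it. One might hope the sequence can be salvaged in this weighted situation (the junk classes on $\mathbb{A}^1$ have nonzero $\mathbb{G}_m$-weight, so $[\mathbb{A}^1/\mathbb{G}_m]$ is better behaved than $\mathbb{A}^1$), but that requires a genuine argument which you neither give nor cite; as written, the fiber sequence $R\Gamma_{\mathrm{crys}}(B\mathbb{G}_m)[-2]\xrightarrow{p^m c}R\Gamma_{\mathrm{crys}}(B\mathbb{G}_m)\to R\Gamma_{\mathrm{crys}}(B\mu_{p^m})$ is unproved, and it is the entire content of the computation.

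Your fallback dévissage is incomplete at exactly the decisive point. From \cref{rightexact} and \cref{compute} one gets (granting surjectivity) that $H^2_{\mathrm{crys}}(B\mu_{p^m})$ has length at most $m$; the two things you defer as ``transparent'' — surjectivity of restriction and non-splitness, i.e.\ that $p^{m-1}c_1\neq 0$ so the module is cyclic of order exactly $p^m$ — are the whole point, and the splitting of the conjugate filtration is a mod-$p$ statement which by universal coefficients \cref{univcoeff} cannot distinguish $W(k)/p^m$ from, say, $W(k)/p^{m-1}\oplus k$. (Also, the relevant lift is of $\mu_{p^m}$ to $W_2(k)$ with Frobenius lifted by the $p$-power map, not ``to $\mathbb{Z}/p^{m+1}$''.) The paper handles precisely this integral issue differently: since $\mathcal{L}^{\otimes p^m}$ is trivial on $B\mu_{p^m}$, the Chern class is canonically $p^m$-torsion, so together with $H^1_{\mathrm{crys}}(B\mu_{p^m})=0$ one gets a map $\sigma^*W(k)\oplus\sigma^*(W(k)/p^m)[-2]\to R\Gamma_{\mathrm{crys}}(B\mu_{p^m})$, and by derived $p$-completeness it suffices to show its cofiber lies in degrees $\geq 3$ after reduction mod $p$, which is where the conjugate-filtration splitting, $\mathbb{L}_{B\mu_{p^m}}=\mathcal{O}\oplus\mathcal{O}[-1]$ and $H^{>0}(B\mu_{p^m},\mathcal{O})=0$ enter; the exact order $p^m$ is built into the source of the map rather than extracted from a Gysin or dévissage argument. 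Your computation of $R\Gamma_{\mathrm{crys}}(B\mathbb{G}_m)$ and your remark that the twist $\sigma^*$ is harmless for the cyclic module $W(k)/p^m$ are fine; the gaps are the unjustified Gysin sequence and the unjustified integral cyclicity in the fallback.
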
{}

\begin{proof}


Chern class of the line bundle corresponding to the map $B\mu_{p^m} \to B\mathbb{G}_m$ defines a canonical $p^m$-torsion class on $H^2_{\w{crys}}(B \mu_{p^m}).$ Since $H^1_{\mathrm{crys}}(B \mu_{p^m})=0,$ we obtain a canonical map
$$\sigma^* W(k) \oplus \sigma^* W(k)/p^m[-2] \to R\Gamma_{\w{crys}}(B\mu_{p^m}) $$ in the derived category of $W(k)$-modules. Let $C$ denote the cofiber. It will be enough to prove that $C \in D^{\ge 3}(W(k)).$ Since $C$ is derived $p$-complete it is enough to prove the same for $C/p.$ By base change, we may now assume that $k= \mathbb{F}_p$. It is therefore enough to prove that the induced map 
\begin{equation}\label{inducedmap}
  \mathbb{F}_p \oplus \mathbb{F}_p[-1] \oplus \mathbb{F}_p[-2] \to R\Gamma_{\w{dR}}(B \mu_{p^m}) 
\end{equation} has cofiber in $D^{\ge 3}(\mathbb{F}_p)$.

To this end, we will first use the conjugate filtration to compute the target of the above map. Since $\mu_{p^m}$ lifts to $W_2(k)$ as a group scheme along with a lift of the Frobenius (which is just multiplication by $p$), the conjugate filtration splits (see \cite[Lem.~4.2]{ben} for more details, which applies here since $\mu_{p^m}$ is a diagonalizable group scheme), and one has 
$$R\Gamma_{\mathrm{dR}}(B\mu_{p^m}) \simeq \oplus_{i \ge 0} R\Gamma (B\mu_{p^m}, \wedge^i \mathbb{L}_{B\mu_{p^m}/\mathbb{F}_p}[-i]).$$Now, one computes using the coLie complex (\cite[\S~VII, 3.1.2]{Ill72}) that $\mathbb{L}_{B\mu_{p^m}/\mathbb{F}_p} \simeq \mathcal{O} \oplus \mathcal{O}[-1]$, so that we have $\wedge^i \mathbb{L}_{B\mu_{p^m}/\mathbb{F}_p} \simeq \mathcal{O}[-i+1] \oplus \mathcal{O}[-i].$ Further, we have $H^{>0} (B \mu_{p^m}, \mathcal{O})=0:$ to see the vanishing, note that by Cartier duality, quasi-coherent sheaves on $B
\mu_p$ identify with $\mathbb{Z}/p^m\mathbb{Z}$-graded vector spaces, and thus the global section functor (which corresponds to taking the summand of weight $0$) is exact. Combining these facts, one concludes that $H^i_{\mathrm{dR}}(B\mu_{p^m})$ is $1$-dimensional as an $\mathbb{F}_p$-vector space for all $i$.

Now, to show that \cref{inducedmap} has cofiber in $D^{\ge 3}(\mathbb{F}_p)$, it suffices to show that the induced map on $i$-th cohomology is an isomorphism for $0 \le i \le 2.$ Since both the source and target of these maps are $1$-dimensional, it suffices to show that the maps are nonzero. In order to show the latter, by composing with the natural map $R\Gamma_{\mathrm{dR}}(B\mu_{p^m})\to R\Gamma_{\mathrm{dR}}(B\mu_{p})$ induced by the natural inclusion $\mu_p \to \mu_{p^m}$ and using functoriality, we may assume that $m=1.$ However, in that case, when $i=2$, by construction, we recover the composite map $\mathbb{F}_p \xrightarrow{c} H^2_{\mathrm{crys}}(B\mu_p) \simeq H^2_{\mathrm{dR}}(B\mu_p), $ which is an isomorphism by the proof of \cref{compute} (2). When $i=1$, we recover the composite map $\mathbb{F}_p \xrightarrow{c} H^2_{\mathrm{crys}}(B\mu_p)[p] \simeq H^1_{\mathrm{dR}}(B\mu_p)$, which is again an isomorphism since $c$ is. The case when $i \le 0$ follows directly. This finishes the proof.
\end{proof}{}

\begin{construction}\label{cons22}

Now we can use the map $T$ from \cref{mainconst1} to obtain a map $$C: \varinjlim_{n}H^1 (BG,  W_n) \to \sigma_*H^2_{\mathrm{crys}} (BG).$$ Note that $\varinjlim_{n}H^1(BG, W_n) = \varinjlim_{n}\mathrm{Hom}(G,  W_n).$ Therefore, when $G$ is unipotent, by \cref{defofdieu}, we get a natural map \begin{equation}\label{unimap}
    C^{\mathrm{uni}}: \sigma^*M(G) \to H^2_{\mathrm{crys}}(BG).
\end{equation} Suppose that $G$ is a local group scheme over $k$ of order $p^k$  whose Cartier dual $G^\vee$ is \'etale. In \cref{multicons}, we will produce a natural map
\begin{equation}\label{multimap}
   C^{\mathrm{mult}}: H^2_{\w{crys}}(BG) \to \sigma^* M(G). 
\end{equation}
\end{construction}{}
To this end, in the remarks below, we recall certain relevant constructions.

\begin{remark}[Duality]
    Let $\mathrm{Mod}_{W(k)}^{\mathrm{fl}}$ denote the category of finite length $W(k)$-modules. The functor that sends $M \mapsto M^*:=\mathrm{Hom}_{W(k)}(M, W(k)[\frac{1}{p}]/ W(k))$ induces an antiequivalence of $\mathrm{Mod}_{W(k)}^{\mathrm{fl}}.$ To see this, note that there is a canonical evaluation map $M \to (M^*)^*$, which is an isomorphism, as one directly checks by reducing to cyclic modules. This duality also extends to the set up of Dieudonn\'e modules whose underlying $W(k)$-module is finite length (see \cite[p.~66]{dem}).
\end{remark}{}

\begin{remark}[Galois descent]
    Let $\overline{k}$ be an algebraic closure of $k.$ Let $(\mathrm{Mod}_{W(\overline{k})}^{\mathrm{fl}})^{\mathrm{Gal}(\overline{k}/k)}$ denote the category of finite length $W(\overline{k})$-modules equipped with a (semilinear) action of $\mathrm{Gal}(\overline{k}/k).$ By Galois descent, we obtain an equivalence of categories $(\mathrm{Mod}_{W(\overline{k})}^{\mathrm{fl}})^{\mathrm{Gal}(\overline{k}/k)} \simeq \mathrm{Mod}_{W(k)}^{\mathrm{fl}}$ induced by the functors that send $M \mapsto M \otimes_{W(k)} W(\overline{k}) \in (\mathrm{Mod}_{W(\overline{k})}^{\mathrm{fl}})^{\mathrm{Gal}(\overline{k}/k)}$, and $N \to N^{\mathrm{Gal}(\overline{k}/k)}.$ To check this, one needs to prove that the natural map $N^{\mathrm{Gal}(\overline{k}/k)} \otimes_{W(k)} W(\overline{k}) \to N$ is an isomorphism. Suppose first that $N$ is $p$-torsion. Then $N$ corresponds to a vector bundle in the \'etale site of $\mathrm{Spec}\, k,$ which must be trivial by descent; i.e., $N \simeq M \otimes_k \overline{k}$ for some finite dimensional $k$-vector space $N,$ which implies that the desired map is an isomorphism. The case of general $N$ follows by considering the (finite) $p$-adic filtration on $N$ and using that $H^{>0}(\mathrm{Gal}(\overline{k}/k), \overline{k})\simeq H^{>0}_{\mathrm{\acute{e}t}} (\mathrm{Spec}\,k, \mathcal{O})=0.$

\end{remark}

\begin{construction}\label{multicons}
Let $G$ be such that $G^\vee$ is \'etale. Note that for $n \gg 0,$ we have $$M(G_{\overline{k}}^\vee) \simeq \mathrm{Hom}(G_{\overline{k}}^\vee, \mathbb{Z}/p^n)\otimes_{\mathbb{Z}_p}W(\overline{k}) \simeq \mathrm{Hom}(\mu_{p^n}, G_{\overline{k}})\otimes_{\mathbb{Z}_p} W(\overline{k}),$$ where the last step follows from Cartier duality. By functoriality of crystalline cohomology and \cref{impmu}, we obtain a map $$\mathrm{Hom}(\mu_{p^n}, G_{\overline{k}})\otimes_{\mathbb{Z}_p} W(\overline{k}) \to \mathrm{Hom}_{W(\overline{k})}^{\mathrm{Gal}(\overline{k}/k)} (\sigma_* H^2_{\w{crys}} (BG_{\overline{k}}), W(\overline{k})/p^n);$$ the latter denotes maps taken in $(\mathrm{Mod}_{W(\overline{k})}^{\mathrm{fl}})^{\mathrm{Gal}(\overline{k}/k)}.$ Taking Galois fixed points, we obtain a map $M(G^\vee) \to (\sigma_* H^2_{\w{crys}} (BG))^*.$ Applying duality now produces a map $$\sigma_* H^2_{\w{crys}}(BG) \to M(G^\vee)^* \simeq M(G).$$ This constructs $C^{\mathrm{mult}}$ as desired in \cref{multimap}.   
\end{construction}

\subsection{Recovering the result of Berthelot--Breen--Messing}
Having performed the above constructions, in this subsection, we will now use them to prove one of our main theorems (\cref{mainthm1}), and use it to deduce a result due to Berthelot--Breen--Messing (\cref{yng}).

\begin{lemma}\label{isommup}
 The map $C^{\mathrm{mult}}$ in \cref{multicons} is an isomorphism for $G= \mu_p.$   
\end{lemma}

\begin{proof}
To check isomorphism, we can assume that $k$ is algebraically closed, where it follows from tracing throgh the maps in \cref{multicons} as we explain. Note that the map $\mathrm{Hom}(\mu_p, \mu_p) \to \mathrm{Hom}_{W(k)}(H^2_{\mathrm{crys}}(B\mu_p), H^2_{\mathrm{crys}}(B\mu_p))$ induced from functoriality of crystalline cohomology sends identity to identity, and is nonzero in particular. This gives an isomorphism 
$$\mathbb{Z}/p\mathbb{Z} \otimes_{\mathbb{Z}_p} W(k) \simeq  \mathrm{Hom}(\mu_p, \mu_p) \otimes_{\mathbb{Z}_p} W(k) \to \mathrm{Hom}_{W(k)}(H^2_{\mathrm{crys}}(B\mu_p), H^2_{\mathrm{crys}}(B\mu_p)) $$ where the latter (nonzero) map is seen to be isomorphism by \cref{compute}, since its source and target are both $1$-dimensional over $k$. Since Cartier dual of $\mu_p$ is the group scheme $\mathbb{Z}/p$, by \cref{multicons}, the map
$$M(\mathbb{Z}/p) \to (\sigma_*H^2_{\mathrm{crys}}(B\mu_p))^*$$ is an isomorphism. Since $C^{\mathrm{mult}}$, by construction, is dual to the latter map, it is an isomorphism as desired.
\end{proof}{}

\begin{lemma}\label{wlaf}
 Let $G$ be unipotent. The map $C^{\mathrm{uni}}$ is injective.   
\end{lemma}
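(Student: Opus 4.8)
The plan is a dévissage to the two simple unipotent group schemes $\mathbb{Z}/p$ and $\alpha_p$, followed by a direct computation with the conjugate filtration. First I would reduce to the case $k=\bar k$: injectivity of $C^{\mathrm{uni}}$ may be checked after the faithfully flat base change $W(k)\to W(\bar k)$, and both $\sigma^*M(-)$ and $H^2_{\mathrm{crys}}(B(-))$ commute with this base change compatibly with $C^{\mathrm{uni}}$ --- for the latter by \cref{basechange}, and for the former because $M(-)=\varinjlim_{n,V}\mathrm{Hom}(-,W_n)$ and both $\mathrm{Hom}(-,W_n)$ (over perfect fields) and filtered colimits commute with base change.

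Next the dévissage. Any unipotent $G\in\mathrm{FFG}(\bar k)$ admits a filtration with simple subquotients, each isomorphic to $\mathbb{Z}/p$ or $\alpha_p$. Both $G\mapsto\sigma^*M(G)$ and $G\mapsto H^2_{\mathrm{crys}}(BG)$ are contravariant and send a short exact sequence $0\to G'\to G\to G''\to 0$ of unipotent group schemes to a left-exact sequence of $W(k)$-modules --- for $M$ because $\mathrm{Hom}(-,W_n)$ is left exact and filtered colimits are exact, for $H^2_{\mathrm{crys}}(B(-))$ by \cref{rightexact} --- and $C^{\mathrm{uni}}$ is a natural transformation between these two functors. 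A four-lemma diagram chase then shows that if $C^{\mathrm{uni}}$ is injective for $G'$ and $G''$, it is injective for $G$; inducting on the length of the filtration reduces us to $G\in\{\mathbb{Z}/p,\alpha_p\}$.

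For these two cases, $H^2_{\mathrm{crys}}(BG)=k$ by \cref{compute} while $\sigma^*M(G)$ is a one-dimensional $k$-vector space, so it suffices to exhibit one class on which $C^{\mathrm{uni}}$ is nonzero. I would take the class of a nonzero homomorphism $\phi\colon G\hookrightarrow\mathbb{G}_a=W_1$; this is nonzero in $M(G)=\varinjlim_{n,V}\mathrm{Hom}(G,W_n)$ because the first transition map $\mathrm{Hom}(G,W_1)\to\mathrm{Hom}(G,W_2)$, post-composition with the closed immersion $V\colon W_1\to W_2$, is injective. By the description of the map $T$ in \cref{conj}, $C^{\mathrm{uni}}$ sends this class to the image of $\phi\in H^1(BG,\mathcal{O})$ under the composite $H^1(BG,\mathcal{O})\to H^1_{\mathrm{dR}}(BG)\xrightarrow{\ \sim\ }H^2_{\mathrm{crys}}(BG)$, where the first arrow is $W_1\mathcal{O}=\mathcal{O}=\mathrm{Fil}^0_{\mathrm{conj}}R\Gamma_{\mathrm{dR}}(BG)\hookrightarrow R\Gamma_{\mathrm{dR}}(BG)$ followed by the Bockstein, and the isomorphism is \cref{derham} at level $n=1$ (valid since $H^2_{\mathrm{crys}}(BG)$ is killed by $p$ and $H^1_{\mathrm{crys}}(BG)=0$ by \cref{vanish}). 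Since $H^1(BG,\mathcal{O})=\mathrm{Hom}(G,\mathbb{G}_a)=k\neq 0$, I would conclude by proving that $\mathrm{Fil}^0_{\mathrm{conj}}H^1_{\mathrm{dR}}(BG)=H^1(BG,\mathcal{O})$ injects into $H^1_{\mathrm{dR}}(BG)$: in the fiber sequence $R\Gamma(BG,\mathcal{O})\to R\Gamma_{\mathrm{dR}}(BG)\to Q$, the object $Q$ carries an exhaustive filtration with $\mathrm{gr}^i\simeq R\Gamma(BG,\wedge^i\mathbb{L}^{(1)}_{BG})[-i]$ for $i\ge 1$; since $\mathbb{L}_{BG}$ is concentrated in cohomological degrees $[0,1]$ (it is $\mathcal{O}\oplus\mathcal{O}[-1]$ for $\alpha_p$ as in the proof of \cref{compute}, and $0$ for $\mathbb{Z}/p$), each $\wedge^i\mathbb{L}^{(1)}_{BG}$ lies in $D^{\ge 0}$, hence $\mathrm{gr}^i\in D^{\ge i}\subseteq D^{\ge 1}$, so $Q\in D^{\ge 1}$ and in particular $H^0(BG,Q)=0$; the long exact sequence then forces $H^1(BG,\mathcal{O})\hookrightarrow H^1_{\mathrm{dR}}(BG)$. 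Thus $C^{\mathrm{uni}}$ is nonzero, hence injective, in the base cases, completing the induction.

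The main obstacle is the input from \cref{conj}: that the de Rham--Witt connecting map $T$ of \cref{mainconst1}, evaluated on $H^1$ of $BG$, is exactly the conjugate-filtration edge map followed by the Bockstein of \cref{derham}, with the Frobenius-twist bookkeeping among $W_n\mathcal{O}$, $R\Gamma_{\mathrm{crys}}(\cdot)/p^n$ and $\sigma_*H^2_{\mathrm{crys}}$ carried through correctly. Once this identification is secured, the cotangent-complex computation $\mathbb{L}_{BG}=\mathcal{O}\oplus\mathcal{O}[-1]$ makes the final connectivity estimate routine.
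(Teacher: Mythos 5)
Your proof takes a genuinely different route from the paper's, and it is workable in outline, but it is far heavier and has one load-bearing step that is not actually established. The paper's argument is uniform in $G$ and needs no d\'evissage: the exact sequence \cref{illusie} defining $T$ admits a termwise projection onto its degree-zero part, the exact sequence $0 \to W \to \varinjlim_V W \to \varinjlim_{n,V} W_n \to 0$, so $C^{\mathrm{uni}}$ followed by the induced map $H^2_{\mathrm{crys}}(BG)\to H^2(BG,W)$ is the boundary map of this degree-zero sequence on $BG$; its kernel is the image of $H^1(BG,\varinjlim_V W)=\mathrm{Hom}(G,\varinjlim_V W)$, which vanishes because a finite unipotent $G$ is killed by a power of $V$ while $W$ is $V$-torsion free. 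No base change, no four lemma, and no appeal to \cref{compute}, \cref{vanish} or \cref{derham} is needed. Your reduction steps (base change to $\overline{k}$ via \cref{basechange} and the base-change compatibility of $M(-)$ over perfect fields, left exactness of both functors via \cref{rightexact}, naturality of $C^{\mathrm{uni}}$, the four-lemma chase, induction on length) are fine, and so is your base-case analysis: injectivity of $H^1(BG,\mathcal{O})\to H^1_{\mathrm{dR}}(BG)$ from coconnectivity of $\mathbb{L}_{BG}$ (granting, as the paper does in \cref{basechange}, that bounded-below totalizations commute with the filtered colimit exhausting the conjugate filtration), and the Bockstein being an isomorphism by \cref{univcoeff} together with \cref{vanish} and \cref{compute}.

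The genuine soft spot is the one you flag yourself: you evaluate $C^{\mathrm{uni}}$ on the class of $\phi\in\mathrm{Hom}(G,W_1)$ by invoking \cref{conj}, i.e.\ the assertion that the stage-$1$ component of $T$ is the conjugate-filtration edge map $\mathcal{O}\to R\Gamma_{\mathrm{crys}}(\cdot)/p$ followed by the Bockstein. But \cref{conj} is only an explanatory remark (``can be thought of as''), with the $\varphi$-linear twist bookkeeping left implicit, and neither the paper nor your proposal proves that this composite agrees with the boundary map of $0\to W\Omega^*\to W\Omega^*(1)\to \mathcal{O}\to 0$ actually used in \cref{mainconst1}. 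The paper's proof of this lemma is arranged precisely so as not to need any such identification: the only compatibility it extracts from the construction is the trivial one with the degree-zero quotient. To make your argument self-contained you would either have to prove the identification of \cref{conj} at stage $1$, or compute the boundary map of $0\to W\Omega^*\to W\Omega^*(1)\to\mathcal{O}\to 0$ directly on $H^1(B\alpha_p,\mathcal{O})$ and $H^1(B\mathbb{Z}/p,\mathcal{O})$ --- at which point the paper's truncation argument is both shorter and stronger.
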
{}

\begin{proof}
By \cref{mainconst1}, we have the following commutative diagram where the bottom row is exact:
\begin{center}
\begin{tikzcd}
                                       &  & {H^1(BG, \varinjlim W_n)} \arrow[d,"\mathrm{id}"] \arrow[rr, "C^{\mathrm{uni}}"] &  & H^2_{\mathrm{crys}}(BG) \arrow[d] \\
{H^1(BG, \varinjlim_{V} W)} \arrow[rr] &  & {H^1(BG, \varinjlim W_n)} \arrow[rr]           &  & {H^2(BG, W)}                     
\end{tikzcd}
\end{center}{}
Therefore, to prove the injectivity of $C^{\mathrm{uni}},$ it suffices to show that \linebreak $H^1(BG, \varinjlim_{V}W)= \mathrm{Hom}(G, \varinjlim_{V}W)=0.$ But this follows because $W$ is $V$-torsion free, and $G$, being finite and unipotent, is killed by a power of the Verschiebung operator $V$ by \cite[Thm. (v)~p.~38]{dem}.
\end{proof}{}

\begin{proposition}\label{mainthm1}
Let $G$ be a finite commutative $p$-power rank group scheme over $k.$ We have a canonical isomorphism $\sigma^*M(G) \simeq H^2_{\mathrm{crys}}(BG).$
\end{proposition}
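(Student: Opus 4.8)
The plan is to decompose $G$ using the canonical splitting $G = G^{\mathrm{uni}} \oplus G^{\mathrm{mul}}$ and treat the two summands separately, then glue. Since crystalline cohomology of $BG$ splits as $R\Gamma_{\mathrm{crys}}(BG) \simeq R\Gamma_{\mathrm{crys}}(BG^{\mathrm{uni}}) \otimes_{W(k)} R\Gamma_{\mathrm{crys}}(BG^{\mathrm{mul}})$ by the Künneth formula for classifying stacks (together with $\cref{vanish}$, which kills $H^1$), and since $M(G) = M(G^{\mathrm{uni}}) \oplus M((G^{\mathrm{mul}})^\vee)^*$ by definition, it suffices to establish the isomorphism in degree $2$ for each factor. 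For $G$ unipotent, the candidate map is $C^{\mathrm{uni}} : \sigma^* M(G) \to H^2_{\mathrm{crys}}(BG)$ constructed above, which we already know is injective. For $G$ local with \'etale dual, the candidate map is $C^{\mathrm{mult}} : H^2_{\mathrm{crys}}(BG) \to \sigma^* M(G)$.

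First I would finish the unipotent case by proving $C^{\mathrm{uni}}$ is surjective. The strategy is d\'evissage: filter $G$ by copies of the simple unipotent group schemes $\mathbb{Z}/p$ and $\alpha_p$, and use the four-term exactness from $\cref{rightexact}$ together with the additivity of $M(\cdot)$ on short exact sequences and a five-lemma style argument. The base cases $G = \mathbb{Z}/p$ and $G = \alpha_p$ are handled by $\cref{compute}$, which gives $H^2_{\mathrm{crys}}(BH) = k$; one then checks that $C^{\mathrm{uni}}$ hits a generator, using that $M(\mathbb{Z}/p) = k$ and $M(\alpha_p) = k$ and that the de Rham--Witt map $T$ is nontrivial there (it detects the nonsplitting/obstruction classes already analyzed in $\cref{compute}$). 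Counting lengths: injectivity of $C^{\mathrm{uni}}$ plus $\mathrm{length}_{W(k)}\sigma^* M(G) \ge \mathrm{length}_{W(k)} H^2_{\mathrm{crys}}(BG)$ (obtained inductively from $\cref{rightexact}$ and the simple cases) forces it to be an isomorphism. This length bookkeeping is probably the cleanest route and avoids having to identify the map on the nose in the d\'evissage.

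Next I would treat the multiplicative case. Here $C^{\mathrm{mult}}$ was built by Galois descent from the algebraically closed case, where $G_{\overline{k}}$ is a product of $\mu_{p^{n_i}}$'s, and $\cref{impmu}$ identifies $H^2_{\mathrm{crys}}(B\mu_{p^m}) \simeq \sigma^*(W(k)/p^m)$ canonically via the Chern class of the tautological line bundle. So over $\overline{k}$ the map $C^{\mathrm{mult}}$ is an isomorphism by $\cref{impmu}$ applied factor by factor (using the Cartier duality identification $\mathrm{Hom}(\mu_{p^n}, G_{\overline{k}}) \simeq M(G_{\overline{k}}^\vee)$ and the perfectness of the pairing). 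Galois descent — which was verified in the remark above to be an equivalence on finite-length $W$-modules — then shows $C^{\mathrm{mult}}$ is an isomorphism over $k$ as well, since formation of $H^2_{\mathrm{crys}}(BG)$ commutes with the perfect base extension $k \to \overline{k}$ by $\cref{basechange}$ and $M(\cdot)$ does too.

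The main obstacle I expect is the surjectivity in the unipotent case, specifically making the d\'evissage interact correctly with the map $C^{\mathrm{uni}}$ rather than just with the abstract modules: $\cref{rightexact}$ only gives a left-exact three-term sequence on $H^2_{\mathrm{crys}}(B-)$, not a full short exact sequence, so one must be careful that the cokernel of $H^2_{\mathrm{crys}}(BG) \to H^2_{\mathrm{crys}}(BG')$ is controlled. The length argument sidesteps exactness on the right: once one knows $\mathrm{length}\, H^2_{\mathrm{crys}}(BG) \le \sum \mathrm{length}\, H^2_{\mathrm{crys}}(BH_i) = \mathrm{length}\, \sigma^* M(G)$ from a composition series, injectivity of $C^{\mathrm{uni}}$ closes the argument. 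A secondary technical point is checking that the unipotent and multiplicative maps are mutually inverse on the overlap (i.e. genuinely assemble to a single canonical isomorphism for general $G$), which should follow formally from the Künneth decomposition and the fact that $C^{\mathrm{uni}}$ and $C^{\mathrm{mult}}$ are built from the same functorial input $T$ of $\cref{mainconst1}$ and the Chern class construction of $\cref{impmu}$.
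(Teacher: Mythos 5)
Your proposal is correct and shares the paper's overall architecture (split $G=G^{\mathrm{uni}}\oplus G^{\mathrm{mul}}$, reduce to $\overline{k}$, induct on length with base cases \cref{compute} and \cref{impmu}), but it closes the two inductions differently, and the differences are worth noting. In the unipotent case the paper finishes with a five-lemma-style chase on the diagram of \cref{rightexact}, using surjectivity of $\sigma^*M(G)\to\sigma^*M(G')$; your length count (injectivity of $C^{\mathrm{uni}}$, the inequality $\mathrm{length}\,H^2_{\mathrm{crys}}(BG)\le\sum_i\mathrm{length}\,H^2_{\mathrm{crys}}(BH_i)$ from left-exactness, and $\mathrm{length}\,\sigma^*M(G)=\log_p|G|$ from classical exactness of $M$) is an equivalent, slightly cleaner finish, and it makes the base-case check that $C^{\mathrm{uni}}$ ``hits a generator'' superfluous, since the injectivity lemma already covers $\mathbb{Z}/p$ and $\alpha_p$. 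In the multiplicative case the paper again argues by d\'evissage, while you split $G_{\overline{k}}$ as a product of $\mu_{p^{n_i}}$'s and apply \cref{impmu} factor by factor before Galois descent; this is arguably the more robust route, since a chase in the $C^{\mathrm{mult}}$ direction would need surjectivity of $H^2_{\mathrm{crys}}(BG)\to H^2_{\mathrm{crys}}(BG')$ rather than of $M(G)\to M(G')$, but it obliges you to actually verify what you only assert: compatibility of $C^{\mathrm{mult}}$ with products (the same completed K\"unneth-plus-\cref{vanish} computation you use at the outset) and perfectness of the pairing for $\mu_{p^m}$, which reduces to the fact that the map $\mu_{p^n}\to\mu_{p^m}$ dual to $1\mapsto jp^{n-m}$ pulls the tautological Chern class back to $p^{n-m}j$ times the tautological class (additivity of $c_1$ in the character) --- routine, not a gap. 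Two small points: your unipotent composition series through $\mathbb{Z}/p$ and $\alpha_p$ only exists after base change to $\overline{k}$ (over a general perfect $k$ the \'etale factors may be nonsplit forms), so \cref{basechange} must be invoked there too, exactly as in the multiplicative case; and there is no ``overlap'' to check at the end --- the global isomorphism is simply $C^{\mathrm{uni}}\oplus(C^{\mathrm{mult}})^{-1}$, canonical because all homomorphisms between unipotent and multiplicative parts vanish.
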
{}

\begin{proof}
    We use the natural maps $C^{\mathrm{uni}}$ (\cref{mainconst1}) and $C^{\mathrm{mult}}$ (\cref{multicons}) constructed before and argue separately. To check that these natural maps are isomorphisms, we may assume that $k$ is algebraically closed (\cref{basechange}). We have an exact sequence $0 \to G' \to G \to G'' \to 0,$ where one may assume that $G'$ is simple. Since $k$ is algebraically closed, $G'$ must be either $\mathbb{Z}/p, \mu_p$ or $\alpha_p.$
    
    First, we will argue that $C^{\mathrm{mult}}$ is an isomorphism, so in particular we assume that $G^\vee$ is \'etale. In this case, $G' \simeq \mu_p$.
    By \cref{rightexact}, we have the following diagram where the rows are exact:
\begin{center}
    \begin{tikzcd}
0 \arrow[r] & H^2_{\mathrm{crys}}(BG'') \arrow[rr] \arrow[d, "{C}^{\mathrm{mult}}"] &  & H^2_{\mathrm{crys}}(BG) \arrow[rr] \arrow[d, "{C}^{\mathrm{mult}}"] &  & H^2_{\mathrm{crys}}(BG') \arrow[d, "{C}^{\mathrm{mult}}"] \\
        0 \arrow[r]    & \sigma^*M(G'') \arrow[rr]                      &  & \sigma^*M(G) \arrow[rr]                      &  & \sigma^*M(G')               
\end{tikzcd}
\end{center}{}
Since $k$ is assumed to be algebraically closed, and $G$ is diagonalizable (see \cite[p.~36]{dem}), the injection $\mu_p \to G$ induces a surjective map $\mathrm{Hom}(G, \mathbb{G}_m) \to \mathrm{Hom}(\mu_p, \mathbb{G}_m)$ on the group of characters. In particular, the canonical inclusion $\mu_p \to \mathbb{G}_m$ admits a factorization as $\mu_p \to G \to \mathbb{G}_m$. This produces a line bundle on $BG$ whose Chern class in $H^2_{\mathrm{crys}}(BG)$, under the map $H^2_{\mathrm{crys}}(BG) \to H^2_{\mathrm{crys}}(B\mu_p)$ goes to the Chern class of the line bundle corresponding to $B\mu_p \to B\mathbb{G}_m$, which is a nonzero element of $H^2_{\mathrm{crys}}(B\mu_p)$ by the proof of \cref{compute} (2). Therefore, the natural map $H^2_{\mathrm{crys}}(BG) \to H^2_{\mathrm{crys}}(BG')$ is surjective, since its target is $1$-dimensional by \cref{compute}. By \cref{isommup}, the upper and lower horizontal maps in the right of the above diagram are both surjective. Therefore, the claim follows by \cref{isommup}, induction on the length of $G$ and the five lemma. 

Now we will show that $C^\mathrm{uni}$ is an isomorphism, so in particular we assume that $G$ is unipotent, and $G'$ is either $\alpha_p$ or $\mathbb{Z}/p$. Similar to the above paragraph, we have the following diagram where the rows are exact:
\begin{center}
    \begin{tikzcd}
0 \arrow[r] & \sigma^*M(G'') \arrow[rr] \arrow[d, "C^{\mathrm{uni}}"]   &  & \sigma^*M(G) \arrow[rr] \arrow[d, "C^{\mathrm{uni}}"]  &  & \sigma^*M(G') \arrow[d, "C^{\mathrm{uni}}"]  \\
0 \arrow[r] &  H^2_{\mathrm{crys}}(BG'') \arrow[rr] &  & H^2_{\mathrm{crys}}(BG) \arrow[rr] &  & H^2_{\mathrm{crys}}(BG')
\end{tikzcd}
\end{center}By the exactness of $M(\cdot)$ (see e.g., \cite[\S V~4.3~(b)]{MR0302656}), the map $\sigma^* M(G) \to \sigma^* M(G')$ is surjective. By \cref{compute} and \cref{wlaf} the right vertical map is an isomorphism. Therefore, the lower horizontal map in the right is also surjective, and the claim follows from induction on the length of $G$. This finishes the proof.
\end{proof}

\begin{proposition}\label{2-stacks}
   Let $G$ be a finite commutative $p$-power rank group scheme over $k.$ We have a canonical isomorphism 
$H^2_{\mathrm{crys}}(BG) \simeq H^3_{\mathrm{crys}}(B^2G).$
\end{proposition}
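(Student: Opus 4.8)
The plan is to relate $BG$ and $B^2G$ through the bar construction and a degree count. Since $BG$ is a commutative group stack, $B^2G=B(BG)$ is the geometric realization of its nerve, i.e.
\[
B^2G\;\simeq\;\varinjlim_{[s]\in\Delta^{\mathrm{op}}}(BG)^{\times s},\qquad (BG)^{\times 0}=*:=\mathrm{Spec}\,k .
\]
Because $R\Gamma_{\mathrm{crys}}(\cdot)$ turns colimits of (higher) stacks over $k$ into limits (it is defined by right Kan extension from affine schemes, using that the quasisyntomic site is a basis for the fpqc topology), we get $R\Gamma_{\mathrm{crys}}(B^2G)\simeq\varprojlim_{[s]\in\Delta}R\Gamma_{\mathrm{crys}}\bigl((BG)^{\times s}\bigr)$, and hence a spectral sequence $E_1^{s,t}=H^t_{\mathrm{crys}}\bigl((BG)^{\times s}\bigr)\Rightarrow H^{s+t}_{\mathrm{crys}}(B^2G)$. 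The first step is a Künneth isomorphism $R\Gamma_{\mathrm{crys}}\bigl((BG)^{\times s}\bigr)\simeq R\Gamma_{\mathrm{crys}}(BG)^{\widehat{\otimes}_{W(k)}s}$, which I would prove after reduction modulo $p$: there it becomes Künneth for derived de Rham cohomology over the field $k$ (which is symmetric monoidal), applied along \v{C}ech descent for $*\to BG$ and assembled using boundedness below.

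Next I would pass to the normalized cochain complex $N^\bullet$ of the cosimplicial object $[s]\mapsto R\Gamma_{\mathrm{crys}}\bigl((BG)^{\times s}\bigr)$. Writing $\widetilde{R\Gamma}_{\mathrm{crys}}(BG):=\mathrm{fib}\bigl(R\Gamma_{\mathrm{crys}}(BG)\to R\Gamma_{\mathrm{crys}}(*)\bigr)$ for the reduced crystalline cohomology (a direct summand, split off by $*\to BG$), the Künneth formula identifies $N^s\simeq\widetilde{R\Gamma}_{\mathrm{crys}}(BG)^{\widehat{\otimes}_{W(k)}s}$. The decisive input is the connectivity estimate $\widetilde{R\Gamma}_{\mathrm{crys}}(BG)\in D^{\ge 2}(W(k))$, which follows from $H^0_{\mathrm{crys}}(BG)=W(k)$ (connectedness of $BG$) together with $H^1_{\mathrm{crys}}(BG)=0$ from \cref{vanish} — the one place where the hypothesis that $G$ is finite of $p$-power rank is used. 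Since $k$ is a perfect field, $W(k)$ is a discrete valuation ring, so $\mathrm{Tor}^{W(k)}_{\ge 2}$ vanishes; combined with the finiteness of $H^2_{\mathrm{crys}}(BG)$, a short computation then upgrades the estimate to $N^s\in D^{\ge s+1}(W(k))$ for $s\ge 1$: the $p$-completed tensor square of an object of $D^{\ge 2}$ lies only in $D^{\ge 3}$ (a $\mathrm{Tor}_1$-term appears in cohomological degree $3$), and inductively $N^s\in D^{\ge s+1}$. In particular the spectral sequence converges.

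With this estimate, $E_2^{s,t}=H^s\bigl(H^t(N^\bullet)\bigr)$ has only finitely many nonzero entries in each total degree $\le 3$: one reads off $H^0_{\mathrm{crys}}(B^2G)=W(k)$, $H^1_{\mathrm{crys}}(B^2G)=H^2_{\mathrm{crys}}(B^2G)=0$, and the only possible contribution to $H^3_{\mathrm{crys}}(B^2G)$ is $E_\infty^{1,2}$. Since $H^2(N^0)=H^2_{\mathrm{crys}}(*)=0$ and $H^2(N^s)=0$ for $s\ge 2$ (as $N^s\in D^{\ge 3}$), the row $t=2$ of the normalized complex reads $0\to H^2_{\mathrm{crys}}(BG)\to 0$, so $E_2^{1,2}=H^2_{\mathrm{crys}}(BG)$; every differential into or out of this spot vanishes, its source and target being subquotients of $H^*(N^s)$ with $s\ge 3$ in the wrong cohomological degree. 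Therefore $H^3_{\mathrm{crys}}(B^2G)=E_\infty^{1,2}=H^2_{\mathrm{crys}}(BG)$, and the identification is canonical (it is the crystalline transgression for the fiber sequence $BG\to *\to B^2G$), hence functorial in $G$.

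The main obstacle I expect is not the spectral-sequence bookkeeping but making the Künneth formula and descent genuinely rigorous for the higher stack $B^2G$: checking that $R\Gamma_{\mathrm{crys}}(\cdot)$, as defined on $\mathrm{Shv}_{\mathcal{S}}((k)_\qsyn)$, really sends this bar colimit to the corresponding totalization, and that reduction modulo $p$ commutes with that totalization. Here the connectivity bound $N^s\in D^{\ge s+1}(W(k))$ does double duty — it makes the totalization degreewise essentially finite, so that both the convergence of the spectral sequence and the interchange of $-\otimes_{W(k)}k$ with the totalization are automatic. In practice one therefore establishes the connectivity statement first and relies on it throughout.
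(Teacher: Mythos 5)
Your proposal is correct and takes essentially the same route as the paper: the paper also runs the descent spectral sequence along $* \to B^2G$ with $E_1^{s,t}=H^t_{\mathrm{crys}}\bigl((BG)^{\times s}\bigr)$ and invokes \cref{vanish} (note $(BG)^{\times s}=B(G^{\times s})$, so the vanishing of $H^1_{\mathrm{crys}}$ applies to every column) to isolate $E^{1,2}$ in total degree $3$. Your Künneth/normalized-complex elaboration, with the bound $N^s\in D^{\ge s+1}(W(k))$, mainly serves to make explicit the one step the paper's two-line proof leaves implicit — the vanishing of $d_1\colon H^2_{\mathrm{crys}}(BG)\to H^2_{\mathrm{crys}}(BG\times BG)$, i.e.\ primitivity of degree-$2$ classes — and this is a legitimate way to do it (the needed finiteness of $H^2_{\mathrm{crys}}(BG)$, which controls the $p$-completed tensor products, is available from the earlier lemmas), so the argument goes through.
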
{}

\begin{proof}
Using descent along $* \to B^2 G,$ we obtain an $E_1$-spectral sequence
$$E_1^{i,j}=H^j_{\w{crys}}((BG)^i) \implies H^{i+j}_{\w{crys}}(B^2G).$$ The claim now follows by using this spectral sequence and \cref{vanish}; the latter lemma implies that $H^1_{\mathrm{crys}}(BG \times BG) \simeq H^1_{\mathrm{crys}}(B (G \times G)) = 0$.   
\end{proof}{}

\begin{proposition}\label{niceday}
 For any group scheme $G$ over $k,$ one has $$H^3_{\mathrm{crys}}(B^2 G) \simeq \mathrm{Ext}^1_{(k/ W(k))_{\mathrm{Crys}}}(G, \mathcal{O}^{\mathrm{crys}}).$$   
\end{proposition}{}
\begin{proof}
By applying descent along $* \to B^2G,$ similar to \cref{mainspec}, we obtain an $E_2$-spectral sequence $$E_2^{i,j}= \text{Ext}_{(k/ W(k))_{\mathrm{Crys}}}^i (H^{-j}(\mathbb{Z}[B^2 {G}]), \mathcal{O}^{\mathrm{crys}})\implies H^{i+j}_{\w{crys}}(B^2G).$$ The claim now follows from some classical functorial calculation of low degree homology groups of Eilenberg--MacLane spaces. Namely, we use the fact that $H^{-1}(\mathbb{Z}[B^2 {G}])=0,$ $H^{-2}(\mathbb{Z}[B^2 {G}])= G$ (by e.g., Hurewicz theorem relating homotopy and homology) and $H^{-3} (\mathbb{Z}[B^2 {G}])= 0;$ the latter vanishing follows from the classical fact that the homology group $H_3(B^2A, \mathbb{Z})=0$ for any ordinary abelian group $A$. In order to see the latter fact, one may use the spectral sequence \cite[Prop.~1.1]{h3breen}, and the connectivity estimate in \cite[Prop.~3.1.3]{Lur2}, which implies that $E^2_{p,q}$ in the relevant spectral sequence in \cite[Prop.~1.1]{h3breen} vanishes for $p+q=3$.
\end{proof}{}


Combining \cref{mainthm1}, \cref{2-stacks} and \cref{niceday}, we obtain
\begin{corollary}[Berthelot--Breen--Messing]\label{yng}
 Let $G$ be a finite commutative group scheme over $k$ of $p$-power rank. Then $\sigma^* M(G) \simeq \mathrm{Ext}^1_{(k/ W(k))_{\mathrm{Crys}}}(G, \mathcal{O}^{\mathrm{crys}}).$ 
\end{corollary}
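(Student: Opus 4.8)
The plan is to assemble Corollary~\ref{yng} directly from three results already established in the excerpt, each of which handles a conceptually distinct comparison, and to verify that the intermediate identifications are compatible as $W(k)$-modules (in particular respecting the $\sigma$-semilinear structure).

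First I would invoke \cref{mainthm1} to get the canonical isomorphism $\sigma^* M(G) \simeq H^2_{\mathrm{crys}}(BG)$; this is the de Rham--Witt realization side and is precisely the content of \cref{die}. Next I would apply \cref{2-stacks} to transport this to the delooped stack: $H^2_{\mathrm{crys}}(BG) \simeq H^3_{\mathrm{crys}}(B^2 G)$, the point being that $B^2 G$ is the stack whose crystalline cohomology naturally sees $\mathrm{Ext}^1$ of $G$ itself rather than of the homology of $BG$. The shift in cohomological degree from $2$ to $3$ is exactly the shift one expects when passing from $BG = K(G,1)$ to $B^2 G = K(G,2)$. Finally I would apply \cref{niceday} to identify $H^3_{\mathrm{crys}}(B^2 G) \simeq \mathrm{Ext}^1_{(k/W(k))_{\mathrm{Crys}}}(G, \mathcal{O}^{\mathrm{crys}})$, which uses the descent spectral sequence along $* \to B^2 G$ together with the Hurewicz-type computation $H^{-1}(\mathbb{Z}[B^2 G]) = 0$, $H^{-2}(\mathbb{Z}[B^2 G]) = G$, $H^{-3}(\mathbb{Z}[B^2 G]) = 0$. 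Composing the three isomorphisms gives the claim.

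The one point requiring genuine care --- and the step I expect to be the main obstacle --- is not the existence of the isomorphism but its \emph{canonicity} and the bookkeeping of the $W(k)$-module structures: each of \cref{mainthm1}, \cref{2-stacks}, \cref{niceday} produces a canonical map, but one must check that the composite is a map of $W(k)$-modules respecting the twist by $\sigma$ that appears on the left-hand side of \cref{mainthm1} (and is invisible on the right-hand side of \cref{niceday}). Since the twist $\sigma^*$ has already been absorbed on the $H^2_{\mathrm{crys}}(BG)$ side in the statement of \cref{mainthm1}, and \cref{2-stacks} and \cref{niceday} are genuinely $W(k)$-linear (they involve no Frobenius twist, only descent spectral sequences and truncations), the composite automatically carries the $\sigma^*$ from the source. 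One should also note that the identification in \cref{niceday} is via an edge map of the spectral sequence, so compatibility with the analogous edge map in \cref{mainspec} underlying \cref{2-stacks} is what makes the final isomorphism canonical; this is a formal compatibility of the two descent spectral sequences for $* \to BG$ and $* \to B^2 G$ under the Serre-fibration-type relation between them.

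Concretely, the proof is then just the one-line composition: by \cref{mainthm1}, $\sigma^* M(G) \simeq H^2_{\mathrm{crys}}(BG)$; by \cref{2-stacks}, $H^2_{\mathrm{crys}}(BG) \simeq H^3_{\mathrm{crys}}(B^2 G)$; and by \cref{niceday}, $H^3_{\mathrm{crys}}(B^2 G) \simeq \mathrm{Ext}^1_{(k/W(k))_{\mathrm{Crys}}}(G, \mathcal{O}^{\mathrm{crys}})$. I would not expect any further verification to be needed beyond remarking that all three identifications are natural in $G$ (so that, e.g., the construction is compatible with the canonical decomposition $G = G^{\mathrm{uni}} \oplus G^{\mathrm{mul}}$ used to define $M(G)$), which follows because each is built from functorial operations (pushforward along $*\to B^n G$, truncation, and the natural Ext pairing).
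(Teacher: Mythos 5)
Your proposal is correct and is exactly the paper's argument: the corollary is obtained by composing \cref{mainthm1}, \cref{2-stacks}, and \cref{niceday}, precisely as you describe. Your additional remarks on $W(k)$-linearity and naturality are fine but not spelled out in the paper, which simply states the combination of the three results.
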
{}

We end this section with some corollaries of \cref{mainthm1} for $p$-divisible groups that will be useful later.

\begin{corollary}\label{washing1}
Let $G$ be a $p$-divisible group over a perfect field $k$. We have a canonical isomorphism $\sigma^* M(G) \simeq H^2_{\mathrm{crys}}(BG)$. 
\end{corollary}
\begin{proof}
 Let $G_n := G[p^n]$. Since $R^1 \varprojlim H^1_{\mathrm{crys}}(BG_n)=0$ (\cref{vanish}), the claim follows from taking inverse limits and using that $M(G) \simeq \varprojlim M(G_n).$ 
\end{proof}{}

\begin{corollary}\label{makelight}
 Let $G$ be a $p$-divisible group over a perfect field $k$. We have a canonical isomorphism $ H^2_{\mathrm{crys}}(BG)/p \simeq H^2_{\mathrm{dR}}(BG).$    
\end{corollary}

\begin{proof}
 Since we have a short exact sequence 
 $$0 \to H^2_{\mathrm{crys}}(BG)/p \to H^2_{\mathrm{dR}}(BG) \to H^3 _{\mathrm{crys}}(BG)[p] \to 0, $$ the claim follows from the vanishing $H^3_{\mathrm{crys}}(BG)=0$ (see \cite[Prop.~3.34]{Mon1}).
\end{proof}

\begin{remark}
The vanishing $H^3_{\mathrm{crys}}(BG)=0$ used above will be generalized and proven in a different way in \cref{locfree}. Note that taking $\gr^0$ of the Hodge--filtration produces a natural map $H^2_{\mathrm{dR}}(BG) \to H^2 (BG, \mathcal{O})$. The following observation will be important later.    
\end{remark}
\begin{lemma}\label{uselatere}
 Let $G$ be a $p$-divisible group over an algebraically closed field $k$ of characteristic $p$. The natural map $H^2_{\mathrm{dR}}(BG) \to H^2 (BG, \mathcal{O})$ is surjective.   
\end{lemma}{}
 \begin{proof}First, we note that $H^1 (BG, \mathcal{O})=0$. This follows from the natural isomorphism $H^1 (BG, \mathcal{O}) \simeq \varprojlim \mathrm{Hom}(G[p^n], \mathbb{G}_a)$ and \cite[Prop.~2.2.1~(c)]{luci}, which implies that $\mathrm{Hom}(G[p^n], \mathbb{G}_a)$ is pro-zero. Applying the same technique as in the proof of \cref{2-stacks} and \cref{niceday} shows that $H^2(BG, \mathcal{O}) \simeq H^3 (B^2G, \mathcal{O}) \simeq \mathrm{Ext}^1(G, \mathbb{G}_a)$, where the latter $\mathrm{Ext}^1$ denotes extensions as fpqc sheaves. This implies that for two $p$-divisible groups $G_1$ and $G_2$, there is a natural isomorphism $H^2 (BG_1, \mathcal{O}) \oplus H^2 (BG_2, \mathcal{O}) \simeq H^2(B(G_1 \times G_2), \mathcal{O}).$ Since by construction, $M(G_1) \oplus M(G_2) \simeq M(G_1 \times G_2)$, \cref{makelight} implies that there is a natural isomorphism $H^2 _{\mathrm{dR}}(BG_1 ) \oplus H^2 _{\mathrm{dR}}(BG_2) \simeq H^2_{\mathrm{dR}}(B(G_1 \times G_2)).$ Now, as $k$ is algebraically closed, for any given $p$-divisible group $G$, there exists a $p$-divisible group $G'$ such that $G \times G' \simeq A[p^\infty]$ for some abelian variety $A$ (see the proof of \cite[Prop.~3.34]{Mon1}). Therefore, in order to check the surjectivity claim of the lemma, one may without loss of generality assume that $G \simeq A[p^\infty]$ for some abelian variety $A.$ Note that the natural map $A[p^n] \to A$ induces natural isomorphisms $H^2_{\mathrm{dR}}(BA)\simeq H^2_{\mathrm{dR}}(BA[p^\infty])$ and $H^2(BA, \mathcal{O})\simeq H^2(BA[p^\infty], \mathcal{O})$. Thus, it suffices to show the surjectivity of the map $H^2_{\mathrm{dR}}(BA) \to H^2(BA, \mathcal{O})$ (see the proof of \cite[Prop.~3.1, Lem.~3.1]{actaar}, cf.~proof of \cref{vectorbundleabsch}). Further, under the natural isomorphisms $H^2_{\mathrm{dR}}(BA) \simeq H^1_{\mathrm{dR}}(A)$ and $H^2(BA, \mathcal{O}) \simeq H^1(A, \mathcal{O})$, it suffices to show that the natural map $H^1_{\mathrm{dR}} (A) \to H^1(A, \mathcal{O})$ is surjective. However, the latter is classical; in fact, one has a natural short exact sequence $0 \to H^0 (A, \Omega^1_A) \to H^1_{\mathrm{dR}}(A) \to H^1 (A, \mathcal{O}) \to 0$ by the degeneration of Hodge--de Rham spectral sequence for abelian varieties (see \cite[Prop.~5.1]{MR241435}).
\end{proof}{}

\newpage

\section{Dieudonn\'e theory and prismatic $F$-gauges}\label{covidy}
In this section, we expand the ideas and constructions involving crystalline cohomology of classifying stacks appearing in \cref{section2} to mixed characteristic situations by using prismatic cohomology. Roughly speaking, we prove that the Dieudonn\'e module functor induced by $G \mapsto H^2_{\Prism}(BG)$ (where the latter denotes absolute prismatic cohomology) give a fully faithful functor. However, one cannot expect this functor to be fully faithful without carefully analyzing what other extra data on  $H^2_{\Prism}(BG)$ one needs to remember. Let us first take a step back to give an intuitive explanation of our perspective on Dieudonn\'e theory taken in this paper.

By Pontryagin duality, for a finite discrete abelian group $G,$ the functor $G \mapsto \mathrm{Hom}(G, \mathbb{S}^1)$ gives an equivalence of categories. Note that in this case, since $\mathbb{S}^1 = K(\mathbb{Z}, 1)$ and $G$ is finite, one has a natural isomorphism $\mathrm{Hom}(G, \mathbb{S}^1) = H^1 (BG, \mathbb{Z}[1]) = H^2(BG, \mathbb{Z}),$ where the latter denotes Betti cohomology. Our main point here is that it is possible to reconstruct a group (or rather its Pontryagin dual) from the cohomology of $BG.$

Now, let $G$ be a finite locally free commutative group scheme over a base scheme $S.$ By Cartier duality, the functor $G \to G^{\vee}:=\mathcal{H}om(G, \mathbb{G}_m)$ gives an antiequivalence of categories. Note that $\mathcal{H}om(G, \mathbb{G}_m)= \mathcal{H}^1(BG, \mathbb{G}_m).$ Further, from a motivic point of view, one may think of $\mathbb{G}_m = \mathbb{Z}(1)[1],$ where $\mathbb{Z}(1)$ is the Tate twist. Thus, $\mathcal{H}^1(BG, \mathbb{G}_m) = \mathcal{H}^2 (BG, \mathbb{Z}(1));$ the latter recovers the group scheme $G$ (or rather, its Cartier dual). The notion of Tate twists and other related twists would play a very important role in our approach.

Since the goal of Dieudonn\'e theory is to classify finite locally free or $p$-divisible group schemes by linear algebraic data, it is natural to look for a more linearized way to recover the Tate twist $\mathbb{Z}(1).$ To this end, we now entirely specialize to the $p$-adic set up. We take $S$ to be a $p$-complete quasisyntomic ring. The $p$-adic Tate module of $\mathbb{G}_m$ denoted by $\mathbb{Z}_p(1)$ can be understood in terms of prismatic cohomology by means of the following formula (cf.~\cite[Sec.~7.4]{BMS2}):
\begin{equation}\label{verynice}
    \mathrm{R\Gamma}(\mathcal{Y}, \mathbb{Z}_p(1)) \simeq \mathrm{Fib}\left(\mathrm{Fil}^1_{\mathrm{Nyg}}R\Gamma_{\Prism}(\mathcal{Y})\left\{1\right \} \xrightarrow{\varphi_1 - \mathrm{can}} R\Gamma_{\Prism}(\mathcal{Y})\left\{1\right \}\right),\end{equation}where the curly brackets denote the Breuil--Kisin twist, and $\mathcal{Y}$ is a stack over $S.$

\begin{definition}
 We refer to \cite{BS19} and \cite{BhaLu} for the theory of absolute prismatic cohomology (equipped with the Nygaard filtration) that will be used in this section. In particular, let $(S)_\mathrm{qsyn}$ denote the quasisyntomic site of $S$ (see \cite[Variant.~4.33]{BMS2}).
By \cite[Cor.~5.5.21]{BhaLu}, the functor that sends $(S)_{\mathrm{qsyn}} \ni A \mapsto \Fil^*_{\Nyg} R\Gamma_{\Prism} (A)$, is a sheaf with values in filtered objects of $D(\mathbb Z _p)^{\wedge}_p.$ Therefore, it extends to a functor $\Fil^\bullet_\Nyg R\Gamma_{\Prism}(\cdot): \mathrm{Shv}_{\mathcal{S}}((S)_\qsyn) \to \Fil(D(\mathbb Z _p)^{\wedge}_p),$ where $\mathrm{Shv}_{\mathcal{S}} ((S)_\qsyn)$ denotes the category of sheaves on $(S)_\qsyn$ with values in the $\infty$-category $\mathcal{S}.$ This extends the formalism of Nygaard filtered prismatic cohomology to (higher) stacks, which will be used in our paper. For any (higher) stack $\mathcal{Y},$ we will call $\Fil^\bullet _\Nyg R\Gamma_{\Prism}(\mathcal{Y})$ the Nygaard filtered prismatic cohomology of $\mathcal{Y}.$
\end{definition}{}

\begin{notation}\label{useintronotation}
  The functor that sends $(S)_{\mathrm{qsyn}} \ni A \mapsto  R\Gamma_{\Prism} (A)$ will be denoted as $\Prism_{(\cdot)}.$ Similarly, the functor that sends $(S)_{\mathrm{qsyn}} \ni A \mapsto \Fil^*_{\Nyg} R\Gamma_{\Prism} (A)$ will be denoted as $\Fil^*_\Nyg \Prism_{(\cdot)}.$
\end{notation}{}

\subsection{Some calculations of syntomic cohomology of classifying stacks}
In this subsection, we record certain calculations by specializing \cref{verynice} to $\mathcal{Y}= BG,$ that will be important later.
\begin{lemma}\label{pde}
    Let $G$ be a $p$-divisible group over a quasisyntomic ring $S$. We have $\mathcal{H}^1_{\Prism} (BG) = 0.$
\end{lemma}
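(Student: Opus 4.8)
The plan is to compute $\mathcal{H}^1_{\Prism}(BG)$ via the fiber sequence \cref{verynice} applied to $\mathcal{Y} = BG$, which expresses $R\Gamma(BG, \mathbb{Z}_p(1))$ as the fiber of $\varphi_1 - \mathrm{can}$ acting from $\mathrm{Fil}^1_{\mathrm{Nyg}} R\Gamma_{\Prism}(BG)\{1\}$ to $R\Gamma_{\Prism}(BG)\{1\}$; but since the object we actually want is $\mathcal{H}^1_{\Prism}(BG) = \mathcal{H}^1_{(S)_{\mathrm{qsyn}}}(BG, \Prism_{(\cdot)})$, the cleanest route is to work directly with $\Prism_{(\cdot)}$ and use quasisyntomic descent. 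By the sheaf property of $\Fil^\bullet_\Nyg \Prism_{(\cdot)}$ (and hence of $\Prism_{(\cdot)}$), it suffices to check the vanishing after restricting to quasiregular semiperfectoid algebras $R$ over $S$, i.e., I must show $H^1(BG_R, \Prism_{(\cdot)}|_{(R)_{\mathrm{qsyn}}}) = 0$ for such $R$. First I would use Čech descent along the effective epimorphism $* \to BG_R$: this gives $R\Gamma(BG_R, \Prism_{(\cdot)}) \simeq \varprojlim_{m \in \Delta} R\Gamma(G_R^{[m]}, \Prism_{(\cdot)})$, and correspondingly a spectral sequence $E_2^{i,j} = \mathrm{Ext}^i(H^{-j}(\mathbb{Z}[BG_R]), \Prism_{(\cdot)}) \Rightarrow H^{i+j}_{\Prism}(BG_R)$, entirely analogous to \cref{mainspec}.

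The key input is then the identification of the low-degree homology of $\mathbb{Z}[BG_R]$: one has $H^0 = \mathbb{Z}$, $H^{-1} = 0$, and $H^{-2}(\mathbb{Z}[BG_R]) = 0$ as well — wait, more precisely $H_1(\mathbb{Z}[BG]) \cong G$ by Hurewicz, so $H^{-1}(\mathbb{Z}[BG]) = G$ — let me restate: $\mathbb{Z}[BG]$ has $H_0 = \mathbb{Z}$, $H_1 \cong G$, so the contributing terms to $H^1_{\Prism}(BG_R)$ are $E_2^{1,0} = \mathrm{Ext}^1(\mathbb{Z}, \Prism_R) = 0$ and $E_2^{0,1} = \mathrm{Hom}(G, \Prism_{(\cdot)})$ evaluated appropriately; so I need $\mathcal{H}om_{(R)_{\mathrm{qsyn}}}(G, \Prism_{(\cdot)}) = 0$. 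This last vanishing is where the real content lies: $G$ is $p$-power torsion (it is a filtered colimit of the $G[p^n]$, each killed by a power of $p$), so any map $G \to \Prism_{(\cdot)}$ has image in the $p$-power torsion of $\Prism_{(\cdot)}$; but $\Prism_R$ is $p$-torsion-free for $R$ quasiregular semiperfectoid (the prism $\Prism_R$ of a qrsp ring is $(p,I)$-complete and $p$-torsion-free), hence $\mathcal{H}om(G, \Prism_{(\cdot)}) = 0$. One must be slightly careful because $G = \mathrm{colim}\, G[p^n]$ and $\mathrm{Hom}$ turns this into a limit $\varprojlim_n \mathrm{Hom}(G[p^n], \Prism_{(\cdot)})$, but each term vanishes by $p$-torsion-freeness, so the limit vanishes too.

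The main obstacle I anticipate is bookkeeping around the spectral sequence and derived-$p$-completeness: the sheaf $\Prism_{(\cdot)}$ takes values in $D(\mathbb{Z}_p)^{\wedge}_p$, so "$p$-torsion-free" has to be interpreted derived-$p$-completely, and one should phrase the vanishing of $\mathcal{H}om(G, \Prism_{(\cdot)})$ in a way that is robust under completion — e.g. by reducing mod $p$ and using that $\Prism_R/p$ is well-understood, or by invoking directly that $\mathrm{Map}_{D(\mathbb{Z}_p)}(\mathbb{Z}/p^n, \Prism_R)$ is concentrated in degree $1$ (since $\Prism_R$ is $p$-torsion-free), which shifts the relevant $\mathrm{Ext}$ into a degree that does not interfere with $H^1$. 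Assembling these pieces — descent to qrsp, Čech/bar spectral sequence, Hurewicz, and the torsion-freeness of $\Prism_R$ — gives $\mathcal{H}^1_{\Prism}(BG) = 0$.
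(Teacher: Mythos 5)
Your reduction is fine as far as it goes: restricting to quasiregular semiperfectoid $R$, running the descent/bar spectral sequence for $* \to BG_R$, and noting that the only contributions to $H^1$ are $\mathrm{Ext}^1(\mathbb{Z},\Prism_{(\cdot)})$ (which vanishes on qrsp algebras since $\Prism_R$ is discrete there) and $\mathrm{Hom}_{(R)_{\mathrm{qsyn}}}(G,\Prism_{(\cdot)})$ is exactly how the paper also isolates the key term. The gap is in your final step: you kill $\mathcal{H}om(G,\Prism_{(\cdot)})$ by writing $G=\mathrm{colim}\,G[p^n]$ and claiming that $\Prism_R$ is $p$-torsion-free for every quasiregular semiperfectoid $R$. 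That claim is false in mixed characteristic, and the paper says so explicitly in \cref{pathology}: there exist qrsp rings $R$ with $p$-torsion in $\Prism_R$, and for such $R$ one has $\mathcal{H}om(\mathbb{Z}/p,\Prism_{(\cdot)})\neq 0$. Indeed, your argument only uses that each $G[p^n]$ is killed by a power of $p$, so if it worked it would prove $\mathcal{H}^1_{\Prism}(BH)=0$ for every finite locally free $p$-power rank group scheme $H$ --- which is precisely the ``pathology'' the paper is careful to avoid, and the reason it passes to $B^2G$ in the finite locally free case. ($p$-torsion-freeness of $\mathbb{A}_{\mathrm{crys}}$ is a characteristic-$p$ phenomenon and does not extend to general qrsp rings.)

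The correct mechanism, and the one the paper uses, exploits that $G$ is $p$-divisible as a quasisyntomic abelian sheaf rather than that its truncations are torsion: since $0\to T_p(G)\to \varprojlim_p G\to G\to 0$ is exact, the derived $p$-completion of the sheaf $G$ is $T_p(G)[1]$, and since $\Prism_{(\cdot)}$ is derived $p$-complete one gets $R\mathcal{H}om(G,\Prism_{(\cdot)})\simeq R\mathcal{H}om(T_p(G),\Prism_{(\cdot)})[-1]$, whose $\mathcal{H}^0$ vanishes because both $T_p(G)$ and $\Prism_{(\cdot)}$ are concentrated in degree $0$ on the qrsp basis. Your inverse-limit bookkeeping $\mathrm{Hom}(G,\Prism)=\varprojlim_n\mathrm{Hom}(G[p^n],\Prism)$ cannot be salvaged termwise, since the individual terms genuinely need not vanish; any repair has to use the $p$-divisibility of $G$ together with derived $p$-completeness, as above.
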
{}

\begin{proof}
  Note that any class in prismatic cohomology is killed quasisyntomic locally. Therefore, by applying descent along $* \to BG,$ one obtains that $\mathcal{H}^1_{\Prism} (BG) \simeq \mathcal{H}om_{(S)_{\mathrm{qsyn}}}(G, \Prism_{(\cdot)}).$ Thus it is enough to prove that $\mathcal{H}om_{(S)_{\mathrm{qsyn}}}(G, \Prism_{(\cdot)})=0$. Since $G$ is $p$-divisible, this follows because $\Prism_{(\cdot)}$ is derived $p$-complete.
\end{proof}{}

\begin{remark}\label{pathology}
 Even if we are working over a quasisyntomic base ring $S,$ for a finite locally free group scheme $G$ over $S$, the above vanishing need not hold. One can take $S$ to be a quasiregular semiperfectoid algebra such that $\Prism_S$ has $p$-torsion. Then $G = \mathbb{Z}/p\mathbb{Z}_S$ gives such an example. Let us point out that if $S$ has characteristic $p>0,$ this ``pathology" does not occur since for any quasiregular semiperfect ring $S,$ the ring $\mathbb{A}_{\mathrm{crys}}(S)$ is $p$-torsion free. It is precisely to circumvent this pathology that we will be working with $B^2G$ for finite locally free commutative group schemes, whereas for $p$-divisible groups $G,$ the stack $BG$ suffices. 

\end{remark}{}

\begin{lemma}\label{imp}
    Let $G$ be finite locally free group scheme of $p$-power rank over a quasisyntomic ring $S$. Then we have the following isomorphisms in $(S)_{\mathrm{qsyn}}$
$$\mathcal{H}^2_{(S)_{\mathrm{qsyn}}}(BG, \mathbb{Z}_p(1))\simeq \mathcal{E}xt^1_{(S)_{\mathrm{qsyn}}}(G, \mathbb{Z}_p(1)) \simeq G^{\vee}.$$ 
\end{lemma}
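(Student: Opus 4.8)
The plan is to prove the two isomorphisms separately, starting from the right. First I would establish $\mathcal{E}xt^1_{(S)_{\mathrm{qsyn}}}(G, \mathbb{Z}_p(1)) \simeq G^\vee$. Recall that $\mathbb{Z}_p(1) = T_p(\mu_{p^\infty})$ fits into the short exact sequence $0 \to \mathbb{Z}_p(1) \to \lim_p \mathbb{G}_m \to \mathbb{G}_m \to 0$ of abelian sheaves on $(S)_{\mathrm{qsyn}}$ (the analogue of the sequence recalled in the notations for a general $p$-divisible group applied to $\mu_{p^\infty}$). Applying $\mathcal{RH}om_{(S)_{\mathrm{qsyn}}}(G, -)$ and using that $G$ is killed by $p^N$ for some $N$, one has $\mathcal{RH}om(G, \lim_p \mathbb{G}_m) = 0$: indeed $\lim_p \mathbb{G}_m$ is $p$-divisible and $p$-torsion-free as a sheaf, hence $\mathcal{H}om(G,\lim_p \mathbb{G}_m)=0$ and the higher $\mathcal{E}xt$'s also vanish since applying $p^N = 0$ on $G$ induces both zero and an isomorphism (multiplication by $p^N$) on $\mathcal{RH}om(G,\lim_p\mathbb{G}_m)$ after noting $p^N$ is invertible on the target — more carefully, one uses the standard fact that $\mathcal{RH}om$ out of a sheaf killed by $p^N$ into a uniquely $p$-divisible sheaf vanishes. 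Consequently the connecting map gives $\mathcal{E}xt^1_{(S)_{\mathrm{qsyn}}}(G, \mathbb{Z}_p(1)) \simeq \mathcal{H}om_{(S)_{\mathrm{qsyn}}}(G, \mathbb{G}_m) = G^\vee$, the Cartier dual, where I also need $\mathcal{E}xt^1(G,\mathbb{G}_m)$'s contribution to be controlled; but in fact the long exact sequence reads $0 \to \mathcal{H}om(G,\mathbb{G}_m) \to \mathcal{E}xt^1(G,\mathbb{Z}_p(1)) \to \mathcal{E}xt^1(G,\lim_p\mathbb{G}_m) = 0$, so the identification is clean.

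Next I would handle $\mathcal{H}^2_{(S)_{\mathrm{qsyn}}}(BG, \mathbb{Z}_p(1)) \simeq \mathcal{E}xt^1_{(S)_{\mathrm{qsyn}}}(G, \mathbb{Z}_p(1))$. The tool is descent along the effective epimorphism $* \to BG$: since any class in $R\Gamma(-,\mathbb{Z}_p(1))$ (computed via the fiber sequence \cref{verynice} involving Nygaard-filtered prismatic cohomology, which is a quasisyntomic sheaf) is killed quasisyntomic-locally, the sheafified cohomology $\mathcal{H}^i_{(S)_{\mathrm{qsyn}}}(BG, \mathbb{Z}_p(1))$ is computed by the $E_2$ spectral sequence $E_2^{i,j} = \mathcal{E}xt^i_{(S)_{\mathrm{qsyn}}}(H^{-j}(\mathbb{Z}[BG]), \mathbb{Z}_p(1)) \Rightarrow \mathcal{H}^{i+j}_{(S)_{\mathrm{qsyn}}}(BG, \mathbb{Z}_p(1))$, exactly as in \cref{mainspec}. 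Now $H_0(\mathbb{Z}[BG]) = \mathbb{Z}$, $H_1(\mathbb{Z}[BG]) = G$, and one needs control of $H_2(\mathbb{Z}[BG])$; but since $\mathbb{Z}_p(1)$ is $p$-complete and $G$ is $p$-torsion, the relevant low-degree terms are $\mathcal{E}xt^0(\mathbb{Z},\mathbb{Z}_p(1)) = \mathbb{Z}_p(1)$ in bidegree contributing to $\mathcal{H}^0$ (which is killed after we observe $\mathcal{H}^0(BG,\mathbb{Z}_p(1)) = \mathcal{H}om(\mathbb{Z},\mathbb{Z}_p(1))$ has no $G$-dependence — it just gives the ambient $\mathbb{Z}_p(1)$, and for the statement one reads off the relevant reduced piece), and crucially $\mathcal{E}xt^0(G,\mathbb{Z}_p(1)) = \mathcal{H}om(G,\mathbb{Z}_p(1)) = 0$ since $\mathbb{Z}_p(1)$ is $p$-torsion-free. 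Therefore the edge map identifies $\mathcal{H}^2_{(S)_{\mathrm{qsyn}}}(BG,\mathbb{Z}_p(1))$ with $E_2^{1,1} = \mathcal{E}xt^1_{(S)_{\mathrm{qsyn}}}(G,\mathbb{Z}_p(1))$ modulo possible contributions from $E_2^{3,-1} = \mathcal{E}xt^3(H_0,\mathbb{Z}_p(1))$ and $E_2^{0,2}$; I would rule these out by the same $p$-torsion-free/descent argument that forces the relevant $\mathcal{H}om$ and low $\mathcal{E}xt$ terms against $\mathbb{Z}_p(1)$ to vanish, together with the observation that the differentials in and out of $E_2^{1,1}$ land in groups that vanish for degree reasons.

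The main obstacle I anticipate is precisely bookkeeping the spectral sequence: ensuring that the only surviving contribution to total degree $2$ is $\mathcal{E}xt^1(G,\mathbb{Z}_p(1))$, which requires knowing enough about $H_*(\mathbb{Z}[BG])$ in low degrees and about $\mathcal{E}xt^i_{(S)_{\mathrm{qsyn}}}(-,\mathbb{Z}_p(1))$ vanishing. The cleanest route is probably not to truncate by hand but to use the fiber sequence coming from $0 \to \mathbb{Z}_p(1) \to \lim_p \mathbb{G}_m \to \mathbb{G}_m \to 0$ directly at the level of $BG$: namely $\mathcal{H}^i(BG, \lim_p \mathbb{G}_m)$ should vanish for $i = 1, 2$ (by the $p$-divisibility/$p$-torsion-freeness of $\lim_p\mathbb{G}_m$ against the $p$-torsion sheaf cohomology of $BG$, reducing via descent to $\mathcal{E}xt^*(H_*(\mathbb{Z}[BG]), \lim_p\mathbb{G}_m)$ which is concentrated in $H_0$-contributions that are themselves $p$-divisible hence contribute nothing reduced), so the connecting map gives $\mathcal{H}^2(BG,\mathbb{Z}_p(1)) \simeq \mathcal{H}^1(BG,\mathbb{G}_m)$, and then $\mathcal{H}^1(BG,\mathbb{G}_m) = \mathcal{E}xt^1(G,\mathbb{G}_m) \simeq G^\vee$ again by descent together with $\mathcal{H}om(G,\mathbb{G}_m) = G^\vee$ being already a sheaf and the $\mathcal{H}om(\mathbb{Z},\mathbb{G}_m) = \mathbb{G}_m$ piece being discarded after passing to reduced cohomology along the section $* \to BG$. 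This makes the first isomorphism in the lemma a formal consequence and collapses both claims into the single computation $\mathcal{H}^2(BG,\mathbb{Z}_p(1)) \simeq \mathcal{H}^1(BG,\mathbb{G}_m) \simeq G^\vee$; I would present it in that order.
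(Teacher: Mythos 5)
Your proposal is correct in substance, and its first half coincides with the paper's argument: the paper obtains $\mathcal{E}xt^1_{(S)_\qsyn}(G,\mathbb{Z}_p(1)) \simeq \mathcal{H}om_{(S)_\qsyn}(G,\mathbb{G}_m) = G^\vee$ from the map $\mathbb{G}_m \to \mathbb{Z}_p(1)[1]$, whose fiber $\lim_p \mathbb{G}_m$ is uniquely $p$-divisible -- exactly your short exact sequence $0 \to \mathbb{Z}_p(1) \to \lim_p \mathbb{G}_m \to \mathbb{G}_m \to 0$. For the second isomorphism the paper simply runs the descent spectral sequence (as in \cref{mainspec}) with $\mathbb{Z}_p(1)$-coefficients: the contributions from $H^{0}(\mathbb{Z}[BG])=\mathbb{Z}$ die because $\mathbb{Z}_p(1)$ has no higher cohomology quasisyntomic-locally, and the only other term in total degree $2$, namely $E_2^{0,2}=\mathcal{H}om(G\wedge G,\mathbb{Z}_p(1))$ with $H^{-2}(\mathbb{Z}[BG])=G\wedge G$, is killed by embedding it into $\mathcal{H}om(G,\mathcal{H}om(G,\mathbb{Z}_p(1)))$ and using $\mathcal{H}om(G,\mathbb{Z}_p(1))\simeq \mathcal{E}xt^{-1}(G,\mathbb{G}_m)=0$. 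Your preferred packaging -- the long exact sequence for $\lim_p\mathbb{G}_m$ at the level of $BG$, reducing along the section $* \to BG$ -- is a legitimate variant, but it consumes the same inputs (one still must control the $G\wedge G$ term and discard the base contributions using local vanishing of higher cohomology of $\mathbb{Z}_p(1)$ and $\mathbb{G}_m$), so it buys little beyond the paper's computation; on the other hand your observation that $\mathcal{H}om$ of any sheaf killed by a power of $p$ into the $p$-torsion-free sheaf $\mathbb{Z}_p(1)$ vanishes handles the $G\wedge G$ term a bit more directly than the paper's $G\otimes G$ argument. Two slips to fix: you write $\mathcal{H}^1(BG,\mathbb{G}_m) = \mathcal{E}xt^1(G,\mathbb{G}_m)$, but the relevant degree-$1$ contribution from descent is $\mathcal{H}om(G,\mathbb{G}_m)$, not $\mathcal{E}xt^1$ -- indeed the sheaf $\mathcal{E}xt^1_{(S)_\qsyn}(G,\mathbb{G}_m)$ vanishes (this is precisely the content of \cref{w}), so the equality as written is false even though your surrounding text makes clear you intend the $\mathcal{H}om$-term; and the term ``$E_2^{3,-1}$'' you propose to rule out does not occur, since $\mathbb{Z}[BG]$ has no homology in negative degrees.
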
{}
\begin{proof}
    Since $G$ is killed by a power of $p,$ it follows that the natural map $\mathbb{G}_m \to \mathbb{Z}_p(1)[1]$ induces an isomorphism $\mathcal{E}xt^1(G, \mathbb{Z}_p(1)) \simeq \mathcal{H}om(G, \mathbb{G}_m) \simeq G^\vee.$ To see that $\mathcal{H}^2(BG, \mathbb{Z}_p(1))= \mathcal{E}xt^1(G, \mathbb{Z}_p(1)),$ we use the $E_2$-spectral sequence 
$$E_2^{i,j}=\mathrm{Ext}^i_{(S)_{\mathrm{qsyn}}}(H^{-j} (\mathbb{Z}[BG]), \mathbb{Z}_p(1) ) \implies H^{i+j}_{\mathrm{qsyn}}(BG, \mathbb{Z}_p(1)).$$ Note that $H^{-1}(\mathbb{Z}[BG]) \simeq G$ and $H^{-2}(\mathbb{Z}[BG])= G\wedge G$, which follows from classical calculations of group homology in low degrees (e.g., see \cite[Thm.~3]{MR49191}). Since locally in $(S)_{\mathrm{qsyn}}$, $\mathbb{Z}_p(1)$ has no higher cohomology, it suffices to show that $\mathcal{H}om(G \wedge G, \mathbb{Z}_p(1) )=0;$ the latter follows from the fact that we have an injection
$$\mathcal{H}om (G \wedge G, \mathbb{Z}_p(1)) \to \mathcal{H}om (G \otimes G, \mathbb{Z}_p(1)) \simeq \mathcal{H}om (G, \mathcal{H}om (G, \mathbb{Z}_p(1)))$$
and $\mathcal{H}om(G, \mathbb{Z}_p(1)) \simeq \mathcal{E}xt^{-1} (G, \mathbb{G}_m) \simeq 0.$
\end{proof}{}

\begin{proposition}\label{ww}
 Let $G$ be a finite locally free group scheme of $p$-power rank over a quasisyntomic ring $S$. Then $ \mathcal{H}^2_{(S)_{\mathrm{qsyn}}}(B^2G, \mathbb{Z}_p(1))=0$ and $\mathcal{H}^3_{(S)_{\mathrm{qsyn}}}(B^2G, \mathbb{Z}_p(1)) = G^\vee.$ 
\end{proposition}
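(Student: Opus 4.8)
The plan is to imitate the proof of \cref{imp}, but now using the Cech nerve of $* \to B^2 G$ together with the homology of the Eilenberg--MacLane space $K(G,2)$. By descent along $* \to B^2G$, any class in $R\Gamma_{\mathrm{qsyn}}(B^2G, \mathbb{Z}_p(1))$ is killed quasisyntomic-locally, so we may assemble the same $E_2$-spectral sequence
\begin{equation*}
E_2^{i,j}=\mathcal{E}xt^i_{(S)_{\mathrm{qsyn}}}(H^{-j}(\mathbb{Z}[B^2G]), \mathbb{Z}_p(1)) \implies \mathcal{H}^{i+j}_{(S)_{\mathrm{qsyn}}}(B^2G, \mathbb{Z}_p(1)).
\end{equation*}
Since $\mathbb{Z}_p(1)$ has no higher cohomology locally in $(S)_{\mathrm{qsyn}}$, the only contributing row is $i=0$, so the whole computation reduces to identifying the low-degree homology sheaves $H^{-j}(\mathbb{Z}[B^2G])$ and the groups $\mathcal{H}om_{(S)_{\mathrm{qsyn}}}(H^{-j}(\mathbb{Z}[B^2G]), \mathbb{Z}_p(1))$.

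First I would record the relevant homology of $K(G,2)$: $H^0(\mathbb{Z}[B^2G]) = \mathbb{Z}$, $H^{-1}(\mathbb{Z}[B^2G]) = 0$, $H^{-2}(\mathbb{Z}[B^2G]) = G$ (Hurewicz), and $H^{-3}(\mathbb{Z}[B^2G]) = 0$ — exactly as in the proof of \cref{niceday}, via the Serre spectral sequence for $K(G,1) \to * \to K(G,2)$, bearing in mind that $\pi_3 K(G,2) = 0$ and the first possibly-nonzero contribution in the relevant range is killed because $H_1(K(G,1)) = G$ and the transgression accounts for $H_2(K(G,2))$. Feeding this in: the column $i+j=2$ only sees $E_2^{0,2} = \mathcal{H}om(H^{-2}(\mathbb{Z}[B^2G]), \mathbb{Z}_p(1)) = \mathcal{H}om(G, \mathbb{Z}_p(1))$, which vanishes because $\mathcal{H}om(G, \mathbb{Z}_p(1)) \simeq \mathcal{E}xt^{-1}(G, \mathbb{G}_m) = 0$ (as in \cref{imp}); hence $\mathcal{H}^2_{(S)_{\mathrm{qsyn}}}(B^2G, \mathbb{Z}_p(1)) = 0$. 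For $i+j = 3$, the only term is $E_2^{0,3} = \mathcal{H}om(H^{-3}(\mathbb{Z}[B^2G]), \mathbb{Z}_p(1)) = 0$? — here I need to be more careful, since the relevant identification should give $G^\vee$; the point is that $\mathcal{H}^3_{(S)_{\mathrm{qsyn}}}(B^2G, \mathbb{Z}_p(1))$ is governed by $\mathcal{E}xt^1(H^{-2}, \mathbb{Z}_p(1)) = \mathcal{E}xt^1(G, \mathbb{Z}_p(1)) \simeq G^\vee$ together with the (vanishing) $\mathcal{E}xt^0(H^{-3}, -)$ and $\mathcal{E}xt^2(H^{-1}, -)$ contributions.

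To make this rigorous I would actually avoid insisting $\mathbb{Z}_p(1)$ is locally acyclic and instead restrict to quasiregular semiperfectoid $R$, where the statement is about honest sheafified Ext groups; then use the local acyclicity of $\mathbb{Z}_p(1)$ on the basis (as in \cref{imp}) to collapse the spectral sequence to its $i=0,1$ rows — with $\mathcal{E}xt^{\ge 2}$ vanishing — and conclude as above. The cleanest route is to mirror the bootstrap already used for $BG$: note $\mathcal{H}^1(B^2G, \mathbb{Z}_p(1)) = \mathcal{H}om(H^{-1}(\mathbb{Z}[B^2G]), \mathbb{Z}_p(1)) \oplus (\text{a term from } H^0) = 0$ since $H^{-1} = 0$ and $\mathcal{E}xt^1(\mathbb{Z},\mathbb{Z}_p(1)) = 0$ locally, then read off degrees $2$ and $3$ from the surviving entries. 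The main obstacle will be the careful verification that $H^{-3}(\mathbb{Z}[B^2G]) = 0$ and that no stray differential in the spectral sequence hits the $E_2^{0,3}$ or $E_2^{1,2}$ slots — i.e., pinning down that the $\mathcal{E}xt^1(G, \mathbb{Z}_p(1)) \simeq G^\vee$ term genuinely survives to give $\mathcal{H}^3 = G^\vee$ rather than being truncated; this is exactly the kind of low-degree bookkeeping handled in \cref{niceday} and \cref{imp}, so it should go through with the same tools (Hurewicz, the Serre fibration spectral sequence for $K(G,n) \to * \to K(G,n+1)$, and $\mathcal{H}om(G, \mathbb{Z}_p(1)) = 0$).
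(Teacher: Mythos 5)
Your proposal is correct and follows essentially the same route as the paper's proof: descent along $* \to B^2G$ giving the $E_2$-spectral sequence, the identifications $H^{-1}(\mathbb{Z}[B^2G]) = H^{-3}(\mathbb{Z}[B^2G]) = 0$ and $H^{-2}(\mathbb{Z}[B^2G]) \simeq G$, local acyclicity of $\mathbb{Z}_p(1)$ on the quasisyntomic site, and the facts $\mathcal{H}om(G,\mathbb{Z}_p(1)) = 0$ and $\mathcal{E}xt^1(G,\mathbb{Z}_p(1)) \simeq G^\vee$ from \cref{imp}. Your opening claim that only the $i=0$ row contributes is a slip (it would discard exactly the $\mathcal{E}xt^1(G,\mathbb{Z}_p(1))$ term you need), but you correct it yourself, and the final low-degree bookkeeping agrees with the paper's argument.
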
{}

\begin{proof}
For any quasisyntomic sheaf of abelian groups $\mathcal{F}$ on $(S)_\qsyn$ such that $\mathcal{H}^i (*, \mathcal{F})=0$ for $i>0,$ we have $\mathcal{H}^2 (B^2G , \mathcal{F}) \simeq \mathcal{H}om(G, \mathcal{F})$ and $\mathcal{H}^3 (B^2 G , \mathcal{F})) \simeq \mathcal{E}xt^1 (G, \mathcal{F}).$

In order to see the above, we use the $E_2$-spectral sequence 
\begin{equation}\label{uuuu}
E_2^{i,j}=\mathrm{Ext}^i_{(S')_{\mathrm{qsyn}}}(H^{-j} (\mathbb{Z}[B^2G_{S'}]), \mathcal{F}) \implies H^{i+j}_{(S')_{\mathrm{qsyn}}}(B^2G_{S'}, \mathcal{F})  
\end{equation} for any quasisyntomic algebra $S'$ over $S.$ The claims now follow from the fact that $H^{-2}(\mathbb Z[B^2 G_{S'}]) = G_{S'}$ and $H^{-3}(\mathbb Z[B^2 G_{S'}])= H^{-1} (\mathbb Z[B^2 G_{S'}]) = 0$ and $H^0(\mathbb Z[B^2 G_{S'}]) \simeq \mathbb Z$ (\textit{cf}.~proof of \cref{niceday}).


 Applying the isomorphisms in the first paragraph to $\mathcal{F} = \mathbb{Z}_p(1),$ the conclusion follows from \cref{imp}.
\end{proof}{}

\begin{lemma}\label{tatemodule}
       Let $G$ be a $p$-divisible group over a quasisyntomic ring $S$. Then $$\mathcal{H}^2_{(S)_{\mathrm{qsyn}}}(BG, \mathbb{Z}_p(1)) \simeq \mathcal{E}xt^1_{(S)_{\mathrm{qsyn}}}(G, \mathbb{Z}_p(1)) \simeq T_p(G^{\vee}).$$
\end{lemma}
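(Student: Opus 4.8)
The plan is to mimic the proof of \cref{imp}, replacing the finite group scheme $G$ by a $p$-divisible group and keeping careful track of $p$-adic completions. First I would recall from the conventions that, as an abelian sheaf on $(S)_{\mathrm{qsyn}}$, the object $G = \mathrm{colim}\, G[p^n]$ sits in an exact sequence $0 \to T_p(G) \to \lim_p G \to G \to 0$, so that the derived $p$-completion of $G$ is $T_p(G)[1]$. Since $\mathbb{Z}_p(1)$ is derived $p$-complete, $\mathcal{R}\mathrm{Hom}(G, \mathbb{Z}_p(1)) \simeq \mathcal{R}\mathrm{Hom}(T_p(G)[1], \mathbb{Z}_p(1)) = \mathcal{R}\mathrm{Hom}(T_p(G), \mathbb{Z}_p(1))[-1]$. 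Combined with the vanishing $\mathcal{H}om_{(S)_{\mathrm{qsyn}}}(G, \mathbb{G}_m) \simeq \mathcal{E}xt^{-1}(G,\mathbb{G}_m) = 0$ and the exact sequence $0 \to \mathbb{Z}_p(1) \to \lim_p \mathbb{G}_m \to \mathbb{G}_m \to 0$, this identifies $\mathcal{E}xt^1_{(S)_{\mathrm{qsyn}}}(G, \mathbb{Z}_p(1))$ with $\lim_n \mathcal{E}xt^1(G[p^n], \mathbb{Z}_p(1)) \simeq \lim_n G[p^n]^\vee \simeq T_p(G^\vee)$, using \cref{imp} at each finite level and the compatibility of Cartier duality with the transition maps in the $p$-divisible group $G^\vee$.

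Next, to pass from $\mathcal{E}xt^1(G, \mathbb{Z}_p(1))$ to $H^2_{(S)_{\mathrm{qsyn}}}(BG, \mathbb{Z}_p(1))$, I would run the descent spectral sequence along $* \to BG$, exactly as in \cref{imp}:
$$E_2^{i,j} = \mathrm{Ext}^i_{(S)_{\mathrm{qsyn}}}(H^{-j}(\mathbb{Z}[BG]), \mathbb{Z}_p(1)) \implies H^{i+j}_{\mathrm{qsyn}}(BG, \mathbb{Z}_p(1)).$$
Here one uses $H^{-1}(\mathbb{Z}[BG]) \simeq G$ (Hurewicz), so the $E_2^{\bullet, 1}$ column contributes $\mathcal{E}xt^1(G, \mathbb{Z}_p(1))$ to total degree $2$ after the $E_2^{2,0}$ term is checked to vanish. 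The term $E_2^{2,0} = \mathcal{E}xt^2(\mathbb{Z}, \mathbb{Z}_p(1))$ vanishes quasisyntomic-locally since $\mathbb{Z}_p(1)$ has no higher cohomology locally. For the contribution from $H^{-2}(\mathbb{Z}[BG]) \simeq G \wedge G$ one needs $\mathcal{H}om(G \wedge G, \mathbb{Z}_p(1)) = 0$; as in \cref{imp}, this follows by injecting into $\mathcal{H}om(G \otimes G, \mathbb{Z}_p(1)) \simeq \mathcal{H}om(G, \mathcal{H}om(G, \mathbb{Z}_p(1)))$ and using $\mathcal{H}om(G, \mathbb{Z}_p(1)) \simeq \mathcal{E}xt^{-1}(G, \mathbb{G}_m) \simeq 0$. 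This yields the edge isomorphism $\mathcal{H}^2(BG, \mathbb{Z}_p(1)) \simeq \mathcal{E}xt^1(G, \mathbb{Z}_p(1))$.

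The main subtlety — and where I would be most careful — is the interaction between the filtered colimit defining $G$ and the derived $p$-completion implicit in $\mathbb{Z}_p(1)$ and in prismatic/syntomic cohomology: a priori $\mathcal{E}xt^i(\mathrm{colim}\, G[p^n], \mathbb{Z}_p(1))$ is computed by an inverse limit of the $\mathcal{E}xt^i(G[p^n], \mathbb{Z}_p(1))$ together with a $\lim^1$ term, so one must check that the relevant $\lim^1$ vanishes (it does, since the transition maps $G[p^{n+1}]^\vee \to G[p^n]^\vee$ are the surjections defining $T_p(G^\vee)$, a Mittag--Leffler system) and that no extra cohomology is introduced in the limit. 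The statement in the lemma is local in $(S)_{\mathrm{qsyn}}$, so it suffices to verify everything over quasiregular semiperfectoid $R$, where $\Prism_R = \mathbb{A}_{\mathrm{crys}}(R)$-type computations make the vanishing statements transparent. Modulo this bookkeeping, the argument is a routine upgrade of \cref{imp} to the pro-finite setting.
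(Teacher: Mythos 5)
Your argument is correct and is essentially the paper's proof: the paper simply says the lemma "follows from \cref{imp} by taking inverse limits," and your proposal carries out exactly that reduction, with the extra bookkeeping (derived $p$-completion of $G$ being $T_p(G)[1]$, re-running the descent spectral sequence for $BG$, and the $\lim^1$ checks) made explicit. The only cosmetic remark is that for the $\mathcal{E}xt^1$ and $H^2$ terms the relevant $\lim^1$ is that of $\mathcal{H}om(G[p^n],\mathbb{Z}_p(1))$ and $H^1(BG[p^n],\mathbb{Z}_p(1))$, which vanish outright, so the Mittag--Leffler observation about the surjections $G[p^{n+1}]^\vee\to G[p^n]^\vee$ is not even needed.
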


\begin{proof}
 This follows from the proof of \cref{imp} by taking inverse limits as we will explain. To obtain the first isomorphism, we again use the the $E_2$-spectral sequence 
$$E_2^{i,j}=\mathrm{Ext}^i_{(S)_{\mathrm{qsyn}}}(H^{-j} (\mathbb{Z}[BG]), \mathbb{Z}_p(1) ) \implies H^{i+j}_{\mathrm{qsyn}}(BG, \mathbb{Z}_p(1)).$$ Using the same steps as in the proof of \cref{imp}, and the vanishing $\mathcal{H}om(G,\mathbb{Z}_p(1))=0$ (which holds because $G$ is $p$-divisible and $\mathbb{Z}_p(1)$ is $p$-complete) we obtain $$\mathcal{H}^2_{(S)_{\mathrm{qsyn}}}(BG, \mathbb{Z}_p(1)) \simeq \mathcal{E}xt^1_{(S)_{\mathrm{qsyn}}}(G, \mathbb{Z}_p(1)).$$ To see the second isomorphism, let $G_n:= G[p^n]$. Since $G \simeq \varinjlim_{n} G_n$, we have $$\displaystyle{R\mathcal{H}om_{(S)_\qsyn}(G, \mathbb{Z}_p(1)) \simeq R\varprojlim_n R\mathcal{H}om_{(S)_\qsyn}(G_n, \mathbb{Z}_p(1))}.$$ Applying \cref{imp}, the vanishing $\mathcal{H}om(G, \mathbb{Z}_p(1)) =0$, and \cite[Prop.~3.22]{MR100} (cf. \cite[Tag~08U5]{stacks}), we obtain $\mathcal{E}xt^1_{(S)_{\mathrm{qsyn}}}(G, \mathbb{Z}_p(1)) \simeq \varprojlim_n \mathcal{E}xt^1_{(S)_{\mathrm{qsyn}}}(G_n, \mathbb{Z}_p(1)) \simeq T_p(G^{\vee})$ as desired. 
\end{proof}{}

\begin{example}\label{tatemoduleabelianscheme}
 Let $A$ be an abelian scheme over a quasisyntomic ring $S$. Let $A[p^\infty]$ denote the $p$-divisible group associated to $A$. Let $A^\vee$ denote the dual abelian scheme. Then there is a canonical isomorphism $A[p^\infty]^\vee \simeq (A^\vee[p^\infty])$. Since the $\mathbb{Z}_p(1)$ is derived $p$-complete, there is a natural isomorphism $\mathcal{E}xt^1_{(S)_{\mathrm{qsyn}}}(A, \mathbb{Z}_p(1)) \simeq \mathcal{E}xt^1_{(S)_{\mathrm{qsyn}}}(A[p^\infty], \mathbb{Z}_p(1))$ induced via the natural map $A[p^\infty] \to A$ regarded as sheaves on $(S)_\qsyn$ (see the proof of \cite[Prop.~4.39]{Mon1}). Therefore, by \cref{tatemodule}, we obtain a natural isomorphism 
$$\mathcal{E}xt^1_{(S)_{\mathrm{qsyn}}}(A, \mathbb{Z}_p(1)) \simeq \mathcal{E}xt^1_{(S)_{\mathrm{qsyn}}}(A[p^\infty], \mathbb{Z}_p(1)) \simeq T_p(A^\vee).$$
 
\end{example}{}

\begin{lemma}\label{w}
    Let $G$ be a finite locally free group scheme of $p$-power rank or a $p$-divisible group. Then $\mathcal{E}xt^{2}_{(S)_{\mathrm{qsyn}}}(G, \mathbb{Z}_p(1))=0.$
\end{lemma}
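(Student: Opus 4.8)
The plan is to prove the vanishing $\mathcal{E}xt^{2}_{(S)_{\mathrm{qsyn}}}(G, \mathbb{Z}_p(1)) = 0$ by first reducing the $p$-divisible group case to the finite locally free case and then establishing the result for finite locally free $G$ of $p$-power rank. For the reduction: if $G$ is $p$-divisible, then $G = \operatorname{colim} G[p^n]$ as an object of $(S)_{\mathrm{qsyn}}$, so $R\mathcal{H}om(G, \mathbb{Z}_p(1))$ is computed as a limit of the $R\mathcal{H}om(G[p^n], \mathbb{Z}_p(1))$; the transition maps are induced by the inclusions $G[p^n] \hookrightarrow G[p^{n+1}]$, which on Ext groups go the ``wrong way'' and one must check a Mittag--Leffler condition, but since each $\mathcal{E}xt^{1}_{(S)_{\mathrm{qsyn}}}(G[p^n], \mathbb{Z}_p(1)) \simeq (G[p^n])^\vee$ (by \cref{imp}) is finite locally free and the transition maps are surjective (dual to the injections $G[p^n] \hookrightarrow G[p^{n+1}]$, which are faithfully flat hence yield surjections of Cartier duals), the $\lim^1$ term vanishes and we get $\mathcal{E}xt^{2}_{(S)_{\mathrm{qsyn}}}(G, \mathbb{Z}_p(1)) \simeq \lim_n \mathcal{E}xt^{2}_{(S)_{\mathrm{qsyn}}}(G[p^n], \mathbb{Z}_p(1))$, so the $p$-divisible case follows once the finite case is known.

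For the finite locally free case, I would argue quasisyntomic-locally, so we may assume $S = R$ is quasiregular semiperfectoid, and I would use the fibration sequence coming from \cref{verynice} applied to $\mathcal{Y} = BG$:
\begin{equation*}
R\Gamma(BG, \mathbb{Z}_p(1)) \simeq \operatorname{Fib}\left(\Fil^1_{\Nyg} R\Gamma_{\Prism}(BG)\{1\} \xrightarrow{\varphi_1 - \mathrm{can}} R\Gamma_{\Prism}(BG)\{1\}\right).
\end{equation*}
Sheafifying and passing to cohomology over $(S)_{\mathrm{qsyn}}$, the long exact sequence relates $\mathcal{H}^2(BG, \mathbb{Z}_p(1))$ and $\mathcal{H}^3(BG, \mathbb{Z}_p(1))$ to the cohomology sheaves $\mathcal{H}^2_{\Prism}(BG)$ and (Nygaard-filtered) variants thereof. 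The key input is that for a finite locally free group scheme of $p$-power rank, $\mathcal{H}^i_{\Prism}(BG)$ vanishes for $i$ large enough relative to the ``cohomological dimension'' of $BG$ with prismatic coefficients — more precisely, I expect one can show $\mathcal{H}^i_{\Prism}(BG) = 0$ for $i \geq 3$ by the same $E_2$-spectral sequence argument as in \cref{imp} together with the fact that $\mathbb{Z}[BG]$ has homology concentrated in a controlled range and that $\mathcal{E}xt^i_{(S)_{\mathrm{qsyn}}}(G, \Prism_{(\cdot)})$ together with $\mathcal{E}xt^i(G \wedge G, \Prism_{(\cdot)})$ vanish for $i$ past a fixed bound. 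Alternatively, and perhaps more cleanly, one compares $BG$ with $B^2G$ via \cref{ww} and exploits that $\mathcal{H}^2_{(S)_{\mathrm{qsyn}}}(B^2G, \mathbb{Z}_p(1)) = 0$ while $\mathcal{H}^3_{(S)_{\mathrm{qsyn}}}(B^2G, \mathbb{Z}_p(1)) = G^\vee$, using the fibration $BG \to * \to B^2G$ and its associated spectral sequence to bootstrap vanishing of $\mathcal{E}xt^2(G, \mathbb{Z}_p(1))$ from vanishing of $\mathcal{H}^4_{(S)_{\mathrm{qsyn}}}(B^2G, \mathbb{Z}_p(1))$, which in turn reduces to $\mathcal{E}xt^2_{(S)_{\mathrm{qsyn}}}(G, \mathbb{Z}_p(1))$ appearing as an edge term — so I would set this up carefully to avoid circularity, instead proving the $B^2G$ vanishing directly from the structure of $\mathbb{Z}[B^2G]$ plus the higher Ext vanishing of $\mathbb{Z}_p(1)$ against $G$.

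The main obstacle I anticipate is controlling $\mathcal{E}xt^i_{(S)_{\mathrm{qsyn}}}(G, \mathbb{Z}_p(1))$ for $i = 2$ directly, since unlike the $i = 0, 1$ cases (where one uses $\mathbb{Z}_p(1)[1] \simeq \mathbb{G}_m$ derived $p$-completed and classical Cartier duality) there is no immediately classical interpretation. The cleanest route is probably to use the exact triangle $\mathbb{Z}_p(1) \to \mathbb{Q}_p(1) \to \mathbb{Q}_p/\mathbb{Z}_p(1)$ — or rather the fact that quasisyntomic-locally $\mathbb{Z}_p(1)$ sits in a triangle with $\mathbb{G}_m$-related terms — combined with the observation that $G$, being $p$-power torsion and finite locally free, has $\mathcal{E}xt^i_{(S)_{\mathrm{qsyn}}}(G, \mathbb{G}_m)$ controlled by Cartier duality (it vanishes for $i \neq 1$ by the standard computation, at least quasisyntomic-locally where one has enough flat resolutions). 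I would therefore reduce $\mathcal{E}xt^2(G, \mathbb{Z}_p(1))$ to $\mathcal{E}xt^1(G, \mathbb{G}_m \otimes \mathbb{Z}_p)$-type terms and invoke the vanishing of $\mathcal{E}xt^i(G, \mathbb{G}_m)$ for $i \geq 2$, the latter being a statement about the quasisyntomic site that follows from fppf-local triviality of torsors and the affineness of $G$. The bookkeeping of twists and the passage between fppf and quasisyntomic $\mathcal{E}xt$ sheaves is where the care is needed.
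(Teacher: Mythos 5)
Your last paragraph does find the right starting point, and it is the same one the paper uses: since the fiber of $\mathbb{G}_m \to \mathbb{Z}_p(1)[1]$ is uniquely $p$-divisible and $G$ is killed by a power of $p$, one gets $\mathcal{E}xt^{2}_{(S)_{\qsyn}}(G,\mathbb{Z}_p(1)) \simeq \mathcal{E}xt^{1}_{(S)_{\qsyn}}(G,\mathbb{G}_m)$ (note the degree: the shift by $[1]$ puts you in $\mathcal{E}xt^1$ against $\mathbb{G}_m$, not $\mathcal{E}xt^{\geq 2}$). But this is exactly where your proposal has a genuine gap: you then plan to ``invoke the vanishing of $\mathcal{E}xt^i(G,\mathbb{G}_m)$ for $i\ge 2$,'' which is both the wrong degree for what you need and not a safe statement (higher fppf $\mathcal{E}xt$ sheaves of finite flat group schemes into $\mathbb{G}_m$ are delicate and not known to vanish in general), and your parenthetical ``vanishes for $i\neq 1$'' is internally inconsistent with it and also wrong as stated (the $\mathcal{E}xt$ sheaf is $G^\vee$ in degree $0$, and the nontrivial content is that $\mathcal{E}xt^1$ dies locally). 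The heart of the lemma is precisely the local vanishing of $\mathcal{E}xt^{1}_{(S)_{\qsyn}}(G,\mathbb{G}_m)$, and ``fppf-local triviality of torsors and affineness of $G$'' is not an argument for it: the issue is splitting extensions of group schemes, not trivializing torsors. The paper's proof supplies the missing step concretely: a class of $\mathrm{Ext}^1(G,\mathbb{G}_m)$ lifts to $\mathrm{Ext}^1(G,\mu_n)$ via $0\to\mu_n\to\mathbb{G}_m\xrightarrow{n}\mathbb{G}_m\to 0$, is represented by $0\to\mu_n\to H\to G\to 0$, and dies once the tautological character of $\mu_n$ extends to $H$; since $H^\vee\to\mu_n^\vee$ is a surjection of finite locally free group schemes, such an extension exists quasisyntomic-locally. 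Some argument of this kind (or an explicit citation of the classical $\mathcal{E}xt^1_{\mathrm{fppf}}(G,\mathbb{G}_m)=0$, together with the passage from fppf to quasisyntomic covers) is needed; as written your proof does not contain it.

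Your other two routes would not close the gap either. Bounding $\mathcal{H}^i_{\Prism}(BG)$ for $i\ge 3$ fails: for finite locally free $G$ these sheaves receive contributions from $\mathcal{E}xt^2(G,\Prism_{(\cdot)})$ and $\mathcal{E}xt^1(G\wedge G,\Prism_{(\cdot)})$ and do not vanish in general (already over a perfect field $BG$ has plenty of higher crystalline cohomology). The $B^2G$ bootstrap is circular in the way you suspect: $\mathcal{H}^4_{(S)_{\qsyn}}(B^2G,\mathbb{Z}_p(1))$ has $\mathcal{E}xt^2(G,\mathbb{Z}_p(1))$ as an edge term alongside nonzero contributions from $H^{-4}(\mathbb{Z}[B^2G])$, so one cannot prove its vanishing first; indeed the paper proves \cref{ww} only through degree $3$ and uses the present lemma (\cref{w}), not the other way around. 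Finally, your reduction of the $p$-divisible case to the finite case is in the same spirit as the paper's (``by considering inverse limits''), modulo a slip — the inclusions $G[p^n]\hookrightarrow G[p^{n+1}]$ are closed immersions, not faithfully flat; it is their Cartier duals that are faithfully flat surjections — and modulo the usual care needed to justify vanishing of the derived limit term for a tower of sheaves with epimorphic transition maps.
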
{}

\begin{proof}

If $G$ is a $p$-divisible group, in the notations of the proof of \cref{tatemodule}, $G \simeq \varinjlim_n G_n$. The natural inclusion maps $G_n \to G_{n+1}$ induce surjections $\mathcal{E}xt^1_{(S)_\qsyn} (G_{n+1}, \mathbb{Z}_p(1)) \to \mathcal{E}xt^1_{(S)_\qsyn} (G_{n}, \mathbb{Z}_p(1))$ by the second isomorphism in \cref{imp} and Cartier duality. Therefore, by repleteness of $(S)_\qsyn$ (see \cref{notconven} (9)), ${R^1 \varprojlim_n\mathcal{E}xt^1_{(S)_\qsyn} (G_{n+1}, \mathbb{Z}_p(1)) \simeq 0.} $
By \cite[Prop.~3.22]{MR100}, it follows that $\mathcal{E}xt^2_{(S)_{\mathrm{qsyn}}}(G, \mathbb{Z}_p(1)) \simeq \varprojlim_n \mathcal{E}xt^2_{(S)_{\mathrm{qsyn}}}(G_n, \mathbb{Z}_p(1)).$ Thus, it is enough to prove the lemma when $G$ is a finite locally free group scheme.

The fiber of the natural map $\mathbb{G}_m \to \mathbb{Z}_p(1)[1]$ is uniquely $p$-divisible. But since $G$ is killed by a power of $p,$ we must have $\mathcal{E}xt^2(G, \mathbb{Z}_p(1)) \simeq \mathcal{E}xt^1(G, \mathbb{G}_m).$ To show vanishing of the latter, let $n$ be an integer that kills $G.$ Using the exact sequence $0 \to \mu_n \to \mathbb{G}_m  \xrightarrow{n} \mathbb{G}_m \to 0,$ one sees that any class $u \in \mathrm{Ext}^1(G, \mathbb{G}_m)$ arises from a class $v \in \mathrm{Ext}^1(G, \mu_n).$ Let us represent $v$ by an exact sequence $0 \to \mu_n \to H \to G \to 0.$ The class $u$ can be described via pushout of $\mu_n \to H$ along $\mu_n \to \mathbb{G}_m$. The class $u$ can be killed if there exists a map $H \to \mathbb{G}_m$ such that the composition $\mu_n \to H \to \mathbb{G}_m$ is the natural inclusion. However, by Cartier duality, we have a surjection of group schemes $H^\vee \to \mu_n^\vee;$ i.e., we have a surjection $\mathcal{H}om (H, \mathbb{G}_m) \to \mathcal{H}om(\mu_n, \mathbb{G}_m)$ of quasisyntomic sheaves. Therefore, the natural inclusion $\mu_n \to \mathbb{G}_m$ can be lifted quasisyntomic locally and thus the class $u$ can also be killed quasisyntomic locally.
\end{proof}{}

\begin{proposition}\label{finiteflat0rev}
  For a finite locally free group scheme $G$ over a quasisyntomic ring $S$ for which $\mathcal{H}^1_{\Prism}(BG)=0$, we have the following exact sequence of abelian sheaves in $(S)_{\mathrm{qsyn}}$:
\begin{equation}\label{finiteflat0}
  0 \to  G^\vee \to \mathcal{H}^2_{(S)_{\mathrm{qsyn}}} (BG, \mathrm{Fil}_{\mathrm{Nyg}}^1 \Prism_{(\cdot)}) \left \{1\right \} \xrightarrow{\varphi_1 - \mathrm{can}} \mathcal{H}^2_{\Prism} (BG)\left \{ 1 \right \} \to 0.
\end{equation}   
\end{proposition}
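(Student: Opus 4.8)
The plan is to pass from $BG$ to $B^2G$, where the necessary vanishing statements hold, and then apply the fiber sequence \cref{verynice}. The first step is to observe that, under the hypothesis $\mathcal{H}^1_\Prism(BG)=0$, descent along $*\to B^2G$ (exactly as in \cref{2-stacks} and \cref{niceday}) yields, for $\mathcal{F}\in\{\Prism_{(\cdot)},\,\mathrm{Fil}^1_{\mathrm{Nyg}}\Prism_{(\cdot)}\}$, that $\mathcal{H}^2(B^2G,\mathcal{F})=0$ and $\mathcal{H}^3(B^2G,\mathcal{F})\simeq\mathcal{H}^2(BG,\mathcal{F})$, naturally in $\mathcal{F}$ and hence compatibly with $\mathrm{can}$ and $\varphi_1$. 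Indeed $H^{-1}(\mathbb{Z}[B^2G])=0$, $H^{-2}(\mathbb{Z}[B^2G])=G$ and $H^{-3}(\mathbb{Z}[B^2G])=0$ (as in \cref{ww} and \cref{niceday}), so through degree $3$ the descent spectral sequence only sees $\mathcal{H}om(G,\mathcal{F})$, $\mathcal{E}xt^1(G,\mathcal{F})$ and $\mathcal{H}^{\geq 2}(*,\mathcal{F})$; the last vanishes since $\Prism_R$ and $\mathrm{Fil}^1_{\mathrm{Nyg}}\Prism_R$ sit in degree $0$ for qrsp $R$, the hypothesis gives $\mathcal{H}om_{(S)_\qsyn}(G,\Prism_{(\cdot)})=0$ (and hence $\mathcal{H}om_{(S)_\qsyn}(G,\mathrm{Fil}^1_{\mathrm{Nyg}}\Prism_{(\cdot)})=0$, as a subsheaf), and one has $\mathcal{H}om(G\wedge G,\mathcal{F})\hookrightarrow\mathcal{H}om(G,\mathcal{H}om(G,\mathcal{F}))=0$ as in the proof of \cref{imp}. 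Combined with $\mathcal{H}^2(B^2G,\mathbb{Z}_p(1))=0$ and $\mathcal{H}^3(B^2G,\mathbb{Z}_p(1))=G^\vee$ from \cref{ww}, it then suffices to prove the exactness of
\[
0\to G^\vee\to \mathcal{H}^3(B^2G,\mathrm{Fil}^1_{\mathrm{Nyg}}\Prism_{(\cdot)})\{1\}\xrightarrow{\varphi_1-\mathrm{can}}\mathcal{H}^3_\Prism(B^2G)\{1\}\to 0 .
\]

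To get this, I would apply \cref{verynice} with $\mathcal{Y}=B^2G_A$ for each $A\in(S)_\qsyn$; by functoriality in $A$ and exactness of sheafification this becomes a fiber sequence of complexes of abelian sheaves on $(S)_\qsyn$, whose long exact sequence of cohomology sheaves contains the string $\mathcal{H}^2(B^2G,\mathrm{Fil}^1_{\mathrm{Nyg}}\Prism)\{1\}\to\mathcal{H}^2_\Prism(B^2G)\{1\}\to G^\vee\to\mathcal{H}^3(B^2G,\mathrm{Fil}^1_{\mathrm{Nyg}}\Prism)\{1\}\xrightarrow{\varphi_1-\mathrm{can}}\mathcal{H}^3_\Prism(B^2G)\{1\}\to\mathcal{H}^4(B^2G,\mathbb{Z}_p(1))$. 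Since the two leftmost terms vanish by the first step, $G^\vee$ injects into $\mathcal{H}^3(B^2G,\mathrm{Fil}^1_{\mathrm{Nyg}}\Prism)\{1\}$ and the sequence is exact there; what remains is the surjectivity of $\varphi_1-\mathrm{can}$, equivalently $\mathcal{H}^4(B^2G,\mathbb{Z}_p(1))=0$.

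This last vanishing is the heart of the argument, and I would prove it with the descent spectral sequence $E_2^{i,j}=\mathcal{E}xt^i_{(S)_\qsyn}(H^{-j}(\mathbb{Z}[B^2G]),\mathbb{Z}_p(1))\Rightarrow\mathcal{H}^{i+j}(B^2G,\mathbb{Z}_p(1))$ of \cref{ww}. Using $H^0(\mathbb{Z}[B^2G])=\mathbb{Z}$, $H^{-1}(\mathbb{Z}[B^2G])=0$, $H^{-2}(\mathbb{Z}[B^2G])=G$, $H^{-3}(\mathbb{Z}[B^2G])=0$ (as in \cref{ww}, \cref{niceday}) and that $H^{-4}(\mathbb{Z}[B^2G])$ is killed by a power of $p$, the entries of total degree $4$ are $\mathcal{E}xt^4(\mathbb{Z},\mathbb{Z}_p(1))=\mathcal{H}^4(*,\mathbb{Z}_p(1))=0$ (locally $\mathbb{Z}_p(1)$ has no higher cohomology), $\mathcal{E}xt^3(0,\mathbb{Z}_p(1))=0$, $\mathcal{E}xt^2(G,\mathbb{Z}_p(1))=0$ by \cref{w}, $\mathcal{E}xt^1(0,\mathbb{Z}_p(1))=0$, and $\mathcal{H}om(H^{-4}(\mathbb{Z}[B^2G]),\mathbb{Z}_p(1))=0$ since $\mathbb{Z}_p(1)$ is $p$-torsion free. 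Hence $\mathcal{H}^4(B^2G,\mathbb{Z}_p(1))=0$, the $B^2G$-sequence follows, and translating it back through the identifications of the first step gives the asserted \cref{finiteflat0}.

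The step I expect to be the main obstacle is precisely this surjectivity. It is also what forces the detour through $B^2G$: if one applied \cref{verynice} to $BG$ directly, the connecting map lands in $\mathcal{H}^3(BG,\mathbb{Z}_p(1))$, which carries the contribution $\mathcal{E}xt^1(G\wedge G,\mathbb{Z}_p(1))\simeq(G\wedge G)^\vee$ and is in general nonzero, so surjectivity is not formal. Passing to $B^2G$ — where $H^{-1}(\mathbb{Z}[B^2G])=H^{-3}(\mathbb{Z}[B^2G])=0$ and \cref{w} applies — is exactly what removes this obstruction.
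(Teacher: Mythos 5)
Your argument is correct, but it takes a genuinely different route from the paper's. The paper never leaves $BG$: under the hypothesis it identifies $\mathcal{H}^2_{\Prism}(BG)\simeq\mathcal{E}xt^1_{(S)_{\qsyn}}(G,\Prism_{(\cdot)})$ and $\mathcal{H}^2_{(S)_{\qsyn}}(BG,\mathrm{Fil}^1_{\mathrm{Nyg}}\Prism_{(\cdot)})\simeq\mathcal{E}xt^1_{(S)_{\qsyn}}(G,\mathrm{Fil}^1_{\mathrm{Nyg}}\Prism_{(\cdot)})$ via the descent spectral sequence (using $\mathcal{H}om(G,\Prism_{(\cdot)})=0$ and the $G\wedge G$ trick, exactly as you do), gets the left and middle exactness from \cref{verynice} and \cref{imp}, and then deduces surjectivity of $\varphi_1-\mathrm{can}$ directly at the level of $\mathcal{E}xt^1(G,-)$ from the long exact sequence for $\mathcal{E}xt(G,-)$ and the single vanishing $\mathcal{E}xt^2_{(S)_{\qsyn}}(G,\mathbb{Z}_p(1))=0$ of \cref{w}. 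You instead pass to $B^2G$, prove the short exact sequence there (essentially anticipating the paper's \cref{finiteflatrev}, though that proof also reduces to $\mathcal{E}xt^1$'s rather than computing a degree-$4$ group), and transport back via $\mathcal{H}^3(B^2G,\mathcal{F})\simeq\mathcal{E}xt^1(G,\mathcal{F})\simeq\mathcal{H}^2(BG,\mathcal{F})$. The price of your route is the extra inputs needed for $\mathcal{H}^4(B^2G,\mathbb{Z}_p(1))=0$: local acyclicity of $\mathbb{Z}_p(1)$ in higher degrees at the $(4,0)$ spot, and the assertion (stated without proof, though standard, in the spirit of the paper's torsion lemma for $\mathbb{Z}[BG]$) that $H^{-4}(\mathbb{Z}[B^2G])$ is killed by a power of $p$; you should also say a word on why the $BG$/$B^2G$ identifications are natural in the coefficient sheaf (e.g.\ both sides are computed by the truncation maps $\mathbb{Z}[BG]\to G[1]$ and $\mathbb{Z}[B^2G]\to G[2]$), which is what makes the transport compatible with $\varphi_1-\mathrm{can}$. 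One inaccuracy in your closing remark: the detour through $B^2G$ is not forced by the nonvanishing of $\mathcal{H}^3(BG,\mathbb{Z}_p(1))$ — the paper's own proof shows that staying on $BG$ works once one argues at the level of $\mathcal{E}xt^1(G,-)$, where the only obstruction is $\mathcal{E}xt^2(G,\mathbb{Z}_p(1))$, not the full $\mathcal{H}^3(BG,\mathbb{Z}_p(1))$ (which indeed carries the $(G\wedge G)^\vee$ contribution you identify).
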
{}

\begin{proof}
Applying \cref{verynice} for $\mathcal{Y}:= BG$ and using \cref{imp}, we obtain the desired exactness in the left and in the middle. For the surjectivity on the right, we first note that by applying descent along $* \to BG,$ we have $$\mathcal{H}^2_{\Prism}(BG) \simeq \mathcal{E}xt^1_{(S)_{\mathrm{qsyn}}}(G, \Prism_{(\cdot)}).$$ In order to see the above, we use the $E_2$-spectral sequence 
\begin{equation}\label{u}
E_2^{i,j}=\mathrm{Ext}^i_{(S')_{\mathrm{qsyn}}}(H^{-j} (\mathbb{Z}[BG_{S'}]), \Prism_{(\cdot)} ) \implies H^{i+j}_{(S')_{\mathrm{qsyn}}}(BG_{S'}, \Prism_{(\cdot)})  
\end{equation} for any quasisyntomic algebra $S'$ over $S.$ This implies that $\mathcal{H}^1_{\Prism}(BG)= \mathcal{H}om_{(S)_{\mathrm{qsyn}}}(G, \Prism_{(\cdot)}).$ By our hypothesis, it follows that $\mathcal{H}om_{(S)_{\mathrm{qsyn}}}(G, \Prism_{(\cdot)})=0.$ To prove that $\mathcal{H}^2_{\Prism} (BG) \simeq \mathcal{E}xt^1(G, \Prism_{(\cdot)}),$ by the above spectral sequence and the isomorphism $H^{-2}(\mathbb{Z}[BG]) \simeq G \wedge G$, it would be enough to prove that $\mathcal{H}om_{(S)_{\mathrm{qsyn}}}(G \wedge G, \Prism_{(\cdot)}) =0.$ This follows from the vanishing $\mathcal{H}om_{(S)_{\mathrm{qsyn}}}(G, \Prism_{(\cdot)})=0$ in a way similar to the proof of \cref{imp}. Now we claim that $$\mathcal{H}^2_{(S)_{\mathrm{qsyn}}} (BG, \mathrm{Fil}_{\mathrm{Nyg}}^1 \Prism_{(\cdot)}) \simeq \mathcal{E}xt^1_{(S)_{\mathrm{qsyn}}}(G, \mathrm{Fil}_{\mathrm{Nyg}}^1 \Prism_{(\cdot)}).$$ In order to see this, we may again use an $E_2$-spectral sequence 
\begin{equation}\label{u}
E_2^{i,j}=\mathrm{Ext}^i_{(S')_{\mathrm{qsyn}}}(H^{-j} (\mathbb{Z}[BG_{S'}]), \mathrm{Fil}^1_{\mathrm{Nyg}}\Prism_{(\cdot)} ) \implies H^{i+j}_{(S')_{\mathrm{qsyn}}}(BG_{S'}, \mathrm{Fil}^1_{\mathrm{Nyg}}\Prism_{(\cdot)})  
\end{equation} for any quasisyntomic algebra $S'$ over $S.$ Note that for a qrsp algebra $S_0 \in (S)_{\mathrm{qsyn}},$ we have an injection of discrete abelian groups  $0 \to \mathrm{Fil}^1_{\mathrm{Nyg}} \Prism_{S_0} \to \Prism_{S_0}.$ Since qrps algebras form a basis for $(S)_{\mathrm{qsyn}},$ it follows that we have an injection $\mathcal{H}om_{(S)_{\mathrm{qsyn}}}(G, \mathrm{Fil}_{\mathrm{Nyg}}^1 \Prism_{(\cdot)}) \to \mathcal{H}om_{(S)_{\mathrm{qsyn}}}(G, \Prism_{(\cdot)})=0.$ This implies that $\mathcal{H}om_{(S)_{\mathrm{qsyn}}}(G, \mathrm{Fil}_{\mathrm{Nyg}}^1 \Prism_{(\cdot)})=0.$ To prove that $\mathcal{H}^2_{(S)_\mathrm{qsyn}} (BG, \mathrm{Fil}_{\mathrm{Nyg}}^1 \Prism_{(\cdot)}) \simeq \mathcal{E}xt^1_{(S)_{\mathrm{qsyn}}}(G, \mathrm{Fil}^1_{\mathrm{Nyg}}\Prism_{(\cdot)}),$ using the spectral sequence, it is enough to prove that $\mathcal{H}om_{(S)_{\mathrm{qsyn}}}(G \wedge G, \mathrm{Fil}^1_{\mathrm{Nyg}}\Prism_{(\cdot)}) =0,$ for which we argue the same way as before. Now the surjectivity in the right of \cref{finiteflat0} is equivalent to the surjectivity of $\mathcal{E}xt^1_{(S)_{\mathrm{qsyn}}}(G, \mathrm{Fil}_{\mathrm{Nyg}}^1 \Prism_{(\cdot)})\left \{1\right \}\xrightarrow{\varphi_1 - \mathrm{can}}\mathcal{E}xt^1_{(S)_{\mathrm{qsyn}}}(G, \Prism_{(\cdot)})\left \{1\right \}.$ However, this surjectivity follows from $\mathcal{E}xt^{2}_{(S)_{\mathrm{qsyn}}}(G, \mathbb{Z}_p(1))=0$ (see \cref{w}).
\end{proof}{}
 

\begin{proposition}
 For a $p$-divisible group $G$ over a quasisyntomic ring $S,$ we have the following exact sequence of abelian sheaves in $(S)_{\mathrm{qsyn}}:$
\begin{equation}\label{p-div}
  0 \to  T_p(G^\vee) \to \mathcal \mathcal{H}^2_{(S)_{\mathrm{qsyn}}} (BG, \mathrm{Fil}_{\mathrm{Nyg}}^1 \Prism_{(\cdot)}) \left \{ 1 \right \} \xrightarrow{\varphi_1 - \mathrm{can}} \mathcal{H}^2_{\Prism} (BG)\left \{ 1 \right \} \to 0.
\end{equation}   
\end{proposition}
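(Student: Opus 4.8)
The plan is to deduce the $p$-divisible case from the finite locally free case of \cref{finiteflat0rev} by passing to a limit along the multiplication-by-$p$ maps in the $p$-divisible group $G$. Recall that in $(S)_\qsyn$ we have $G = \mathrm{colim}\, G[p^n]$ and $T_p(G^\vee) = \lim G[p^n]^\vee$, so the statement for $G$ should be obtained by applying \cref{finiteflat0rev} to each $G[p^n]$ and taking the inverse limit over $n$. The first point to check is that the hypothesis of \cref{finiteflat0rev} is satisfied, i.e.\ that $\mathcal{H}^1_{\Prism}(BG[p^n]) = 0$ for each $n$; but this is exactly the content of \cref{pde} applied to $G$ together with its proof, since $\mathcal{H}om_{(S)_\qsyn}(G[p^n], \Prism_{(\cdot)})$ embeds into $\mathcal{H}om_{(S)_\qsyn}(G, \Prism_{(\cdot)}) = 0$ (any homomorphism out of $G[p^n]$ extends to $G$ after the colimit description, or more directly: a $p^n$-torsion class in $\Prism_{(\cdot)}$, which is $p$-torsion-free after passing to a qrsp cover where $\Prism$ is an honest $\delta$-ring--wait, it need not be $p$-torsion-free, so one argues instead via the surjection $G[p^{n+1}] \to G[p^n]$ and the fact that $\Prism_{(\cdot)}$ is derived $p$-complete, exactly as in \cref{pde}). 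So \cref{finiteflat0} holds with $G$ replaced by $G[p^n]$ for every $n$.

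Next I would identify the limit of the exact sequences \cref{finiteflat0} for $G[p^n]$ with the desired sequence \cref{p-div}. The transition maps are induced by the inclusions $G[p^n] \hookrightarrow G[p^{n+1}]$ (equivalently, by the projections on Cartier duals $G[p^{n+1}]^\vee \twoheadrightarrow G[p^n]^\vee$, noting $(G[p^n])^\vee = G^\vee[p^n]$). On the leftmost term this gives $\lim_n G^\vee[p^n] = T_p(G^\vee)$. On the middle and right terms, I claim that
$$
\lim_n \mathcal{H}^2_{(S)_\qsyn}(BG[p^n], \Fil^1_{\Nyg}\Prism_{(\cdot)}) \simeq \mathcal{H}^2_{(S)_\qsyn}(BG, \Fil^1_{\Nyg}\Prism_{(\cdot)}),
\qquad
\lim_n \mathcal{H}^2_{\Prism}(BG[p^n]) \simeq \mathcal{H}^2_{\Prism}(BG),
$$
and similarly with $\Fil^1_{\Nyg}$; this is essentially the statement that cohomology of $BG = B(\mathrm{colim}\, G[p^n])$ against a derived $p$-complete coefficient sheaf is computed as the limit over $n$ of cohomology of $BG[p^n]$ — which follows because $BG = \mathrm{colim}\, BG[p^n]$ (as the bar construction commutes with filtered colimits), because $\Fil^1_{\Nyg}\Prism_{(\cdot)}$ and $\Prism_{(\cdot)}$ take values in derived $p$-complete modules, and because $R\Gamma(B(\lim_p G), -) = \lim_n R\Gamma(BG[p^n], -)$ after derived $p$-completion by the exact sequence $0 \to T_p(G) \to \lim_p G \to G \to 0$ recalled in the notations. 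Concretely, one shows the relevant $\lim^1$ terms vanish (the systems are Mittag--Leffler because the transition maps $G^\vee[p^{n+1}] \to G^\vee[p^n]$ are surjective, hence so are the induced maps on the $\mathcal{E}xt^1$'s appearing in the proof of \cref{finiteflat0rev}).

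Then the proof concludes: taking $\lim_n$ of the short exact sequences \cref{finiteflat0} for $G[p^n]$ — which is exact on the left and in the middle automatically, and exact on the right because the inverse system of the sequences is Mittag--Leffler (the right-hand terms $\mathcal{E}xt^1(G^\vee[p^n], \Prism_{(\cdot)})$ have surjective transition maps, or alternatively the $\lim^1$ of the kernels, which are the $G^\vee[p^n]$ with surjective transitions, vanishes) — yields exactly \cref{p-div}, after identifying $\varphi_1 - \mathrm{can}$ on the limit with the limit of the maps $\varphi_1 - \mathrm{can}$ (this is immediate from naturality).

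\textbf{The main obstacle} I expect is the bookkeeping around exactness of inverse limits: making sure that the right-exactness in \cref{finiteflat0} survives passage to $\lim_n$, which requires a clean Mittag--Leffler / $\lim^1$-vanishing argument for the systems involved. A secondary technical point is justifying that $\mathcal{H}^i_{(S)_\qsyn}(BG, -)$ genuinely agrees with $\lim_n \mathcal{H}^i_{(S)_\qsyn}(BG[p^n], -)$ for the (filtered) prismatic coefficient sheaves in the relevant range of degrees; this uses derived $p$-completeness of the coefficients together with the colimit presentation $BG = \mathrm{colim}_n BG[p^n]$ and a degreewise boundedness argument so that the spectral sequences of \cref{finiteflat0rev} are compatible across $n$. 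Neither of these is serious, so the proof is short — it is really just "take inverse limits in \cref{finiteflat0rev}", mirroring how \cref{tatemodule} is deduced from \cref{imp}.
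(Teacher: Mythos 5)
There is a genuine gap at the very first step: you cannot verify the hypothesis of \cref{finiteflat0rev} for the torsion levels $G[p^n]$, and in fact it can fail. The vanishing $\mathcal{H}^1_{\Prism}(BG[p^n]) = \mathcal{H}om_{(S)_{\mathrm{qsyn}}}(G[p^n], \Prism_{(\cdot)}) = 0$ is exactly the ``pathology'' the paper warns about in \cref{pathology}: taking $G = \mathbb{Q}_p/\mathbb{Z}_p$, so $G[p^n] = \mathbb{Z}/p^n$, one has $\mathcal{H}om(\mathbb{Z}/p^n, \Prism_{(\cdot)}) = \Prism_{(\cdot)}[p^n]$, which is nonzero over a qrsp ring $S$ for which $\Prism_S$ has $p$-torsion. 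Both of your justifications break down: a homomorphism out of $G[p^n]$ does not extend to $G$ (restriction along $G[p^n] \hookrightarrow G$ goes the other way, and $\mathcal{H}om(G, \Prism_{(\cdot)}) = \lim_n \mathcal{H}om(G[p^n], \Prism_{(\cdot)})$ being zero says nothing about the individual terms), and the derived $p$-completeness argument of \cref{pde} uses that multiplication by $p$ is surjective on $G$ as a quasisyntomic sheaf, which is false for $G[p^n]$. So the plan ``apply \cref{finiteflat0rev} to each $G[p^n]$ and take $\lim_n$'' cannot get started in general. A secondary problem, even if one tried to salvage the limit argument with the four-term sequences including the $\mathcal{H}^1$-terms, is your identification $\lim_n \mathcal{H}^2_{\Prism}(BG[p^n]) \simeq \mathcal{H}^2_{\Prism}(BG)$: since $\mathcal{H}^1_{\Prism}(BG[p^n])$ need not vanish, there is a potential $\lim^1$ contribution, and the transition maps on $\mathcal{H}om(G[p^n], \Prism_{(\cdot)})$ (restriction along the inclusions) are not obviously surjective, so the Mittag--Leffler claim is unjustified.

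The paper avoids the torsion levels entirely: it observes that for the $p$-divisible group itself $\mathcal{H}^1_{\Prism}(BG) = 0$ by \cref{pde}, and then runs the argument of \cref{finiteflat0rev} verbatim with $BG$ for this $G$ — the spectral-sequence identifications $\mathcal{H}^2 \simeq \mathcal{E}xt^1$ for $\Prism_{(\cdot)}$ and $\Fil^1_{\Nyg}\Prism_{(\cdot)}$, the identification of the kernel via \cref{verynice} and \cref{tatemodule} (in place of \cref{imp}), and surjectivity on the right from $\mathcal{E}xt^2(G, \mathbb{Z}_p(1)) = 0$, which \cref{w} proves for $p$-divisible groups as well. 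If you want to keep a limit-style argument, the correct move is to take limits only after everything has been phrased for $G$ itself (as in the deduction of \cref{tatemodule} from \cref{imp}), not to feed the finite levels into \cref{finiteflat0rev}, whose hypothesis they need not satisfy.
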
{}
\begin{proof}
By \cref{pde}, in this case we have $\mathcal{H}^1_{\Prism} (BG)=0.$ The proof now follows exactly in the same way as \cref{finiteflat0rev}.\end{proof}{}

\begin{notation}
Let $\tau_{[-3,-2]} R\Gamma(B^2 G_{(\cdot)}, \mathbb{Z}_p(1))^\sharp$ denotes the $D(\mathbb{Z}_p)^{\wedge}_p$ valued sheaf obtained by sheafifying the presheaf that sends $$(S)_{\mathrm{qsyn}} \ni T \mapsto \tau_{[-3,-2]} R\Gamma(B^2 G_T, \mathbb{Z}_p(1));$$ similarly for $\tau_{[-3,-2]}\mathrm{Fil}^1_{\mathrm{Nyg}}R\Gamma_{\Prism}(B^2 G_{(\cdot)})\left \{1\right\}^\sharp$ and 
$\tau_{[-3,-2]} R\Gamma_{\Prism}(B^2 G_{(\cdot)})\left \{1\right\}^\sharp.$ We have the following:
\end{notation}{}

\begin{proposition}\label{finiteflatrev}Let $G$ be a finite locally free group scheme over a quasisyntomic ring $S$. We have the following fiber sequence of $D(\mathbb Z_p)^{\wedge}_p$-valued sheaves on $(S)_{\mathrm{qsyn}}$:
\begin{equation}\label{finiteflat2}
\begin{split}  \tau_{[-3,-2]} R\Gamma(B^2 G_{(\cdot)}, \mathbb{Z}_p(1))^\sharp \to \tau_{[-3,-2]}\mathrm{Fil}^1_{\mathrm{Nyg}}R\Gamma_{\Prism}(B^2 G_{(\cdot)})\left \{1\right\}^\sharp \\ \xrightarrow{\varphi_1 - \mathrm{can}} \tau_{[-3,-2]} R\Gamma_{\Prism}(B^2 G_{(\cdot)})\left \{1\right\}^\sharp.\end{split}
\end{equation}   
\end{proposition}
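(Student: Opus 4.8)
The plan is to bootstrap \cref{finiteflat2} from the untruncated identity \cref{verynice}. First I would apply \cref{verynice} to the higher stack $B^2G_T$ over each quasisyntomic $S$-algebra $T$ and sheafify, obtaining a fibre sequence of $D(\mathbb{Z}_p)^{\wedge}_p$-valued sheaves on $(S)_{\mathrm{qsyn}}$,
$$R\Gamma(B^2G_{(\cdot)}, \mathbb{Z}_p(1))^\sharp \to \mathrm{Fil}^1_{\mathrm{Nyg}}R\Gamma_{\Prism}(B^2G_{(\cdot)})\left\{1\right\}^\sharp \xrightarrow{\varphi_1-\mathrm{can}} R\Gamma_{\Prism}(B^2G_{(\cdot)})\left\{1\right\}^\sharp,$$
which I abbreviate $C \to A \xrightarrow{\psi} B$. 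As $C \to A \to B$ is a fibre sequence, the composite $C \to B$ carries a canonical nullhomotopy, so applying the truncation functor $\tau_{[-3,-2]}$ gives maps $\tau_{[-3,-2]}C \to \tau_{[-3,-2]}A \to \tau_{[-3,-2]}B$ whose composite is still canonically null, hence a canonical comparison map $\Phi\colon \tau_{[-3,-2]}C \to F$ to the fibre $F := \mathrm{Fib}(\tau_{[-3,-2]}A \xrightarrow{\psi} \tau_{[-3,-2]}B)$; the content of the proposition is exactly that $\Phi$ is an equivalence. (Truncation to a bounded cohomological window does not preserve fibre sequences in general, so this genuinely requires checking.) Since $\tau_{[-3,-2]}C$ lives in cohomological degrees $[2,3]$ and $F$, being the fibre of a map between objects in $[2,3]$, lives in $[2,4]$, it suffices to verify that $\Phi$ induces an isomorphism on the cohomology sheaves $\mathcal{H}^i$ for $i\in\{2,3,4\}$.

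The inputs are \cref{ww} and \cref{w}. By \cref{ww}, $\mathcal{H}^2(C)=\mathcal{H}^2_{(S)_{\mathrm{qsyn}}}(B^2G,\mathbb{Z}_p(1))=0$ and $\mathcal{H}^3(C)=\mathcal{H}^3_{(S)_{\mathrm{qsyn}}}(B^2G,\mathbb{Z}_p(1))=G^\vee$. The vanishing of $\mathcal{H}^2(C)$, read off the long exact sequence of $C\to A\to B$, forces $\mathcal{H}^2(A)\to\mathcal{H}^2(B)$ to be injective, whence $\mathcal{H}^2(F)=\ker(\mathcal{H}^2(A)\to\mathcal{H}^2(B))=0=\mathcal{H}^2(\tau_{[-3,-2]}C)$; this settles $i=2$. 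For $i=4$ one has $\mathcal{H}^4(\tau_{[-3,-2]}C)=0$ and $\mathcal{H}^4(F)=\mathrm{coker}(\psi\colon\mathcal{H}^3(A)\to\mathcal{H}^3(B))$, so it remains to show that $\varphi_1-\mathrm{can}$ is surjective on $\mathcal{H}^3$. I would establish this exactly along the lines of the proof of \cref{finiteflat0rev}: descent along $*\to B^2G$, together with the homology computations $H^{-1}(\mathbb{Z}[B^2G])=H^{-3}(\mathbb{Z}[B^2G])=0$ and $H^{-2}(\mathbb{Z}[B^2G])=G$ (used in the proof of \cref{ww}) and the vanishing of higher cohomology of $\Prism_{(\cdot)}$ and $\mathrm{Fil}^1_{\mathrm{Nyg}}\Prism_{(\cdot)}$ on qrsp algebras, identifies $\mathcal{H}^3(A)\simeq\mathcal{E}xt^1_{(S)_{\mathrm{qsyn}}}(G,\mathrm{Fil}^1_{\mathrm{Nyg}}\Prism_{(\cdot)})\left\{1\right\}$ and $\mathcal{H}^3(B)\simeq\mathcal{E}xt^1_{(S)_{\mathrm{qsyn}}}(G,\Prism_{(\cdot)})\left\{1\right\}$ with $\varphi_1-\mathrm{can}$ between them; applying $\mathcal{E}xt^\bullet_{(S)_{\mathrm{qsyn}}}(G,-)$ to the sheaf-level fibre sequence $\mathbb{Z}_p(1)\to\mathrm{Fil}^1_{\mathrm{Nyg}}\Prism_{(\cdot)}\left\{1\right\}\xrightarrow{\varphi_1-\mathrm{can}}\Prism_{(\cdot)}\left\{1\right\}$ (which is \cref{verynice} for $\mathcal{Y}$ the final object) and invoking $\mathcal{E}xt^2_{(S)_{\mathrm{qsyn}}}(G,\mathbb{Z}_p(1))=0$ from \cref{w} then gives the surjectivity, so $\mathcal{H}^4(F)=0=\mathcal{H}^4(\tau_{[-3,-2]}C)$.

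The remaining case $i=3$ is formal: the long exact sequences of $C\to A\to B$ and of $F\to\tau_{[-3,-2]}A\to\tau_{[-3,-2]}B$ coincide on the terms $\mathcal{H}^2(A),\mathcal{H}^2(B),\mathcal{H}^3(A),\mathcal{H}^3(B)$, truncation being an isomorphism on these cohomology sheaves, and $\Phi$ induces a commuting ladder between them which is the identity on those four terms and $\mathcal{H}^3(\Phi)$ in the middle; the five lemma then shows $\mathcal{H}^3(\Phi)\colon G^\vee=\mathcal{H}^3(C)\longisoto\mathcal{H}^3(F)$. Hence $\Phi$ is an equivalence and \cref{finiteflat2} follows. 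I expect the one genuine point to be the $i=4$ step, i.e. the surjectivity of $\varphi_1-\mathrm{can}$ on $\mathcal{H}^3$, which ultimately rests on $\mathcal{E}xt^2_{(S)_{\mathrm{qsyn}}}(G,\mathbb{Z}_p(1))=0$ (\cref{w})---the reflection of $\mathbb{Z}_p(1)$ being ``of weight one''---with everything else being truncation bookkeeping and \cref{ww}. One should also record the routine compatibility of $\tau_{[-3,-2]}$ with sheafification $(-)^\sharp$, by $t$-exactness of sheafification, and the $t$-exactness of the Breuil--Kisin twist $\left\{1\right\}$, which qrsp-locally is tensoring with a free rank-one module placed in degree zero.
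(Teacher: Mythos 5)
Your proof is correct and follows essentially the same route as the paper: sheafify \cref{verynice} for $B^2G$, identify the degree-$3$ cohomology sheaves of the Nygaard-filtered and unfiltered terms with $\mathcal{E}xt^1_{(S)_{\mathrm{qsyn}}}(G,-)$ via descent along $*\to B^2G$, and deduce surjectivity of $\varphi_1-\mathrm{can}$ on $\mathcal{H}^3$ from $\mathcal{E}xt^2_{(S)_{\mathrm{qsyn}}}(G,\mathbb{Z}_p(1))=0$ (\cref{w}), the truncation comparison being routine. The only (harmless) deviation is at the degree-$2$ boundary, where you invoke $\mathcal{H}^2_{(S)_{\mathrm{qsyn}}}(B^2G,\mathbb{Z}_p(1))=0$ from \cref{ww} in place of the paper's vanishing $\mathcal{H}^1_{\Prism}(B^2G)=0$; both are proved by the same descent spectral sequence and either suffices.
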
{}
\vspace{1mm}
\begin{proof}
Using \cref{verynice} for $\mathcal{Y}:= B^2 G,$ it would suffice to show that $\mathcal{H}^1_{\Prism}(B^2G)=0$ and the map $\mathcal{H}^3_{(S)_{\mathrm{qsyn}}} (B^2 G, \mathrm{Fil}^1_{\mathrm{Nyg}} \Prism_{(\cdot)}) \xrightarrow{\varphi_i - \mathrm{can}} \mathcal{H}^3_{(S)_{\mathrm{qsyn}}} (B^2 G, \Prism_{(\cdot)})$ is surjective.

To this end, we use the $E_2$-spectral sequence 
\begin{equation}\label{u}
E_2^{i,j}=\mathrm{Ext}^i_{(S')_{\mathrm{qsyn}}}(H^{-j} (\mathbb{Z}[B^2G_{S'}]), \Prism_{(\cdot)} ) \implies H^{i+j}_{(S')_{\mathrm{qsyn}}}(B^2G_{S'}, \Prism_{(\cdot)})  
\end{equation} for any quasisyntomic algebra $S'$ over $S.$ Since $H^{-1} (\mathbb{Z}[B^2G])=0,$ the above spectral sequence gives $\mathcal{H}^1_{\Prism} (B^2 G)=0.$ Using the same spectral sequence and the fact that $H^{-3}(\mathbb{Z}[B^2G])=0, H^{-2} (\mathbb{Z}[B^2G]) \simeq G$, and $H^{-1}(\mathbb{Z}[B^2 G])=0,$ it follows that $$\mathcal{H}^3_{(S)_{\mathrm{qsyn}}}(B^2 G, \Prism_{(\cdot)}) \simeq \mathcal{E}xt^1_{(S)_{\mathrm{qsyn}}}(G, \Prism_{(\cdot)}).$$ By an entirely similar argument, we have $$\mathcal{H}^3_{(S)_{\mathrm{qsyn}}}(B^2 G, \mathrm{Fil}^1_{\mathrm{Nyg}}\Prism_{(\cdot)}) \simeq \mathcal{E}xt^1_{(S)_{\mathrm{qsyn}}}(G, \mathrm{Fil}^1_{\mathrm{Nyg}}\Prism_{(\cdot)}).$$ The desired surjectivity is equivalent to the surjectivity of $$\mathcal{E}xt^1_{(S)_{\mathrm{qsyn}}}(G, \mathrm{Fil}_{\mathrm{Nyg}}^1 \Prism_{(\cdot)})\xrightarrow{\varphi_1 - \mathrm{can}}\mathcal{E}xt^1_{(S)_{\mathrm{qsyn}}}(G, \Prism_{(\cdot)}).$$ However, this surjectivity follows from the vanishing $\mathcal{E}xt^{2}_{(S)_{\mathrm{qsyn}}}(G, \mathbb{Z}_p(1))=0$ (\cref{w}).
\end{proof}{}

\begin{proposition}
Let $G$ be a finite locally free group scheme over a quasisyntomic ring $S.$ We have the following fiber sequence of $D(\mathbb{Z}_p)^{\wedge}_p$-valued sheaves on $(S)_{\mathrm{qsyn}}$:   
\begin{equation}\label{finiteflat}
\begin{split}  R\Gamma_{\mathrm{qsyn}}((\cdot), G^\vee) [-3] \to \tau_{[-3,-2]}\mathrm{Fil}^1_{\mathrm{Nyg}}R\Gamma_{\Prism}(B^2 G_{(\cdot)})\left \{1\right\}^\sharp \\ \xrightarrow{\varphi_1 - \mathrm{can}} \tau_{[-3,-2]} R\Gamma_{\Prism}(B^2 G_{(\cdot)})\left \{1\right\}^\sharp. \end{split}
\end{equation}
\end{proposition}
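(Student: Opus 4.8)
The plan is to obtain \cref{finiteflat} directly from \cref{finiteflatrev} by recognizing the first term of the fiber sequence produced there. Recall that \cref{finiteflatrev} already furnishes the fiber sequence
\[
\tau_{[-3,-2]} R\Gamma(B^2 G_{(\cdot)}, \mathbb{Z}_p(1))^\sharp \to \tau_{[-3,-2]}\mathrm{Fil}^1_{\mathrm{Nyg}}R\Gamma_{\Prism}(B^2 G_{(\cdot)})\{1\}^\sharp \xrightarrow{\varphi_1 - \mathrm{can}} \tau_{[-3,-2]} R\Gamma_{\Prism}(B^2 G_{(\cdot)})\{1\}^\sharp
\]
of $D(\mathbb{Z}_p)^{\wedge}_p$-valued sheaves on $(S)_{\mathrm{qsyn}}$, whose two rightmost terms are literally those appearing in \cref{finiteflat}. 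So the entire task reduces to producing a canonical equivalence $\tau_{[-3,-2]} R\Gamma(B^2 G_{(\cdot)}, \mathbb{Z}_p(1))^\sharp \simeq R\Gamma_{\mathrm{qsyn}}((\cdot), G^\vee)[-3]$ of sheaves and substituting it into that fiber sequence; the fiber sequence property is preserved under an equivalence of one of the terms.

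For this I would invoke \cref{ww}, which computes exactly the two sheafified cohomology groups that are relevant: $\mathcal{H}^2_{(S)_{\mathrm{qsyn}}}(B^2G, \mathbb{Z}_p(1)) = 0$ and $\mathcal{H}^3_{(S)_{\mathrm{qsyn}}}(B^2G, \mathbb{Z}_p(1)) \simeq G^\vee$. Since the truncation $\tau_{[-3,-2]}$ retains only cohomological degrees $2$ and $3$, the vanishing in degree $2$ forces $\tau_{[-3,-2]} R\Gamma(B^2 G_{(\cdot)}, \mathbb{Z}_p(1))^\sharp$ to be concentrated in cohomological degree $3$, where it is $G^\vee$; hence it is canonically isomorphic to $G^\vee[-3]$.

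It then remains to recognize $G^\vee[-3]$ as the object denoted $R\Gamma_{\mathrm{qsyn}}((\cdot), G^\vee)[-3]$: since $G^\vee$ is a discrete abelian sheaf on $(S)_{\mathrm{qsyn}}$, the sheafification of the presheaf $T \mapsto R\Gamma_{\mathrm{qsyn}}(T, G^\vee)$ is just $G^\vee$ placed in degree $0$, because the higher cohomology presheaves $T \mapsto H^i_{\mathrm{qsyn}}(T, G^\vee)$, $i>0$, vanish after sheafification. Substituting this identification into the fiber sequence of \cref{finiteflatrev} yields \cref{finiteflat}. (One could also argue directly, imitating the proof of \cref{finiteflat0rev} but with $B^2 G$ in place of $BG$, using \cref{ww} in place of \cref{imp} and keeping \cref{w} for the surjectivity on the right; but routing through \cref{finiteflatrev} is shorter and reuses work already done.)

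There is no substantive obstacle in this last step, the content having been isolated in \cref{finiteflatrev} and \cref{ww}; the only things that need a moment's attention are bookkeeping. One should confirm that the truncation window $[-3,-2]$ in \cref{finiteflatrev} is exactly the window in which \cref{ww} computes the cohomology sheaves, so that nothing outside degrees $2$ and $3$ is being silently discarded or needed, and that the Breuil--Kisin twist $\{1\}$ carried by the two right-hand terms is precisely the one dictated by the syntomic fiber sequence \cref{verynice} applied to $\mathcal{Y} = B^2 G$; both are immediate from the cited statements.
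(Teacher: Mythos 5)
Your proposal is correct and matches the paper's own argument, which deduces \cref{finiteflat} precisely by combining \cref{finiteflatrev} with \cref{ww} to identify the first term $\tau_{[-3,-2]} R\Gamma(B^2 G_{(\cdot)}, \mathbb{Z}_p(1))^\sharp$ with $G^\vee[-3] \simeq R\Gamma_{\mathrm{qsyn}}((\cdot), G^\vee)[-3]$. Your bookkeeping remarks (truncation window, twist, and the vanishing of sheafified higher cohomology of the discrete sheaf $G^\vee$) are exactly the implicit steps behind the paper's one-line proof.
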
{}
\begin{proof}

This follows from \cref{ww} and \cref{finiteflatrev}. Indeed, by \cref{ww}, $\tau_{[-3,-2]} R\Gamma(B^2 G_{(\cdot)}, \mathbb{Z}_p(1))^\sharp \simeq (G^\vee(\cdot)[-3])^\sharp \simeq R\Gamma_\qsyn ((\cdot), G^\vee)[-3].$
\end{proof}{}

\begin{remark}\label{tricky}
The above exact sequences make it clear that it is possible to reconstruct $G$ from prismatic cohomology, and what other \textit{extra data} is essential for the purpose of reconstruction. Namely, we use $\mathcal{H}^2_{\Prism}(BG)$ or $\tau_{[-2,-3]} R\Gamma_{\Prism}(B^2G),$ viewed as a prismatic crystal, as well as the Nygaard filtration, and the \textit{divided} Frobenius. If one wanted to fully faithfully embed $p$-divisible groups and finite locally free group schemes, one naturally looks for a category where all of these data make sense. This is given by the derived category of the stack $S^{\mathrm{syn}},$ introduced by Drinfeld and Bhatt--Lurie, which gives the category of coefficients for prismatic cohomology along with the other natural data such as the Nygaard filtration, divided Frobenius, etc. We will see that the formalism of quasi-coherent sheaves on these stacks (which automatically keeps track of the relevant data) gives a very convenient framework for Dieudonn\'e theory.
\end{remark}{}

\subsection{Preliminaries on prismatic $F$-gauges}\label{korma1}
 For the rest of this section, the base ring $S$ should be assumed to be a $p$-complete quasisyntomic ring unless otherwise mentioned. In this set up, one can consider the prismatization stacks $\mathrm{Spf}(S)^\Prism, \mathrm{Spf}(S)^{\mathrm{Nyg}}$ and $\mathrm{Spf}(S)^{\mathrm{syn}}$ as defined in \cite{drinew}, \cite{BhaLur2}; they are also discussed in \cite{fg}, which will be our main reference. The category of prismatic $F$-gauges, by definition (see \cref{prismfga}), is the derived $\infty$-category of quasicoherent sheaves on the stack $\mathrm{Spf}(S)^\syn.$ In \cref{fgaugeff}, we give an equivalent description of the category of prismatic $F$-gauges by working in the quasisyntomic site of $S,$ which is similar to the notion of $F$-gauges in \cite{FJ13} in spirit. In \cref{comppp}, we explain how to view admissible prismatic Dieudonn\'e modules introduced in \cite{alb} in terms of prismatic $F$-gauges.

\begin{definition}[$p$-adic formal stack]\label{nota1}
 A functor $X$ from the category of $p$-nilpotent animated rings to the $\infty$-category $\mathcal{S}$ will be simply called a $p$-adic formal prestack. A $p$-adic formal prestack $X$ that satisfies descent (in the sense of \cite[\S~6.1.3]{Lur09}) for the fpqc topology will be called a $p$-adic formal stack.   
\end{definition}{}

\begin{example}
 When $R$ is qrsp, see \cite[Rmk.~5.5.3]{fg} for the description of $\mathrm{Spf}(R)^{\mathrm{Nyg}}$ as a functor defined on $p$-nilpotent rings.   
\end{example}{}

\begin{remark}\label{presen}
By \cite[Rmk.~5.5.18]{fg} (cf.~\cite[Cor.~6.12.5]{madapusi}), using quasisyntomic descent, for a quasisyntomic ring $S,$ one has a natural isomorphism of $p$-adic formal stacks 
\begin{equation}\label{descentnyg1}
   \mathrm{colim}_{\substack{S \to R; \\ R~\text{is qrsp} }} \mathrm{Spf}(R)^{\mathrm{Nyg}} \xrightarrow{ \sim } \mathrm{Spf}(S)^{\mathrm{Nyg}}. 
\end{equation}{}
In particular, let $S \to R_0$ be a chosen quasisyntomic cover, where $R_0$ is qrsp. Then each term in the Cech conerve of $S \to R_0$ is a qrsp algebra (\cite[Lem.~4.30]{BMS1}). Since $\mathrm{Spf} (R_0)^{\mathrm{Nyg}} \to \mathrm{Spf} (S)^{\mathrm{Nyg}}$ is a surjection of $p$-adic formal stacks (\cite[Cor.~6.12.5]{madapusi}), by applying descent, and the Tor-independent finite product preservation property of the functor $X \mapsto X^{\mathrm{Nyg}}$ (see \cite[Rmk.~5.5.18]{fg}), it follows that we have natural isomorphisms
\begin{equation}\label{simpny}
 \mathrm{colim}_{[n] \in \Delta^{\mathrm{op}}}\mathrm{Spf}(R_0^{[n]})^{\mathrm{Nyg}} \simeq  \mathrm{colim}_{[n] \in \Delta^{\mathrm{op}}}\left(\mathrm{Spf}(R_0)^{\mathrm{Nyg}}\right)^{[n]} \simeq  \mathrm{Spf}(S)^{\mathrm{Nyg}},   
\end{equation}where $R_0^{[n]}:= R_0^{\otimes_S n}$ is a qrsp algebra. Note that one defines (cf.~\cref{sc}) 
\begin{equation}\label{sick3}
 \mathrm{Spf}(S)^{\mathrm{syn}}:=  \mathrm{coeq} \left(\xymatrix{
  \mathrm{Spf}(S)^\Prism\ar@<1ex>[r]^{\mathrm{can}}\ar@<-1ex>[r]_{\varphi} & \mathrm{Spf}(S)^{\mathrm{Nyg}}
 }\right) .  \end{equation}
 Therefore, by \cref{descentnyg1}, we have a natural isomorphism of $p$-adic formal stacks \begin{equation}\label{descentsyn1}
   \mathrm{colim}_{\substack{S \to R; \\ R~\text{is qrsp} }} \mathrm{Spf}(R)^{\mathrm{syn}} \xrightarrow{ \sim } \mathrm{Spf}(S)^{\mathrm{syn}}. 
\end{equation}Furthermore, using the cover $S \to R_0,$ from \cref{simpny} we have a natural isomorphism 
\begin{equation}\label{simpsyn}
 \mathrm{colim}_{[n] \in \Delta^{\mathrm{op}}}\mathrm{Spf}(R_0^{[n]})^{\mathrm{syn}} \simeq \mathrm{Spf}(S)^{\mathrm{syn}}.   
\end{equation}These covers by syntomification stacks associated to qrsp algebras will be used later in certain arguments.
\end{remark}{}

\begin{remark}\label{introdd}
 The stacks $(S)^\Prism,$ $(S)^{\mathrm{Nyg}}$ and $(S)^\syn$ are related to the theory of (absolute) prismatic cohomology in the following way: there is a canonical line bundle on each of these stacks, denoted as $\mathcal{O}\left \{i \right \}$, and called the Breuil--Kisin twist (see \cref{bktwis}) such that $$R\Gamma(S^\Prism, \mathcal{O}\left \{ i \right \}) \simeq \Prism_S \left \{i \right \},$$ $$R\Gamma(S^\Nyg, \mathcal{O}\left \{ i \right \}) \simeq \Fil^i_\Nyg\Prism_S \left \{i \right \},$$ and $$R\Gamma(S^\syn, \mathcal{O}\left \{ i \right \}) \simeq R\Gamma_\syn (S, \mathbb{Z}_p(i)),$$ where the latter denotes syntomic cohomology of weight $i.$ See \cref{lion} for certain generalizations.
\end{remark}{}




\begin{construction}[Quasi-coherent sheaves on $p$-adic formal stacks]\label{conspadicqcoh}
Let $X$ be a $p$-adic formal prestack. One defines the quasi-coherent derived $\infty$-category of $X$ denoted as $$D_{\mathrm{qc}}(X):= \varprojlim_{(\mathrm{Spec}\, T \to X) \in \mathcal{C}_X} D(T),$$ where $D(T)$ is the derived category of the $p$-nilpotent animated ring $T.$ By faithfully flat descent, the association $X \mapsto D_{\mathrm{qc}}(X)$, viewed as a functor from the opposite $\infty$-category of $p$-adic formal stacks to the $\infty$-category of (not necessarily small) $\infty$-categories (denoted as $\widehat{\mathcal{C}\mathrm{at}}_\infty$) is a sheaf in the $\infty$-categorical sense, i.e., it preserves small limits (see e.g., \cite[Ex.~4.2.4, Ex. 4.2.5]{Lur2}, cf.~\cite[Prop.~6.2.3.1]{spectra}). 
\end{construction}{}

\begin{definition}[Prismatic $F$-gauges]\label{prismfga}
 Let $S$ be a quasi-syntomic ring. One defines the stable $\infty$-category of prismatic $F$-gauges over $S$ to be
$$F\text{-Gauge}_{\Prism}(S) := D_{\mathrm{qc}}(\mathrm{Spf}(S)^{\mathrm{syn}}).$$   
\end{definition}{}

\begin{remark}\label{si}
By \cref{descentsyn1} and the sheaf property discussed in \cref{conspadicqcoh}, it follows that
\begin{equation}\label{willprobneedit}
F\text{-Gauge}_{\Prism}(S) \simeq \lim_{\substack{S \to R; \\R~\text{is qrsp}}} F\text{-Gauge}_{\Prism}(R).   
\end{equation}
In other words, specifying a quasicoherent sheaf on $\mathrm{Spf}(S)^{\mathrm{syn}}$ is equivalent to producing a quasicoherent sheaf on $\mathrm{Spf}(R)^{\mathrm{syn}}$ for every map $S \to R,$ where $R$ is qrsp, in a manner that is compatible with base change.
\end{remark}{}

\begin{remark}[Explicit description of $F$-gauges]\label{cherry}
When $R$ is qrsp, it is possible to describe the category $F\text{-Gauge}_{\Prism}(R)$ completely algebraically. Let $(\Prism_R, I)$ be the prism associated to $R.$ The Nygaard filtered prismatic cohomology of $R$ denoted as $\mathrm{Fil}^\bullet_{\mathrm{Nyg}}\Prism_R$ is naturally a filtered ring. Note that the Frobenius on prismatic cohomology naturally defines a map $\mathrm{Fil}^\bullet_{\mathrm{Nyg}} \Prism_R \to I^\bullet \Prism_R$ of filtered rings. 

Now, $F\text{-Gauge}_{\Prism}(R)$ is the $\infty$-category of $(p,I)$-complete $\mathbb{Z}$-indexed filtered modules $\mathrm{Fil}^\bullet M$ over $\mathrm{Fil}^\bullet_{\mathrm{Nyg}}\Prism_R$ equipped with a $\mathrm{Fil}^\bullet_{\mathrm{Nyg}}\Prism_R$-linear map $$\varphi: \mathrm{Fil}^\bullet M \to I^\bullet \Prism_R \otimes_{\Prism_R} M=: I^\bullet M$$ (where the $\mathrm{Fil}^\bullet_{\mathrm{Nyg}} \Prism_R$-module structure on the right hand side is obtained by restriction of scalar and $M$ is the $(p,I)$-completed underlying object of $\mathrm{Fil}^\bullet M$) with the property that the map 
\begin{equation}\label{who1}    
\mathrm{Fil}^\bullet M \widehat{\otimes}_{\mathrm{Fil}^\bullet_{\mathrm{Nyg}}\Prism_R} I^\bullet \Prism_R \to I^\bullet M
\end{equation}{}
associated via adjunction is an isomorphism of filtered $I^\bullet \Prism_R$-modules.

Combined with \cref{si}, in principle, we obtain a purely algebraic description of $F\text{-Gauge}_{\Prism}(S)$ for any quasisyntomic ring $S.$ 
\end{remark}{}

\begin{example}Let $k$ be a perfect field of characteristic $p$. Then the category $F\text{-Gauge}_{\Prism}(k)$ is equivalent to the category of ``$\varphi$-gauges" introduced by Fontaine--Jannsen \cite[\S~1.2]{FJ13}. See the discussion in \cite[Rmk.~3.4.6]{fg}.
\end{example}{}

\begin{example}[Breuil--Kisin twists]\label{bktwis} For any quasisyntomic ring $S,$ the stack $\mathrm{Spf}(S)^{\mathrm{syn}}$ is equipped with a line bundle $\mathcal{O}\left \{1 \right \}$ arising from the Breuil--Kisin twist in prismatic cohomology. Let $\mathcal{O}\left\{n\right\} := \mathcal{O}\left\{1\right\}^{\otimes n}$ for $n \in \mathbb{Z}.$ We explain an algebraic description of this in the case when $S$ is qrsp, which glues to define the desired line bundle on $\mathrm{Spf}(S)^{\mathrm{syn}}$. For $n \in \mathbb{Z},$ the filtered $\mathrm{Fil}^\bullet_{\mathrm{Nyg}} \Prism_S$-module defined by setting
$$\mathrm{Fil}^i M:= \mathrm{Fil}^{i-n}_{\mathrm{Nyg}} \Prism_S \otimes_{\Prism_S} \Prism_S \left \{ -n \right \}$$ equipped with the natural Frobenius map (obtained as tensor product of the Frobenius  $\Fil^{i-n}_{\mathrm{Nyg}} \Prism_S \to I^{i-n} \Prism_S$ and the Frobenius $\Prism_S \left \{-n\right \} \to I^n \Prism_S \left \{-n\right \}$, where $I$ is the ideal of $\Prism_S$; see \cite[Rmk.~2.5.9]{BhaLu}) defines an object of $F\text{-Gauge}_{\Prism}(S)$ that gives the Breuil--Kisin twist $\mathcal{O}\left \{-n \right \}.$ In order to see the isomorphism in \cref{who1}, one uses transitivity property of tensor products and the fact that the Frobenius map $\Prism_S \left \{-n \right \} \to I^n \Prism_S \left \{-n \right \}$ induces a canonical $\Prism_S$-linear  isomorphism $\varphi^*_{\Prism_S} (\Prism_S \left \{-n \right \}) \simeq I^n \Prism_S \left \{-n \right \}$ (see \cite[Rmk.~2.5.9]{BhaLu}).
    
\end{example}{}

\begin{remark}[Effective $F$-gauges]
 When $R$ is qrsp, an object of $F\text{-Gauge}_{\Prism}(R)$ is called effective if the underlying $\mathbb{Z}$-filtered object $\mathrm{Fil}^{\bullet} M$ has the property that the natural maps $\mathrm{Fil}^i M \to \mathrm{Fil}^{i-1}M$ are all isomorphisms for $i \le 0.$ 
\end{remark}{}

\begin{example}
 Let $X$ be a quasisyntomic $p$-adic formal over the qrsp algebra $R.$ Then by quasisyntomic descent, the filtered $\mathrm{Fil}^\bullet_{\mathrm{Nyg}} \Prism_R$-module $\mathrm{Fil}^\bullet_{\mathrm{Nyg}}R\Gamma_{\Prism}(X) $ equipped with the natural Frobenius map has the structure of an effective $F$-gauge.
\end{example}{}

\begin{definition}[Weak $F$-gauges]\label{earthq1}
 Let $R$ be a qrsp ring. The category of prismatic weak $F$-gauges over $R$ denoted as $F\text{-Gauge}^{\mathrm{w}}_{\Prism}(R)$ is the $\infty$-category of $(p,I)$-complete $\mathbb{Z}$-indexed filtered modules $\mathrm{Fil}^\bullet M$ over $\mathrm{Fil}^\bullet_{\mathrm{Nyg}}\Prism_R$ equipped with a $\mathrm{Fil}^\bullet_{\mathrm{Nyg}}\Prism_R$-linear map $$\varphi: \mathrm{Fil}^\bullet M \to I^\bullet \Prism_R \otimes_{\Prism_R} M=: I^\bullet M;$$ as before, the $\mathrm{Fil}^\bullet_{\mathrm{Nyg}} \Prism_R$-module structure on the right hand side is obtained by restriction of scalar and $M$ is the $(p,I)$-completed underlying object of $\mathrm{Fil}^\bullet M$. Note that the natural functor $F\text{-Gauge}^{\mathrm{}}_{\Prism}(R) \to F\text{-Gauge}^{\mathrm{w}}_{\Prism}(R)$ is fully faithful.
\end{definition}{}

\begin{remark}\label{uset}
 If $\Fil^\bullet M$ is a weak prismatic $F$-gauge over a qrsp ring $R$ such that $\Fil^n M$ stabilizes for $n \ll 0,$ then by passing to filtered colimits on both sides of the map $$\varphi: \mathrm{Fil}^\bullet M \to I^\bullet \Prism_R \otimes_{\Prism_R} M=: I^\bullet M,$$ we obtain a map $$\varphi_M: (\varphi_{\Prism_R}^* M)[1/I] \to M[1/I].$$ If $\Fil^\bullet M$ is further assumed to be a dualizable prismatic $F$-gauge, then $\varphi_M$ is an isomorphism; the latter claim is obtained by passing to underlying objects of the filtered isomorphism in \cref{who1}, where the tensor product in the left can be computed in the non-completed sense by dualizability of $\Fil^\bullet M$.
\end{remark}{}

\begin{construction}[Canonical $t$-structures]\label{fl}
We call a prismatic weak $F$-gauge as above effective if the underlying $\mathbb{Z}$-filtered object $\mathrm{Fil}^\bullet M$ has the property that the maps $\mathrm{Fil}^iM \to \mathrm{Fil}^{i-1} M$ are isomorphisms for $i \le 0.$ We note that the category of effective prismatic weak $F$-gauges (which is a stable $\infty$-category with all small limits and colimits) has the pleasant feature that it is canonically equipped with a $t$-structure, where an object is connective (resp. coconnective) if the underlying filtered module $\mathrm{Fil}^\bullet M$ has the property that $\mathrm{Fil}^i M$ is connective (resp. coconnective) for all $i \in \mathbb{Z}.$ \end{construction}

\begin{remark}[Gauges as filtered crystals on the quasisyntomic site]\label{vanc} Suppose that $S$ is a quasisyntomic ring. In \cite{fg}, one defines the category of prismatic gauges over $S$ to be $D_{\mathrm{qc}}(\mathrm{Spf}(S)^{\mathrm{Nyg}}).$ We will explain how to think of them in the quasisyntomic site of $S.$ Let $(S)_{\mathrm{qrsp}}$ denote category of qrsp algebras over $S$ equipped with the quasisyntomic topology, which, by \cite[Lem.~4.27]{BMS2} forms a site (also see \cite[Variant.~4.33]{BMS2}).

 Note that the association $$(S)_{\mathrm{qrsp}} \ni R \mapsto \mathrm{Fil}^{\bullet}_{\mathrm{Nyg}}\Prism_{R}$$ defines a $D(\mathbb {Z})$-valued sheaf of $\mathbb{Z}$-indexed filtered rings on $(S)_{\mathrm{qrsp}}$, which we denote by $\mathrm{Fil}^{\bullet}\Prism_{(\cdot)}$. Also, the association $(S)_{\mathrm{qrsp}} \ni R \mapsto I$,
 where $I$ is the invertible ideal of $\Prism_R$ defines an invertible sheaf of $\Prism_{(\cdot)}$-modules which we will denote by $\mathscr I.$ Let $\mathrm{Gauge}_{\Prism}(S)$ be the $\infty$-category of derived $(p,\mathscr{I})$-complete sheaf of filtered modules $\mathrm{Fil}^{\bullet}\mathscr{M}$ over the sheaf of filtered ring $\mathrm{Fil}^{\bullet} \Prism_{(\cdot)}$ that satisfies the following ``crystal" property: for every map $R \to T$ in $(S)_{\mathrm{qrsp}},$ the natural map $\mathrm{Fil}^{\bullet}\mathscr{M} (R) \to \mathrm{Fil}^{\bullet} \mathscr{M} (T)$ of filtered $\mathrm{Fil}^{\bullet} \Prism_{R}$-modules induces an isomorphism
\begin{equation}\label{crystalprop}
\mathrm{Fil}^{\bullet} \mathscr{M}(R) \widehat{\otimes}_{\mathrm{Fil}^{\bullet} \Prism_R} \mathrm{Fil}^{\bullet} \Prism_{T} \xrightarrow{\sim} \mathrm{Fil}^{\bullet} \mathscr{M}(T)    
\end{equation}{}
of derived $(p,\mathscr{I}(T))$-complete $\mathrm{Fil}^{\bullet} \Prism_T$-modules. 

\end{remark}{}
\begin{proposition}\label{equivalence}
 Let $S$ be a quasisyntomic ring. Then $D_{\mathrm{qc}}(\Spf(S)^\Nyg) \simeq \mathrm{Gauge}_{\Prism}(S).$   
\end{proposition}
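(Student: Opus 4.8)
The plan is to identify $D_{\mathrm{qc}}(\Spf(S)^{\Nyg})$ with $\mathrm{Gauge}_{\Prism}(S)$ by first reducing to the case where $S = R$ is qrsp, then unwinding the stacky definition of $\Spf(R)^{\Nyg}$ to match the algebraic data in \cref{vanc}, and finally reassembling via descent. For the reduction: by \cref{simpny}, for a chosen qrsp cover $S \to R_0$ we have $\Spf(S)^{\Nyg} \simeq \mathrm{colim}_{[n]\in\Delta^{\mathrm{op}}} \Spf(R_0^{[n]})^{\Nyg}$ with each $R_0^{[n]}$ qrsp, and $D_{\mathrm{qc}}$ turns this colimit of stacks into a limit of $\infty$-categories; on the other side, the crystal condition \cref{crystalprop} is by design a descent (sheaf) condition for the quasisyntomic topology on $(S)_{\mathrm{qrsp}}$, so $\mathrm{Gauge}_{\Prism}(S) \simeq \lim_{[n]\in\Delta} \mathrm{Gauge}_{\Prism}(R_0^{[n]})$ as well. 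Hence it suffices to produce an equivalence $D_{\mathrm{qc}}(\Spf(R)^{\Nyg}) \simeq \mathrm{Gauge}_{\Prism}(R)$ for $R$ qrsp that is natural in $R$ (so that it is compatible with the cosimplicial structure).

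For the qrsp case, recall (\cite[Rmk.~5.5.3, Rmk.~5.5.16]{fg}) that $\Spf(R)^{\Nyg} = \Spf\bigl(\bigoplus_{i\in\mathbb{Z}} \Fil^i_{\Nyg}\Prism_R\{i\}\bigr)/\mathbb{G}_m$ is the quotient of an affine $p$-adic formal scheme by $\mathbb{G}_m$. By fpqc descent along $\Spf\bigl(\bigoplus_i \Fil^i_{\Nyg}\Prism_R\{i\}\bigr) \to \Spf(R)^{\Nyg}$, a quasicoherent sheaf on $\Spf(R)^{\Nyg}$ is the same as a $(p,I)$-complete graded module over the graded ring $\bigoplus_i \Fil^i_{\Nyg}\Prism_R\{i\}$ — and a graded module over a filtered ring (via the Rees/$\mathbb{G}_m$-quotient dictionary) is exactly a $\mathbb{Z}$-indexed filtered module over $\Fil^\bullet_{\Nyg}\Prism_R$. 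This is the standard equivalence between graded modules on $[\mathrm{Spec}(\mathrm{Rees}(A^\bullet))/\mathbb{G}_m]$ and filtered $A^\bullet$-modules; I would cite it (e.g. \cite{fild} or the relevant portion of \cite{fg}) rather than reprove it. The $(p,I)$-completeness on the module side matches the derived $p$-adic completeness built into the formal-stack picture together with the completeness of $\Fil^\bullet_{\Nyg}\Prism_R$ itself. Since $\mathrm{Gauge}_{\Prism}(R)$ for $R$ qrsp is, by \cref{vanc}, precisely the category of $(p,\mathscr I)$-complete filtered $\Fil^\bullet_{\Nyg}\Prism_R$-modules (the crystal property being vacuous on the one-object "site" $\{R\}$), this gives the desired equivalence, and its construction is manifestly functorial in $R$.

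Finally, assembling the two halves: the equivalence for qrsp rings is compatible with base change along $R \to T$ in $(S)_{\mathrm{qrsp}}$ because both the pullback functor on $D_{\mathrm{qc}}$ and the completed base-change of filtered modules are computed by $\widehat{\otimes}_{\Fil^\bullet_{\Nyg}\Prism_R}\Fil^\bullet_{\Nyg}\Prism_T$; hence the pointwise equivalences glue to an equivalence of the cosimplicial diagrams, and passing to limits yields $D_{\mathrm{qc}}(\Spf(S)^{\Nyg}) \simeq \mathrm{Gauge}_{\Prism}(S)$ for general quasisyntomic $S$. The main obstacle I anticipate is purely bookkeeping: matching the two completion conditions carefully (derived $(p,I)$- or $(p,\mathscr I)$-completeness on both sides, and checking that $\Spf$ of the graded Rees ring really does see this completion) and verifying that the graded-module/filtered-module dictionary is compatible with the symmetric monoidal and $t$-structural data one wants — though for the bare statement only the underlying $\infty$-categorical equivalence is needed, so the subtleties about $t$-structures (\cref{fl}) can be deferred.
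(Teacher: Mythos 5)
Your overall skeleton is sound and, in the end, close to the paper's: both arguments rest on the identification, for $R$ qrsp, of $D_{\mathrm{qc}}(\Spf(R)^\Nyg)$ with derived $(p,I)$-complete filtered modules over $\Fil^\bullet_{\Nyg}\Prism_R$ (which the paper simply cites as \cite[Rmk.~5.5.4]{fg}, exactly as you propose via the $\mathbb{G}_m$-quotient/Rees dictionary), together with the limit description of $D_{\mathrm{qc}}(\Spf(S)^\Nyg)$ over qrsp algebras. The difference is organizational: the paper does not pass through a chosen Čech nerve as in \cref{simpny}, but instead constructs the two mutually inverse functors directly, using the limit $D_{\mathrm{qc}}(\Spf(S)^\Nyg)\simeq \lim_{R\in (S)_\qrsp} D_{\mathrm{qc}}(\Spf(R)^\Nyg)$ over the whole site.

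There is, however, a genuine gap in the middle of your argument. You identify $\mathrm{Gauge}_{\Prism}(R)$ for $R$ qrsp with $(p,I)$-complete filtered $\Fil^\bullet_{\Nyg}\Prism_R$-modules on the grounds that ``the crystal property is vacuous on the one-object site $\{R\}$''; but by \cref{vanc} the category $\mathrm{Gauge}_{\Prism}(R)$ is still a category of sheaves on the full site $(R)_\qrsp$ subject to \cref{crystalprop}, not modules over a single ring. Evaluation at the initial object $R$ is indeed an equivalence, but the nontrivial direction is essential surjectivity: given a filtered module $\Fil^\bullet M$, one must check that the assignment $T\mapsto \Fil^\bullet M\,\widehat{\otimes}_{\Fil^\bullet_{\Nyg}\Prism_R}\Fil^\bullet_{\Nyg}\Prism_T$ is a quasisyntomic \emph{sheaf}. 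This descent statement is precisely the input the paper supplies by observing that $X\mapsto X^{\Nyg}$ preserves finite limits and sends quasisyntomic covers to covers of $p$-adic formal stacks (via \cite[Prop.~2.29]{guoli}), so that pullbacks of a quasicoherent sheaf automatically satisfy descent and the crystal property. The same unproved descent is hidden in your assertion that $\mathrm{Gauge}_{\Prism}(S)\simeq \lim_{[n]\in\Delta}\mathrm{Gauge}_{\Prism}(R_0^{[n]})$: the crystal condition is a compatibility condition on transition maps, not literally a sheaf-descent condition for this Čech nerve, and gluing crystals along the cover needs the same input. Once you either cite the covers-to-covers fact (or quasisyntomic descent for Nygaard-filtered prismatic cohomology with module coefficients) at these two points, or drop the Čech reduction and argue as the paper does—pullback of a quasicoherent sheaf to each $\Spf(R)^\Nyg$ yields a crystal, and conversely a crystal yields a compatible family, hence an object of the limit—your argument closes up.
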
{}

\begin{proof}
   Indeed, by \cite[Rmk.~5.5.4]{fg}, if $R$ is qrsp, an object of $D_{\mathrm{qc}}(\Spf(R)^\mathrm{Nyg})$ identifies with a derived $(p, I)$-complete filtered module over the filtered ring $\mathrm{Fil}^\bullet_{\mathrm{Nyg}}\Prism_R.$ Now let $\mathscr F \in D_{\mathrm{qc}}(\Spf(S)^\Nyg)$ and let $R \in (S)_\mathrm{qrsp}.$ The pullback of $\mathscr{F}$ to $\Spf(R)^\Nyg$ can be identified with a derived $(p,I)$-complete filtered module over the filtered ring $\mathrm{Fil}^\bullet_{\mathrm{Nyg}}\Prism_R,$ that we denote as $\Fil^\bullet \mathscr{M}_R.$ Using \cref{presen}, \cref{descentnyg1} and \cref{conspadicqcoh}, it follows that the association $(S)_{\mathrm{qrsp}} \ni R \mapsto \Fil^\bullet\mathscr{M}_R$ defines a derived $(p , \mathscr I)$-complete $D(\mathbb {Z})$-valued sheaf. Further, as pullback along any map $\mathrm{Spf}\,T \to \mathrm{Spf}\, R$ for qrsp rings $R$ and $T$ identifies with (completed) filtered tensor products along the induced map $\Fil^\bullet \Prism_R \to \Fil^\bullet \Prism_T$ in our set up, this sheaf satisfies the crystal property \cref{crystalprop}. This determines a functor $D_{\mathrm{qc}}(\Spf(S)^\Nyg) \to \mathrm{Gauge}_{\Prism} (S).$

Conversely, given $\Fil^\bullet \mathscr M \in \Ga(S)$ and $R \in (S)_\qrsp$, the filtered module $\Fil^\bullet \mathscr M (R)$ over the filtered ring $\Fil^\bullet_\Nyg \Prism_R$ determines an object $\mathscr F_R$ of $D_{\mathrm{qc}}(\Spf(R)^\Nyg).$
The crystal property of $\Fil^\bullet \mathscr M$ ensures that the construction $R \mapsto \mathscr F_R$ is compatible with (completed) base change for maps $R \to R'$ in $(S)_{\qrsp}.$ Since $D_{\mathrm{qc}}(\Spf(S)^\Nyg) \simeq \lim _{R \in (S)_{\qrsp}} D_{\mathrm{qc}}(\Spf(R)^\Nyg)$ (see \cite[Rmk.~5.5.18]{fg}), the association $R \mapsto \mathscr F_R$ determines an object of $D_{\mathrm{qc}} (\Spf(S)^\Nyg).$ This determines a functor $\mathrm{Gauge}_{\Prism} (S) \to D_{\mathrm{qc}}(\Spf(S)^\Nyg)$ that is inverse to the functor described in previous paragraph, which proves the proposition.
\end{proof}{}


\begin{remark}[$F$-gauges as filtered Frobenius crystals on the quasisyntomic site]\label{fgaugeff}Let $S$ be a quasisyntomic ring. We work on the site $(S)_\qrsp$ as before. Note that the category of prismatic $F$-gauges over $S$ can be equivalently described as the $\infty$-category of derived $(p,\mathscr{I})$-complete sheaf of $\mathbb Z$-indexed filtered modules $\mathrm{Fil}^{\bullet} \mathscr{N}$ over the sheaf of filtered ring $\mathrm{Fil}^{\bullet} \Prism_{(\cdot)}$ on $(S)_{\mathrm{qrsp}}$ equipped with a $\mathrm{Fil}^{\bullet} \Prism_{(\cdot)}$-linear Frobenius map 
$$\varphi:  \mathrm{Fil}^\bullet \mathscr N \to \mathscr{I}^\bullet \Prism_{(\cdot)}\otimes_{\Prism_{(\cdot)}} \mathscr N =: \mathscr{I}^\bullet \mathscr N$$(where the $\mathrm{Fil}^\bullet \Prism_{(\cdot)}$-module structure on the right hand side is obtained by restriction of scalar along the Frobenius map $\mathrm{Fil^\bullet} \Prism_{(\cdot)} \to \mathscr I^\bullet \Prism_{(\cdot)}$ and $\mathscr{N}$ denotes $(p, \mathscr I)$-completed underlying object of $\mathscr{N}$) such that the following two conditions hold:
\begin{enumerate}
    \item $\mathrm{Fil}^\bullet \mathscr{N}$ satisfies the ``crystal" property as in \cref{vanc}, \cref{crystalprop}.

\item The natural map $\mathrm{Fil}^\bullet \mathscr N \widehat{\otimes}_{\mathrm{Fil}^\bullet \Prism_{(\cdot)}} \mathscr{I}^\bullet \Prism_{(\cdot)} \to \mathscr{I}^\bullet \mathscr{N}$ associated to $\varphi$ via adjunction is an isomorphism of $\mathscr{I}^\bullet \Prism_{(\cdot)}$-modules.
\end{enumerate}{}
The equivalence of the above description with $D_{\mathrm{qc}}(S^{\mathrm{syn}})$ follows from \cref{presen} and \cref{cherry}. A prismatic $F$-gauge $\Fil^\bullet \mathscr N$ is called effective if the natural maps $\Fil^i \mathscr N \to \Fil^{i-1}\mathscr N$ are isomorphisms for all $i \le 0$ and the full subcategory spanned by such objects will be denoted by $F\text{-Gauge}^{\mathrm{}}_{\Prism}(S)^\mathrm{eff}.$
\end{remark}{}


\begin{remark}\label{smallworagain1}
 We record a variant of the category of prismatic $F$-gauges. Let $S$ be a quasisyntomic ring. We work on the site $(S)_\qrsp$ as before. We define the $\infty$-category $F\text{-}\mathrm{Mod}_{\Fil^\bullet \Prism}(S)$ to be the $\infty$-category of derived $(p, \mathscr I)$-complete sheaf of $\mathbb Z$-indexed filtered modules $\mathrm{Fil}^{\bullet} \mathscr{N}$ over the sheaf of filtered ring $\mathrm{Fil}^{\bullet} \Prism_{(\cdot)}$ on $(S)_{\mathrm{qrsp}}$ equipped with a $\mathrm{Fil}^{\bullet} \Prism_{(\cdot)}$-linear Frobenius map 
$$\varphi:  \mathrm{Fil}^\bullet \mathscr N \to \mathscr{I}^\bullet \Prism_{(\cdot)}\otimes_{\Prism_{(\cdot)}} \mathscr N =: \mathscr{I}^\bullet \mathscr N.$$ By definition, there is a fully faithful functor
$F\text{-Gauge}^{\mathrm{}}_{\Prism}(S) \to F\text{-}\mathrm{Mod}_{\Fil^\bullet \Prism}(S).$ Let us denote by $F\text{-}\mathrm{Mod}_{\Fil^\bullet \Prism}(S)^{\mathrm{eff}}$ the full subcategory spanned by $\Fil^\bullet \mathscr N \in F\text{-}\mathrm{Mod}_{\Fil^\bullet \Prism}(S)$ with the property that the natural maps $\Fil^i \mathscr N \to \Fil^{i-1}\mathscr N$ are isomorphisms for all $i \le 0.$ Note that $F\text{-}\mathrm{Mod}_{\Fil{^\bullet}\Prism}(S)^{\mathrm{eff}}$ is a stable $\infty$-category with all small limits and colimits.
\end{remark}{}

\begin{remark}\label{adjoinnn}
Let $S$ be a quasisyntomic ring. Let $\mathscr F \in \mathrm{Shv}_{D(\mathbb Z)}(S_\qrsp).$ Then the association $$\mathscr{F} \mapsto R\mathcal{H}om_{(S)_\qrsp}(\mathscr F, \mathrm{Fil}^\bullet \Prism_{(\cdot)})$$ determines a functor $$\mathrm{Shv}_{D(\mathbb{Z})}(S_\qrsp) \to (F\text{-}\mathrm{Mod}_{\Fil^\bullet \Prism}(S)^{\mathrm{eff}})^{\mathrm{op}}$$that preserves colimits. By the adjoint functor theorem, it must have a right adjoint. In this case, the right adjoint is explicitly determined by the functor that sends $$(F\text{-}\mathrm{Mod}_{\Fil^\bullet \Prism}(S)^{\mathrm{eff}})^{\mathrm{op}} \ni \Fil^\bullet \mathscr N \mapsto R\mathcal{H}om_{\syn}(\Fil^\bullet \mathscr N, \Fil^\bullet \Prism_{(\cdot)}),$$ where $R\mathcal{H}om_{\syn}(\Fil^\bullet \mathscr N, \Fil^\bullet \Prism_{(\cdot)})$ is the $D(\mathbb Z)$-valued sheaf on $(S)_\qrsp$ determined by $$(S)_\qrsp \ni R \mapsto R\Hom_{F\text{-}\mathrm{Mod}_{\Fil^\bullet \Prism}(R)}(\Fil^\bullet \mathscr N|_{(R)_\qrsp}, \Fil^\bullet \Prism_{(\cdot)}|_{(R)_\qrsp}). $$ To see the aforementioned description of the right adjoint, one may apply the adjunction to the case where $\mathscr{F}$ is given by the sheafification of the constant presheaf $\mathbb{Z}$ on $S_\qrsp$ and any $\Fil^\bullet \mathscr N \in (F\text{-}\mathrm{Mod}_{\Fil^\bullet \Prism}(S)^{\mathrm{eff}})^{\mathrm{op}} .$ 
\end{remark}{}

\begin{definition}[Hodge--Tate weights]\label{hodg}
Let us suppose that $S$ is a quasiregular semiperfectoid algebra. We have a natural map of graded rings 
$$\bigoplus_{i \in \mathbb Z} \mathrm{Fil}^i_{\mathrm{Nyg}} \Prism_S  \to S,$$ where $S$ is viewed as a graded ring sitting in degree zero, which is obtained by quotienting the left hand side by the graded ideal $$I:=\left(\bigoplus_{i \ne 0}\mathrm{Fil}^i_{\mathrm{Nyg}} \Prism_S \right) \oplus \mathrm{Fil}^1 _{\w{Nyg}} \Prism_S,$$ where the summand $\Fil^1_{\Nyg} \Prism_S$ on the right has weight zero.
This defines a map 
$$\mathrm{Spf}(S) \times B\mathbb{G}_m \to \mathrm{Spf}(S)^{\mathrm{Nyg}}.$$By quasisyntomic descent, this defines a map \begin{equation}\label{pullbackHT}
j: \mathrm{Spf}(S) \times B\mathbb{G}_m \to \mathrm{Spf}(S)^{\mathrm{Nyg} }  
\end{equation}for any quasisyntomic ring $S$ (cf.~\cite[Remark~5.3.14]{fg}). For any $M \in D_{\w{qc}}(\mathrm{Spf}(S)^{\w{Nyg}}),$ the pullback $j^* M$ can be identified with $\oplus_{i \in \mathbb{Z}} M_i \in D_{\w{qc}}(\mathrm{Spf}(S)^{\w{}}).$ The set of integers $i$ such that $M_i \ne 0$ is called the set of Hodge--Tate weights of $M.$ For $M \in D_{\w{qc}}(\mathrm{Spf}(S)^{\w{syn}}),$ the set of Hodge--Tate weights of $M$ is defined as the set of Hodge--Tate weights of the pullback of $M$ along $\mathrm{Spf}(S)^{\w{Nyg}} \to \mathrm{Spf}(S)^{\w{syn}}.$
\end{definition}{}

\begin{remark}\label{guolii}
 Let $S$ be a qrsp algebra. We will describe an explicit way to understand pullback of an object in $\mathscr{F} \in D_{\mathrm{qc}}(\Spf(S)^\Nyg)$ along the map $j$ from \cref{pullbackHT} (see \cite[Cons.~2.41]{guoli}). By \cref{equivalence}, we may identify $\mathscr F$ with a filtered module $\Fil^\bullet M$ over the filtered ring $\Fil^\bullet_{\Nyg} \Prism_S.$ Passing to graded pieces, we obtain a graded module $\mathrm{gr}^\bullet M$ over the graded ring $\mathrm{gr}^\bullet_{\Nyg} \Prism_S. $ Using the isomorphism $\mathrm{gr}^0 _{\mathrm{Nyg}} \Prism_S \simeq S,$ we obtain a natural map $\mathrm{gr}^\bullet_{\Nyg} \Prism_S \to S$ of graded rings. In this situation, by construction, it follows that $j^* \mathscr F$ corresponds to the graded $S$-module 
\begin{equation}\label{foh}
\mathrm{gr}^\bullet M \otimes_{\mathrm{gr}^\bullet_{\Nyg} \Prism_S} S.    
\end{equation}{}
 
\end{remark}{}

\begin{remark}
By \cref{hodg}, the Breuil--Kisin twist $\mathcal{O}\left \{-n \right \}$ has Hodge--Tate weight $n.$   
\end{remark}{}

\begin{remark}\label{sick}
 Suppose that $S$ is a quasiregular semiperfectoid algebra. Let $\mathrm{Fil}^\bullet M$ (eqiupped with a Frobenius) denote a prismatic $F$-gauge over $S$ in the sense of \cref{cherry}. Then the filtered module underlying the prismatic $F$-gauge $\Fil^\bullet M \left \{n \right \} :=  \Fil^\bullet M \otimes \mathcal{O}\left \{n \right \}$ is given by $$\Fil^i M \left \{n \right \} \simeq \Prism_S \left \{n \right \} \otimes_{\Prism_S} \Fil^{i+n} M.$$ 
\end{remark}{}

\begin{construction}[Syntomic cohomology of coefficients]\label{lion}
 Suppose that $S$ is a quasiregular semiperfectoid algebra. Let $\mathrm{Fil}^\bullet M$ (eqiupped with a Frobenius) denote a prismatic $F$-gauge over $S$ in the sense of \cref{cherry} and let $\mathscr{M}$ be the object of $D_\mathrm{qc}(\Spf(S)^\syn)$ corresponding to it. Let $\mathscr M \left \{ n \right \} := \mathscr M \otimes \mathcal{O} \left \{ n \right \}.$

For simplicity, let us assume that $\mathrm{Fil}^\bullet M$ is effective. Let $\mathscr{M}'\left \{ n \right \}$ denote the pullback of $\mathscr{M} \left \{ n \right \}$ to $\mathrm{Spf}(S)^\Nyg.$ Then, by \cref{sick}, \begin{equation}\label{sick1}
R\Gamma (\mathrm{Spf}(S)^\Nyg, \mathscr{M}'\left \{ n \right \}) \simeq \Prism_S\left \{ n \right \} \otimes_{\Prism_S} \Fil^{n} M =: \Fil^{n} M \left \{ n \right \}.
\end{equation}
Also, we have $R\Gamma (\mathrm{Spf}(S)^\Prism, \mathrm{can}^* \mathscr{M}' \left \{n \right \}) \simeq \Fil^0 M \left \{ n \right \}.$ Moreover, we have an induced map $i: \Fil^{n} M \left \{ n \right \} \to \Fil^0 M \left \{ n \right \},$ which is simply the canonical map arising from the filtration.
Now, we also have the map $\varphi: \Spf(S)^\Prism \to \Spf(S)^\Nyg,$ which induces a map 
\begin{equation}\label{sick2}
 R\Gamma (\mathrm{Spf}(S)^\Nyg, \mathscr{M}'\left \{ n \right \}) \to R\Gamma (\mathrm{Spf}(S)^\Prism, \varphi^* \mathscr{M}' \left \{n \right \}).   
\end{equation}{}
However, since $\mathscr{M}'\left \{n \right \}$ by construction descends to $\Spf (S)^\syn,$ we have an isomorphism $\varphi^* \mathscr{M}' \left \{n \right \} \simeq \mathrm{can}^* \mathscr{M}' \left \{n \right \}$. Combining this with \cref{sick1} and \cref{sick2}, we obtain a map
\begin{equation}\label{dividedfrob}
\nu_n:\Fil ^n M \left \{n \right\} \to \Fil^0 M \left \{n \right\}.
\end{equation}{}
 One may think of the above map as a certain ``divided Frobenius" for coefficients. In this set up, by \cref{sick3}, we have
\begin{equation}\label{sick5}
    R\Gamma (\Spf(S)^\syn, \mathscr{M}\left \{n \right \}) \simeq \mathrm{fib} \left(\Fil^n M \left \{n \right\} \xrightarrow{\nu_n - i} \Fil^0 M \left \{n \right\} \right).
\end{equation}{}

\end{construction}{}

We now move on to discussing some examples and a concrete description of prismatic $F$-gauges with Hodge--Tate weights on $[0,1]$ (see \cref{comppp},~cf.~\cite[Thm.~2.54]{guoli}, \cite[\S~5.27]{madapusi}). We start with the example of cohomology of $B\mathbb{G}_m$ which will be used later.

\begin{proposition}[Cohomology of $B\mathbb{G}_m$]\label{bgm}
Let $S$ be a quasiregular semiperfectoid ring. The effective prismatic weak $F$-gauge defined by the filtered object $H^2 (\mathrm{Fil}^\bullet_{\mathrm{Nyg}} R\Gamma_{\Prism}(B \mathbb{G}_m ))$ is isomorphic to the prismatic $F$-gauge $\mathcal{O}\left \{ -1 \right \}.$
\end{proposition}
\begin{proof}
Let us choose a map $R \to S,$ where $R$ is a perfectoid ring mapping onto $S.$ It would suffice to prove that the natural map $$\bigoplus_{n \in \mathbb{Z}} \mathrm{Fil}_{\mathrm{Nyg}}^{n-1}\Prism_{S} \left\{n-1\right \}
\to \bigoplus_{n \in \mathbb{Z}} H^2 (\mathrm{Fil}_{\mathrm{Nyg}}^nR\Gamma_{\Prism}(B\mathbb{G}_m) \left \{n \right \} )$$ of graded of modules over the graded ring $\oplus_{n \in \mathbb{Z}} \mathrm{Fil}_{\mathrm{Nyg}}^{n}\Prism_{S} \left\{n\right \}$ induced by the tautological Chern class $c_1 \in H^2 (\mathrm{Fil}_{\w{Nyg}}^1R\Gamma_{\Prism}(B\mathbb{G}_m)\left \{1 \right \})$ is an isomorphism. To this end, we fix an $n \in \mathbb{Z}$. It will suffice to prove that the natural map induced by the classes $c_1^i$ for $i \in \mathbb N$  \begin{equation}\label{inducedmapchernpower}
 \bigoplus_{i \ge 0} \mathrm{Fil}^{m-i}_{\mathrm{Nyg}}\Prism_S\left \{n-i \right \}[-2i] \xrightarrow[]{} \mathrm{Fil}^m_{\mathrm{Nyg}}R\Gamma_{\Prism}(B \mathbb G_m)\left \{n \right \}.   
\end{equation}
 is an isomorphism. By passing to the graded pieces of the filtrations appearing on both sides, \cref{inducedmapchernpower} induces a natural map
 \begin{equation}\label{nygaardchernpower}
     \bigoplus_{i \ge 0} \mathrm{gr}^{m-i}_{\mathrm{Nyg}}\Prism_S\left \{n-i \right \}[-2i] \xrightarrow[]{} \mathrm{gr}^m_{\mathrm{Nyg}}R\Gamma_{\Prism}(B \mathbb G_m)\left \{n \right \}.
 \end{equation}
 By choosing a generator for the principal ideal of the prism $\Prism_R$, one can trivialize the Breuil--Kisin twists. Further, we also obtain a commutative diagram 
 \begin{center}
   
  \begin{tikzcd}

{ \bigoplus_{i \ge 0} \mathrm{gr}^{m-i}_{\mathrm{Nyg}}\Prism_S[-2i]} \arrow[d, "\simeq"] \arrow[rrr] &  &  & \mathrm{gr}^m_{\mathrm{Nyg}}R\Gamma_{\Prism}(B \mathbb G_m). \arrow[d, "\simeq"] \\
{\bigoplus_{i \ge 0} \mathrm{Fil}^{m-i}_{\mathrm{conj}}\overline{\Prism}_S[-2i]} \arrow[rrr]         &  &  & \mathrm{Fil}^m_{\mathrm{conj}}R\Gamma_{\mathrm{HT}}(B \mathbb G_m)    \end{tikzcd}   
 \end{center}{}
where the lower horizontal map
\begin{equation}\label{conjugatechern}
  \bigoplus_{i \ge 0} \mathrm{Fil}^{m-i}_{\mathrm{conj}}\overline{\Prism}_S[-2i] \xrightarrow[]{} \mathrm{Fil}^m_{\mathrm{conj}}R\Gamma_{\mathrm{HT}}(B \mathbb G_m) 
\end{equation} is induced by powers of $c_1^{\mathrm{HT}} \in H^2 (\mathrm{Fil}^1_{\mathrm{conj}}R\Gamma_{\mathrm{HT}}(B \mathbb G_m)).$

Now note that in order to prove that \cref{inducedmapchernpower} is an isomorphism for all $m$, it suffices to prove it for $m=0$, and inductively, by passing to graded pieces, to prove that \cref{nygaardchernpower}, or equivalently, \cref{conjugatechern} is an isomorphism. Further, to prove \cref{inducedmapchernpower} is an isomorphism for $m=0$, by going derived modulo the ideal of $\Prism_S$, it suffices to prove that the induced map 
$\bigoplus_{i \ge 0} \overline{\Prism}_S\left \{n-i \right \}[-2i] \xrightarrow[]{} R\Gamma_{\mathrm{HT}}(B \mathbb G_m)\left \{n\right \}$ is an isomorphism. However, once we prove that \cref{conjugatechern} is an isomorphism, the latter map is automatically an isomorphism, by passing to the underlying object of the filtration.

Finally, to prove \cref{conjugatechern} is an isomorphism, since both sides of the map are zero for $m<0$, it suffices to show that \cref{conjugatechern} induces isomorphism on the graded pieces. Since $\mathbb{L}_{B\mathbb{G}_m/S} = \mathcal{O}[-1],$ from the transitivity fiber sequence for cotangent complex, $\mathbb L_{B\mathbb G_m/R}^{\wedge p}\simeq \mathcal{O}[-1] \oplus (\mathbb L_{S/R}^{\wedge p}\otimes \mathcal{O}).$ Therefore, $R\Gamma (B\mathbb{G}_m, \wedge^m\mathbb L_{B\mathbb G_m/R}[-m]^{\wedge p})\simeq \bigoplus_{i \ge 0} \wedge^{m-i} \mathbb L_{S/R}^{\wedge p}[-i-m]$. Thus the graded pieces of \cref{conjugatechern} identifies with the natural map 
\begin{equation}\label{hodgecherntoomuch}
\bigoplus_{i \ge 0}\mathrm{gr}_{\mathrm{conj}}^{m-i} \overline{\Prism}_S [-2i] \simeq \bigoplus_{i \ge 0} \wedge^{m-i} \mathbb L_{S/R}^{\wedge p}[-i-m]   \to \bigoplus_{i \ge 0} \wedge^{m-i} \mathbb L_{S/R}^{\wedge p}[-i-m].
\end{equation}which is induced by multiplication by $t^i$ on the $i$-th summand where $t \in H^2 (B\mathbb{G}_m, \mathbb{L}_{B\mathbb{G}_m/R}[-1])$ $ \simeq H^1(B\mathbb{G}_m, \mathbb{L}_{B\mathbb{G}_m/R}) \simeq  H^1(B\mathbb{G}_m, \mathbb{L}_{B\mathbb{G}_m/S}) \simeq S.$
It suffices to show that $t$ is a unit in $S$. By \cite[Thm.~7.6.2]{BhaLu}, the class $t$ is equal to the tautological Chern class, classically defined using $d\log.$ Thus, the fact that $t$ is a unit can be seen via functoriality. For example, let $\mathbb{P}^1 \to B\mathbb{G}_m$ be the map classifying $\mathcal{O}(1) \in \mathrm{Pic}(\mathbb{P}^1).$ Then map of $S$-modules $S \simeq H^1 (B\mathbb{G}_m, \mathbb{L}_{B\mathbb{G}_m/S}) \to H^1 (\mathbb{P}^1, \Omega^1_{\mathbb{P}^1}) \simeq S$ sends $t$ to a unit (see \cite[Tag~0FMI]{stacks}). Therefore, $t$ must be a unit. This shows that \cref{hodgecherntoomuch} is an isomorphism, which finishes the proof.
\end{proof}{}


\begin{example}[Breuil--Kisin modules]
 Let $R$ be a qrsp ring and let $(\Prism_R, \varphi_, I)$ be the associated prism. Let $M$ be a finite rank projective module over $\Prism_R$ equipped with an isomorphism $$\varphi_M: (\varphi^* M)[1/I] \xrightarrow{\sim} M[1/I]$$ or $\Prism_R$-modules. This data is also called a Breuil--Kisin module over $\Prism_R.$ We will denote the category of Breuil--Kisin modules by $\mathrm{BK}(\Prism_R).$

 For $i \in \mathbb Z,$ let us define
$$\Fil^i M:= \left \{m \in M \mid  \varphi_M(m) \in I^i M \right \}.$$ Then $\Fil^\bullet M$ is a filtered module over $\Fil^\bullet_{\mathrm{Nyg}}\Prism_R.$ Since $M$ is projective of finite rank, it follows that there exists some $n \in \mathbb Z$ such that $\varphi_M (M) \subseteq I^n M.$ In particular, $\Fil^i M$ stabilizes for $i \ll 0$ and the underlying object of $M^\bullet$ is naturally isomorphic to $M.$ Further, we have a $\Fil^\bullet_{\mathrm{Nyg}}\Prism_R$-linear map $$\Tilde{\varphi}_M: \Fil^\bullet M \to I^\bullet M.$$ By the above discussion, this gives a fully faithful functor (see \cref{uset}) 
\begin{equation}\label{funcc}
 \mathrm{BK}(\Prism_R) \to F\text{-Gauge}^{\mathrm{w}}_{\Prism}(R)   
\end{equation}In general, this functor does not factor through the inclusion $F\text{-Gauge}_{\Prism}(R) \to F\text{-Gauge}^{\mathrm{w}}_{\Prism}(R).$
\end{example}{}

\begin{example}
Let $R$ be a qrsp ring. Let us denote by $ F\text{-Gauge}^{\mathrm{vect}}_{\Prism}(R)$ the category $\mathrm{Vect}(\Spf(R)^\syn)$ of vector bundles on $\Spf(R)^\syn.$ By \cref{uset}, one obtains a functor 
\begin{equation}\label{funcc2}
\mathrm{BK}: F\text{-Gauge}^{\mathrm{vect}}_{\Prism}(R) \to \mathrm{BK}(\Prism_R).   
\end{equation}By \cite[Cor.~2.53]{guoli}, this functor is fully faithful.
\end{example}

\begin{remark}
 Let $R$ be a qrsp ring and let $\mathrm{DM}^{\mathrm{adm}}(R) \subset \mathrm{BK}^{\mathrm{}}(\Prism_R)$ be the full subcategory of admissible prismatic Dieudonn\'e modules defined in \cite[Def.~1.3.5]{alb} which we recall. An object $(M, \varphi_M) \in \mathrm{BK}^{\mathrm{}}(\Prism_R)$ is an admissible prismatic Dieudonn\'e module if the following two conditions are satisfied.
\begin{enumerate}
    \item The map $\varphi_M|_{\varphi^* M}$ has its image contained in $M$ and cokernel of the induced map $\varphi^* M \to M$ is killed by $I.$

\item The image of the composition $M \xrightarrow{\varphi_M|_M} M \to M/IM$ is a finite locally free $\Prism_R/ \Fil^1_{\mathrm{Nyg}}\Prism_R \simeq R$-module $F_M$ such that the induced map $\overline{\Prism}_R \otimes_R F_M \to M/IM$ is a monomorphism.  
\end{enumerate}

 Let $\Fil^\bullet M$ be the prismatic weak $F$-gauge associated to $(M, \varphi_M) \in \mathrm{DM}^\mathrm{adm}(R)$ by the functor \cref{funcc}. In this situation, it follows that one can write $M \simeq W \oplus T$ for two projective $\Prism_R$-modules $W$ and $T$ such that we have $\Fil^1 M \simeq (\Fil^1_\Nyg\Prism_R \otimes W) \oplus 
 T$ (see proof of \cite[Lem.~4.1.23]{alb}). One may define a filtration 
\begin{equation}\label{e}
F^i M:= (\Fil^i _\Nyg\Prism_R \otimes W) \oplus 
(\Fil^{i-1}_\Nyg \Prism_R \otimes T),    
\end{equation}which can be naturally equipped with the structure of a prismatic weak $F$-gauge denoted as $F^\bullet M.$

 We will now check that the prismatic weak $F$-gauge $F^\bullet M$ is actually a prismatic $F$-gauge. We need to check that the Frobenius map $$ {F}^\bullet M \widehat{\otimes}_{\mathrm{Fil}^\bullet_{\mathrm{Nyg}}\Prism_R} I^\bullet \Prism_R \to I^\bullet M$$ is an isomorphism. In our case, one can directly compute the filtered tensor product appearing on the left hand side of the above map and see that this amounts to checking that the map
$$\theta_n: (W \otimes_{\Prism_R, \varphi} I^n \Prism_R) \oplus (T\otimes_{\Prism_R, \varphi} I^{n-1} \Prism_R) \to I^n M$$ is an isomorphism for all $n \in \mathbb Z.$
Note that, since $I$ is a principal ideal generated by a nonzerodivisor, the source and the target of these maps are projective $\Prism_R$-modules of the same rank. Therefore, it suffices to check that they are surjective. Further, by scaling, it suffices to show the surjectivity of $\theta_1.$ Note that the natural map $u: \Fil^1 M \otimes_{\Prism_R, \varphi} \Prism_R \to I M $ factors canonically through $\theta_1.$ Thus, it suffices to show that $u$ is surjective. But that follows from \cite[Rmk.~4.7]{alb} since $(M, \varphi_M)$ was assumed to be admissible.

Note that there are natural injective maps $F^iM \to \Fil^i M,$ which are isomorphisms. Indeed, to see this isomorphism, it suffices to show that the induced map $F^iM/F^{i+1} M \to I^i M/ I^{i+1}M$ is injective, which follows from the fact that $W$ and $T$ are projective modules (\textit{cf}.~\cite[Cor.~2.53]{guoli}). 

 In summary, we see that \cref{uset} refines to a fully faithful functor
\begin{equation}\label{foodishere}
{G}_{\Prism}': \mathrm{DM}^{\mathrm{adm}}(R) \to F\text{-Gauge}^{\mathrm{vect}}_{\Prism}(R).  
\end{equation}{}

\end{remark}{}

\begin{remark}\label{anslebras1}
 Let $M \in \mathrm{DM}^{\mathrm{adm}}(R).$ Using the explicit description in \cref{e} of $G_\Prism' (M)$ as a filtered module over $\Fil^\bullet \Prism_S$, and  \cref{guolii}~\cref{foh}, one sees that the pullback $j^* G_\Prism'(M)$ identifies with the graded module $W_0 \oplus T_0,$ where $W_0:= W \otimes_{\Prism_R} R$ and $T_0:= T \otimes_{\Prism_R} R,$ and $W_0$ (resp.~ $T_0$) is in degree $0$ (resp.~ degree $1$). Therefore, $G_\Prism' (M)$ has Hodge--Tate weights in $[0,1]$ and it follows that $G_\Prism'$ refines to a functor 
\begin{equation}\label{aa5}
G_\Prism: \mathrm{DM}^{\mathrm{adm}}(R) \to F\text{-Gauge}^{\mathrm{vect}}_{[0,1]}(R),   
\end{equation}where the latter denotes the full subcategory of $F\text{-Gauge}^{\mathrm{vect}}_{\Prism}(R)$ spanned by objects of Hodge--Tate weights in $[0,1].$
\end{remark}{}

\begin{proposition}\label{comppp}
 The functor $G_\Prism: \mathrm{DM}^{\mathrm{adm}}(R) \to F\mathrm{\text{-}{Gauge}}^{\mathrm{vect}}_{[0,1]}(R)$ from \cref{aa5} is an equivalence. \footnote{The proof of this proposition in the preprint ~\cite[Thm.~2.54]{guoli} has errors; namely, the isomorphisms in last two lines of page 24, loc. cit. do not hold.}
\end{proposition}{}

\begin{proof}
It suffices to show that for $M \in F\text{-Gauge}^{\mathrm{vect}}_{[0,1]}(R),$ the Breuil--Kisin module $Q:=\BK(M)$ \cref{funcc2} is an object of $\mathrm{DM}^{\mathrm{adm}}(R).$ Using \cref{guolii}~\cref{foh}, it follows that since $M$ has Hodge--Tate weights in $\ge 0,$ the $F$-gauge $M$ must be effective. This implies that the Frobenius $\varphi_Q$ on $Q$ refines to a map $\varphi^* Q \to Q.$ Further, since $M$ has Hodge--Tate weights in $[0,1],$ the pullback $j^*M$ identifies with a graded module $W_0 \oplus T_0,$ where $W_0, T_0$ are projective $R$-modules in degree $0$ and $1$, respectively. Let us use $\Fil^\bullet M$ to denote the filtered $\Fil^\bullet_\Nyg \Prism_R$ module underlying $M$. We set $N^\bullet:= \mathrm{gr}^\bullet M,$ which is a (locally free) graded module over $\mathrm{gr}^\bullet_{\Nyg} \Prism_R \simeq \Fil^\bullet_{\mathrm{conj}} \overline{\Prism}_R\left \{ \bullet \right\}.$ By \cref{guolii}, 
\begin{equation}\label{usee}
 \mathrm{gr}^\bullet M \otimes_{\mathrm{gr}^\bullet_{\Nyg} \Prism_R} R \simeq W_0 \oplus T_0.   
\end{equation}
One can view $P^\bullet:= N^\bullet \left \{-\bullet \right \}$ as a filtered object (as it is a graded module over $\Fil^\bullet_{\mathrm{conj}} \overline{\Prism}_R$), and because $N^\bullet$ is locally free as a graded module over $\mathrm{gr}^\bullet_\Nyg \Prism_R,$ it follows that the maps $P^i \to P^{i+1}$ are injective. Further, the underlying object of $P^\bullet,$ denoted by $P^\infty: = \mathrm{colim}\,P_i$ can be viewed as a projective $\overline{\Prism}_R$-module. By computing the graded tensor product \cref{usee}, we have canonical isomorphisms $N^0 \simeq W_0$ and $N^1 /\gr^1 \Prism_R N^0 \simeq T_0. $ By composition, we obtain a canonical map \begin{equation}\label{maptosplit}
    N^1 \to N^1 /\gr^1 \Prism_R N^0 \simeq T_0.
\end{equation} Suppose that $\tau: T_0 \to N^1  $ is any choice of a splitting (which exists since $T_0$ is projective) of the latter composite map. Using $\tau$, we obtain the maps
\begin{equation}\label{hurry}
C_\tau: ( \mathrm{gr}^i_\Nyg \Prism_R \otimes_R W_0) \oplus (\mathrm{gr}^{i-1}_\Nyg \Prism_R \otimes_R T_0) \to N^i. 
\end{equation} By \cref{usee}, it also follows that these maps are surjective. Indeed, the surjectivity for $i \in \left \{0,1 \right\}$ follows from construction, and for $i \ge 2$, it follows inductively from unwrapping the tensor product in \cref{usee} and using the fact that the summand in degree $i$ in the right hand side of \cref{usee} is zero. $C_\tau$ leads to a map of filtered objects
$$C'_\tau: ( P^i \otimes_R W_0) \oplus (P^{i-1} \left \{-1 \right \}\otimes_R T_0) \to P^i $$

Passing to colimit, we obtain a surjective map 
\begin{equation}\label{saporo}
S_\tau: (  \overline{\Prism}_R \otimes_R W_0) \oplus (\overline{\Prism}_R \left \{-1\right \} \otimes_R T_0) \to P^\infty.   
\end{equation}Since both sides are projective modules of the same rank, the map \cref{saporo} must be an isomorphism. This implies that the maps \cref{hurry} are injective, and therefore also isomorphisms. In particular, by construction, we have a canonical map (which does not depend on the choice of splitting $\tau$) \begin{equation}\label{trashday}
    \overline{\Prism}_R \otimes_R W_0 \to P^\infty
\end{equation} which is injective.

Let $M^\w{u}$ be the underlying object of the $F$-gauge $M.$ Note that we have $M^\w{u} = \Fil^0 M,$ since we have shown that $M$ is effective. There is a natural surjective map $$M^\w{u}/ \Fil^1 \Prism_R M^\w{u} \simeq M^{\w{u}} \otimes_{\Prism_R} R \to \gr^0 M \simeq W_0,$$ whose kernel $K$ is a projective module surjecting onto $T_0$ via the natural diagram 
\begin{center}
    \begin{tikzcd}
\mathrm{Fil}^1 M \arrow[rr] \arrow[d]               &  & N^1 \arrow[d]                      \\
K \simeq \mathrm{Fil}^1M/\Fil^1 \Prism_R M^\w{u} \arrow[rr] &  & N^1 /\gr^1 \Prism_R N^0 \simeq T_0
\end{tikzcd}
\end{center}{}
Since $K$ and $T_0$ must have the same rank as projective modules, the natural map $K \to T_0$ is an isomorphism. Therefore, we obtain an exact sequence $$0 \to T_0 \to M^\w{u}\otimes_{\Prism_R} R \to W_0 \to 0.$$ By projectivity, we may split this sequence to get an isomorphism $\rho: W_0 \oplus T_0 \simeq M^u \otimes_{\Prism_R} R .$ Since $\Prism_R \to R$ is henselian (see \cite[Lem.~4.1.28]{alb}), we can lift it to an isomorphism $\rho': W \oplus T \simeq M^u,$ where $W$ and $T$ are projective $\Prism_R$-modules lifting $W_0$ and $T_0$. This gives an identification $\Fil^1 M \simeq (\Fil^1_{\Nyg}\Prism_R \otimes _{\Prism_R} W) \oplus T.$ In particular, we may choose a section $\rho'': T \to \Fil^1 M$ to the surjection $\Fil^1 M \to T$. This gives a composite map $T \xrightarrow{\rho''} \Fil^1 M \to N^1$ which factors as $\rho''':T_0 \simeq  T/\Fil^1 \Prism_R T \to N^1.$ Note that we have a commutative diagram

\begin{center}
\begin{tikzcd}
T \arrow[r, "\rho''"] \arrow[d]     & \Fil^1 M \arrow[r, two heads] \arrow[d]                        & T \arrow[d] \\
T_0 \arrow[rd] \arrow[r, "\rho'''"] & N^1 \arrow[d] \arrow[r, "\cref{maptosplit}"]                   & T_0         \\
                                    & {\Fil^1M/\Fil^1 \Prism_R M^u \simeq N^1/\gr^1 N^0,} \arrow[ru] &            
\end{tikzcd}
\end{center} which shows that $\tau:= \rho'''$ gives a splitting to \cref{maptosplit}.
Furthermore, using $\rho''$ we can define filtered maps 
\begin{equation}\label{who}
( \mathrm{Fil}^i_\Nyg \Prism_R \otimes_{\Prism_R} W) \oplus (\mathrm{Fil}^{i-1}_\Nyg \Prism_R \otimes_{\Prism_R} T) \to \Fil^i M.   
\end{equation}{}
Since the above maps induce isomorphism on the underlying objects and the maps in \cref{hurry} are isomorphisms (for $\tau:= \rho'''$), it follows that the maps \cref{who} are isomorphisms too.
The condition \cref{who1} of $M$ being an $F$-gauge now translates to the map
$$\theta_n: (W \otimes_{\Prism_R, \varphi} I^n \Prism_R) \oplus (T\otimes_{\Prism_R, \varphi} I^{n-1} \Prism_R) \to I^n M$$ being isomorphism for all $n \in \mathbb Z.$ In particular, $\theta_1$ is surjective. This implies that cokernel of the map $\varphi^* Q \to Q$ is killed by $I,$ where $Q = \mathrm{BK}(M).$ Note that $\theta_0, \theta_1$ being isomorphisms imply that $\Fil^1 M \simeq \left\{x \in Q\mid \varphi(x) \in IQ\right\}$. To check that $Q$ is an admissible prismatic Dieudonn\'e module, one needs to further verify that the image of the composition $Q \xrightarrow{\varphi_Q} Q \to Q/IQ$ is a finite locally free $R \simeq \mathrm{gr}^0_\Nyg \Prism_R$-module $F_Q$ and the induced map $\overline{\Prism}_R \otimes_R F_Q \to Q/IQ$ is an injection. Both of these follow from the fact that $F_Q$ naturally identifies with $Q/ \Fil^1 M \simeq \gr^0 M = N^0 \simeq W_0$ as an $R$-module, and the map $\overline{\Prism}_R \otimes_R F_Q \to Q/IQ$ identifies with the injective map \cref{trashday}, under the further identification of $P^\infty$ with $M^{\w{u}}\otimes_{\Prism_R} \overline{\Prism}_R \simeq M^\w{u}/ I M^{\w{u}} \simeq Q/IQ.$ 
\end{proof}{}

\subsection{Prismatic Dieudonn\'e $F$-gauges associated to $p$-divisible groups}\label{prismaticdie}
In this section, we begin by explaining how to reinterpret the work of \cite{alb} on $p$-divisible groups in terms of prismatic $F$-gauges. More specifically, we define the prismatic Dieudonn\'e $F$-gauge of a $p$-divisible group over a quasisyntomic ring. After that, we will define the prismatic Dieudonn\'e $F$-gauge of a finite locally free group scheme of $p$-power rank and prove fully faithfulness. Let us recall the following definition:

\begin{definition}
For a $p$-divisible group $G= \left \{ G_n\right \},$ the classifying stack of $G$
 denoted as $BG$ is defined to be $\varinjlim BG_n$ taken in the category of stacks.    
\end{definition}{}

Our main construction in this section can be stated in terms of the following definition:

\begin{definition}\label{prelimdef} Let $S$ be a quasiregular semiperfectoid ring. Let $G$ be a $p$-divisible group over $S.$ We define $\mathcal{M}(G)$ to be the effective prismatic weak $F$-gauge over $S$ obtained from $H^2 (\mathrm{Fil}^\bullet_{\mathrm{Nyg}}R\Gamma_{\Prism}(BG))$ (see \cref{fl}).
\end{definition}

We will prove that $\mathcal{M}(G)$ is in fact a prismatic $F$-gauge and moreover, a vector bundle when viewed as a quasi-coherent sheaf on $\mathrm{Spf}(S)^{\mathrm{syn}}.$ To this end, we will begin by an understanding of the cotangent complex of the classifying stack of a $p$-divisible group (cf.~\cite{luci}). At first, we assume that $p^N S =0,$ and we consider an $n$-truncated Barsotti--Tate group $G$ for $n \ge N$ over $S.$ Let $\ell_G$ denote the co-Lie complex of $G,$ which is a perfect complex of $S$-modules with Tor amplitude in $[0,1].$ Its dual $\ell^{\vee}_G$ is called the lie complex. We let $\omega_G:= H^0 (\ell_G),$ which is a finite locally free $S$-module, whose rank is called the dimension of $G.$

 By a result of Grothendieck \cite[App.]{Mess2}, one has $$\ell^{\vee}_G \simeq \tau_{\ge -1}R\mathcal{H}om_{\mathbb Z} (G^\vee, \mathbb{G}_a) \simeq  \tau^{\le 1}R\mathcal{H}om_{\mathbb Z} (G^\vee, \mathbb{G}_a).$$ Note that there is a natural map $\phi_n: \mathcal{E}xt^1 (G^\vee, \mathbb{G}_a) \to \mathcal{H}om (G^\vee, \mathbb{G}_a)$ obtained as follows (cf.~\cite[2.2.3,~2.2.4]{luci}): an element of $\mathcal{E}xt^1 (G^\vee, \mathbb{G}_a)$ determines a map $u: G^\vee \to \mathbb{G}_a[1].$ Applying $(\cdot)\otimes_{\mathbb Z}^L \mathbb Z/p^n$ and noting that $G^\vee$ is killed by $p^n,$ we obtain a map $G^\vee [1] \oplus G^\vee \to \mathbb{G}_a[2]\oplus \mathbb{G}_a[1];$ applying $\pi_1$ gives the desired map $\phi_n$. If $G$ is an $n$-truncated Barsotti--Tate group, then $\phi_n$ is an isomorphism (see \cite[Prop.~2.2.4]{luci}). Suppose now that $G=\left \{ G_n \right \}$ is a $p$-divisible group over $S.$ Let $f: G_{n+1} \to G_n$ be any map of group schemes. By construction, we have a commutative diagram

\begin{center}
    \begin{tikzcd}
{\mathcal{E}xt^1(G_n, \mathbb{G}_a)} \arrow[rr, "\phi_n"] \arrow[d, "f_1"] &  & {\mathcal{H}om(G_n, \mathbb{G}_a)} \arrow[d, "p f_0"] \\
{\mathcal{E}xt^1(G_{n+1}, \mathbb{G}_a)} \arrow[rr, "\phi_{n+1}"]        &  & {\mathcal{H}om(G_{n+1}, \mathbb{G}_a).}             
\end{tikzcd}
\end{center}{}

\begin{proposition}\label{cotangent}
    Let $S$ be a bounded $p^\infty$-torsion, $p$-complete ring and $G = \left \{ G_n \right \}$ be a $p$-divisible group over $S.$ Then, we have a natural isomorphism $$\mathbb L_{BG/S} := \varprojlim  (\mathbb{L}_{BG_n/S})^{\wedge}_p \simeq  \omega_G[-1],$$ where ${\omega_G}$ is a locally free $S$-module of finite rank.
\end{proposition}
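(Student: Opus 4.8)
The plan is to compute $\mathbb{L}_{BG_n/S}$ for each truncated piece, show the transition maps in the inverse limit are (up to isomorphism) multiplication by $p$ on one summand and identities on another, and then take the limit. First I would use the cotangent complex for a classifying stack: for a flat affine group scheme $H$ over $S$, one has $\mathbb{L}_{BH/S} \simeq \ell_H[-1]$, where $\ell_H$ is the co-Lie complex of $H$ (this follows from descent along $\ast \to BH$ together with the simplicial resolution of $BH$ by powers of $H$, since the cotangent complex of the Cech nerve computes the co-Lie complex shifted by one). So $\mathbb{L}_{BG_n/S} \simeq \ell_{G_n}[-1]$, a perfect complex with Tor-amplitude in $[0,1]$, hence $\ell_{G_n}[-1]$ sits in degrees $[1,2]$ cohomologically. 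Then I would invoke Grothendieck's description recalled just above the statement: $\ell^\vee_{G_n} = \tau_{\geq -1} R\mathcal{H}om_{\mathbb{Z}}(G_n^\vee, \mathbb{G}_a)$, so dually $\ell_{G_n}$ has $H^0(\ell_{G_n}) = \omega_{G_n}$ finite locally free and $H^1(\ell_{G_n}) = \mathcal{E}xt^1(G_n^\vee,\mathbb{G}_a)^\vee$ (using that $\mathcal{H}om(G_n^\vee,\mathbb{G}_a)$ and $\mathcal{E}xt^1(G_n^\vee,\mathbb{G}_a)$ are the only nonzero terms and they are finite locally free when $n \geq N$, i.e. when $p^N S = 0$, so that $G_n$ is $n$-truncated Barsotti--Tate — here I use the hypothesis and the fact that $\phi_n$ is an isomorphism for truncated BT groups).

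Next I would analyze the inverse system. The key input is the commutative square displayed just before the proposition: under the identification of $\mathcal{E}xt^1(G_n,\mathbb{G}_a)$ with $\mathcal{H}om(G_n,\mathbb{G}_a)$ via $\phi_n$, a transition map $f\colon G_{n+1}\to G_n$ in the $p$-divisible system induces on the $\mathcal{E}xt^1$ the map $f$ but on the $\mathcal{H}om$ the map $pf$. For the standard $p$-divisible structure maps this means: on $H^0(\ell_{G_n}) = \omega_{G_n}$ the transition maps are isomorphisms (the $\omega_{G_n}$ for $n \geq N$ all identify with a single finite locally free $S$-module $\omega_G$, via the inclusions $G_n \hookrightarrow G_{n+1}$), while on $H^1(\ell_{G_n})$ the transition maps carry an extra factor of $p$. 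Concretely, after fixing $n_0 \geq N$, for $m \geq n_0$ the module $H^1(\ell_{G_m})$ is killed by $p^m$ and the transition $H^1(\ell_{G_{m+1}}) \to H^1(\ell_{G_m})$ is multiplication by $p$ composed with an isomorphism. Therefore $\varprojlim_m H^1(\ell_{G_m}) = 0$ and $\varprojlim^1_m H^1(\ell_{G_m}) = 0$ as well (a tower with transition maps that are, up to isomorphism, multiplication by $p$ on a $p$-power-torsion module is Mittag--Leffler-zero after $p$-completion — more precisely its derived limit vanishes $p$-adically).

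Finally I would assemble the limit. Since $\mathbb{L}_{BG_n/S}$ has cohomology concentrated in cohomological degrees $1$ and $2$ (i.e. $H^{-1}$ and $H^{-2}$ in homological indexing: $H^{-1}(\mathbb{L}_{BG_n/S}) = \omega_{G_n}$ and $H^{-2}(\mathbb{L}_{BG_n/S}) = H^1(\ell_{G_n})$), the inverse limit $\varprojlim (\mathbb{L}_{BG_n/S})^\wedge_p$ is computed by a Milnor sequence in each degree. In degree $-2$ both the $\lim$ and $\lim^1$ of the relevant towers vanish $p$-adically by the previous paragraph; in degree $-1$ the tower is eventually constant with value $\omega_G$ (and has vanishing $\lim^1$). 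Hence $\mathbb{L}_{BG/S} \simeq \omega_G[-1]$ with $\omega_G$ finite locally free, which is the claim. The main obstacle I anticipate is the careful bookkeeping in the second step: one must pin down precisely which transition map appears in the $p$-divisible system (the "times $p$" map versus the inclusion), verify the commuting square applies with the correct instance of $f$, and confirm that the resulting tower on $\mathcal{E}xt^1$ really is pro-zero after $p$-completion rather than merely eventually zero — this requires knowing that each $\mathcal{E}xt^1(G_n^\vee,\mathbb{G}_a)$ is a finite locally free $S$-module (valid for truncated BT groups over a base where $p$ is nilpotent) and that boundedness of $p^\infty$-torsion on $S$ lets one pass between the $p$-nilpotent and $p$-complete pictures.
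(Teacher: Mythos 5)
Your proposal is correct and follows essentially the same route as the paper: identify $\mathbb{L}_{BG_n/S}\simeq \ell_{G_n}[-1]$, use Grothendieck's description of $\ell_{G_n}^{\vee}$ together with the displayed $\phi_n$-square (applied to the dual tower) to see that the transition maps on the non-$\omega$ cohomology acquire factors of $p$ and hence form an essentially zero pro-system, note that the $\omega_{G_n}$ stabilize to the locally free module $\omega_G$, and pass from the $p$-nilpotent to the $p$-complete case via bounded $p^{\infty}$-torsion. The only differences are presentational (you assemble the limit through Milnor sequences where the paper argues by pro-isomorphism and then takes limits, and your degree conventions for $\ell_{G_n}$ differ by a shift), and the bookkeeping you flag about which structure map of the dual tower enters the square is exactly the point the paper settles by invoking the map $``p"\colon G_{n+1}^{\vee}\to G_n^{\vee}$.
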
{}

\begin{proof}
First, we argue over $S/p^NS.$ The above commutative diagram applied to the maps induced by $``p": G_{n+1}^\vee \to G_n^\vee$ shows that the induced ind-object $\mathcal{E}xt^1(G_n^\vee, \mathbb{G}_a )$ is equivalent to zero (cf.~\cite[2.2.1]{luci}). By duality, this implies that the pro-object $H^{-1} (\ell_G)$ is zero. Since $\mathbb{L}_{BG_n} \simeq \ell_{G_n}[-1]$ (cf.~proof of \cite[Thm.~3.1]{totaro}),  we see that $\mathbb{L}_{BG_n}$ is naturally pro-isomorphic to $\omega_{G_n}[-1];$ since $\omega_G := \varprojlim \omega_{G_n}$ is locally free (see \cite[Prop.~2.2.1,~(b),~(c)]{luci}), this gives the claim in the case over $S/p^NS$. Since $S$ has bounded $p$-power torsion, the pro-objects (in the category of animated rings) $S \otimes_{\mathbb{Z}}^L \mathbb{Z}/p^k $ and $S/p^kS$ are pro-isomorphic, and our claim in the $p$-complete case follows from base change properties of cotangent complex, using \cite[Tag~0D4B]{stacks}, and taking limits.
\end{proof}{}

\begin{remark}
 Note that for any map $g: G_n \to G_{n+1},$ one has a similar diagram
\begin{center}
    \begin{tikzcd}
{\mathcal{E}xt^1(G_{n+1}, \mathbb{G}_a)} \arrow[rr, "\phi_{n+1}"] \arrow[d, "pg_1"] &  & {\mathcal{H}om(G_{n+1}, \mathbb{G}_a)} \arrow[d, "g_0"] \\
{\mathcal{E}xt^1(G_{n}, \mathbb{G}_a)} \arrow[rr, "\phi_{n}"]        &  & {\mathcal{H}om(G_{n}, \mathbb{G}_a).}            
\end{tikzcd}
\end{center}{}
Let $S$ be such that $p^NS = 0$. For a $p$-divisible group $G= \left \{ G_n \right \}$ over $S,$ the above diagram implies that the structure maps $i: G_n \to G_{n+1}$ induces a pro-system $\mathcal{H}om(G_n, \mathbb{G}_a)$ that is pro-zero. We have already seen in the above proof that the maps $``p": G_{n+1} \to G_n$ induces a direct system $\mathcal{E}xt^1(G_n, \mathbb{G}_a)$ that is equivalent to zero. However, the pro-system $\mathcal{E}xt^1(G_n, \mathbb{G}_a)$ induced by the maps $i: G_n \to G_{n+1}$ is nonzero; in fact, we have $\mathcal{E}xt^1(G, \mathbb{G}_a) \simeq \omega_{G^\vee}^{\vee},$ the latter will be denoted by $t_{G^\vee}.$
\end{remark}{}

\begin{proposition}\label{cotangentcalc}
    Let $R \to S$ be a perfectoid ring surjecting onto a quasiregular semiperfectoid algebra $S.$ Let $G$ be a $p$-divisible group over $S.$ Then we have a natural fiber sequence $$\mathbb{L}_{S/R} \otimes_S \mathcal{O}_{BG} \to \mathbb{L}_{BG/R}  \to \omega_G[-1].$$
\end{proposition}{}

\begin{proof}
Follows from the transitivity fiber sequence of cotangent complex associated to the maps $BG \to \mathrm{Spf}\, S \to \mathrm{Spf}\, R$ and \cref{cotangent}.
\end{proof}{}

Using the above proposition, one can fully describe the conjugate filtration on absolute Hodge--Tate cohomology $H^2_{\overline{\Prism}}(BG).$ Below, let $t_{G^\vee} := \mathcal{E}xt^1(G, \mathbb{G}_a) \simeq H^2 (BG, \mathcal{O})$, which is a locally free $S$-module since $\omega_{G^\vee}$ is locally free. First, we record a lemma.

\begin{lemma}\label{harddlemma}
    Let $G$ be a $p$-divisible group over a bounded $p^\infty$-torsion, $p$-complete ring $S$. Then $H^1(BG, \mathcal{O})$ and $H^3(BG, \mathcal{O})$ vanish.
\end{lemma}{}

\begin{proof}
 For $H^1 (BG, \mathcal{O})=0$, similar to \cref{pde}, it suffices to note that $\mathcal{H}om _{(S)_\qsyn}(G, \mathcal{O})=0$, which holds since $G$ is $p$-divisible and $\mathcal{O}$ is derived $p$-complete. For the vanishing of $H^3(BG, \mathcal{O})$, since $R^1\lim H^2(BG,\mathcal{O})/p^k =0$, we can reduce to the case where $S$ is killed by a power of $p$. Note that we have a convergent spectral sequence 
$$E_2^{i,j}= \text{Ext}_{(S)_\mathrm{qsyn}}^i (H^{-j}(\mathbb{Z}[B {G}]), \mathbb{G}_a)\implies H^{i+j}(BG, \mathcal{O}).$$ In order to prove that $H^3(BG, \mathcal{O})=0$, it suffices to show that $E^{i,j}_2=0$ for $i+j=3$ and $i,j \ge 0$. Note that $E^{3,0}_2 \simeq \mathrm{Ext}^3 _{(S)_\qsyn}(\mathbb{Z}, \mathbb{G}_a) \simeq H^3 (\mathrm{Spec}\, S, \mathcal{O})=0.$ Below, we give arguments for the other necessary vanishings.

\begin{enumerate}
    \item For $i=2$, $j=1$, we need to show that $\mathrm{Ext}^2_{(S)_\qsyn}(G, \mathbb{G}_a)=0,$ which follows from \cite[Cor.~2.2.7,~(ii),~(iii)]{luci}.

    \item For $i=1$, $j=2$, we need to prove that $\mathrm{Ext}^1_{(S)_\qsyn} (G \wedge G, \mathbb{G}_a)=0.$ To this end, we consider the short exact sequence $0 \to Q \to G \otimes_\mathbb{Z} G \to \wedge^2_\mathbb{Z} G \to 0.$ By construction, it follows that $p^2: Q \to Q$ is surjective since $G$ is $p$-divisible. In particular, we must have $\mathrm{Hom}(Q, \mathbb{G}_a)=0$ since $S$ is killed by a power of $p$. Thus, to prove $\mathrm{Ext}^1_{(S)_\qsyn} (G \wedge G, \mathbb{G}_a)=0$, by an exact sequence chase, it suffices to prove that $\mathrm{Ext}^1_{(S)_\qsyn} (G \otimes_\mathbb{Z} G, \mathbb{G}_a)=0$. Using the spectral sequence $$\text{Ext}_{(S)_\mathrm{qsyn}}^i (H^{-j}(G \otimes^L_\mathbb{Z} G), \mathbb{G}_a)\implies \mathrm{Ext}^{i+j}_{(S)_\qsyn}(G \otimes^L_\mathbb{Z}G, \mathbb{G}_a),$$ one sees that $\mathrm{Ext}^1_{(S)_\qsyn} (G \otimes_\mathbb{Z} G, \mathbb{G}_a)$ is a subquotient of $\mathrm{Ext}^1_{(S)_\qsyn} (G \otimes^L_\mathbb{Z} G, \mathbb{G}_a)$. Now, $\mathrm{Ext}^1_{(S)_\qsyn} (G \otimes^L_\mathbb{Z} G, \mathbb{G}_a)\simeq \mathrm{Ext}^1_{(S)_\qsyn} (G ,R\mathcal{H}om (G, \mathbb{G}_a)) \simeq \mathrm{Hom}_{(S)_\qsyn}(G ,R\mathcal{H}om (G, \mathbb{G}_a)[1]).$ Since $\mathcal{H}om(G, \mathbb{G}_a)=0$ and $\mathcal{E}xt^1(G, \mathbb{G}_a) \simeq \omega_{G^\vee}$, $\mathrm{Hom}_{(S)_\qsyn}(G ,R\mathcal{H}om (G, \mathbb{G}_a)[1]) \simeq \mathrm{Hom}_{(S)_\qsyn}(G, \omega_{G^\vee})$, which is zero since $G$ is $p$-divisible and $\omega_{G^\vee}$ is killed by a power of $p$. This gives the desired vanishing.

    \item For $i=0, j=3$, we need to show that $\mathrm{Hom}_{(S)_\qsyn} (H^{-3}(\mathbb{Z}[BG]), \mathbb{G}_a)=0.$ By \cref{socurious}, multiplication by $p$ map induces a certain surjective map $[p]_3$ on $H^{-3}(\mathbb{Z}[BG])$. By \cite[\S~6.2]{h3breen}, we have an exact sequence 
    \begin{equation}\label{breenseq}
       0 \to \wedge^3_{\mathbb{Z}}G \to H^{-3}(\mathbb{Z}[BG]) \to L_1 \wedge^2G \to 0,  
    \end{equation} where the term in the right denotes a derived functor. Since multiplication by $p^3$ is surjective on $\wedge^3_\mathbb{Z} G$, we have $\mathrm{Hom}_{(S)_\qsyn}(\wedge^3_\mathbb{Z} G, \mathbb{G}_a)=0$. Further, multiplication by $p$ on $G$ induces the map $p^2: L_1 \wedge^2G \to L_1 \wedge^2G $, which is surjective by the above exact sequence since $[p]_3$ is surjective. Therefore, $\mathrm{Hom}_{(S)_\qsyn}( L_1 \wedge^2G, \mathbb{G}_a)=0$. This gives the desired vanishing.
    
\end{enumerate}
This finishes the proof.\end{proof}{}
The following lemma was used in the above proof.

\begin{lemma}\label{socurious}
 Let $G$ be a $p$-divisible group over a bounded $p^\infty$-torsion, $p$-complete ring $S$. Then the map $[p]_3: H^{-3}(\mathbb{Z}[BG]) \to  H^{-3}(\mathbb{Z}[BG])$ induced by $p: G \to G$ is surjective.   
\end{lemma}{}

\begin{proof}
 Since the topos $\mathcal{X}$ associated to $(S)_\qsyn$ can be generated by affine schemes, it is (locally) coherent, and by Deligne's theorem (\cite[Exposé VI, p. 336]{AGV}), it has enough points. Therefore, it suffices to check surjectivity of $[p]_3$ after taking stalks at an arbitrary (geometric) point $x: \mathrm{Sets} \to \mathcal{X}$. We may view $G$ as an abelian group object of $\mathcal{X}$ and let $A:= G_x$. Since $G \simeq \varinjlim G[p^n]$, it follows that every element of the abelian group $A$ is killed by a power of $p$. Further, $p: A \to A$ is also surjective.

 We claim that there is an isomorphism $A \simeq (\mathbb{Q}_p/\mathbb{Z}_p)^{\oplus I}$ for some index set $I.$ To this end, note that if $\ell \ne p$ is a prime, then $\ell: A \to A$ is surjective. In order to see this, let $a \in A$. Then there exists $N$ such that $p^Na  =0.$ Since $\mathrm{gcd}(\ell, p^N)=1$, we have $u \ell + p^Nv=1$ for $u,v \in \mathbb{Z}$. Therefore, $a=  (u\ell + p^Nv )a= \ell u a$. Setting $b:= ua$ gives an element such that $\ell b = a$, as desired. Since $p: A \to A$ is also surjective, it follows that $A$ is a divisible group. By the classification of divisible groups (e.g., see \cite[\S~24]{MR233887}), $A$ is a direct sum of possibly infinitely many copies of $\mathbb{Q}$ and the groups $\mathbb{Q}_\nu/\mathbb{Z}_\nu$, where $\nu$ is some prime number. Since every element of $A$ is $p$-power torsion, $A$ must be a direct sum of copies of $\mathbb{Q}_p/\mathbb{Z}_p$, as claimed.

 Since taking stalks is an exact functor, by the first paragraph, it suffices to show that the map $[p]_3: H_3 (A, \mathbb{Z}) \to H_3 (A, \mathbb{Z})$ on group homology induced by $p: A \to A$ is surjective. Since $A \simeq (\mathbb{Q}_p/\mathbb{Z}_p)^{\oplus I}$, by considering filtered colimits, we may reduce to the case when the index set $I$ is finite. First, we show that the map $[p]_3: H_3 (\mathbb{Q}_p/\mathbb{Z}_p, \mathbb{Z}) \to H_3(\mathbb{Q}_p/\mathbb{Z}_p, \mathbb{Z})$ induced by $p: \mathbb{Q}_p/\mathbb{Z}_p \to  \mathbb{Q}_p/\mathbb{Z}_p$ is surjective. By writing $\mathbb{Q}_p/\mathbb{Z}_p \simeq \varinjlim _k \mathbb{Z}/p^k\mathbb{Z}$ and using the classical calculation of group homology of cyclic groups via the Tate complex, it follows that $H_3 (\mathbb{Q}_p/\mathbb{Z}_p, \mathbb{Z}) \simeq \mathbb{Q}_p/\mathbb{Z}_p.$ Further, one sees (e.g., by using \cref{breenseq} in the case of cyclic groups) that $[p]_3$ identifies with $p^2: \mathbb{Q}_p/\mathbb{Z}_p \to \mathbb{Q}_p/\mathbb{Z}_p$, which is indeed surjective. In fact, by a similar argument, we have that $H_2 (\mathbb{Q}_p/\mathbb{Z}_p, \mathbb{Z})=0, H_1 (\mathbb{Q}_p/\mathbb{Z}_p, \mathbb{Z}) \simeq \mathbb{Q}_p/\mathbb{Z}_p,$ and the induced map $[p]_i: H_i (\mathbb{Q}_p/\mathbb{Z}_p, \mathbb{Z}) \to H_i(\mathbb{Q}_p/\mathbb{Z}_p, \mathbb{Z})$ is surjective for all $i \le 3.$
 
We will argue by induction on $m$ that $[p]_3: H_3 (A, \mathbb{Z}) \to H_3 (A, \mathbb{Z})$ is surjective for $A:= (\mathbb{Q}_p/\mathbb{Z}_p)^m$. The previous paragraph treats the case $m=1$, so we assume that $m \ge 2$ and let $B:= (\mathbb{Q}_p/\mathbb{Z}_p)^{m-1}.$ By the K\"unneth formula, we have a short exact sequence
$$0 \to \bigoplus_{i+j=3}H_i(B, \mathbb{Z}) \otimes_\mathbb{Z} H_j(\mathbb{Q}_p/\mathbb{Z}_p, \mathbb{Z}) \to H_3 (A, \mathbb{Z}) \to \bigoplus_{i+j=2} \mathrm{Tor}_1^{\mathbb{Z}} (H_i(B,\mathbb{Z}), H_j(\mathbb{Q}_p/\mathbb{Z}_p, \mathbb{Z})) \to 0.$$
Since $p: B \to B$ induces a surjective map on $H_2(B, \mathbb{Z}) \simeq \wedge^2 B$, by induction we conclude that $p: A \to A$ induces a surjection on $\bigoplus_{i+j=3}H_i(B, \mathbb{Z}) \otimes_\mathbb{Z} H_j(\mathbb{Q}_p/\mathbb{Z}_p, \mathbb{Z})$. Therefore, it would suffice to show that the induced map on $\bigoplus_{i+j=2} \mathrm{Tor}_1^{\mathbb{Z}} (H_i(B,\mathbb{Z}), H_j(\mathbb{Q}_p/\mathbb{Z}_p, \mathbb{Z})$ is surjective. To this end, we reduce to showing that the induced map on $\mathrm{Tor}_1^{\mathbb{Z}} (B, \mathbb{Q}_p/\mathbb{Z}_p) \simeq \varinjlim B[p^n] \simeq B = (\mathbb{Q}_p/\mathbb{Z}_p)^{m-1}$ is surjective. However, this induced map identifies with $p^2: (\mathbb{Q}_p/\mathbb{Z}_p)^{m-1} \to (\mathbb{Q}_p/\mathbb{Z}_p)^{m-1}$ which is surjective. This finishes the proof.
\end{proof}{}

\begin{proposition}[Hodge--Tate sequence]\label{hodgetate}
    Let $G$ be a $p$-divisible group over a quasiregular semiperfectoid ring $S.$ Then there is a canonical exact sequence 

$$0 \to \overline{\Prism}_S \otimes_S t_{G^\vee} \to H^2_{\overline{\Prism}}(BG) \to (\overline{\Prism}_S \otimes_S \omega_G) \left \{-1 \right \} \to 0. $$
\end{proposition}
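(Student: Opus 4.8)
The plan is to extract $H^2$ from the conjugate filtration on the absolute Hodge--Tate cohomology $R\Gamma_{\overline{\Prism}}(BG)$, exactly along the lines of \cref{bgm}. Fix a perfectoid ring $R$ surjecting onto $S$, so that the Hodge--Tate comparison over $R$ equips $R\Gamma_{\overline{\Prism}}(BG)$ with an exhaustive increasing conjugate filtration with $\mathrm{gr}^i_{\mathrm{conj}}\simeq R\Gamma\bigl(BG,\wedge^i\mathbb{L}_{BG/R}\bigr)[-i]\{-i\}$, and the same formula applied to $\mathrm{Spec}\,S$ computes $\mathrm{gr}^i_{\mathrm{conj}}\overline{\Prism}_S$. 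By the preceding proposition $\mathbb{L}_{BG/R}\simeq\mathbb{L}_{S/R}\oplus\omega_G[-1]$, and since $G$ is commutative the $G$-action on the co-Lie summand is trivial, so $\omega_G[-1]$ is pulled back from $\mathrm{Spec}\,S$. Hence $\wedge^i\mathbb{L}_{BG/R}\simeq\bigoplus_{a+b=i}\wedge^a\mathbb{L}_{S/R}\otimes_S\Gamma^b(\omega_G)[-b]$ --- using the d\'ecalage isomorphism $\wedge^b(\omega_G[-1])\simeq\Gamma^b(\omega_G)[-b]$ valid since $\omega_G$ is locally free --- and the projection formula along $BG\to\mathrm{Spec}\,S$ rewrites the graded pieces as $\mathrm{gr}^i_{\mathrm{conj}}R\Gamma_{\overline{\Prism}}(BG)\simeq\bigoplus_{a+b=i}\wedge^a\mathbb{L}_{S/R}\otimes_S\Gamma^b\omega_G\otimes_S R\Gamma(BG,\mathcal{O})[-b-i]\{-i\}$.

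The remaining inputs are the low-degree computations $H^0(BG,\mathcal{O})=S$, $H^1(BG,\mathcal{O})=0$, $H^2(BG,\mathcal{O})=t_{G^\vee}$ (the last by definition), and $H^1_{\overline{\Prism}}(BG)=0$. Descent along $*\to BG$ identifies $R\Gamma(BG,\mathcal{O})$ with $R\mathrm{Hom}(\mathbb{Z}[BG],\mathbb{G}_a)$; then $H^1=\mathrm{Hom}(G,\mathbb{G}_a)$ vanishes because $\mathcal{H}om(G,\mathbb{G}_a)=\varprojlim_n\mathcal{H}om(G[p^n],\mathbb{G}_a)$ and this pro-system is pro-zero (the remark preceding \cref{cotangent}), while in cohomological degree $2$ the term $\mathcal{H}om(G\wedge G,\mathbb{G}_a)$ vanishes as it injects into $\mathcal{H}om(G,\mathcal{H}om(G,\mathbb{G}_a))=0$, leaving $H^2=\mathcal{E}xt^1(G,\mathbb{G}_a)=t_{G^\vee}$; the vanishing $H^1_{\overline{\Prism}}(BG)=0$ is then read off the conjugate filtration. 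Substituting these into the formula for $\mathrm{gr}^i_{\mathrm{conj}}$ and bookkeeping the shifts, one finds that in cohomological degree $2$ only two summands survive for each $i$: the $(a,b)=(i,0)$ summand, equal to $\mathrm{gr}^i_{\mathrm{conj}}\overline{\Prism}_S\otimes_S t_{G^\vee}$, and --- for $i\ge1$ --- the $(a,b)=(i-1,1)$ summand, equal to $\mathrm{gr}^{i-1}_{\mathrm{conj}}\overline{\Prism}_S\otimes_S\omega_G\{-1\}$; every other $(a,b)$ contributes only through $H^{<0}(BG,\mathcal{O})$ after shifting, hence nothing. Therefore $H^2_{\overline{\Prism}}(BG)$ carries a filtration whose associated graded equals that of $\overline{\Prism}_S\otimes_S t_{G^\vee}$ together with a one-step shift of that of $(\overline{\Prism}_S\otimes_S\omega_G)\{-1\}$.

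To promote this to the asserted canonical short exact sequence, I would define $\alpha\colon\overline{\Prism}_S\otimes_S t_{G^\vee}\to H^2_{\overline{\Prism}}(BG)$ as the $\overline{\Prism}_S$-linear extension of $t_{G^\vee}=H^2(BG,\mathcal{O})=H^2(\mathrm{Fil}^0_{\mathrm{conj}})\to H^2_{\overline{\Prism}}(BG)$; by the previous paragraph $\alpha$ is filtered and induces the evident inclusion on associated graded, so (using conjugate-completeness and $H^1_{\overline{\Prism}}(BG)=0$ to rule out unwanted contributions) it is injective with cokernel whose associated graded matches that of $(\overline{\Prism}_S\otimes_S\omega_G)\{-1\}$, and the $\overline{\Prism}_S$-linear extension of the inclusion of the bottom nonzero conjugate graded piece $\omega_G\{-1\}$ of $\mathrm{coker}\,\alpha$ gives a map $(\overline{\Prism}_S\otimes_S\omega_G)\{-1\}\to\mathrm{coker}\,\alpha$ that is an isomorphism on associated graded, hence an isomorphism. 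The step I expect to be the main obstacle is precisely this last identification: the conjugate filtration interleaves the $t_{G^\vee}$-part (level $i$) with the $\omega_G$-part (level $i-1$), so one must argue that the $t_{G^\vee}$-part genuinely spans a subobject. The cleanest route is to pass to the $\mathbb{G}_m$-weight decomposition of the pullback along $j\colon\mathrm{Spf}(S)\times B\mathbb{G}_m\to\mathrm{Spf}(S)^{\mathrm{Nyg}}$ of \cref{hodg}, under which $(\overline{\Prism}_S\otimes_S\omega_G)\{-1\}$ sits in weight $1$ and $\overline{\Prism}_S\otimes_S t_{G^\vee}$ in weight $0$; alternatively a d\'evissage works --- $G=\mu_{p^\infty}$ is \cref{bgm} (where $t_{G^\vee}=0$), the \'etale case is immediate from $\mathbb{L}_{BG/S}=0$, and the general case follows by functoriality together with the exact sequences of $p$-divisible groups.
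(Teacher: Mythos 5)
Your argument is essentially correct, but it takes a genuinely different route from the paper's. The paper's proof is a two-line reduction: since $S$ is qrsp, $R\Gamma_{\overline{\Prism}}(BG)$ is computed as relative Hodge--Tate cohomology of $BG_{\overline{\Prism}_S}$ over the canonical prism $(\Prism_S,I)$, and the relative Hodge--Tate filtration there has graded pieces $\wedge^i\mathbb{L}_{BG_{\overline{\Prism}_S}/\overline{\Prism}_S}[-i]\{-i\}$; by \cref{cotangent} this cotangent complex is just $\omega_G\otimes_S\overline{\Prism}_S[-1]$, so in cohomological degree $2$ only $\mathrm{gr}^0$ (giving $\overline{\Prism}_S\otimes_S t_{G^\vee}$) and $\mathrm{gr}^1$ (giving $(\overline{\Prism}_S\otimes_S\omega_G)\{-1\}$) contribute, and the asserted sequence is literally the degree-$2$ part of the two-step filtration $\mathrm{Fil}_0\subset\mathrm{Fil}_1$ --- no choice of perfectoid cover, no interleaving, and the two maps are canonical by construction. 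You instead fix $R\twoheadrightarrow S$ perfectoid and use the conjugate filtration with graded pieces $\wedge^i\mathbb{L}_{BG/R}[-i]\{-i\}$, which mixes the $\mathbb{L}_{S/R}$- and $\omega_G$-directions; this is exactly the filtration the paper analyzes afterwards in \cref{laaa} and in the proof of \cref{dualo1}, so in effect you reprove \cref{laaa} and then reassemble the sequence by a filtered-multiplication argument ($\alpha$ and the identification of $\mathrm{coker}\,\alpha$). That reassembly does work, and the worry you flag is already answered by your own observation that $\alpha$ is a filtered map which is a split injection on associated graded; the cost relative to the paper is the extra bookkeeping and a (mild) check that the resulting sequence is independent of the choice of $R$, which the relative route gets for free.

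Two points to tighten. First, your list of inputs omits $H^3(BG,\mathcal{O})=0$: to know that the induced filtration on $H^2_{\overline{\Prism}}(BG)$ has associated graded equal to all of $H^2(\mathrm{gr}^i_{\mathrm{conj}})$ (and hence that $\mathrm{gr}^\bullet(\mathrm{coker}\,\alpha)$ is what you claim), you need $H^2(\mathrm{Fil}^i_{\mathrm{conj}})\to H^2(\mathrm{gr}^i_{\mathrm{conj}})$ to be surjective, i.e.\ the boundary maps into $H^3(\mathrm{Fil}^{i-1}_{\mathrm{conj}})$ to vanish; your remark that the other $(a,b)$-summands die only controls $H^2(\mathrm{gr}^i_{\mathrm{conj}})$ itself. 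This is exactly where the paper invokes $H^3(BG,\mathcal{O})=0$ (recorded in \cref{badwriter} and used inductively in \cref{laaa}), and the same input is implicitly needed on the paper's route as well; add it. Second, your two fallback suggestions are the weak spots and, fortunately, are unnecessary: the weight decomposition along $j$ from \cref{hodg} lives on the graded fiber over $S$ and does not decompose $H^2_{\overline{\Prism}}(BG)$ itself (consistently, \cref{badwriter} notes the sequence splits only non-canonically), and a d\'evissage into $\mu_{p^\infty}$ and \'etale pieces is not available for a general $p$-divisible group over a qrsp ring (e.g.\ supersingular ones). Finally, a cosmetic point: the convention used in the paper is $\wedge^b(\omega_G[-1])\simeq\mathrm{Sym}^b(\omega_G)[-b]$ rather than $\Gamma^b(\omega_G)[-b]$; since only $b\le 1$ matters for your degree count, this does not affect the argument.
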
{}

\begin{proof}
 Note that $H^2_{\Prism}(BG)$ can be computed as relative prismatic cohomology of $BG_{\overline{\Prism}_S}$ with respect to the canonical prism $(\Prism_S, I)$ associated to $S.$ The claim now follows from the conjugate filtration on relative Hodge--Tate cohomology (which is defined canonically) and \cref{cotangent}. Indeed (working over $\overline{\Prism}_S$), we have $H^2 (BG, (\wedge^i \omega_G[-1])[-i])=0$ for $i \ge 2$ and $H^1 (BG,  \omega_G[-2])=0$. Thus the conjugate filtration (cf.~\cref{cotangent}) gives an exact sequence $$0 \to \overline{\Prism}_S \otimes_S t_{G^\vee} \to H^2_{\overline{\Prism}}(BG) \to (\overline{\Prism}_S \otimes_S \omega_G) \left \{-1 \right \} \to 0,$$ where the surjectivity on the right follows from $H^3 (BG, \mathcal{O})=0$ (\cref{harddlemma}).
\end{proof}{}

\begin{remark}\label{badwriter}
 The Hodge--Tate sequence above sequence can be split non-canonically. It also implies that $H^2_{\overline{\Prism}} (BG)$ is a locally free $\overline{\Prism}_S$-module of rank $\w{height}(G),$ as $\mathrm{height}(G) = \dim (G) + \dim(G^\vee)$. Using the fact that $H^3(BG, \mathcal{O})=0$ (resp. $H^1(BG, \mathcal{O})=0$) and \cref{cotangent}, one can obtain that $H^3_{\overline{\Prism}}(BG)=0$ (resp. $H^1_{\overline{\Prism}}(BG)=0$), as explained below.
\end{remark}{}

\begin{lemma}\label{badwriter11}
  Let $G$ be a $p$-divisible group over a quasiregular semiperfectoid ring $S.$ Then $H^1_{\overline{\Prism}}(BG)$ and $H^3_{\overline{\Prism}}(BG)$ vanish.  
\end{lemma}

\begin{proof}
  Similar to \cref{hodgetate}, we may compute the desired Hodge--Tate cohomology groups as relative Hodge--Tate cohomology of $BG_{\overline{\Prism}_S}$ with respect to the canonical prism $(\Prism_S, I)$ associated to $S$. By using the conjugate filtration, arguing in a manner similar to \cref{hodgetate}, the vanishing $H^1_{\overline{\Prism}}(BG)=0$ follows as a consequence of $H^1 (BG, \mathcal{O})=0$ (\cref{harddlemma}) and $H^1 (BG,  \omega_G[-2])=0$. The assertion $H^3_{\overline{\Prism}}(BG)=0$ is also proven using a similar strategy. Indeed (working over $\overline{\Prism}_S$), since $H^2 (BG, (\wedge^i \omega_G[-1])[-i])$ and $H^3 (BG, (\wedge^i \omega_G[-1])[-i])$ both vanish for $i \ge 2$, the relative conjugate filtration gives an exact sequence
  $$ H^3 (BG, \mathcal{O}) \to H^3_{\overline{\Prism}}(BG) \to H^3(BG, \omega_G[-2]) .$$ Now, $ H^3(BG, \omega_G[-2]) \simeq H^1 (BG, \omega_G) \simeq \mathrm{Hom}(G, \omega_G)=0$ since $G$ is $p$-divisible and $\omega_G$ is derived $p$-complete. As $H^3(BG, \mathcal{O})=0$ (\cref{harddlemma}), we obtain the desired vanishing.
\end{proof}{}

\begin{proposition}\label{locfree}

     Let $G$ be a $p$-divisible group over a quasiregular semiperfectoid ring $S.$ Then $H^3_\Prism(BG)=0$ and  $H^2_{\Prism}(BG)$ is a locally free $\Prism_S$-module of rank equal to $\mathrm{height}(G).$
\end{proposition}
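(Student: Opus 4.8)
The plan is to bootstrap from the Hodge--Tate computation in \cref{hodgetate} and \cref{badwriter}. Since $S$ is quasiregular semiperfectoid, $R\Gamma_{\Prism}(BG)$ computes the prismatic cohomology of $BG_S$ relative to the bounded prism $(\Prism_S, I)$, hence is derived $(p,I)$-complete, and so is each of its cohomology groups. I would therefore first write down the Hodge--Tate fiber sequence $R\Gamma_{\Prism}(BG)\xrightarrow{d} R\Gamma_{\Prism}(BG)\to R\Gamma_{\overline{\Prism}}(BG)$, where $d$ is a generator of the invertible ideal $I$ (a nonzerodivisor on $\Prism_S$; after passing to a perfectoid cover $R\to S$ one may take $I=(d)$ principal), and examine the associated long exact sequence.

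Using $H^1_{\overline{\Prism}}(BG)=H^3_{\overline{\Prism}}(BG)=0$ and the fact that $H^2_{\overline{\Prism}}(BG)$ is finite locally free over $\overline{\Prism}_S$ of rank $h:=\mathrm{height}(G)$ (\cref{badwriter}), the long exact sequence yields: multiplication by $d$ is surjective on $H^3_{\Prism}(BG)$, so $H^3_{\Prism}(BG)/I\cdot H^3_{\Prism}(BG)=0$ and hence $H^3_{\Prism}(BG)=0$ by derived Nakayama; and then $0\to H^2_{\Prism}(BG)\xrightarrow{d} H^2_{\Prism}(BG)\to H^2_{\overline{\Prism}}(BG)\to 0$ is exact. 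Thus $M:=H^2_{\Prism}(BG)$ is $I$-torsion free and $M\otimes^{L}_{\Prism_S}\overline{\Prism}_S=M/IM\cong H^2_{\overline{\Prism}}(BG)$ is concentrated in degree $0$ and finite locally free of rank $h$.

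It remains to observe that a derived $(p,I)$-complete $\Prism_S$-module $M$ whose base change $M\otimes^L_{\Prism_S}\overline{\Prism}_S$ is finite locally free of rank $h$ in degree $0$ is itself finite locally free of rank $h$ over $\Prism_S$. This is the usual statement that $I$-completely finite projective modules over an $I$-complete ring are finite projective; if one prefers a self-contained argument, lift, Zariski-locally on $\mathrm{Spec}(\overline{\Prism}_S)$, an isomorphism $\overline{\Prism}_S^{\,h}\xrightarrow{\sim}M/IM$ to a map $\Prism_S^{\,h}\to M$, and note that both the kernel and the cokernel of this map are derived $(p,I)$-complete (being the cohomology of the fiber, resp.\ cofiber, of a morphism of derived complete complexes) and vanish after $\otimes^L_{\Prism_S}\overline{\Prism}_S$ --- here the $I$-torsion-freeness of $M$ is what kills the potential $\mathrm{Tor}_1$ term --- whence they vanish by derived Nakayama, and finite local freeness over $\Prism_S$ follows by descent. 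The only real subtlety is bookkeeping: making sure the cohomology groups in play are genuinely derived $(p,I)$-complete so that Nakayama applies, and performing the local spreading-out with completed localizations so as not to lose completeness. (An alternative is to work with $\tau_{\le 3}R\Gamma_{\Prism}(BG)=\tau_{\le 2}R\Gamma_{\Prism}(BG)$, note that its $\overline{\Prism}_S$-reduction is a perfect complex of $\overline{\Prism}_S$-modules concentrated in degrees $0$ and $2$, deduce that it is a perfect complex over $\Prism_S$ via the perfectness criterion for $(p,I)$-complete rings, and read off $H^2$.)
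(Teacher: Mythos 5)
Your proof is correct and takes essentially the same route as the paper: both arguments use $H^1_{\overline{\Prism}}(BG)=H^3_{\overline{\Prism}}(BG)=0$ together with \cref{hodgetate} to reduce the statement to the vanishing of $H^3_{\Prism}(BG)$ and the identification of $H^2_{\Prism}(BG)\otimes^{L}_{\Prism_S}\overline{\Prism}_S$ with the finite locally free module $H^2_{\overline{\Prism}}(BG)$, and then conclude by lifting local freeness along the $(p,I)$-complete surjection $\Prism_S\to\overline{\Prism}_S$. The only cosmetic differences are that you obtain $H^3_{\Prism}(BG)=0$ by derived Nakayama applied to the derived $(p,I)$-complete cohomology module, where the paper instead runs an explicit $\varprojlim$/Milnor-sequence argument, and that you spell out the final completeness/lifting step which the paper leaves implicit.
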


\begin{proof}
Let $(\Prism_S, I)$ be the prism associated to $I.$ In this situation, $I = (d)$ for some element $d.$ Using the fact that $R\Gamma_{\Prism}(BG)$ is derived $d$-complete, by a limit argument, one deduces that $H^3_{\Prism} (BG)=0.$ Indeed, since $H^3_{\overline{\Prism}}(BG)=0$ (\cref{badwriter}), by induction, it follows that $H^3 (R\Gamma_{\Prism}(BG) \otimes _{\Prism_S} (\Prism_S/d^k))=0.$ By derived $d$-completeness, $R\Gamma_{\Prism} (BG) \simeq \varprojlim R\Gamma_{\Prism} (BG) \otimes_{\Prism_S} (\Prism_S/d^k).$ Using Milnor sequences, in order to show that $H^3_{\Prism}(BG) \simeq \varprojlim_k H^3 (R\Gamma_{\Prism}(BG) \otimes _{\Prism_S} (\Prism_S/d^k))$, it suffices to prove that $$R^1 \varprojlim_k H^2 (R\Gamma_{\Prism}(BG) \otimes _{\Prism_S} (\Prism_S/d^k))=0.$$ Note that we have a fiber sequence
$$R\Gamma_{\Prism}(BG)\otimes_{\Prism_S} (\Prism_S/d) \to R\Gamma_{\Prism}(BG)\otimes_{\Prism_S} (\Prism_S/d^k) \to R\Gamma_{\Prism}(BG)\otimes_{\Prism_S} (\Prism_S/d^{k-1}).$$ This implies that the map $H^2 (R\Gamma_{\Prism}(BG) \otimes _{\Prism_S} (\Prism_S/d^k)) \to H^2 (R\Gamma_{\Prism}(BG) \otimes _{\Prism_S} (\Prism_S/d^{k-1}))$ is a surjection, which gives the desired $R^1\lim$-vanishing. This shows that $H^3_{\Prism}(BG) =0$.

Now, using the fact that $H^1_{\overline{\Prism}} (BG)=0$ (\cref{badwriter11}), $H^3_{\Prism}(BG)=0$, and the universal coefficient theorem, one sees that $H^2_{\Prism}(BG) \otimes_{\Prism_S}^L ({\Prism}_S/d) = H^2_{\overline{\Prism}}(BG)$ and $H^2_{\Prism}(BG) \simeq \varprojlim_k H^2_{\Prism}(BG)/d^k$. By \cref{hodgetate}, $H^2_{\overline{\Prism}}(BG)$ is a locally free $\overline{\Prism}_S$-module of rank $\mathrm{height}(G).$ Therefore, invoking \cite[Tag~0D4B]{stacks} finishes the proof.
\end{proof}{}

\begin{corollary}[Base change]\label{basechangedm}
Let $S \to S'$ be a map of quasiregular semiperfectoid rings and let $G$ be a $p$-divisible group over $S$. Then we have a natural isomorphism $H^2_\Prism(BG) \otimes_{\Prism_S} \Prism_{S'} \simeq H^2_{\Prism}(BG_{S'}).$   
\end{corollary}

\begin{proof}
By the last paragraph in the proof of \cref{locfree}, $H^2_\Prism (BG) \otimes_{\Prism_S} \overline{\Prism}_{S} \simeq H^2_{\overline{\Prism}} (BG)$ (and similarly over $S'$). Thus, in order to prove that the natural map  $H^2_\Prism(BG) \otimes_{\Prism_S} \Prism_{S'} \to H^2_{\Prism}(BG_{S'})$ is an isomorphism, by derived completenss of a prism with respect to its ideal, it suffices to prove that the natural map $H^2_{\overline{\Prism}}(BG) \otimes_{\overline{\Prism}_S} \overline{\Prism}_{S'} \to H^2_{\overline{\Prism}}(BG_{S'})$ is an isomorphism. However, that follows from the Hodge--Tate sequence (\cref{hodgetate}) and five lemma.
\end{proof}{}


Before we proceed further, we note the following notations which will be used later on.

\begin{notation}\label{notation11}
 Let $G$ be a $p$-divisible group over a quasiregular semiperfectoid ring $S$, with a fixed choice of a perfectoid ring $R$ mapping surjectively onto $S$. Let $R\Gamma_{\Prism}(BG)$ denote the absolute prismatic cohomology. Below, we use the following notations: $F^k_{\w{N}}:= H^2 (\mathrm{Fil}^k_{\mathrm{Nyg}}R\Gamma_{\Prism}(BG))$ and
$F^k_{\w{conj}}:= H^2 (\mathrm{Fil}^k_{\mathrm{conj}}R\Gamma_{\overline{\Prism}}(BG)).$ We point out that while $R\Gamma_{{\Prism}}(BG),$ $ \Fil^k_{\mathrm{Nyg}}R\Gamma_{{\Prism}}(BG), R\Gamma_{\overline{\Prism}}(BG)$ are defined independently of $R$, the conjugate filtration $F^k_{\mathrm{conj}}$ is constructed as the conjugate filtration relative to the choice of $R$ (see \cite[Def.~12.1]{BS19}).
\end{notation}{}
\begin{proposition}\label{laaa}Let $R \to S$ be a perfectoid ring $R$ mapping surjectively onto the quasiregular semiperfectoid algebra $S$. In the above notations,
the natural maps $F_{\mathrm{conj}}^{k-1} \to F_{\mathrm{conj}}^{k}$ are injective and there is a natural exact sequence
$$0\to \wedge^k \mathbb{L}_{S/R}[-k] \otimes_S t_{G^\vee} \to (F_{\mathrm{conj}}^{k}/F_{\mathrm{conj}}^{k-1} ) \left \{k \right \}   \to \wedge^{k-1} \mathbb{L}_{S/R}[-k+1] \otimes_S \omega_{G} \to 0.$$
\end{proposition}{}

\begin{proof}
First, we claim that $H^2 (BG, \wedge^k \mathbb{L}_{S/R}[-k]\otimes_S \mathcal{O}_{BG}) \simeq t_{G^\vee} \otimes_S \wedge^k \mathbb{L}_{S/R}[-k] .$ To see this, we observe that since $G$ is $p$-divisible, $$H^1 (BG_{S/p^m}, M\otimes_S S/p^m \otimes_{S/p^m} \mathcal{O} ) \simeq \mathrm{Hom}_{(S/p^m)_\qsyn}(G,  M \otimes_S S/p^m \otimes_{S/p^m} \mathbb{G}_a )=0,$$ where $M:=\wedge^k \mathbb{L}_{S/R}[-k]$. Therefore, by $p$-completeness (and $R^1\lim$ vanishing of the above term), we have 
$$H^2(BG, M \otimes_S \mathcal{O}) \simeq \varprojlim_m H^2 (BG_{S/p^m}, M \otimes_S S/p^m \otimes_{S/p^m} \mathcal{O}).$$ Now $M \otimes_S S/p^m$ is a flat $S/p^m$-module (cf.~\cite[Rmk~4.21]{BMS2}) and we have a natural isomorphism $ H^2 (BG_{S/p^m}, M \otimes_S S/p^m \otimes_{S/p^m} \mathcal{O}) \simeq \mathrm{Ext}^1_{(S/p^m)_\qsyn}(G, M \otimes_S S/p^m \otimes_{S/p^m} \mathbb{G}_a)$. Let $G_j:= G[p^j]$. Since $\mathrm{Hom}(G_j,M \otimes_S S/p^m \otimes_{S/p^m} \mathbb{G}_a )$ is pro-zero (as the target is killed by $p^m$), it follows that 
$$\mathrm{Ext}^1(G, M \otimes_S S/p^m \otimes_{S/p^m} \mathbb{G}_a) \simeq \varprojlim_j \mathrm{Ext}^1(G_j, M \otimes_S S/p^m \otimes_{S/p^m} \mathbb{G}_a).$$ By \cite[Cor.~2.2.7,~(iii)]{luci}, the right term above is naturally isomorphic to $\mathrm{Ext}^1(G_m, M \otimes_S S/p^m \otimes_{S/p^m} \mathbb{G}_a)$. By \cref{uselemmafr}, it follows that for any direct system $N_r$ of $S/p^m$-modules, we have $\varinjlim_r \mathrm{Ext}^1(G_m, N_r \otimes_{S/p^m} \mathbb{G}_a) \simeq \mathrm{Ext}^1(G_m, \varinjlim_r N_r \otimes_{S/p^m} \mathbb{G}_a).$ Since $M \otimes_S S/p^m$ is a flat $S/p^m$-module, by writing it as a direct limit of finite free modules, we conclude that $\mathrm{Ext}^1(G_m, M \otimes_S S/p^m \otimes_{S/p^m} \mathbb{G}_a) \simeq \mathrm{Ext}^1(G_m,  \mathbb{G}_a) \otimes_{S/p^m} (M \otimes_S S/p^m) \simeq t_{G^\vee} \otimes_S (M \otimes_S S/p^m).$ Therefore, by taking limits, we obtain a natural isomorphism $H^2 (BG, M \otimes_S \mathcal{O}) \simeq  t_{G^\vee}\otimes_S \wedge^k\mathbb{L}_{S/R} [-k].$

Now, we have a fiber sequence 

\begin{equation}\label{tallash}
    \mathrm{Fil}^{k-1}_{\mathrm{conj}}R\Gamma_{\overline{\Prism}}(BG) \to \mathrm{Fil}^k_{\mathrm{conj}}R\Gamma_{\overline{\Prism}}(BG) \to \mathrm{gr}^k_{\mathrm{conj}}R\Gamma_{\overline{\Prism}}(BG).
\end{equation}

Note that $\mathrm{gr}^k_{\mathrm{conj}}R\Gamma_{\overline{\Prism}}(BG) \left \{ k \right \} \simeq \wedge^k \mathbb{L}_{BG/R}[-k]$, and by \cref{cotangentcalc}, the latter has a canonical, finite (increasing) filtration where the $v$-th graded piece is given by  
$\wedge^{k-v} \mathbb{L}_{S/R} [-k+v] \otimes_S \mathrm{Sym}^v \omega_G[-2v].$

From this filtration, it follows that we have a natural short exact sequence $$0 \to \wedge^k \mathbb{L}_{S/R}[-k] \otimes_S t_{G^\vee} \to H^2(\mathrm{gr}^k_{\mathrm{conj}}R\Gamma_{\overline{\Prism}}(BG)) \left \{k \right \}\to \wedge^{k-1} \mathbb{L}_{S/R}[-k+1] \otimes_S \omega_{G}\to 0.$$ From the above filtration, we also note that $H^1(\mathrm{gr}^k_{\mathrm{conj}}R\Gamma_{\overline{\Prism}}(BG))=0$; moreover, we also have $H^3(\mathrm{gr}^k_{\mathrm{conj}}R\Gamma_{\overline{\Prism}}(BG))=0$. In order to see this, note that using the finite filtration on $\wedge^k \mathbb{L}_{BG/R}[-k]$ considered above, one reduces to showing that $H^3 (BG, \wedge^k \mathbb{L}_{S/R}[-k] \otimes_S \mathcal{O}_{BG})=0$. By $p$-completeness, since $R^1 \varprojlim H^2(BG_{S/p^m}, M \otimes_S S/p^m \otimes_{S/p^m} \mathcal{O} ) \simeq R^1 \varprojlim (t_{G^\vee} \otimes_S M  \otimes_S S/p^m) = 0$, we have 
$$H^3 (BG, \wedge^k \mathbb{L}_{S/R}[-k] \otimes_S \mathcal{O}_{BG}) \simeq \varprojlim H^3 (BG_{S/p^m}, M \otimes_S S/p^m \otimes_{S/p^m} \mathcal{O}).$$ Vanishing of the individual terms in the limit above follows the same way as in the proof of \cref{harddlemma}, since the vanishing $\mathrm{Ext}^2 (G, M \otimes_S S/p^m \otimes \mathbb{G}_a)=0$ again follows from \cite[Cor.~2.2.7,~(ii),~(iii)]{luci}.

Using induction on $k$, \cref{harddlemma}, and \cref{tallash}, we obtain that $H^3 (\mathrm{Fil}^k_{\mathrm{conj}}R\Gamma_{\overline{\Prism}}(BG))=0$ for all $k.$
 Applying $H^2(\cdot)$ to \cref{tallash} now gives the desired claim.
\end{proof}{}
The following lemma was used in the proof above.

\begin{lemma}\label{uselemmafr}
Let $G$ be a flat, affine, commutative group scheme over a quasisyntomic ring $T$ killed by a power of $p$. Let $Q_r$ be a direct system of $T$-modules. Then
the natural map
$$\varinjlim_rR\mathrm{Hom}(G, Q_r \otimes_T \mathbb{G}_a) \to R\mathrm{Hom}(G, \varinjlim_r Q_r \otimes_T \mathbb{G}_a)$$
is an isomorphism, where the $R\mathrm{Hom}$ is taken in the derived category of abelian sheaves on $T$
\end{lemma}

\begin{proof}
  By using the Breen--Deligne resolution (see e.g., \cite[\S~2]{bbm} and \cite[Appx. to Lecture IV]{condensed}) along with the Dold--Kan correspondence, one can write $G$ as a colimit of a simplicial object where each term is a finite direct sum of $\mathbb{Z}[G^{\times m}]$. Since totalizations of coconnective objects commute with filtered colimits (see, e.g., \cite[Lem.~2.2.27]{MR4969359}), it suffices to show that the natural map 
  $$\varinjlim_r R\mathrm{Hom}(\mathbb{Z}[G^{\times m}], Q_r \otimes_T \mathbb{G}_a) \to R\mathrm{Hom}(\mathbb{Z}[G^{\times m}], \varinjlim_r Q_r \otimes_T \mathbb{G}_a)$$ is an isomorphism. Equivalently, we need to show that the natural map 
  $$\varinjlim_r R\Gamma_{\qsyn}(G^{\times m}, u^*Q_r) \to R\Gamma_{\qsyn}(G^{\times m}, u^* \varinjlim_r Q_r)$$ is an isomorphism, where $u: G \to \mathrm{Spf}\,T$ is the structure map of the group scheme. However, since $u$ is flat and $G$ is affine, the above holds true since the natural map $\varinjlim _r \mathcal{O}(G^{\times m}) \otimes_T Q_r \to \mathcal{O}(G^{\times m})\otimes _T \varinjlim_r Q_r$ is an isomorphism.
\end{proof}{}

\begin{remark}\label{useinprooflter}
By the last paragraph in the proof of \cref{laaa}, we have \linebreak  $H^3 (\mathrm{Fil}^k_{\mathrm{conj}}R\Gamma_{\overline{\Prism}}(BG))=0$. Similarly, one also obtains $H^1 (\mathrm{Fil}^k_{\mathrm{conj}}R\Gamma_{\overline{\Prism}}(BG))=0$. Using the fiber sequence $$ \Fil^{k+1}_{\w{Nyg}} R\Gamma_\Prism (BG) \to \Fil^k_{\w{Nyg}} R\Gamma_\Prism (BG) \to \Fil^k_{\w{conj}} R\Gamma_{\overline{\Prism}}(BG) \left \{k \right \},$$ we obtain that the natural map $F^{k+1}_{\w{N}} \to F^{k}_{\w{N}}$ is injective (see \cref{notation11}). Further, since $H^3_\Prism (BG)=0$ (\cref{locfree}), using the vanishing $H^3 (\mathrm{Fil}^k_{\mathrm{conj}}R\Gamma_{\overline{\Prism}}(BG))=0$, the above fiber sequence inductively implies that $H^3( \Fil^k_{\w{Nyg}} R\Gamma_\Prism (BG))=0$ for all $k$. Consequently, we have a short exact sequence $0 \to F^{k+1}_{\w{N}} \to F^{k}_{\w{N}} \to F^k_{\w{conj}} \left \{k \right \} \to 0. $
\end{remark}{}

\begin{lemma}[Breuil--Kisin property]\label{bkproperty}
 Let $G$ be a $p$-divisible group over a quasiregular semiperfectoid algebra $S$. Then the natural Frobenius map $\varphi^* H^2_{\Prism}(BG) \to H^2_{\Prism}(BG)$ is an isomorphism after inverting the ideal $I$ of $\Prism_S$.   
\end{lemma}{}
\begin{proof}
First, let us additionally assume that $S$ is perfectoid.
 Let $G_n:= G[p^n]$. Since $H^2_\Prism (BG_n)$ can be computed as relative prismatic cohomology of $(B{G_n})_{\overline{\Prism}_S}$ with respect to $(\Prism_S, I)$, and $BG_n$ is a smooth Artin stack, by \cite[Prop.~2.2.10]{MR4850408}, $$H^2 (\varphi^* R\Gamma_{\Prism}(BG_n)) \to H^2_\Prism (BG_n)$$ is an isomorphism after inverting $I$. Since $S$ is assumed to be perfectoid, $\varphi: \Prism_S \to \Prism_S$ is an isomorphism. Therefore, we obtain that $\varphi^*H^2_\Prism(BG_n) \to H^2_\Prism (BG_n)$ is an isomorphism after inverting $I$ (see \cite[Ex.~2.2.12]{MR4850408}). Moreover, by writing $I=(d)$, and choosing a smooth hypercover similar to the proof of \cite[Prop.~2.2.10]{MR4850408} combined with \cite[Cor.~15.5]{BS19} (for $i=2$), we obtain a natural operator $V$ on $H^2_\Prism (BG_n)$ such that $\varphi V = d^2.$ Now $H^1_{\Prism}(BG_n) \simeq \mathrm{Hom}_{(S)_\qsyn} (G_n, \Prism_{(\cdot)}) \subseteq H^0 _{\Prism}(G_n)$, and the latter is $p$-torsion free by \cite[Lem.~5.6]{alb} since $G_n$ is a syntomic affine scheme over $\Prism_S/d \simeq S$ and $\Prism_S \simeq A_{\mathrm{inf}}(S)$ is $p$-torsion free. As $G_n$ is killed by $p^n$, this implies that $H^1_{\Prism}(BG_n)=0.$ Therefore, $ H^2_{\Prism} (BG)  \simeq \varprojlim_n H^2_{\Prism} (BG_n)$ is also equipped with an operator $V$ such that $\varphi V = d^2.$ In particular, $\varphi^* H^2_{\Prism} (BG) \to H^2_{\Prism} (BG)$ is an isomorphism.

Now for a general quasiregular semiperfectoid algebra $S$, we choose a perfectoid ring $R$ mapping sujrectively onto $S$. As a consequence of the almost purity theorem, the $p$-completed henselization of $R$ at the ideal $\mathrm{Ker}(R \to S)$, denoted by $R'$, is perfectoid (see \cite[Cor.~2.10]{alb}). Thus, there exists a perfectoid ring $R'$ mapping surjectively onto $S$ such that $R' \to S$ is a henselian surjection. By \cite[Lem.~4.70]{alb} (which uses the results of (\cite[\S~2]{lauuu} and \cite{luci}), there exists a $p$-divisible group $G'$ over $R'$ such that $G'_S \simeq G.$ Thus, the lemma follows from the perfectoid case addressed in the above paragraph by applying base change (\cref{basechange}).
\end{proof}{}

\begin{construction}[Prismatic--de Rham comparison]\label{washing}
 Let $G$ be a $p$-divisible group over a quasiregular semiperfectoid ring $S$. By \cite[Cons.~5.4.9]{BhaLu} (and quasisyntomic descent), we obtain a natural composite map
 $$R\Gamma_{\Prism} (BG) \to R\Gamma_{\mathrm{dR}}(BG) \to \widehat{R\Gamma}_{\mathrm{dR}}(BG) ,$$ where the latter map comes from taking completion with respect to the Hodge filtration on ($p$-completed) derived de Rham cohomology. This defines a natural map of $\Prism_S$-modules
 \begin{equation}
  H^2_{\Prism} (BG) \to  \widehat{H}^2 _{\mathrm{dR}}(BG):= H^2 (\widehat{R\Gamma}_{\mathrm{dR}}(BG)), 
 \end{equation}where the right hand side is viewed as a $\Prism_S$-module via restriction of scalars along the natural map $\Prism_S \to S.$ This defines a natural (surjective) map
 \begin{equation}\label{derhamcomparisonmap}
     H^2_{\Prism} (BG) \otimes_{\Prism_S} S \to  \widehat{H}^2 _{\mathrm{dR}}(BG). 
 \end{equation}{} We will show that the above map is an isomorphism. Before that we need two lemmas.
\end{construction}

\begin{lemma}[Hodge filtration sequence]\label{hodgefilseqs}
In the situation of \cref{washing}, there is a canonical short exact sequence of $S$-modules
$$0 \to \omega_G \to \widehat{H}^2 _{\mathrm{dR}}(BG) \to t_{G^\vee} \to 0. $$
\end{lemma}{}

\begin{proof}
 We have a fiber sequence $$\Fil^1_{\mathrm{Hodge}}\widehat{R\Gamma}_{\mathrm{dR}}(BG) \to \widehat{R\Gamma}_{\mathrm{dR}}(BG) \to R\Gamma (BG, \mathcal{O}).$$ Let us denote $\Fil^i:=  \Fil^i_{\mathrm{Hodge}}\widehat{R\Gamma}_{\mathrm{dR}}(BG))$ and $\gr^i:= \gr^i_{\mathrm{Hodge}}\widehat{R\Gamma}_{\mathrm{dR}}(BG))$. Since $H^1(BG, \mathcal{O})=0$ (\cref{harddlemma}), it will suffice to show that $$H^2 (\Fil^1_{\mathrm{Hodge}}\widehat{R\Gamma}_{\mathrm{dR}}(BG)) \simeq \omega_G\,\,\, \mathrm{and}\,\,\,H^3 (\Fil^1_{\mathrm{Hodge}}\widehat{R\Gamma}_{\mathrm{dR}}(BG)) =0.$$ Since $\gr^1 \simeq R\Gamma (BG, \omega_G[-2])$ (cf.~\cref{cotangent}), similar to the proof of \cref{hodgetate} and \cref{badwriter11}, it will be sufficient to show that 
 \begin{equation}
  H^2 (\Fil^2) =0\,\,\, \mathrm{and}\,\,\, H^3 (\Fil^2)=0.   
 \end{equation} However, since $\Fil^i$ is a complete descending filtration by construction, the above vanishings follow from the vanishings 
$$ 
   H^2 (\gr^i) \simeq H^2 (BG, (\wedge^i \omega_G[-1])[-i])=0\,\,\, \mathrm{and} \,\,\,   H^3 (\gr^i) \simeq H^3 (BG, (\wedge^i \omega_G[-1])[-i])=0 
$$ for $i \ge 2$.
\end{proof}

\begin{lemma}\label{hodgecomsame}
  Let $G$ be a $p$-divisible group over an algebraically closed field $k$ of characteristic $p$. Then the natural map $H^2_{\mathrm{dR}}(BG) \to  \widehat{H}^2_{\mathrm{dR}}(BG)$ is an isomorphism.  
\end{lemma}

\begin{proof}
By \cref{uselatere} and using the Hodge filtration on $R\Gamma_{\mathrm{dR}}(BG)$, we have a natural diagram of short exact sequences
\begin{center}
\begin{tikzcd}
0 \arrow[r] & H^2(\Fil^1 R\Gamma_{\mathrm{dR}}(BG)) \arrow[rr] \arrow[d] &  & H^2_{\mathrm{dR}}(BG) \arrow[d] \arrow[rr] &  & {H^2(BG, \mathcal{O})} \arrow[r] \arrow[d, "\simeq"] & 0 \\
0 \arrow[r] & \omega_G \arrow[rr]                                        &  & \widehat{H}^2_{\mathrm{dR}}(BG) \arrow[rr] &  & t_{G^\vee} \arrow[r]                                 & 0
\end{tikzcd}
\end{center}
Thus, it suffices to prove that the left vertical map is an isomorphism. Let $\Fil^1_{\mathrm{H}}{\mathcal{O}}^{\mathrm{dR}}$ denote the sheaf on $(k)_\qsyn$ determined by the assignment $(k)_{\qrsp} \ni T \to \Fil^1_{\mathrm{Hodge}}{R\Gamma}_{\mathrm{dR}}(T).$ Then we have $\mathrm{Ext}^1_{(k)_\qsyn}(G, \Fil^1_{\mathrm{H}}{\mathcal{O}}^{\mathrm{dR}}) \simeq {H}^2(\Fil^1 R\Gamma_{\mathrm{dR}}(BG)).$ Therefore, for any two $p$-divisible groups $G_1$ and $G_2$ over $k$, the natural map 
${H}^2(\Fil^1 R\Gamma_{\mathrm{dR}}(BG_1)) \oplus{H}^2(\Fil^1 R\Gamma_{\mathrm{dR}}(BG_2)) \to {H}^2(\Fil^1 R\Gamma_{\mathrm{dR}}(B(G_1 \times G_2)))$ is an isomorphism. Therefore, using the same technique as in the proof of \cref{uselatere}, it suffices to check that the left vertical map is an isomorphism when $G := A[p^\infty]$ for some abelian variety $A$ over $k$. By the proof of \cref{hodgefilseqs}, it suffices to show that the natural map $ \Fil^1 R\Gamma_{\mathrm{dR}}(B A[p^\infty]) \to  \Fil^1 \widehat{R\Gamma}_{\mathrm{dR}}(B A[p^\infty])$ on Hodge filtration is an isomorphism. To this end (since the $\gr^0$ on both sides are naturally isomorphic), it is enough to show that the natural map $  R\Gamma_{\mathrm{dR}}(B A[p^\infty]) \to   \widehat{R\Gamma}_{\mathrm{dR}}(B A[p^\infty])$ is an isomorphism.  Since the natural maps $R\Gamma_{\mathrm{dR}}(B A)  \to R\Gamma_{\mathrm{dR}}(B A[p^\infty])$ and $\widehat{R\Gamma}_{\mathrm{dR}}(B A)  \to \widehat{R\Gamma}_{\mathrm{dR}}(B A[p^\infty])$ are isomorphisms (see the proof of \cite[Prop.~3.1, Lem.~3.1]{actaar}, cf.~proof of \cref{vectorbundleabsch}), it suffices to show that the natural map $  R\Gamma_{\mathrm{dR}}(B A) \to   \widehat{R\Gamma}_{\mathrm{dR}}(B A)$ is an isomorphism. By descent along $* \to BA$, it suffices to verify that the natural map $  R\Gamma_{\mathrm{dR}}( A^{\times k}) \to   \widehat{R\Gamma}_{\mathrm{dR}}(A^{\times k})$ is an isomorphism for $k \ge 1$. However, that follows because the Hodge filtration $\Fil^i$ on de Rham cohomology of an abelian variety is nonzero for only finitely many integers $i$. This finishes the proof.
\end{proof}{}

\begin{proposition}\label{usin}
 The map in \cref{derhamcomparisonmap} is an isomorphism.   
\end{proposition}
\begin{proof}
By \cref{locfree} and \cref{hodgefilseqs}, the source and target of the map \cref{derhamcomparisonmap} are both projective $S$-modules of rank equal to $\mathrm{height}(G).$ Therefore, it suffices to check isomorphism after base changing along an arbitrary map $S \to k$, where $k$ is an algebraically closed field of characteristic $p$. Note that \cref{hodgefilseqs} implies that the natural map $\widehat{H}^2_{\mathrm{dR}}(BG) \otimes_S k \simeq \widehat{H}^2_{\mathrm{dR}}(BG_k) $ is an isomorphism. Further, by \cref{basechangedm}, $$ H^2_{\Prism} (BG) \otimes_{\Prism_S} S \otimes_S k \simeq H^2_{\Prism} (BG) \otimes_{\Prism_S} \Prism_k \otimes_{\Prism_k} k \simeq H^2_{\Prism}(BG_k) \otimes_{\Prism_k} k.$$ Under the isomorphism $\Prism_k \simeq W(k)$, the map $\Prism_k \to k$ identifies with the composition $W(k) \xrightarrow{\varphi} W(k) \to W(k)/p \simeq k$ (see \cite[Thm.~12.2]{BS19} for $i=0$). Therefore, to prove the proposition, it suffices to show that the composition
$$H^2_{\mathrm{crys}}(BG_k)/p \to H^2_{\mathrm{dR}}(BG) \to \widehat{H}^2_{\mathrm{dR}}(BG)$$ is an isomorphism. This follows from \cref{makelight} and \cref{hodgecomsame}.
\end{proof}{}

\begin{corollary}\label{talktoday}
 Let $G$ be a $p$-divisible group over a quasiregular semiperfectoid ring $S$. Then there is a canonical short exact sequence of $S$-modules
 $$0 \to \omega_G \to H^2_{\Prism}(BG)\otimes_{\Prism_S}S \to t_{G^\vee} \to 0.$$
\end{corollary}{}

\begin{proof}
Follows from \cref{hodgefilseqs} and \cref{usin}.    
\end{proof}{}

\begin{proposition}[Dualizability]\label{dualo1}
Let $S$ be a quasiregular semiperfectoid algebra. Let $G$ be a $p$-divisible group over $S$ of height $h$. Then $\mathcal{M}(G)$ (see \cref{prelimdef}) is a prismatic $F$-gauge on $S$ and is a vector bundle of rank $h$ when viewed as a quasicoherent sheaf on $\mathrm{Spf}(S)^{\mathrm{syn}}$.
\end{proposition}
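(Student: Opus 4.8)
The plan is to compute the whole Nygaard-filtered object together with its Frobenius and recognize it as a vector-bundle $F$-gauge via the algebraic description of \cref{cherry}. Write $M := H^2_{\Prism}(BG)$ and $\mathrm{Fil}^k M := H^2(\mathrm{Fil}^k_{\mathrm{Nyg}} R\Gamma_{\Prism}(BG))$, so that, by \cref{prelimdef}, $\mathcal{M}(G)$ is the effective prismatic weak $F$-gauge with underlying filtered $\mathrm{Fil}^\bullet_{\mathrm{Nyg}}\Prism_S$-module $\mathrm{Fil}^\bullet M$ and Frobenius induced from that of prismatic cohomology. By \cref{locfree}, $M$ is finite locally free of rank $h$ over $\Prism_S$, and since the prismatic Frobenius is an isomorphism after inverting $I$, the pair $(M,\varphi_M)$ is a Breuil--Kisin module over $\Prism_S$. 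It then suffices to prove: (i) locally on $\mathrm{Spf}\,S$ there is an isomorphism of filtered $\mathrm{Fil}^\bullet_{\mathrm{Nyg}}\Prism_S$-modules $\mathrm{Fil}^\bullet M \simeq (\mathrm{Fil}^\bullet_{\mathrm{Nyg}}\Prism_S \otimes_{\Prism_S} W) \oplus (\mathrm{Fil}^{\bullet-1}_{\mathrm{Nyg}}\Prism_S \otimes_{\Prism_S} T)$ for finite locally free $\Prism_S$-modules $W,T$ of ranks $\dim G^{\vee}$ and $\dim G$; and (ii) the Frobenius on $\mathrm{Fil}^\bullet M$ satisfies the isomorphism condition \cref{who1}. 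Granting these, $\mathcal{M}(G)$ is a prismatic $F$-gauge which by \cref{cherry} is a vector bundle on $\mathrm{Spf}(S)^{\mathrm{syn}}$ of rank $\dim G^{\vee} + \dim G = \mathrm{height}(G) = h$.

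For (i) I would first establish the homological vanishing that lets one pass freely to $H^2$. By \cref{cotangent}, for a perfectoid ring $R \twoheadrightarrow S$ one has $\mathbb{L}_{BG/R} \simeq \mathbb{L}_{S/R} \oplus \omega_G[-1]$, so the conjugate graded pieces $\wedge^j\mathbb{L}_{BG/R}[-j]$ of $R\Gamma_{\overline{\Prism}}(BG)$ sit in even cohomological degrees; hence, as in the proof of \cref{laaa}, $H^1$ and $H^3$ vanish on every stage of the conjugate filtration of $R\Gamma_{\overline{\Prism}}(BG)$, and therefore also on each $\mathrm{gr}^k_{\mathrm{Nyg}} R\Gamma_{\Prism}(BG) \simeq (\mathrm{Fil}^k_{\mathrm{conj}} R\Gamma_{\overline{\Prism}}(BG))\{k\}$. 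Using the finite filtration of $R\Gamma_{\Prism}(BG)/\mathrm{Fil}^k_{\mathrm{Nyg}}$ by these graded pieces one gets $H^1(R\Gamma_{\Prism}(BG)/\mathrm{Fil}^k_{\mathrm{Nyg}}) = H^3(R\Gamma_{\Prism}(BG)/\mathrm{Fil}^k_{\mathrm{Nyg}}) = 0$; combined with $H^1_{\Prism}(BG) = H^3_{\Prism}(BG) = 0$ (\cref{badwriter}, \cref{locfree}) and the Nygaard-completeness of $R\Gamma_{\Prism}(BG)$ (via the Milnor sequence), this yields $H^1(\mathrm{Fil}^k_{\mathrm{Nyg}} R\Gamma_{\Prism}(BG)) = H^3(\mathrm{Fil}^k_{\mathrm{Nyg}} R\Gamma_{\Prism}(BG)) = 0$ for all $k \ge 0$. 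In particular every $\mathrm{Fil}^k M$ injects into $M$ and $\mathrm{gr}^k M \simeq H^2(\mathrm{gr}^k_{\mathrm{Nyg}} R\Gamma_{\Prism}(BG))$, which by \cref{laaa} is a locally free graded module over $\mathrm{gr}^\bullet_{\mathrm{Nyg}}\Prism_S$ whose pullback along $\mathrm{gr}^\bullet_{\mathrm{Nyg}}\Prism_S \to S$ (see \cref{guolii}) is $t_{G^{\vee}}$ in degree $0$ and $\omega_G$ in degree $1$ (cf.\ \cref{hodgetate}). Thus $\mathrm{gr}^\bullet M$ is, locally, a sum of $\mathrm{gr}^\bullet_{\mathrm{Nyg}}\Prism_S \otimes W_0$ and $\mathrm{gr}^{\bullet-1}_{\mathrm{Nyg}}\Prism_S \otimes T_0$ with $W_0,T_0$ finite locally free over $S$; lifting this splitting to the filtered level — using the henselianity of $\Prism_S \to \overline{\Prism}_S$ and arguing verbatim as in the proof of \cref{comppp} — gives (i).

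For (ii), under the description in (i) the condition \cref{who1} becomes, exactly as in \cref{comppp}, the assertion that each $\theta_n \colon (W \otimes_{\Prism_S,\varphi} I^n\Prism_S) \oplus (T \otimes_{\Prism_S,\varphi} I^{n-1}\Prism_S) \to I^n M$ is an isomorphism. These maps are injective and become isomorphisms after inverting $I$ (since $\varphi_M$ does), so their cokernels are $I$-power torsion; rescaling by a generator of $I$, it is enough to see $\theta_1$ is surjective, i.e.\ that $\mathrm{Fil}^1 M \otimes_{\Prism_S,\varphi}\Prism_S \to IM$ is surjective. This is where the Hodge--Tate sequence \cref{hodgetate} is used: the $\{-1\}$-twist on its $\omega_G$-summand — equivalently, the fact that the weight-$1$ summand $T$ of $\mathrm{Fil}^\bullet M$ already lies, untwisted, inside $\mathrm{Fil}^1 M$ — forces $\theta_1 \bmod I$ to surject onto $IM/I^2 M$, so its $I$-power-torsion cokernel is killed by $I$ and hence vanishes. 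With (i) and (ii) in hand, $\mathcal{M}(G)$ is a rank-$h$ vector bundle on $\mathrm{Spf}(S)^{\mathrm{syn}}$; moreover the computation of $j^{*}\mathcal{M}(G)$ in (i) shows its Hodge--Tate weights lie in $[0,1]$, which is needed for \cref{sna}.

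The step I expect to be the real obstacle is the combination of the graded-to-filtered lifting in (i) and the Frobenius computation in (ii): together they amount to identifying $\mathcal{M}(G)$ with the vector-bundle $F$-gauge attached to the Breuil--Kisin module $H^2_{\Prism}(BG)$ and showing that this Breuil--Kisin module is minuscule — admissible in the sense of \cite{alb} — with the essential geometric inputs being the cotangent-complex computation \cref{cotangent} and the Hodge--Tate sequence \cref{hodgetate}. The homological bookkeeping that licenses working with $H^2$ throughout is routine but needs some care, since the Nygaard-graded pieces of $R\Gamma_{\Prism}(BG)$ are unbounded above.
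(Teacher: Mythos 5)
Your outline follows the paper's own route (Hodge--Tate sequence, \cref{cotangent}, henselian lifting of a splitting, comparison of Nygaard graded pieces with the conjugate filtration via \cref{laaa}, then the Frobenius condition \cref{who1}), but two steps as written are genuine gaps. First, you want to know $H^1(\mathrm{Fil}^k_{\mathrm{Nyg}}R\Gamma_{\Prism}(BG))=H^3(\mathrm{Fil}^k_{\mathrm{Nyg}}R\Gamma_{\Prism}(BG))=0$ \emph{before} anything else, and you justify it by ``Nygaard-completeness of $R\Gamma_{\Prism}(BG)$.'' That completeness is nowhere established in the paper and is not harmless: since $\mathrm{Fil}^\bullet_{\mathrm{Nyg}}\Prism_S$ is a filtered retract of $\mathrm{Fil}^\bullet_{\mathrm{Nyg}}R\Gamma_{\Prism}(BG)$ (via $\mathrm{Spf}(S)\to BG\to \mathrm{Spf}(S)$), your claim is at least as strong as completeness of the Nygaard filtration on $\Prism_S$ itself, which the framework of \cite{fg} does not supply. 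Moreover, this vanishing is exactly what you need to conclude that $\mathrm{Fil}^kM/\mathrm{Fil}^{k+1}M\to H^2(\gr^k_{\mathrm{Nyg}}R\Gamma_{\Prism}(BG))$ is surjective, i.e.\ to run your ``$\gr^\bullet M$ is a locally free graded module'' argument; in the paper these vanishings appear only \emph{after} the proposition, as consequences of its proof (\cref{3van}), so assuming them upfront courts circularity. The paper's proof is built to avoid this: it constructs the candidate filtration $G^k_{\mathrm{N}}=(\mathrm{Fil}^k_{\mathrm{Nyg}}\Prism_S\otimes T)\oplus(\mathrm{Fil}^{k-1}_{\mathrm{Nyg}}\Prism_S\otimes W)$ and proves $G^k_{\mathrm{N}}\xrightarrow{\sim}F^k_{\mathrm{N}}$ by comparing \emph{both} to the conjugate filtration through the fiber sequence \cref{f}, using only $H^1$-vanishing of the conjugate stages (for injectivity of $F^{k+1}_{\mathrm{N}}\to F^k_{\mathrm{N}}$) and the observation that the composite $T^k\to F^k_{\mathrm{N}}/F^{k+1}_{\mathrm{N}}\hookrightarrow F^k_{\mathrm{conj}}\{k\}$ is an isomorphism with injective second map. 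You should either adopt that two-out-of-three comparison or supply an actual proof of the vanishing you invoke.

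Second, your verification of \cref{who1} contains an invalid inference: ``its $I$-power-torsion cokernel is killed by $I$ and hence vanishes'' is false as stated (a nonzero module can be killed by $I$). What is true is that surjectivity of $\theta_1$ modulo $I$ plus derived $(p,I)$-completeness of the cokernel gives surjectivity (derived Nakayama); but then the real content is the mod-$I$ surjectivity, which is a statement about the \emph{divided Frobenius} on $\mathrm{Fil}^1M$, and your stated reason (``the weight-$1$ summand $T$ already lies untwisted in $\mathrm{Fil}^1M$'') does not engage with the Frobenius at all. The paper handles this by noting that $\theta_\bullet$ is an isomorphism on underlying objects and on associated graded pieces --- the graded statement being exactly the isomorphism $T^k\{-k\}\simeq F^k_{\mathrm{conj}}$ already established, which identifies the divided Frobenius on Nygaard graded pieces with the conjugate-filtration description --- and then applies \cref{below}. (Also, minor: the henselian lifting is along $\Prism_S\to S$, as in \cite[Lem.~4.1.28]{alb}, not along $\Prism_S\to\overline{\Prism}_S$.)
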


\begin{proof}
The proof will proceed via an explicit understanding of $F^\bullet_{\w{N}}:=H^2 (\mathrm{Fil}^\bullet_{\mathrm{Nyg}}R\Gamma_{\Prism}(BG)).$ To this end, since $H^1 (BG, \mathcal{O})=0$, we have a short exact sequence $0 \to F^1_{\w{N}} \to H^2_{\Prism}(BG) \to  t_{G^\vee} \to 0,$ where the surjectivity follows from the vanishing $H^3( \Fil^1_{\w{Nyg}} R\Gamma_\Prism (BG))=0$. Further, there is a natural $\Prism_S$-linear map $$\gamma^1: F_\w{N}^1 \to H^2 (BG, \mathbb{L}_{BG/S}[-1]) \simeq \omega_G$$ arising from the composition $$F^1_\w{N} \to F^1_{\w{conj}}\left \{1\right \} \to H^2 (\gr^1_{\mathrm{conj}}R\Gamma_{\overline{\Prism}}(BG))\left \{1\right \} \to H^2 (BG, \mathbb{L}_{BG/S}[-1]),$$ where the source is viewed as a $\Prism_S$-module via the (surjective) map $\Prism_S \to S$ (see \cref{laaa} and \cref{useinprooflter}).

By \cref{talktoday}, the induced map $H^2_{\Prism}(BG) \otimes_{\Prism_S}S \to t_{G^\vee}$ arising from above map $H^2_{\Prism}(BG) \to t_{G^\vee}$ fits into a natural short exact sequence $$0 \to \omega_G \to H^2_{\Prism}(BG)\otimes_{\Prism_S} S \to t_{G^\vee} \to 0.$$ We recall that the modules $w_{G}$ and $t_{G^\vee}$ are locally free. Let us choose a splitting $H^2_{\Prism} (BG) \otimes_{\Prism_S} S \simeq \omega_{G} \oplus t_{G^\vee}.$ Since the surjection $\Prism_S \to S$ is henselian (see \cite[Lem.~4.28]{alb}), it is possible to choose an isomorphism $H^2_{\Prism}(BG) \simeq W \oplus T$, such that $W \otimes_{\Prism_S} S \simeq \omega_G$, $T \otimes_{\Prism_S} S \simeq t_{G^\vee}$, and lifting the isomorphism $H^2_{\Prism} (BG) \otimes_{\Prism_S} S \simeq \omega_{G} \oplus t_{G^\vee}.$ It follows that under these identifications, $F^1_\w{N} = (\mathrm{Fil}^1_{\mathrm{Nyg}} \Prism_S \otimes_{\Prism_S} T) \oplus W.$ This induces a projection map $\pi: F^1_{\w{N}} \to W$, which admits a section $\tau_\pi: W \to F^1_{\w{N}}.$ We also have a map $\rho: T \to H^2_{\Prism}(BG)$ obtained by composing $T \xrightarrow{ (0, \mathrm{id})} W \oplus T \simeq H^2_{\Prism}(BG).$

Let us define $$G^k_{\w{N}}:= (\mathrm{Fil}^k_{\mathrm{Nyg}} \Prism_S \otimes_{\Prism_S}T) \oplus (\mathrm{Fil}^{k-1}_{\mathrm{Nyg}} \Prism_S \otimes_{\Prism_S}W).$$ Using the map $\rho$ and the section $\tau_\pi$, we obtain a map $G^k_{\w{N}} \to F^k_{\w{N}}$ of (decreasing) filtered objects. We will show that it is an isomorphism. Since the map we constructed induces isomorphism on underlying objects, we need to check that it induces isomorphism on graded pieces. Let us now choose a perfectoid ring $R$ mapping surjectively onto $S$. The graded pieces for the left hand side are given by 
\begin{equation}\label{claritin}
T^k:=\w{gr}^k(G^\bullet_{\w{N}}) \simeq (\mathrm{Fil}^k_{\w{conj}} \overline{\Prism}_S \left \{k\right \}\otimes_S t_{G^\vee}) \oplus (\mathrm{Fil}^{k-1}_{\w{conj}} \overline{\Prism}_S \left \{k-1\right \}\otimes_S \omega_{G}).
\end{equation}
By the short exact sequence in \cref{useinprooflter}, the graded pieces of the right hand side are given by $\gr^k (F^\bullet_{\w{N}}) \simeq F^k_{\w{conj}} \left \{k \right \}.$ Therefore, we obtain a map $T^k \left \{ -k \right \} \to {F}^k_{\mathrm{conj}}$ of (increasing) filtered objects, which we will show to be an isomorphism. Since the source and target of the map are both zero for $k<0$, it suffices to check isomorphism of the induced map on graded pieces. In other words, we need to prove that the induced map
\begin{equation}\label{toomuch}
 (\wedge^k \mathbb{L}_{S/R}[-k] \otimes_S t_{G^\vee} )\oplus   (\wedge^{k-1} \mathbb{L}_{S/R}[-k+1] \otimes_S \omega_{G}) \to (F^k_{\mathrm{conj}}/F^{k-1}_{\mathrm{conj}})\left \{k\right \} 
\end{equation}
is an isomorphism for $k \ge 0$. Note that by choice, the map $\rho$ induces a map $t_{G^\vee} \to H^2_{\Prism}(BG) \otimes_{\Prism_S} S$, which is a section to the canonical composite map
$$H^2_{\Prism}(BG) \otimes_{\Prism_S} S \to   H^2 (BG, \mathcal{O})) \simeq F^0_{\mathrm{conj}} \simeq t_{G^\vee}.$$
Thus, for $k=0$, the map in \cref{toomuch} is the natural isomorphism. Therefore, for $k \ge 1$, the map $(\wedge^k \mathbb{L}_{S/R}[-k] \otimes_S t_{G^\vee} ) \to (F^k_{\mathrm{conj}}/F^{k-1}_{\mathrm{conj}})\left \{k\right \} $ in \cref{toomuch} is the same as the canonical map in the short exact sequence in \cref{laaa}. By our choice of $\tau_\pi$, the induced map $\overline{\tau}: \omega_G \to (F^1_{\mathrm{conj}}/ F^0_{\mathrm{conj}})\left \{1\right\}$ in \cref{toomuch} (for $k=1$) is a section to the canonical map $\overline{\gamma}^1: (F^1_{\mathrm{conj}}/ F^0_{\mathrm{conj}})\left \{1\right\} \to \omega_G$. Since we have the following diagram
\begin{center}
\begin{tikzcd}
{\wedge^{k-1} \mathbb{L}_{S/R}[-k+1] \otimes_S (F^1_{\mathrm{conj}}/ F^0_{\mathrm{conj}})\left \{1\right\}} \arrow[rr] \arrow[rrd, "\mathrm{id}\otimes \overline{\gamma}^1"] &  & (F^k_{\mathrm{conj}}/F^{k-1}_{\mathrm{conj}})\left \{k\right \} \arrow[d, "\cref{laaa}"] \\
{\wedge^{k-1} \mathbb{L}_{S/R}[-k+1] \otimes_S \omega_{G}} \arrow[u, "\mathrm{id} \otimes \overline{\tau}"] \arrow[rr, "\mathrm{id}"]                                        &  & {(\wedge^{k-1} \mathbb{L}_{S/R}[-k+1] \otimes_S \omega_{G})}                            
\end{tikzcd}
\end{center} for $k \ge 1$, the induced map $(\wedge^{k-1} \mathbb{L}_{S/R}[-k+1] \otimes_S \omega_{G}) \to (F^k_{\mathrm{conj}}/F^{k-1}_{\mathrm{conj}})\left \{k\right \}$ in \cref{toomuch} gives a section to the surjection in \cref{laaa}. Thus the map \cref{toomuch} is an isomorphism for $k$. Therefore,
 $G^k_\mathrm{N} \to F^k_{\mathrm{N}}$ is an isomorphism; i.e., 
\begin{equation}\label{cocacola11}
F^k_{\w{N}} \simeq (\mathrm{Fil}^k_{\mathrm{Nyg}} \Prism_S \otimes_{\Prism_S}T) \oplus (\mathrm{Fil}^{k-1}_{\mathrm{Nyg}} \Prism_S \otimes_{\Prism_S}W).
\end{equation}{}

We will now verify that the prismatic weak $F$-gauge given by $H^2 (\mathrm{Fil}^\bullet_{\mathrm{Nyg}}R\Gamma_{\Prism}(BG))$ is indeed a prismatic $F$-gauge. To do so, we need to verify the condition \cref{who1}. In our situation, using \cref{cocacola11}, checking \cref{who1} amounts to checking that the canonical map of filtered objects
\begin{equation}
 \theta_n: (T \otimes_{\Prism_S, \varphi} I^n \Prism_S) \oplus (W\otimes_{\Prism_S, \varphi} I^{n-1} \Prism_S) \to I^n H^2_\Prism (BG)   
\end{equation}are isomorphisms. Note that since the Frobenius on prismatic cohomology $\varphi^* H^2_\Prism(BG) \to H^2_\Prism(BG)$ is an isomorphism after inverting $I$ (\cref{bkproperty}), the filtered map $\theta_\bullet$ induces an isomorphism on underlying objects. By \cref{below}, it suffices to check that the induced map on associated graded objects are isomorphic. However, that follows from the isomorphism $T^k \left \{ - k \right \} \to F^k_{\mathrm{conj}}$ shown above.

Since we have shown $\mathcal{M}(G)$ is a prismatic $F$-gauge over $S$, the fact that it is a vector bundle of height $h$ as a quasicoherent sheaf on $\Spf(S)^\syn$ can be checked by pulling back along $\Spf(S)^\Nyg \to \Spf(S)^\syn;$ but that follows from \cref{cocacola11}.
\end{proof}{}
The following lemma was used in the above proof.

\begin{lemma}\label{below}
 Let $\Fil^\bullet A$ be any $\mathbb{Z}$-indexed filtered object in a stable $\infty$-category $\mathcal{C}$ such that the underlying object and the associated graded object of $\Fil^\bullet A$ are both zero. Then $\Fil^\bullet A \simeq 0.$
\end{lemma}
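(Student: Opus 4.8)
The plan is to translate the two hypotheses into termwise statements and then invoke a standard fact about colimits of diagrams of equivalences. Recall that a $\mathbb{Z}$-indexed filtered object is a diagram $\cdots \to \Fil^{i+1}A \to \Fil^i A \to \cdots$ in $\mathcal{C}$, that its $i$-th graded piece is $\gr^i A = \mathrm{cofib}(\Fil^{i+1}A \to \Fil^i A)$, and that its underlying object is $A^{\infty} := \mathrm{colim}_{i \to -\infty} \Fil^i A$. The vanishing of the associated graded object says exactly $\gr^i A \simeq 0$ for every $i$, and the vanishing of the underlying object says $A^{\infty} \simeq 0$.

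First I would use stability of $\mathcal{C}$: the cofiber sequence $\Fil^{i+1}A \to \Fil^i A \to \gr^i A \simeq 0$ forces each transition map $\Fil^{i+1}A \to \Fil^i A$ to be an equivalence, and hence (by composition) every map $\Fil^j A \to \Fil^i A$ with $i \le j$ is an equivalence. Thus the diagram $i \mapsto \Fil^i A$, indexed by the poset $(\mathbb{Z}, \ge)$, carries every morphism to an equivalence. Since $(\mathbb{Z}, \ge)$ is filtered, hence weakly contractible, such a diagram is equivalent to the constant diagram at $\Fil^{0}A$ and its colimit is therefore $\Fil^{0}A$ (see, e.g., \cite[\S~4.4]{Lur09}). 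Combining with the second hypothesis, $\Fil^{0}A \simeq A^{\infty} \simeq 0$, and then the equivalences just established give $\Fil^i A \simeq 0$ for every $i \in \mathbb{Z}$, i.e.\ $\Fil^\bullet A \simeq 0$.

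The only real subtlety — the step I expect to be the main obstacle, modest as it is — is identifying the colimit of a diagram of equivalences over $\mathbb{Z}$ with one of its terms: this needs \emph{weak contractibility} of the index category, not merely its connectedness. If one prefers to argue by hand rather than cite the general statement, one can instead check directly that the canonical map $\Fil^j A \to A^{\infty}$ is an equivalence: its cofiber is $\mathrm{colim}_{i<j}\,\mathrm{cofib}(\Fil^j A \to \Fil^i A)$, and each $\mathrm{cofib}(\Fil^j A \to \Fil^i A)$ carries a finite filtration whose graded pieces lie among $\gr^{i}A,\dots,\gr^{j-1}A$, all zero, so it vanishes; hence so does the colimit, and $\Fil^j A \xrightarrow{\ \sim\ } A^{\infty} \simeq 0$. (This repackaging uses the same contractibility input, now hidden in ``cofiber commutes with the colimit over $\{i<j\}$'', but keeps the bookkeeping transparent.) Either route yields $\Fil^i A \simeq 0$ for all $i$, completing the proof.
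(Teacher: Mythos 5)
Your argument is correct and matches the paper's proof: the vanishing of the graded pieces forces all transition maps to be equivalences, and then the vanishing of the colimit (the underlying object) forces each $\Fil^i A$ to vanish. The extra care you take about the colimit of a diagram of equivalences over a filtered poset is a fair elaboration of the same step the paper treats as immediate.
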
{}

\begin{proof}
 Since the associated graded object is zero, the maps in the diagram $\ldots \to \Fil^n A \to \Fil^{n-1} A \to \ldots$ are all isomorphisms. The underlying object being zero implies that colimit of the above diagram is zero. This implies that $\Fil^n A = 0.$    
\end{proof}{}

\begin{remark}\label{3van}
By \cref{useinprooflter}, for a $p$-divisible group $G$ over a quasiregular
semiperfectoid algbra $S$, we have  $H^3 (\mathrm{Fil}^k_{\mathrm{Nyg}}R\Gamma_{\Prism}(BG))=0$ for all $k.$ Using the $E_2$-spectral sequence (where the $\w{Ext}$-groups are computed in the quasisyntomic topos)
$$E_2^{i,j}=\mathrm{Ext}^i (H^{-j} (\mathbb{Z}[BG]), \mathrm{Fil}^k_{\mathrm{Nyg}}\Prism_{(\cdot)} ) \implies H^{i+j} (\mathrm{Fil}^k_{\w{Nyg}}(R\Gamma_{\Prism}(BG)),$$ the vanishing $H^3 (\mathrm{Fil}^k_{\mathrm{Nyg}}R\Gamma_{\Prism}(BG))=0$ implies that 
\begin{equation}\label{ext2van}
E_2^{2,1}=\mathrm{Ext}^2 (G, \mathrm{Fil}_{\w{Nyg}}^k\Prism_{(\cdot)})=0.
\end{equation}
To see this, note that since $\Fil^k_{\Nyg} \Prism_{(\cdot)}$ is derived $p$-complete, we have $\mathcal{H}om( G, \Fil^k_{\Nyg} \Prism_{(\cdot)})=0$. From this, we obtain that $E_2^{0,2}= \mathrm{Hom}(G \wedge G, \Fil^k_{\Nyg} \Prism_{(\cdot)})=0$; the latter follows from the vanishing $\mathcal{H}om( G, \Fil^k_{\Nyg} \Prism_{(\cdot)})=0$ in the same way as in the proof of \cref{imp}. Further, $E_2^{4,0}= \mathrm{Ext}^4 (\mathbb{Z}, \Fil^k_{\Nyg} \Prism_{(\cdot)} )= H^4 (\Fil^k_{\Nyg} \Prism_S)=0$, since $\Fil^k_{\Nyg} \Prism_S$ is discrete. Therefore, by the spectral sequence, we obtain that $E_2^{2,1} \simeq E_{\infty}^{2,1}$, which must be zero since $E_{\infty}^{2,1}$ is a subquotient of $H^3 (\mathrm{Fil}^k_{\mathrm{Nyg}}R\Gamma_{\Prism}(BG))$. Similarly, one also has 
\begin{equation}\label{ext1}
H^2 (\mathrm{Fil}^{\bullet}_{\w{Nyg}}R\Gamma_{\Prism} (BG)) \simeq \mathrm{Ext}^1(G, \mathrm{Fil}^{\bullet}_{\mathrm{Nyg}}\Prism_{(\cdot)}), 
\end{equation} as prismatic $F$-gauges.

\end{remark}{}

\begin{remark}[Base change]\label{pdivbase}
Let $S \to S'$ be a map of qrsp algebras. Let $G$ be a $p$-divisible group over $S$ and let $G_{S'}$ be the $p$-divisible group over $S'$ obtained by base change. Let $f: \mathrm{Spf}(S')^{\mathrm{syn}} \to \Spf(S)^\syn$ denote the natural map. By \cref{basechangedm} and the proof of \cref{dualo1}, one obtains that the natural map $f^* \mathcal{M}(G) \to \mathcal{M}(G_{S'})$ must be an isomorphism. Alternatively, since the source and the target of the natural map are both prismatic $F$-gauges, one can also invoke \cref{comppp} to conclude that the map is an isomorphism.  
\end{remark}{}
Finally, we can define the prismatic Dieudonn\'e $F$-gauge associated to a $p$-divisible group.
\begin{definition}[Dieudonn\'e $F$-gauge of a $p$-divisible group]\label{d}
 Let $T$ be a quasisyntomic ring and $G$ be a $p$-divisible group over $T.$ By \cref{pdivbase}, for every qrsp algebra $S$ over $T,$ we obtain a vector bundle $\mathcal{M}(G_S)$ that is compatible under pullback for every map $S \to S'$ of qrsp algebras over $T.$ By \cref{si}, this data descends to a prismatic $F$-gauge over $T$, which we denote as $\mathcal{M}(G)$ and call it the (prismatic) Dieudonn\'e $F$-gauge of $G.$ By \cref{dualo1}, it follows that $\mathcal{M}(G)$ is naturally a vector bundle over $\Spf(T)^\syn$ of rank equal to the height of $G.$
\end{definition}{}

\begin{remark}\label{l}
 By the above definition, if $T$ is a quasisyntomic ring and $S$ is a qrsp algebra over $T,$ then the pullback of the prismatic $F$-gauge $\mathcal{M}(G)$ to $\mathrm{Spf}\,(S)^\syn$ is canonically isomorphic to $\mathcal{M}(G_S).$ Since the pullback of a prismatic $F$-gauge to $\mathrm{Spf}(T)^{\mathrm{Nyg}}$ is a prismatic gauge in the sense of \cref{vanc}, by the first paragraph in the proof of \cref{equivalence}, it follows that the functor determined by
$$(T)_\qrsp \ni S \mapsto H^2 (\mathrm{Fil}^\bullet_{\mathrm{Nyg}}R\Gamma_{\Prism}(BG_S))$$ is a sheaf of filtered modules over $\Fil^\bullet \Prism_{(\cdot)}.$ 
\end{remark}{}

\begin{remark}[Compatibility with \cite{alb}]\label{iasbeer} As explained in \cref{ias1}, for a $p$-divisible group $G$ over a qrsp algebra, the prismatic $F$-gauge $\mathcal{M}(G)$ has Hodge--Tate weights in $[0,1].$ By \cref{comppp}, one can associate an admissible prismatic Dieudonn\'e module to $\mathcal{M}(G).$ By \cite[Thm.~1.6]{Mon1} this is isomorphic to the admissible prismatic Dieudonn\'e module associated to $G$ by \cite{alb}.
\end{remark}{}

\begin{example}
Let $T$ be a quasisyntomic ring and let us consider the $p$-divisible group $\mathbb{Q}_p/\mathbb{Z}_p$ over $T.$ The prismatic Dieudonn\'e $F$-gauge of $\mathbb{Q}_p/\mathbb{Z}_p$ is naturally isomorphic to the structure sheaf $\mathcal{O}_{T^{\syn}}$. This follows from \cref{ext1}.    
\end{example}{}

\begin{example}
Let $T$ be a quasisyntomic ring and let us consider the $p$-divisible group $\mu_{p^\infty}$ over $T.$ The prismatic Dieudonn\'e $F$-gauge of $\mu_{p^\infty}$ is naturally isomorphic to the Breuil--Kisin twist $\mathcal{O}\left \{-1\right \}.$ This follows from \cref{comppp}, \cref{dualo1} and \cite[Cor.~1.3]{actaar}.   
\end{example}{}

\subsection{Prismatic Dieudonn\'e $F$-gauges associated to abelian schemes}

Now we will discuss the construction of prismatic Dieudonn\'e $F$-gauges associated to abelian schemes. To this end, let $S$ be a qrsp algebra. Let $A$ be an abelian scheme over $S.$  Note that $\mathbb{L}_{BA/S} \simeq \Omega^1_{A/S} [-1]$. Further, since $p: A \to A$ is a surjection on $(S)_\qsyn$ and $\mathbb{G}_a$ is derived $p$-complete, one has $\mathcal{H}om_{(S)_\qsyn}(A, \mathbb
G_a)=0.$ Using the latter vanishing, similar to the proof of \cref{imp}, we obtain a natural isomorphism $H^2 (BA, \mathcal{O}) \simeq \mathrm{Ext}^1 (A, \mathbb{G}_a)$. By the moduli description of the dual abelian variety $A^\vee$, $\mathrm{Ext}^1 (A, \mathbb{G}_a)$ is naturally isomorphic to the tangent space of $A^\vee$ at the origin, which we note by $t_{A^\vee}$.

\begin{definition}
    In the above set up, we define $\mathcal{M}(A)$ to be the effective prismatic weak $F$-gauge over $S$ obtained from $H^2 (\mathrm{Fil}^\bullet_{\mathrm{Nyg}}R\Gamma_{\Prism}(BA))$ (see \cref{fl}).
\end{definition}{}

\begin{proposition}\label{vectorbundleabsch}
In the above set up, $\mathcal{M}(A)$ is a prismatic $F$-gauge on $S$ and is a vector bundle of rank $2\dim A$ when viewed as a quasicoherent sheaf on $\mathrm{Spf}(S)^\syn.$
\end{proposition}

\begin{proof}
  Follows in the same way as in the proof of \cref{dualo1} by replacing $\omega_G$ by $\Omega^1_A$ and $t_{G^\vee}$ by $t_{A^\vee}$. Alternatively, one can deduce it directly from the statement of  \cref{dualo1}, as we explain. Note that, $\Fil^k _{\mathrm{Nyg}}R\Gamma_{\Prism}(BA)) \simeq R\mathrm{Hom}_{(S)_\qsyn} (\mathbb{Z}[BA], \mathrm{Fil}^k_{\Nyg} \Prism_{(\cdot)} )$ (see the discussion before \cref{torsion} or the discussion before \cite[Lem.~3.2]{actaar}). Let $A[p^\infty]$ denote the $p$-divisible group associated to $A$. As $\mathrm{Fil}^k_{\Nyg} \Prism_{(\cdot)}$ is derived $p$-complete, by \cite[Lem.~3.2]{actaar}, we have 
 \begin{equation}\label{reducetopdic}
      \Fil^k _{\mathrm{Nyg}}R\Gamma_{\Prism}(BA))  \simeq \Fil^k _{\mathrm{Nyg}}R\Gamma_{\Prism}(BA[p^\infty])).
 \end{equation}{}
Therefore, applying \cref{dualo1} finishes the proof.
  \end{proof}{}

\begin{construction}[Dieudonn\'e $F$-gauge of abelian schemes]\label{starbc}
Let $S$ be a qrsp algebra. Let $A$ be an abelian scheme over $S.$ If $S \to S'$ is a map of qrsp algebras, and $A_{S'}$ is the abelian scheme over $S'$ obtained by base change, it follows again that the natural map $f^* \mathcal{M}(A) \to \mathcal{M}(A_{S'})$ is an isomorphism, where $f: \mathrm{Spf}(S')^{\mathrm{syn}} \to \Spf(S)^\syn$ denotes the natural map. As a consequence, if $T$ is a quasisyntomic ring and $A$ is an abelian scheme over $T$, by \cref{si}, one obtains a vector bundle on $\Spf(T)^\syn$ of rank $2\dim A$ which we denote as $\mathcal{M}(A)$.
\end{construction}{}

\begin{remark}\label{abbb}
Similar to \cref{l}, by the above definition, if $T$ is a quasisyntomic ring and $S$ is a qrsp algebra over $T,$ then the pullback of $\mathcal{M}(A)$ to $\mathrm{Spf}\,(S)^\syn$ is canonically isomorphic to $\mathcal{M}(A_S).$ By the first paragraph in the proof of \cref{equivalence}, it follows that the functor determined by
\begin{equation}\label{smallworld}
(T)_\qrsp \ni S \mapsto H^2 (\mathrm{Fil}^\bullet_{\mathrm{Nyg}}R\Gamma_{\Prism}(BA_S))\end{equation}is a sheaf of filtered modules over $\Fil^\bullet \Prism_{(\cdot)}.$ 
\end{remark}{}

\begin{remark}\label{cafe}
Let $A$ be an abelian scheme over a qrsp algebra $S.$ All the $\mathrm{Ext}$-groups below are computed in the quasisyntomic topos of $S.$ Similar to \cref{3van}, we have $H^1(\Fil^\bullet_{\Nyg}R\Gamma_{\Prism}(BA))=0,$ which implies $\mathrm{Hom}(A, \mathrm{Fil}_{\w{Nyg}}^k\Prism_{(\cdot)})=0.$ By \cref{useinprooflter} and \cref{reducetopdic}, one also has $$H^3(\Fil^\bullet_{\Nyg}R\Gamma_{\Prism}(BA))=0,$$ which implies, by the $E_2$ spectral sequence as in \cref{3van}, that we have
\begin{equation}\label{ext2vanab}
\mathrm{Ext}^2 (A, \mathrm{Fil}_{\w{Nyg}}^k\Prism_{(\cdot)})=0.
\end{equation}for all $k$.
Further, one also has 
\begin{equation}\label{ext1ab}
H^2 (\mathrm{Fil}^{\bullet}_{\w{Nyg}}R\Gamma_{\Prism} (BA)) \simeq \mathrm{Ext}^1(A, \mathrm{Fil}^{\bullet}_{\mathrm{Nyg}}\Prism_{(\cdot)}), 
\end{equation} as prismatic $F$-gauges.
\end{remark}{}

\begin{remark}\label{cafeforgm}
By \cref{inducedmapchernpower}, we have  $H^1(\Fil^\bullet_{\Nyg}R\Gamma_{\Prism}(B\mathbb{G}_m))=0$ and $H^3(\Fil^\bullet_{\Nyg}R\Gamma_{\Prism}(B\mathbb{G}_m))=0$. Similar to \cref{cafe}, we have $$H^2 (\mathrm{Fil}^{\bullet}_{\w{Nyg}}R\Gamma_{\Prism} (B\mathbb{G}_m)) \simeq \mathrm{Ext}^1(\mathbb{G}_m, \mathrm{Fil}^{\bullet}_{\mathrm{Nyg}}\Prism_{(\cdot)}),$$ which is further isomorphic to the prismatic $F$-gauge $\mathcal{O} \left \{-1\right \}$ by \cref{bgm}.
\end{remark}{}

Before proceeding further, we record a few observations.

\begin{construction}\label{chcago}
Let $T$ be a quasisyntomic ring. Let $\mathcal{F}$ be a quasisyntomic sheaf of abelian groups on $(T)_\qsyn$ such that $p: \mathcal{F} \to \mathcal{F}$ is surjective. Let $T_p(\mathcal{F}):= \varprojlim \mathcal{F}[p^n].$ Then we have a natural short exact sequence $0 \to T_p(\mathcal{F}) \to \varprojlim_p \mathcal{F} \to \mathcal{F} \to 0.$ For any derived $p$-complete sheaf of abelian groups $\mathcal{G}$, we have $R\mathcal{H}om (\varprojlim_p \mathcal{F}, \mathcal{G})=0$. To see the latter, note that multiplication by $p$ is an isomorphism on $\varprojlim_p \mathcal{F}$; therefore, $R\mathcal{H}om (\varprojlim_p \mathcal{F}, \mathcal{G}) \simeq  \varprojlim_p R\mathcal{H}om (\varprojlim_p \mathcal{F}, \mathcal{G}) \simeq R\mathcal{H}om (\varprojlim_p \mathcal{F}, \varprojlim_p \mathcal{G})$. But since $\mathcal{G}$ is derived $p$-complete, we have $\varprojlim_p \mathcal{G} = 0.$ This gives the desired vanishing, which produces a natural isomorphism
\begin{equation}\label{fight}
    R\mathcal{H}om(T_p(\mathcal{F}), \mathcal{G}) \simeq R\mathcal{H}om(\mathcal{F}, \mathcal{G}[1]).
\end{equation}As a consequence of \cref{fight}, we obtain the following isomorphisms, which will be used in the proof of \cref{abdual}. 
\begin{enumerate}
    \item $\mathrm{Hom}(T_p(A), \mathrm{Fil}^{\bullet}_{\mathrm{Nyg}}\Prism_{(\cdot)}) \simeq \mathrm{Ext}^1(A, \mathrm{Fil}^{\bullet}_{\mathrm{Nyg}}\Prism_{(\cdot)})$ for any abelian scheme $A$ over $T$.

    \item $\mathrm{Hom}(\mathbb{Z}_p(1), \mathrm{Fil}^{\bullet}_{\mathrm{Nyg}}\Prism_{(\cdot)}) \simeq \mathrm{Ext}^1(\mathbb{G}_m, \mathrm{Fil}^{\bullet}_{\mathrm{Nyg}}\Prism_{(\cdot)}) \simeq H^2 (\mathrm{Fil}^{\bullet}_{\w{Nyg}}R\Gamma_{\Prism} (B\mathbb{G}_m)).$

    \item $\mathcal{H}om_{(T)_\qsyn}(T_p(A), \mathbb{Z}_p(1)) \simeq \mathcal{E}xt^1_{(T)_\qsyn}(T_p(A), \mathbb{Z}_p(1)) \simeq T_p(A^\vee)$, for an abelian scheme $A$ over $T$, where the latter isomorphism is from \cref{tatemoduleabelianscheme}. 
\end{enumerate}{}
 
\end{construction}{}

\begin{remark}[De Rham realizations]
Let $A$ be an abelian scheme over a quasisyntomic ring $T$ of characteristic $p$. Note that $(T)_\qrsp \ni S \mapsto R\Gamma_{\mathrm{dR}}(S) $ defines a discrete sheaf of rings denoted by $\mathcal{O}^{\mathrm{dR}}$. Similar to the proof of \cref{vectorbundleabsch}, one has $R\Gamma_{\mathrm{dR}}(BA) \simeq R\Gamma_{\mathrm{dR}}(B A[p^\infty]).$ By the same argument in the proof of \cite[Prop.~3.28]{Mon1}, $H^*_{\mathrm{dR}}(BA)$ is a symmetric algebra in degree $2$ (also see Prop.~4.40 loc. cit.). In particular, $H^1_{\mathrm{dR}}(BA) \simeq H^3 _{\mathrm{dR}}(BA)=0.$ By the same strategy as in \cref{cafe}, we obtain $$\mathrm{Ext}^2 (A, \mathcal{O}^{\mathrm{dR}})=0\,\,\,  \text{and}\,\,\, H^2_{\mathrm{dR}}(BA) \simeq \mathrm{Ext}^1 (A, \mathcal{O}^{\mathrm{dR}}).$$The natural map $\mathbb{Z}[A] \to A$ of abelian sheaves induces a natural map 
$$\mathrm{Ext}^1 (A, \mathcal{O}^{\mathrm{dR}}) \to \mathrm{Ext}^1 (\mathbb{Z}[A], \mathcal{O}^{\mathrm{dR}}) \simeq H^1_{\mathrm{dR}}(A),$$ which is known to be an isomorphism, by using the technique of Breen--Deligne resolution (see \cite[Thm.~2.5.6]{bbm} or \cite[Thm.~4.60]{alb}). Combining with \cref{chcago}, we obtain a natural isomorphism
\begin{equation}\label{3-1}\mathrm{Hom} (T_p(A), \mathcal{O}^{\mathrm{dR}}) \simeq H^1_{\mathrm{dR}}(A)
   \end{equation} 
that will be used in the proof of \cref{abdual}. The above maps also give a natural isomorphism \begin{equation}\label{2-1}
    H^2_{\mathrm{dR}}(BA) \simeq H^1_{\mathrm{dR}}(A).
\end{equation}{}
\end{remark}{}

\begin{proposition}[Duality compatibility for abelian schemes]\label{abdual} Let $A$ be an abelian scheme over a quasisyntomic ring $T.$ Let $A^\vee$ denote the dual abelian scheme. Then we have a natural isomorphism
$$\mathcal{M}(A)^* \left \{-1 \right \} \simeq \mathcal{M}(A^\vee)$$ of prismatic $F$-gauges over $T.$
\end{proposition}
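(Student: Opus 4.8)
The plan is to produce the duality pairing from the Poincaré line bundle and prove it is perfect by descending to Hodge--Tate cohomology, where it becomes the classical Poincaré duality for abelian schemes.

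\emph{Reductions.} By \cref{si} together with the construction of $\mathcal{M}$ for abelian schemes over an arbitrary quasisyntomic ring (\cref{starbc}), and the compatibility of $(-)^*$ and of the Breuil--Kisin twist with pullback, it suffices to produce an isomorphism $\mathcal{M}(A)^*\left\{-1\right\} \xrightarrow{\sim} \mathcal{M}(A^\vee)$ that is natural in the qrsp algebra $S$; such a compatible family descends to the given quasisyntomic ring $T$. Fix a qrsp algebra $S$ and set $g=\dim A$. Using \cref{starbc1} I identify $\mathcal{M}(A)$ with $\mathcal{M}'(A)=H^2(\mathrm{Fil}^\bullet_{\mathrm{Nyg}}R\Gamma_{\Prism}(BA))=H^1(\mathrm{Fil}^\bullet_{\mathrm{Nyg}}R\Gamma_{\Prism}(A))$, i.e.\ with $R^1 v_{A,*}\mathcal{O}$ for $v_A\colon A^{\syn}\to S^{\syn}$, and likewise for $A^\vee$; by \cref{starbc} both are vector bundles of rank $2g$ on $\Spf(S)^{\syn}$, with $R^0 v_{A,*}\mathcal{O}=\mathcal{O}_{S^{\syn}}$.

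\emph{Construction of the pairing.} Let $\mathcal{P}$ be the Poincaré line bundle on $A\times_S A^\vee$, rigidified along $A\times_S 0$ and along $0\times_S A^\vee$. Its syntomic first Chern class is a class $c_1(\mathcal{P})$ in $H^2$ of $R\Gamma(S^{\syn},Rw_*\mathcal{O}\left\{1\right\})$, where $w\colon (A\times_S A^\vee)^{\syn}\to S^{\syn}$. By the Künneth formula for Nygaard-filtered, Frobenius-equivariant prismatic cohomology of smooth proper morphisms, $Rw_*\mathcal{O}\simeq Rv_{A,*}\mathcal{O}\otimes_{\mathcal{O}_{S^{\syn}}}Rv_{A^\vee,*}\mathcal{O}$; writing each factor as $\mathcal{O}\oplus\overline{C}$ with $\overline{C}$ connective and $H^0\overline{C}=0$, $H^1\overline{C}=\mathcal{M}(A)$ (resp.\ $\mathcal{M}(A^\vee)$), the tensor product splits off the summand $\overline{C}_A\otimes_{\mathcal{O}_{S^{\syn}}}\overline{C}_{A^\vee}$, whose lowest cohomology is $\mathcal{M}(A)\otimes_{\mathcal{O}_{S^{\syn}}}\mathcal{M}(A^\vee)$ in degree $2$. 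Since $\mathcal{P}$ is a biextension (equivalently, rigidified along both factors and along the zero section), the components of $c_1(\mathcal{P})$ in $H^2$ of the other three Künneth summands vanish, so $c_1(\mathcal{P})$ yields, via the truncation $\overline{C}_A\otimes\overline{C}_{A^\vee}\to(\mathcal{M}(A)\otimes\mathcal{M}(A^\vee))[-2]$, a canonical global section of $(\mathcal{M}(A)\otimes_{\mathcal{O}_{S^{\syn}}}\mathcal{M}(A^\vee))\left\{1\right\}$, i.e.\ a morphism $\eta\colon \mathcal{O}\left\{-1\right\}\to \mathcal{M}(A)\otimes_{\mathcal{O}_{S^{\syn}}}\mathcal{M}(A^\vee)$ of prismatic $F$-gauges. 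As $\mathcal{M}(A)$ is dualizable, $\eta$ is adjoint to a morphism $\widetilde{\eta}\colon \mathcal{M}(A)^*\left\{-1\right\}\to \mathcal{M}(A^\vee)$, manifestly natural in $S$; these assemble to a morphism over $T$.

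\emph{Perfectness.} The remaining, and essential, point is that $\widetilde{\eta}$ is an isomorphism. Since both sides are vector bundles of rank $2g$, by \cref{below} (applied to the cofiber of $\widetilde{\eta}$, viewed as a filtered $\mathrm{Fil}^\bullet_{\mathrm{Nyg}}\Prism_S$-module) it is enough to check that $\widetilde{\eta}$ induces isomorphisms on underlying $\Prism_S$-modules and on the associated graded of the Nygaard filtration. On underlying modules $\widetilde{\eta}$ is the map $H^1_{\Prism}(A)^\vee\left\{-1\right\}\to H^1_{\Prism}(A^\vee)$ induced by $c_1(\mathcal{P})$ through Künneth; reducing modulo $(p,I)$ and invoking the conjugate filtration together with the Hodge--Tate description of \cref{hodgetate}, it becomes the pairing on Hodge cohomology induced by the Poincaré bundle, which is the classical perfect duality for abelian schemes ($H^1(A,\mathcal{O}_A)\cong\operatorname{Lie}A^\vee$ pairing with $H^0(A^\vee,\Omega^1_{A^\vee})\cong\omega_{A^\vee}$, and $\omega_A\cong(\operatorname{Lie}A)^*$ pairing with $\operatorname{Lie}A=H^1(A^\vee,\mathcal{O}_{A^\vee})$); hence $\widetilde{\eta}$ is an isomorphism modulo $(p,I)$, and therefore an isomorphism by $(p,I)$-completeness and finite projectivity. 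The graded statement is the conjugate-filtered refinement of the same duality: using the identification $T^k\left\{-k\right\}\simeq F^k_{\mathrm{conj}}$ from the proof of \cref{dualo1} (valid for abelian schemes as well, cf.~\cref{starbc}) it reduces to Poincaré duality for the Hodge cohomology of $A$ compatibly with the Hodge/conjugate filtration, which again holds classically. (Alternatively, if one grants the identification of $\mathcal{M}(A)$ with the prismatic Dieudonné $F$-gauge of $A[p^\infty]$, perfectness follows from Cartier-duality compatibility for $p$-divisible groups and $A^\vee[p^\infty]\simeq A[p^\infty]^\vee$.) Thus the one substantive input is the verification that the constructed $\widetilde{\eta}$ restricts to these classical perfect pairings on underlying modules and on Nygaard gradeds; everything else is formal.
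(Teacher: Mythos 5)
Your proposal is correct and follows essentially the same route as the paper: construct the map $\mathcal{O}\{-1\}\to\mathcal{M}'(A)\otimes\mathcal{M}'(A^\vee)$ from the Poincar\'e bundle (via \cref{bgm}, equivalently its syntomic Chern class) and K\"unneth, pass to $\mathcal{M}(A)^*\{-1\}\to\mathcal{M}(A^\vee)$ by dualizability, and verify it is an isomorphism by reducing to the underlying module and the Nygaard graded pieces as in the calculation of \cref{dualo1}, where it becomes classical Hodge/Hodge--Tate duality for abelian schemes; your write-up simply makes this last verification explicit. One caveat: your parenthetical alternative via Cartier-duality compatibility for $p$-divisible groups would be circular in this paper's logical order, since \cref{grading} is deduced from \cref{dualoo}, which itself relies on \cref{abdual}.
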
{}
\begin{proof}
 By quasisyntomic descent, it is enough to construct a natural isomorphism in the case when $T$ is qrsp. Note that we have a canonical isomorphism (\cref{chcago} (3))
 \begin{equation}\label{canonicalidentificationoftatemodules}
       \mathcal{H}om_{(T)_\qsyn}(T_p(A), \mathbb{Z}_p(1)) \simeq T_p(A^\vee).
 \end{equation}{}
By functoriality, the above map constructs a natural map
 \begin{equation}
  T_p(A^\vee) \to \mathcal{H}om_{\syn} \left(\Hom (\mathbb{Z}_p(1), \Fil^\bullet_{\Nyg}\Prism_{(\cdot)}), \Hom (T_p(A), \Fil^\bullet_{\Nyg}\Prism_{(\cdot)}) \right)
  \end{equation} in the category of abelian sheaves on $(T)_\qrsp.$ This produces a natural map
  \begin{equation}\label{importantmapshouldhaveusedbefore}
  T_p(A^\vee) \to \mathcal{H}om_{\syn} (\mathcal{O}\left\{-1\right\}, \mathcal{M}(A)) \simeq \mathcal{H}om_{\syn} (\mathcal{M}(A)^*\left\{-1\right\}, \Fil^\bullet \Prism_{(\cdot)}).    
  \end{equation}
Now, there is a canonical ``evaluation map"
 \begin{equation}\label{veryimportantmap}
   \mathcal{M}(A)^* \left \{-1\right\} \to  \mathrm{Hom}_{(T)_\qrsp} (\mathcal{H}om_{\syn} (\mathcal{M}(A)^*\left\{-1\right\}, \Fil^\bullet \Prism_{(\cdot)}), \Fil^\bullet \Prism_{(\cdot)}).  
  \end{equation}
  Composing with \cref{importantmapshouldhaveusedbefore}, we obtain a map
  \begin{equation}\label{abelianschemedualitymap}
    \mathcal{M}(A)^* \left \{-1\right\} \to \Hom_{(T)_\qrsp}(T_p(A^\vee), \Fil^\bullet \Prism_{(\cdot)}) \simeq   \mathcal{M}(A^\vee). 
  \end{equation}
We will argue that the above map is an isomorphism. Since both sides are prismatic $F$-gauges with Hodge--Tate weights in $\left \{0,1 \right \}$ and the map we have constructed is compatible with the natural Frobenius maps, by \cref{comppp} it suffices to show that the induced natural map
\begin{equation}
H^2_{\Prism} (BA)^* \left \{-1\right \} \to H^2_\Prism (BA^\vee) 
\end{equation} of locally free $\Prism_T$-modules is an isomorphism. Since $\Prism_T \to T$ is henselian, it would suffice to show that the induced natural map
$$H^2_\Prism(BA)^* \left \{-1 \right\} \otimes_{\Prism_T} T \to H^2_\Prism (BA^\vee )\otimes_{\Prism_T} T$$ of locally free $T$-modules is an isomorphism. It suffices to check this after base changing to an arbitrary map $T \to k$, where $k$ is a perfect field of characteristic $p$. Under \cref{3-1} and \cref{2-1}, we are thus reduced to checking that the induced map
\begin{equation}\label{firstmap1}
 H^1_{\mathrm{dR}}(A_k)^* \to H^1_{\mathrm{dR}} (A^\vee_k)   \end{equation}is an isomorphism of $k$-vector spaces. We will give a geometric reconstruction of this map, under which this statement will be classical. Let $\mathcal{P} $ denote the tautological line bundle on $A_k \times A_k^\vee$. Let ${c}_1 (\mathcal{P}) \in H^2_{\mathrm{dR}}(A_k\times A_k^\vee)$ denote its Chern class. Let $c_1(\mathcal{P})^{1,1} \in H^1_{\mathrm{dR}}(A_k) \otimes_k H^1_{\mathrm{dR}}(A^\vee_k)$ denote the projection of $c_1(\mathcal{P})$ under the Kunneth map $$\pi_{1,1}: H^2_{\mathrm{dR}}(A_k\times A_k^\vee) \to H^1_{\mathrm{dR}}(A_k) \otimes_k H^1_{\mathrm{dR}}(A^\vee_k). $$ Then $c_1(\mathcal{P})^{1,1}$ induces a canonical map 
 \begin{equation}\label{chernclassmapduality}
 H^1_{\mathrm{dR}}(A_k)^* \to H^1_{\mathrm{dR}} (A^\vee_k).
 \end{equation}By \cref{lemmaforcomp}, \cref{firstmap1} and \cref{chernclassmapduality} are the same map, and they are isomorphisms by \cite[\S~5.1]{bbm}. This finishes the proof.
\end{proof}

\begin{lemma}\label{lemmaforcomp}
 The maps \cref{firstmap1} and \cref{chernclassmapduality} constructed above are the same.   
\end{lemma}{}
\begin{proof}
 This proof uses the ideas from \cite[Lem.~5.2.5]{bbm}. To simplify notations, we will use $A$ to denote $A_k$. Note the following commutative diagram

 \begin{center}
\begin{tikzcd}
{\mathrm{Ext}^1(A \otimes_{\mathbb{Z}}^L A^\vee, \mathbb{G}_m)} \arrow[d, "d_1"] \arrow[rr, "t_1"]                 &  & {H^1(A \times A^\vee, \mathbb{G}_m)} \arrow[d, "c_1"]        \\
{\mathrm{Ext}^1(A \otimes_{\mathbb{Z}}^L A^\vee, \mathcal{O}^{\mathrm{dR}}[1])} \arrow[rr, "t_2"] \arrow[d, "d_2"] &  & H^2_{\mathrm{dR}}(A \times A^\vee) \arrow[d, "{\pi_{1,1}}"]  \\
{\mathrm{Ext}^1(A^\vee, R\mathcal{H}om(A, \mathcal{O}^{\mathrm{dR}}[1]))} \arrow[rr, "t_3"]                        &  & H^1_{\mathrm{dR}}(A) \otimes_k H^1_{\mathrm{dR}}(A^\vee)
\end{tikzcd}
 \end{center}
 All the Ext groups above are taken in $(k)_\qrsp$. The maps $t_1$ and $t_2$ are induced by the composite canonical map $\mathbb{Z}[A \times A^\vee] \simeq \mathbb{Z}[A] \otimes^L_{\mathbb{Z}}\mathbb{Z}[A^\vee]  \to A \otimes^L_{\mathbb{Z}}A^\vee$ in $(k)_\qrsp$; see the proof of \cite[Lem.~5.2.5]{bbm}. The maps $ d_2$ and $t_3$ are the natural maps as in the commutative diagram in loc. cit. Note that there is a canonical map $\mathrm{ch}_{\mathrm{dR}}:\mathbb{G}_m \to \mathcal{O}^{\mathrm{dR}}[1]$ induced via left Kan extending the composite map $d \log: \mathbb{G}_m(S) \to \Fil^1_{\mathrm{Hodge}} R\Gamma_{\mathrm{dR}}(S)[1] \simeq R\Gamma_{\mathrm{dR}}(S)[1]$ from smooth $k$-algebras $S$ to $(k)_\qsyn$, and then restricting to $(k)_\qrsp$. The map $d_1$ above is induced from $\mathrm{ch}_{\mathrm{dR}}.$
 
 Since $\mathcal{P}$ arises via a canonical isomorphism $D_{\mathcal{P}}: A \simeq \tau_{\ge 0}\mathcal{R}Hom_{(k)_\qsyn}(A^\vee, \mathbb{G}_m[1])$ (cf.~\cite[Lem.~5.2.3]{bbm}), there is a canonical element $\mathcal{P'}$ such that $t_1(\mathcal{P}')= \mathcal{P}$.
  The above diagram gives $c_1(\mathcal{P})^{1,1} = t_3 (d_2 (d_1(\mathcal{P}'))).$

Now there is a canonical element in $\mathrm{Hom}_{(k)_\qrsp}(\mathbb{Z}_p(1), \Prism_{(\cdot)}) \simeq H^2_{\Prism}(B\mathbb{G}_m)$ arising from the prismatic Chern class of the tautological line bundle on $B\mathbb{G}_m$, which corresponds to a map $\mathbb{Z}_p(1) \to \Prism_{(\cdot)}$. By \cite[Thm.~7.6.2]{BhaLu}, the composition $\mathbb{G}_m \to \mathbb{Z}_p(1)[1] \to \Prism_{(\cdot)}[1] \to \mathcal{O}^{\mathrm{dR}}[1]$ agrees with $\mathrm{ch}_{\mathrm{dR}}$. Further, there is a map
\begin{equation}\label{bilinearmap}
  T_p: {\mathrm{Ext}^1(A \otimes_{\mathbb{Z}}^L A^\vee, \mathbb{G}_m)} \to \mathrm{Hom} (T_p(A) \otimes_\mathbb{Z} T_p(A^\vee), \mathbb{Z}_p(1)), 
\end{equation} as we will construct. We have a canonical identification 
$\mathrm{Ext}^1 (A\otimes^L_{\mathbb{Z}}A^\vee, \mathbb{G}_m) \simeq \mathrm{Hom}(A, A)$ induced by $D_\mathcal{P}$. By functoriality, we have a natural map $\Hom (A, A) \to \Hom(T_p(A), T_p(A)).$ Further, \cref{canonicalidentificationoftatemodules} gives a natural identification $\Hom(T_p(A), T_p(A)) \simeq \mathrm{Hom} (T_p(A) \otimes_\mathbb{Z} T_p(A^\vee), \mathbb{Z}_p(1))$. Composing these maps, we obtain the desired map \cref{bilinearmap}. Now we have the following commutative diagram 

\begin{center}
\begin{tikzcd}
{\mathrm{Ext}^1(A \otimes_{\mathbb{Z}}^L A^\vee, \mathbb{G}_m)} \arrow[d, "d_1"] \arrow[rr, "T_p"]                    &  & {\mathrm{Hom} (T_p(A) \otimes_\mathbb{Z} T_p(A^\vee), \mathbb{Z}_p(1))} \arrow[d, "g_1"]      \\
{\mathrm{Ext}^1(A \otimes_{\mathbb{Z}}^L A^\vee, \mathcal{O}^{\mathrm{dR}}[1])} \arrow[rr, "\simeq"] \arrow[d, "d_2"] &  & {\mathrm{Hom}(T_p(A), \mathrm{Hom}(T_p(A^\vee), \mathcal{O}^{\mathrm{dR}}))} \arrow[d, "g_2"] \\
{\mathrm{Ext}^1(A^\vee, R\mathcal{H}om(A, \mathcal{O}^{\mathrm{dR}}[1]))} \arrow[rr, "t_3"]                        &  & H^1_{\mathrm{dR}}(A) \otimes_k H^1_{\mathrm{dR}}(A)                                 
\end{tikzcd}

\end{center} In the above, the two lower horizontal maps are induced by \cref{fight}. The map $g_1$ is induced by the map $\mathbb{Z}_p(1) \to \mathcal{O}^{\mathrm{dR}}$ mentioned above, and $g_2$ is induced by \cref{3-1}.

By construction, $g_2(g_1(T_p(\mathcal{P}'))) \in H^1_{\mathrm{dR}}(A_k) \otimes_k H^1_{\mathrm{dR}}(A^\vee_k)$ defines \cref{firstmap1}. By commutativity of the diagram, it is the same as $ t_3 (d_2 (d_1(\mathcal{P}')))= c_1(\mathcal{P})^{1,1},$ as desired.
\end{proof}


\subsection{Prismatic Dieudonn\'e $F$-gauges associated to finite flat group schemes}
In this subsection, we will construct certain prismatic Dieudonn\'e $F$-gauges associated to finite flat group schemes, and prove our main theorems (\cref{mainthm22}, \cref{mainthm2}). We will also prove compatibility with Cartier duality (\cref{dualoo}), expression for cohomology with coefficients in finite flat group schemes in terms of prismatic Dieudonn\'e $F$-gauges (\cref{earth}), and use it to reconstruct Galois representations associated to group schemes (\cref{finalprop}). 

\begin{construction}\label{cafe4}
Let $T$ be a quasisyntomic ring and let $\mathrm{FFG}(T)$ denote the category of finite locally free commutative group schemes over $S$ of $p$-power rank. Let $G \in \mathrm{FFG}(T).$ The functor determined by 
\begin{equation}\label{sh}
 (T)_{\mathrm{qrsp}} \ni S \mapsto (\tau_{[-2,-3]} \mathrm{Fil}^\bullet_{\mathrm{Nyg}} R\Gamma_{\Prism}(B^2G_S))[3]   
\end{equation}defines a presheaf of filtered modules over the filtered ring $\mathrm{Fil}^\bullet \Prism_{(\cdot)}.$ We let $\mathcal{M}(G)$ denote the sheafification of this functor for the quasisyntomic topology on $(T)_{\mathrm{qrsp}}$ as a sheaf of derived $(p, \mathscr I)$-complete filtered module over the filtered ring $\Fil^\bullet \Prism_{(\cdot)}.$ Let $\mathcal{M}(G)^\mathrm{u}$ denote the underlying object of the filtered object $\mathcal{M}(G).$ There is a natural map
$\varphi: \mathcal{M}(G) \to \mathscr I^\bullet \otimes_{\Prism_{(\cdot)}} \mathcal{M}(G)^{\w{u}}$ of filtered $\Fil^\bullet \Prism_{(\cdot)}$-modules induced from the natural Frobenius map defined at the level of presheaves in \cref{sh}. We will prove in \cref{fffff} that $(\mathcal{M}(G), \varphi)$ satisfies conditions $(1)$ and $(2)$ from \cref{fgaugeff} and therefore, defines a prismatic $F$-gauge, which we would again denote as $\mathcal{M}(G).$
\end{construction}
Before that, we will need the following preparation.

\begin{proposition}\label{cafe3}
 Let $S$ be a quasiregular semiperfectoid algebra and let $G \in \mathrm{FFG}(S).$ Then we have a natural isomorphism of prismatic weak $F$-gauges over $S$ $$\tau_{\ge 0}R\mathrm{Hom}_{(S)_{\qrsp}}(G, \Fil^\bullet \Prism_{(\cdot)}[1]) \simeq (\tau_{[-2,-3]}\mathrm{Fil}^\bullet_{\Nyg} R\Gamma_{\Prism} (B^2 G))[3].$$  
\end{proposition}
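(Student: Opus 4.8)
The plan is to compute both sides via the Postnikov filtration of the chain complex $\mathbb{Z}[B^2G]$ and to check that in the relevant range of degrees they coincide on the nose, Frobenius included.

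First I would use descent along $*\to B^2G$, as used repeatedly above, to identify $\Fil^\bullet_{\Nyg}R\Gamma_{\Prism}(B^2G)$ — with its Nygaard filtration and Frobenius — with $R\mathcal{H}om_{(S)_{\qrsp}}(\mathbb{Z}[B^2G],\Fil^\bullet_{\Nyg}\Prism_{(\cdot)})$; the Frobenius on the target is induced by postcomposition with $\Fil^\bullet\Prism_{(\cdot)}\to\mathscr{I}^\bullet\Prism_{(\cdot)}$ together with the identity $R\mathcal{H}om(-,\mathscr{I}^\bullet\Prism_{(\cdot)})\simeq\mathscr{I}^\bullet\otimes_{\Prism_{(\cdot)}}R\mathcal{H}om(-,\Prism_{(\cdot)})$ coming from invertibility of $\mathscr{I}$. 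I would then record the cohomology sheaves of $\mathbb{Z}[B^2G]$ in low degree: $H^{0}(\mathbb{Z}[B^2G])=\mathbb{Z}$, $H^{-1}(\mathbb{Z}[B^2G])=0$, $H^{-2}(\mathbb{Z}[B^2G])=G$ (Hurewicz), and $H^{-3}(\mathbb{Z}[B^2G])=0$ (Serre spectral sequence for $K(G,1)\to *\to K(G,2)$; cf.~the proof of \cref{ww} and \cite[Rmk.~3.17]{Mon1}).

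Now comes a degree count with the Postnikov filtration of $\mathbb{Z}[B^2G]$. Applying $R\mathcal{H}om_{(S)_{\qrsp}}(-,\Fil^\bullet_{\Nyg}\Prism_{(\cdot)})$, the contribution of $H^{0}=\mathbb{Z}$ is $R\Gamma((S)_{\qrsp},\Fil^\bullet_{\Nyg}\Prism_{(\cdot)})=\Fil^\bullet_{\Nyg}\Prism_{S}$, concentrated in cohomological degree $0$ precisely because $S$ is qrsp; the contribution of $H^{-2}=G$ is $R\mathcal{H}om_{(S)_{\qrsp}}(G,\Fil^\bullet_{\Nyg}\Prism_{(\cdot)})[-2]$, lying in cohomological degrees $\ge2$; and, since $H^{-3}=0$, the remaining contributions (coming from $H^{-j}$ with $j\ge4$) lie in cohomological degrees $\ge4$ — here one uses that $\Fil^\bullet_{\Nyg}\Prism_{(\cdot)}$ has discrete values on $(S)_{\qrsp}$, so that $R\mathcal{H}om_{(S)_{\qrsp}}(-,\Fil^\bullet_{\Nyg}\Prism_{(\cdot)})$ sends $n$-connective objects into cohomological degrees $\ge n$. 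Hence $\tau_{[-2,-3]}\Fil^\bullet_{\Nyg}R\Gamma_{\Prism}(B^2G)\simeq\tau_{[-2,-3]}\bigl(R\mathcal{H}om_{(S)_{\qrsp}}(G,\Fil^\bullet_{\Nyg}\Prism_{(\cdot)})[-2]\bigr)$, and, since $R\mathcal{H}om_{(S)_{\qrsp}}(G,\Fil^\bullet_{\Nyg}\Prism_{(\cdot)})$ is concentrated in non-negative cohomological degrees, shifting by $[3]$ rewrites the right-hand side as $\tau_{\ge0}\bigl(R\mathcal{H}om_{(S)_{\qrsp}}(G,\Fil^\bullet_{\Nyg}\Prism_{(\cdot)})[1]\bigr)$. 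All truncations here are taken level-wise in the filtration, i.e., in the canonical $t$-structure on effective prismatic weak $F$-gauges of \cref{fl}; this is exactly the asserted isomorphism of filtered objects.

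Finally, every step above — the descent isomorphism, the Postnikov fiber sequences, and $R\mathcal{H}om(G[2],-)\simeq R\mathcal{H}om(G,-)[-2]$ — is natural in $G$ and in the coefficient sheaf, hence compatible with the Frobenius $\Fil^\bullet\Prism_{(\cdot)}\to\mathscr{I}^\bullet\Prism_{(\cdot)}$; combined with the fact that both sides are effective (as $\Fil^i_{\Nyg}\Prism_{(\cdot)}=\Prism_{(\cdot)}$ for $i\le0$), this promotes the isomorphism to one of prismatic weak $F$-gauges. The step demanding the most care is exactly this Frobenius bookkeeping, together with keeping the truncations $\tau_{[-2,-3]}$ and $\tau_{\ge0}$ consistently level-wise in the filtration so that the degree counts remain valid in the category of weak $F$-gauges; everything else is routine once the cohomology sheaves of $\mathbb{Z}[B^2G]$ above are in hand.
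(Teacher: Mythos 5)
Your argument is correct and is essentially the paper's own proof: the paper likewise identifies $\Fil^\bullet_{\Nyg}R\Gamma_{\Prism}(B^2G)$ with $R\mathrm{Hom}_{(S)_{\qrsp}}(\mathbb{Z}[B^2G],\Fil^\bullet\Prism_{(\cdot)})$ via descent along $*\to B^2G$, uses the natural truncation map $\mathbb{Z}[B^2G]\to G[2]$ (your Postnikov step) to produce the comparison map, and then checks it is an isomorphism on $H^2$ and $H^3$ using $H^{-1}(\mathbb{Z}[B^2G])=H^{-3}(\mathbb{Z}[B^2G])=0$ and $H^{-2}(\mathbb{Z}[B^2G])=G$ — the same degree count you carry out with the Postnikov filtration, with the same implicit use of discreteness of $\Fil^\bullet_{\Nyg}\Prism_S$ for qrsp $S$ and the same naturality argument for Frobenius compatibility.
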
{}
\begin{proof}
Let $P$ be an ordinary abelian group. We have a natural map $\mathbb{Z} \to \mathbb{Z}[B^2P]$, arising from the map $(0) \to B^2 P$. The cofiber of $\mathbb{Z} \to \mathbb{Z}[B^2P]$ will be denoted as $\mathbb{Z}^{\w{red}}[B^2P].$ There is a natural map $\mathbb{Z}^{\w{red}}[B^2P] \to P[2],$ which is the $2$-truncation. This induces a natural composite map $\mathbb Z[B^2 P] \to \mathbb{Z}^{\w{red}}[B^2P] \to P[2].$ Applying this to $G$ regarded as an abelian group object of the topos associated to $(S)_\qrsp,$ we obtain a natural map $$R\mathrm{Hom}_{(S)_\qrsp}(G, \Fil^\bullet \Prism_{(\cdot)}[1]) \to R\mathrm{Hom}_{(S)_\qrsp}(\mathbb Z[B^2G], \Fil^\bullet \Prism_{(\cdot)})[3].$$ This induces a natural map 
$$\tau_{\ge 0}R\mathrm{Hom}_{(S)_{\qrsp}}(G, \Fil^\bullet \Prism_{(\cdot)}[1]) \to (\tau_{[-2,-3]}\mathrm{Fil}^\bullet_{\Nyg} R\Gamma_{\Prism} (B^2 G))[3].$$
To show that it is an isomorphism, it suffices to show that the induced maps
$$\mathrm{Hom}_{(S)_{\qrsp}} (G, \Fil^\bullet \Prism_{(\cdot)}) \to H^2 (\mathrm{Fil}^\bullet_{\Nyg} R\Gamma_{\Prism} (B^2 G))) $$ and $$\mathrm{Ext}^1_{(S)_{\qrsp}} (G, \Fil^\bullet \Prism_{(\cdot)}) \to H^3 (\mathrm{Fil}^\bullet_{\Nyg} R\Gamma_{\Prism} (B^2 G)))$$ are isomorphisms. By construction, the canonical map $\mathbb{Z}[B^2 G] \to G[2]$ induces a natural isomorphism on $H^{-2}$. Therefore, the above two maps are the same as the ones induced by the $E_2$-spectral sequence 
$$E_2^{i,j}=\mathrm{Ext}^i_{(S)_\qrsp}(H^{-j} (\mathbb{Z}[B^2G]), \Fil^\bullet \Prism_{(\cdot)} ) \implies H^{i+j}(\mathrm{Fil}^\bullet_{\Nyg} R\Gamma_{\Prism} (B^2 G))),$$ and they are isomorphisms by the same arguments as in the proof of \cref{finiteflatrev}.
\end{proof}{}

\begin{remark}\label{ecc22}
Let $G$ be a $p$-divisible group over a quasisyntomic ring $T.$ Let $S \in (T)_\qrsp.$ An argument using the $E_2$-spectral sequence similar to above shows that $$H^2 (\mathrm{Fil}^\bullet_{\Nyg} R\Gamma_{\Prism} (B^2 G_S))) \simeq \mathrm{Hom}_{(S)_{\qrsp}} (G, \Fil^\bullet \Prism_{(\cdot)})$$ and $$H^3 (\mathrm{Fil}^\bullet_{\Nyg} R\Gamma_{\Prism} (B^2 G_S))) \simeq \mathrm{Ext}^1_{(S)_{\qrsp}} (G, \Fil^\bullet \Prism_{(\cdot)} ).$$ By \cref{3van}, it follows that there is an isomorphism $$(\tau_{[-2,-3]}\mathrm{Fil}^\bullet_{\Nyg} R\Gamma_{\Prism} (B^2 G_S))[3] \simeq \mathcal{M}(G_S)$$ of prismatic $F$-gauges over $S.$ Therefore, if we applied \cref{cafe4} to the case of $p$-divisible groups, the resulting construction would be equivalent to \cref{d}.
\end{remark}{}

\begin{remark}\label{cafe1}
For an abelian scheme $A$ over a quasisyntomic ring $T$ and $S \in (T)_{\qrsp},$ using \cref{cafe} and exactly following the steps in \cref{ecc22}, we obtain that 
$$\tau_{\ge 0}R\mathrm{Hom}_{(S)_{\qrsp}}(A_S, \Fil^\bullet \Prism_{(\cdot)}[1]) \simeq (\tau_{[-2,-3]}\mathrm{Fil}^\bullet_{\Nyg} R\Gamma_{\Prism} (B^2 A_S))[3] \simeq \mathcal{M}(A_S)$$ as prismatic $F$-gauges over $S.$ Therefore, if we applied \cref{cafe4} to the case of abelian schemes, the resulting construction would be equivalent to \cref{d}.
\end{remark}{}

\begin{lemma}\label{atias}
 Let $T$ be a quasisyntomic ring and $G \in \mathrm{FFG}(T).$ Assume that there is a short exact sequence $$0 \to G \to A' \to A \to 0,$$ where $A, A'$ are abelian schemes over $T.$ Then $\mathcal{M}(G)$ is a prismatic $F$-gauge over $T$ and we have a fiber sequence
$$\mathcal{M}(A) \to \mathcal{M}(A') \to \mathcal{M}(G)$$ of prismatic $F$-gauges over $T$.
\end{lemma}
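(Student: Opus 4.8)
The plan is to resolve $G$ by abelian schemes and use the behavior of the $B^2$-construction on short exact sequences, reducing everything to facts already established for abelian schemes in \cref{starbc}, \cref{starbc1}, \cref{abdual}, together with the computations of \cref{cafe3} and \cref{cafe1}. First I would pass to the case where $T$ is quasiregular semiperfectoid, since $\mathcal{M}(G)$ and the asserted fiber sequence are defined by quasisyntomic sheafification on $(T)_{\qrsp}$, and a fiber sequence of sheaves of filtered modules can be checked after restricting to each qrsp algebra over $T$ (using that $f^*\mathcal{M}(A)\simeq\mathcal{M}(A_{S'})$ and the analogous statement for $G$, which follows from the sheaf property). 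So assume $T=S$ is qrsp.

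Next, apply the functor $B^2(-)$ to the exact sequence $0\to G\to A'\to A\to 0$ of abelian sheaves on $(S)_{\qrsp}$. Since $B^2(-)=K(-,2)$ takes short exact sequences of abelian group objects to fiber sequences of pointed stacks, we get a fiber sequence $B^2 G\to B^2 A'\to B^2 A$, hence (after applying $R\Gamma_{\Prism}$ with its Nygaard filtration, which sends colimits of stacks to limits) a fiber sequence
$$\mathrm{Fil}^\bullet_{\Nyg}R\Gamma_{\Prism}(B^2 A)\to \mathrm{Fil}^\bullet_{\Nyg}R\Gamma_{\Prism}(B^2 A')\to \mathrm{Fil}^\bullet_{\Nyg}R\Gamma_{\Prism}(B^2 G).$$
Equivalently, in the language of \cref{cafe3}, this is the long exact sequence obtained by applying $R\mathrm{Hom}_{(S)_{\qrsp}}(-,\mathrm{Fil}^\bullet\Prism_{(\cdot)}[1])$ to $0\to G\to A'\to A\to 0$; here I would use \cref{cafe1} to identify $\tau_{\ge 0}R\mathrm{Hom}_{(S)_{\qrsp}}(A_S,\mathrm{Fil}^\bullet\Prism_{(\cdot)}[1])\simeq\mathcal{M}(A)$ and similarly for $A'$, and \cref{cafe3} to identify the term for $G$. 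The key point making the truncated version behave well is that for abelian schemes $\mathrm{Hom}(A,\mathrm{Fil}^k_{\Nyg}\Prism_{(\cdot)})=0$ and $\mathrm{Ext}^{\ge 2}(A,\mathrm{Fil}^k_{\Nyg}\Prism_{(\cdot)})=0$ by \cref{cafe}, and likewise $\mathrm{Ext}^2(G,\mathrm{Fil}^k_{\Nyg}\Prism_{(\cdot)})$ can be controlled: from the six-term exact sequence associated to $0\to G\to A'\to A\to 0$ applied to $\mathrm{Hom}/\mathrm{Ext}$ into $\mathrm{Fil}^\bullet_{\Nyg}\Prism_{(\cdot)}$, the vanishing for $A,A'$ forces $\mathrm{Hom}(G,\mathrm{Fil}^\bullet_{\Nyg}\Prism_{(\cdot)})\hookrightarrow \mathrm{Ext}^1(A,-)=\mathrm{Fil}^\bullet M(A^\vee)$-type terms to vanish appropriately, and pins $\mathrm{Ext}^1(G,\mathrm{Fil}^\bullet_{\Nyg}\Prism_{(\cdot)})$ and $\mathrm{Ext}^2(G,\mathrm{Fil}^\bullet_{\Nyg}\Prism_{(\cdot)})$ into a four-term sequence with the abelian-scheme terms on either side. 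Concretely, I would extract from this that $R\mathrm{Hom}_{(S)_{\qrsp}}(G,\mathrm{Fil}^\bullet\Prism_{(\cdot)}[1])$ is concentrated in cohomological degrees $[2,3]$ (in the indexing of \cref{cafe3}), so that the truncation $\tau_{[-2,-3]}(\cdot)[3]$ in \cref{sh} loses no information, and the fiber sequence of the previous display survives truncation intact.

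With the identifications in place, the fiber sequence $\mathcal{M}(A)\to\mathcal{M}(A')\to\mathcal{M}(G)$ of objects in the category of prismatic weak $F$-gauges (equivalently, sheaves of filtered $\mathrm{Fil}^\bullet\Prism_{(\cdot)}$-modules with Frobenius) follows. Finally, to see that $\mathcal{M}(G)$ is an honest prismatic $F$-gauge rather than merely a weak one, I would use that the category of prismatic $F$-gauges is a full stable subcategory of weak $F$-gauges (closed under fibers and cofibers), and that $\mathcal{M}(A)$ and $\mathcal{M}(A')$ are prismatic $F$-gauges by \cref{starbc}; hence their cofiber $\mathcal{M}(G)$ is one too. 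The main obstacle I anticipate is precisely the bookkeeping in the last paragraph of the previous step: verifying that $R\mathrm{Hom}_{(S)_{\qrsp}}(G,\mathrm{Fil}^\bullet\Prism_{(\cdot)}[1])$ has no cohomology outside the window $[2,3]$, i.e.\ that $\mathrm{Hom}(G,\mathrm{Fil}^\bullet_{\Nyg}\Prism_{(\cdot)})=0$ and $\mathrm{Ext}^{\ge 2}$ vanishes appropriately — this is where one genuinely uses that $G$ embeds into an abelian scheme with abelian-scheme cokernel, and where the qrsp hypothesis (via the concrete description of $\mathrm{Fil}^\bullet_{\Nyg}\Prism_S$) does real work. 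Everything else is formal manipulation of fiber sequences and the already-proven structural results for abelian schemes.
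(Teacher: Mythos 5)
Your core route is the paper's: apply $R\mathrm{Hom}_{(S)_\qrsp}(-,\Fil^\bullet\Prism_{(\cdot)}[1])$ to $0\to G\to A'\to A\to 0$, truncate, identify the abelian-scheme terms with $\mathcal{M}(A)$, $\mathcal{M}(A')$ via \cref{starbc} and \cref{cafe1} and the third term with $\mathcal{M}(G)$ via \cref{cafe3}, and conclude that $\mathcal{M}(G)$ is a prismatic $F$-gauge because the $F$-gauge condition (2) of \cref{fgaugeff} is stable under cofibers inside weak $F$-gauges. Two assertions in your write-up need repair, though. The intermediate claim that $\Fil^\bullet_{\Nyg}R\Gamma_{\Prism}$ carries the fiber sequence $B^2G\to B^2A'\to B^2A$ to a fiber sequence of \emph{untruncated} filtered complexes is unjustified: cohomology sends colimits of stacks to limits, but a fiber sequence of stacks is a pullback, and $R\Gamma$ of a fibration is governed by a spectral sequence, not a fiber sequence; nor is that statement ``equivalent'' to the linearized one outside the degree window. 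Fortunately you do not need it, since you immediately pass to the $R\mathrm{Hom}$ sequence, which is exactly what the paper uses.

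The more serious problem is your identification of the ``main obstacle.'' You propose to show that $R\mathrm{Hom}_{(S)_\qrsp}(G,\Fil^\bullet\Prism_{(\cdot)}[1])$ is concentrated in the window $[2,3]$, in particular that $\mathrm{Hom}(G,\Fil^\bullet_{\Nyg}\Prism_{(\cdot)})=0$ and $\mathrm{Ext}^{\ge 2}(G,\Fil^\bullet_{\Nyg}\Prism_{(\cdot)})=0$. This is neither needed nor true: $\mathrm{Hom}(G,\Prism_{(\cdot)})$ can be nonzero for finite locally free $G$ in mixed characteristic (\cref{pathology}, e.g.\ $G=\mathbb{Z}/p$ over a qrsp $S$ with $p$-torsion in $\Prism_S$) --- this is precisely why the construction uses $B^2G$ --- and in any case this Hom term sits \emph{inside} the retained window (it is $H^2$ of the $B^2G$-cohomology, i.e.\ homological degree $1$ of $\tau_{\ge 0}R\mathrm{Hom}(G,\Fil^\bullet\Prism_{(\cdot)}[1])$), so it is part of $\mathcal{M}(G)$ rather than something to be killed; similarly nothing forces $\mathrm{Ext}^{\ge 2}(G,-)$ to vanish, and nothing requires it. What the truncation step actually needs is only that $\mathrm{Ext}^2(A,\Fil^k_{\Nyg}\Prism_{(\cdot)})\to\mathrm{Ext}^2(A',\Fil^k_{\Nyg}\Prism_{(\cdot)})$ be injective, which is supplied by the vanishing \cref{ext2vanab} for abelian schemes that you already cite; with that, $\tau_{\ge 0}$ of the fiber sequence is again a fiber sequence (the paper's \cref{cafe2}). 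Granting this correction (and noting that, since the resulting termwise identification exhibits the presheaf \cref{sh} as the cofiber of a map of sheaves, the sheafification in \cref{cafe4} changes nothing), your argument is the paper's proof.
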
{}

\begin{proof}Let $S \in (T)_{\qrsp}$. We have a fiber sequence of filtered $\Fil^\bullet_{\Nyg}\Prism_S$-modules 
\begin{equation*}
\begin{split}
  R\mathrm{Hom}_{(S)_{\qrsp}}(A, \Fil^\bullet \Prism_{(\cdot)}[1]) \to R\mathrm{Hom}_{(S)_{\qrsp}}(A', \Fil^\bullet \Prism_{(\cdot)}[1]) \\ \to R\mathrm{Hom}_{(S)_{\qrsp}}(G, \Fil^\bullet \Prism_{(\cdot)}[1]).  
\end{split}{}    
\end{equation*}
By the vanishing in \cref{ext2vanab}, we have a fiber sequence 
\begin{equation}\label{cafe2}\begin{split}
\tau_{\ge 0}R\mathrm{Hom}_{(S)_{\qrsp}}(A, \Fil^\bullet \Prism_{(\cdot)}[1]) \to \tau_{\ge 0}R\mathrm{Hom}_{(S)_{\qrsp}}(A', \Fil^\bullet \Prism_{(\cdot)}[1]) \\ \to \tau_{\ge 0}R\mathrm{Hom}_{(S)_{\qrsp}}(G, \Fil^\bullet \Prism_{(\cdot)}[1]). \end{split} 
\end{equation}
By \cref{starbc} and \cref{cafe1}, the presheaf of filtered $\Fil^\bullet \Prism_{(\cdot)}$-modules on $(T)_\qrsp$ (equipped with the natural Frobenius) determined by the association
$$(T)_\qrsp \ni S \mapsto \tau_{\ge 0}R\mathrm{Hom}_{(S)_{\qrsp}}(A_S, \Fil^\bullet \Prism_{(\cdot)}[1])$$ is a prismatic $F$-gauge (similarly for $A'$), it follows from \cref{cafe2} that the presheaf of filtered $\Fil^\bullet \Prism_{(\cdot)}$-modules on $(T)_\qrsp$ (equipped with the natural Frobenius) determined by the association
$$(T)_\qrsp \ni S \mapsto \tau_{\ge 0}R\mathrm{Hom}_{(S)_{\qrsp}}(G_S, \Fil^\bullet \Prism_{(\cdot)}[1])$$ is a prismatic $F$-gauge. By \cref{cafe3}, we conclude that $\mathcal{M}(G)$ is a prismatic $F$-gauge and one also has a fiber sequence as desired.
\end{proof}{}

\begin{construction}\label{verys1}
 Let $T$ be a quasisyntomic ring and let $G \in \mathrm{FFG}(T).$ Let us consider the full subcategory of $(T)_\qrsp$ spanned by $S \in (T)_\qrsp$ with the property that there exists a short exact sequence $$0 \to G_S \to A' \to A \to 0,$$ where $A, A'$ are abelian schemes over $S.$ We denote this full subcategory by $(T)^G_\qrsp.$ If $S \in (T)_\qrsp$ and $S \to S_1$ is a quasisyntomic cover with $S_1 \in (T)^G_\qrsp$ and $S \to S_2$ is any map in $(T)_\qrsp$, then the ($p$-completed) pushout is again in $(T)^G_\qrsp$ by the proof of \cite[Lem.~4.27]{BMS2}.

In particular, equipped with the quasisyntomic topology, we see that $(T)^G_\qrsp$ forms a site. Using a theorem of Raynaud \cite[Thm.~3.1.1]{bbm} combined with \cite[Lem.~4.28]{BMS2}, it follows that for any $S \in (T)_{\qrsp},$ there is a quasisyntomic cover $S \to S',$ such that $S' \in (T)^G_\qrsp.$ Also, by \cite[Lem.~4.30]{BMS2}, we have that if $S \to S'$ is a quasinsyntomic cover with $S \in (T)_\qrsp$ and $S' \in (T)^G_\qrsp,$ then all terms of the Cech nerve $S'^\bullet \in (T)^G_\qrsp.$ In particular, $(T)^G_\qrsp$ forms a basis for the site $(T)_\qrsp.$
\end{construction}

\begin{proposition}\label{fffff}Let $T$ be a quasisyntomic ring and $G \in \mathrm{FFG}(T).$ Then $(\mathcal{M}(G), \varphi)$ from \cref{cafe4} defines a prismatic $F$-gauge over $T.$
\end{proposition}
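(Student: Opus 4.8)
The plan is to bootstrap from \cref{atias} using the basis $(T)^G_\qrsp$ of \cref{verys1}. By \cref{cafe4}, the pair $(\mathcal{M}(G),\varphi)$ is the sheafification (with its natural Frobenius) of the presheaf $\mathcal F\colon (T)_\qrsp\ni S\mapsto (\tau_{[-2,-3]}\mathrm{Fil}^\bullet_{\Nyg}R\Gamma_\Prism(B^2G_S))[3]$, and what must be verified is that it satisfies conditions $(1)$ (the crystal property) and $(2)$ (the Frobenius comparison isomorphism) of \cref{fgaugeff}. Since $\mathcal F$ depends only on the pair $(S,G_S)$ and not on the structure map $T\to S$, for any $S\in(T)_\qrsp$ the restriction of $\mathcal{M}(G)$ to the site $(S)_\qrsp$ is canonically identified with the object $\mathcal{M}(G_S)$ obtained by applying \cref{cafe4} to the quasisyntomic ring $S$ and $G_S\in\mathrm{FFG}(S)$; here one uses that a quasisyntomic cover of a qrsp $S$-algebra by qrsp algebras is the same datum whether regarded over $S$ or over $T$, so that restriction of presheaves along $(S)_\qrsp\to(T)_\qrsp$ preserves sheaves and commutes with sheafification (cf.~\cite[Lem.~4.27]{BMS2}).

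Now suppose $S\in(T)^G_\qrsp$. By definition of that category there is a short exact sequence $0\to G_S\to A'\to A\to 0$ of commutative group schemes over $S$ with $A,A'$ abelian schemes, so \cref{atias}, applied with the quasisyntomic ring $S$ in place of $T$, shows that $\mathcal{M}(G)|_{(S)_\qrsp}\simeq\mathcal{M}(G_S)$ is a prismatic $F$-gauge over $S$; in particular conditions $(1)$ and $(2)$ hold for $\mathcal{M}(G)$ on every object and every morphism of $(S)_\qrsp$. Condition $(2)$ asserts that a certain map of sheaves on $(T)_\qrsp$ is an isomorphism, which may be tested on the basis $(T)^G_\qrsp$ by \cref{verys1} and is therefore settled. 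For condition $(1)$, let $R\to R'$ be an arbitrary map in $(T)_\qrsp$; choose quasisyntomic covers $R\to\widetilde R$ and $R'\widehat{\otimes}_R\widetilde R\to\widetilde R'$ with $\widetilde R,\widetilde R'\in(T)^G_\qrsp$ (possible by \cref{verys1}), so that $\widetilde R\to\widetilde R'$ is a morphism inside $(T)^G_\qrsp$. Then the base-change map $\mathrm{Fil}^\bullet\mathcal{M}(G)(\widetilde R)\,\widehat{\otimes}_{\mathrm{Fil}^\bullet\Prism_{\widetilde R}}\,\mathrm{Fil}^\bullet\Prism_{\widetilde R'}\to\mathrm{Fil}^\bullet\mathcal{M}(G)(\widetilde R')$ is an isomorphism by the previous sentence, and since both $\mathcal{M}(G)$ and $\mathrm{Fil}^\bullet\Prism_{(\cdot)}$ satisfy quasisyntomic descent (the latter by \cite[Cor.~5.5.21]{BhaLu}), faithfully flat descent along the covers $R\to\widetilde R$ and $R'\to\widetilde R'$ yields the crystal property for $R\to R'$.

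The genuinely new content beyond \cref{atias,verys1} is thus only the compatibility of $\mathcal{M}(G)$ with localization of the base ring and the descent bookkeeping that promotes conditions $(1)$ and $(2)$ from the basis to all of $(T)_\qrsp$; I expect this formal step — in particular the identification of the sheafification over the basis with the global sheafification, and the faithfully flat descent argument for the crystal property — to be the only point requiring genuine care, the substantive inputs (Raynaud's theorem that $G$ admits abelian-scheme resolutions quasisyntomic-locally, and the $F$-gauge property in the presence of such a resolution) having already been established. Assembling these observations shows that $(\mathcal{M}(G),\varphi)$ is a prismatic $F$-gauge over $T$, as claimed.
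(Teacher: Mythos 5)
Your skeleton agrees with the paper's: restrict to the basis $(T)^G_\qrsp$ of \cref{verys1}, use \cref{atias} (with \cref{abbb} and the sequence \cref{cafe2}) to see that on this basis the presheaf \cref{sh} is already a sheaf whose values are prismatic $F$-gauges, and then globalize. The gap is in the globalization. Your verification of condition (1) for an arbitrary map $R \to R'$ ends with ``faithfully flat descent along the covers $R \to \widetilde R$ and $R' \to \widetilde R'$ yields the crystal property,'' but sheafiness of $\mathcal{M}(G)$ and of $\mathrm{Fil}^\bullet \Prism_{(\cdot)}$ (the latter from \cite[Cor.~5.5.21]{BhaLu}) does not give this. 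Sheafiness gives $\mathrm{Fil}^\bullet \mathcal{M}(G)(R) \simeq \lim_{\Delta} \mathrm{Fil}^\bullet \mathcal{M}(G)(\widetilde R^{[\bullet]})$, but the crystal map compares the completed tensor product $\mathrm{Fil}^\bullet \mathcal{M}(G)(R)\,\widehat{\otimes}_{\mathrm{Fil}^\bullet \Prism_R}\,\mathrm{Fil}^\bullet \Prism_{R'}$ with $\mathrm{Fil}^\bullet \mathcal{M}(G)(R')$, and the completed tensor product does not obviously commute with that totalization; moreover, to pass from the known isomorphism for $\widetilde R \to \widetilde R'$ to the one for $R \to R'$ you also need conservativity (and effectivity) of completed base change along the cover $R' \to \widetilde R'$. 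Both points are precisely the content of $(p,\mathscr I)$-complete descent for the relevant filtered module categories along quasisyntomic covers of qrsp rings, i.e.\ the equivalence $F\text{-Gauge}_{\Prism}(T) \simeq \lim_{S \in (T)^G_{\qrsp}} F\text{-Gauge}_{\Prism}(S)$ of \cref{need} (ultimately \cref{presen}, \cite[Rmk.~5.5.18]{fg}), which you neither invoke nor prove. The same issue undercuts your one-line treatment of condition (2): the source of the map in \cref{fgaugeff}(2) is a section-wise completed tensor product, not a priori a sheaf, so ``testing on the basis'' is not automatic before the crystal property (or the descent equivalence) is in hand.

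The paper closes exactly this gap by using \cref{need} directly: the sheaf of $F$-gauges on the basis produced by \cref{atias} determines, via that limit description, a prismatic $F$-gauge $\mathcal{M}'(G)$ over $T$; the natural map $\mathcal{M}(G) \to \mathcal{M}'(G)$ is a map of sheaves on $(T)_\qrsp$ that is an isomorphism on the basis, hence an isomorphism, so $\mathcal{M}(G)$ inherits conditions (1) and (2). Your preliminary reductions (restriction to $(S)_\qrsp$ commuting with sheafification, the use of Raynaud's theorem through \cref{verys1}, and \cref{atias} on the basis) are fine and match the paper; if you replace the ``faithfully flat descent'' paragraph by the gluing argument through \cref{need} (or explicitly establish effectivity and conservativity of completed base change along the covers you chose), the proof is complete.
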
{}

\begin{proof}
Note that the association 
\begin{equation}\label{atias1}(T)^G_\qrsp \ni S \mapsto (\tau_{[-2,-3]} \mathrm{Fil}^\bullet_{\mathrm{Nyg}} R\Gamma_{\Prism}(B^2G_S))[3]
\end{equation}{}determines a sheaf of $(p, \mathcal{I})$-complete filtered modules $\Fil^\bullet \mathscr{N}$ over the filtered ring $\Fil^* \Prism_{(\cdot)}$ equipped with a $\Fil^\bullet \Prism_{(\cdot)}$-linear Frobenius map 
$$\varphi:  \mathrm{Fil}^\bullet \mathscr N \to \mathscr{I}^\bullet \mathscr N$$ that satisfies natural analogues of $(1)$ and $(2)$ from \cref{fgaugeff}. This follows from the fiber sequence \cref{cafe2}, \cref{abbb}, the base change property for Dieudonn\'e $F$-gauges of abelian schemes in \cref{cafe2}, and \cref{atias}. In the above, we abuse notation slightly and use $\mathscr{I}$ and $\mathrm{Fil}^\bullet \Prism_{(\cdot)}$ to denote their restriction to $(T)^G_\qrsp.$

 By \cref{cafe4}, $\mathcal{M}(G)$ (along with the Frobenius) is obtained by sheafifiying the presheaf on $(T)_\qrsp$ described \cref{sh}. By the previous paragraph, it follows that the latter presheaf restricted to $(T)^G_\qrsp$ is a sheaf, and for $S \in (T)^G_\qrsp,$ we have $\mathcal{M}(G) (S) \simeq (\tau_{[-2,-3]} \mathrm{Fil}^\bullet_{\mathrm{Nyg}} R\Gamma_{\Prism}(B^2G_S))[3]$ is a prismatic $F$-gauge over $S.$

Further, as in \cref{presen}, by descent, we have
\begin{equation}\label{need}
  F\text{-Gauge}_{\Prism}(T) \simeq \lim_{S \in (T)^G_{\qrsp}} F\text{-Gauge}_{\Prism}(S).   
\end{equation}{}
Therefore, by the previous paragraph, the association in \cref{atias1} determines a prismatic $F$-gauge, which we denote as $\mathcal{M}'(G)$ and view it as in \cref{fgaugeff}. Note that by construction, there is a natural map $\mathcal{M}(G) \to \mathcal{M}'(G)$ of sheaf of filtered modules over $\Fil^\bullet \Prism_{(\cdot)}$ on $(T)_\qrsp,$ compatibly with the Frobenius. Since this map is an isomorphism when restricted to $(T)^G_\qrsp,$ it must be an isomorphism. Since $\mathcal{M}'(G)$ is a prismatic $F$-gauge, $\mathcal{M}(G)$ satisfies conditions $(1)$ and $(2)$ from \cref{fgaugeff} and therefore, is a prismatic $F$-gauge.
\end{proof}{}

\begin{definition}[Dieudonn\'e $F$-gauge of a finite locally free group scheme] Let $T$ be quasisyntomic ring and let $G \in \mathrm{FFG}(T).$ We call the prismatic $F$-gauge over $T$ given by  $\mathcal{M}(G)$ from \cref{fffff} the (prismatic) Dieudonn\'e $F$-gauge of $G.$
\end{definition}{}

\begin{remark}
We can view the prismatic $F$-gauge $\mathcal{M}(G)$ as an object of $D_{\mathrm{qc}}(\Spf(T)^\syn)$ or in the sense of \cref{fgaugeff}. If $T' \to T$ is a map of quasisyntomic rings inducing a map $f: \mathrm{Spf}(T')^\syn \to \Spf (T)^\syn,$ then by construction we have $f^* \mathcal{M}(G) \simeq \mathcal{M}(G_{T'}).$
\end{remark}{}

\begin{remark}\label{ant}
 Let $T$ be a quasisyntomic ring and $G \in \mathrm{FFG}(T).$ As a consequence of the proof of \cref{fffff}, we see that the prismatic $F$-gauge $\mathcal{M}(G),$ when viewed as a sheaf of filtered modules (equipped with a Frobenius) on $(T)^G_\qrsp,$ is given by the formula \cref{atias1}. Using \cref{cafe3}, one may rewrite the latter as 
\begin{equation}\label{atias111}(T)^G_\qrsp \ni S \mapsto \tau_{\ge 0}R\mathrm{Hom}_{(S)_{\qrsp}}(G, \Fil^\bullet \Prism_{(\cdot)}[1]).
\end{equation}
\end{remark}{}

\begin{remark}\label{stella1}Although it is out of the scope of our current paper, we give sketch of another description of the prismatic Dieudonn\'e $F$-gauge functor. Let $S$ be a qrsp ring and $G \in \mathrm{FFG}(S).$ Then $\mathcal{M}(G)$ as in \cref{cafe4}, can also be written as $(\tau_{[-2,-3]}Rv_* \mathcal{O})[3]$, where $v: B^2 G^\syn \to S^\syn$ denotes the natural map. Similar to the proof of \cref{cafe3}, we obtain a natural map
\begin{equation}\label{stella}
    R\mathcal{H}om_{S^\syn} (G^{\w{syn}}[2], \mathbb G_a) \to R\mathcal{H}om_{S^\syn} (\mathbb{Z}[B^2G^{\w{syn}}], \mathbb G_a)\simeq Rv_* \mathcal{O}. 
\end{equation}
This induces natural maps
$\mathcal{E}xt^1(G^\mathrm{syn}, \mathbb{G}_a) \to \mathcal{E}xt^{3}(\mathbb{Z}[B^2 G^{\w{syn}}], \mathbb{G}_a)$ and $\mathcal{H}om(G^\mathrm{syn}, \mathbb{G}_a) \to \mathcal{E}xt^1(G^\mathrm{syn}, \mathbb{G}_a)$, which are isomorphisms by a $E_2$-spectral sequence argument similar to \cref{cafe3}. Note that this spectral sequence arises from a certain filtration on $\mathbb{Z}[B^2P]$ for an animated abelian group $P$. This is obtained as animation of the Postnikov filtration $\tau_{\le *} \mathbb{Z}[B^2 P^0]$ for discrete abelian groups $P^0$. As a consequence, $\gr^0 \mathbb{Z}[B^2P] \simeq \mathbb{Z}$, $\gr^1 \mathbb{Z}[B^2P] \simeq 0, \gr^2 \mathbb{Z}[B^2P] \simeq P$ and $\gr^3 \mathbb{Z}[B^2P] \simeq 0,$ as the same statements hold (functorially) for any discrete abelian group $P^0$ (in which case $\gr^i$ is the same as $i$-th homology). Therefore, \cref{stella} induces an isomorphism
\begin{equation}
\tau_{[-2,-3]} R\mathcal{H}om_{S^\syn} (G^{\w{syn}}[2], \mathbb G_a) \simeq \tau_{[-2,-3]}Rv_* \mathcal{O}.    
\end{equation}This implies that for a quasisyntomic ring $T$, and $G \in \mathrm{FFG}(T),$ we have 
\begin{equation}\label{stella3}
  \mathcal{M}(G) \simeq \tau_{\ge 0} R\mathcal{H}om_{S^\syn}(G^\syn, \mathbb{G}_a[1]).  
\end{equation}{}

\end{remark}{}

\begin{proposition}\label{ecc}
 Let $S$ be a quasisyntomic ring and $G \in \mathrm{BT}(S).$ Let $G_n := G[p^n].$ Then the fiber sequence $G_n \to G \xrightarrow{p^n} G$ induces a fiber sequence
\begin{equation}\label{cofiber}
\mathcal{M}(G) \xrightarrow{p^n} \mathcal{M}(G) \to \mathcal{M}(G_n)
\end{equation} of prismatic $F$-gauges over $S.$
\end{proposition}
\begin{proof}
In order to prove this, one may work locally on $S.$ In particular, we may work in the site $(S)^{G_n}_\qrsp.$ Using the description in \cref{ant}, the desired fiber sequence \cref{cofiber} follows from the vanishing \cref{ext2van} and \cref{ext1} (\textit{cf}. \cref{cafe2}).
\end{proof}{}

\begin{remark}\label{ecc2}
Let $S$ be a quasisyntomic ring and $G \in \mathrm{BT}(S).$ Then the natural map $\mathcal{M}(G) \to \varprojlim \mathcal{M}(G_n)$ is an isomorphism of prismatic $F$-gauges. This follows from \cref{cofiber} since $\mathcal{M}(G)$ is derived $p$-complete.\end{remark}{}

\begin{remark}\label{ecc57}
 Let $S$ be a quasisyntomic ring and $G, G' \in BT(S).$ Assume that we have a fiber sequence $H \to G' \to G,$ where $H \in \mathrm{FFG}(S).$ By the same argument as in \cref{ecc}, we have a fiber sequence $\mathcal{M}(G) \to \mathcal{M}(G') \to \mathcal{M}(H)$ or prismatic $F$-gauges over $S.$   
\end{remark}{}

We will now explain how one can recover cohomology with coefficients in $G \in \mathrm{FFG}(T)$ in terms of quasicoherent cohomology of the prismatic Dieudonn\'e $F$-gauge of the Cartier dual $G^\vee$.

\begin{proposition}[Cohomology with coefficients in group schemes]\label{earth} Let $T$ be a quasisyntomic ring and let $G \in \mathrm{FFG}(T).$ There is a natural isomorphism
$$R\Gamma_{\mathrm{qsyn}}(T, G) \simeq R\Gamma (\mathrm{Spf}(T)^\syn, \mathcal{M}(G^\vee) \left \{1 \right \}).$$
\end{proposition}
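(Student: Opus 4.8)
The plan is to identify the syntomic cohomology of the Breuil--Kisin twisted prismatic Dieudonn\'e $F$-gauge $\mathcal{M}(G^\vee)\{1\}$ with the quasisyntomic cohomology of $G$ by relating both sides to the stack $B^2 G^\vee$ and its syntomification. First I would reduce to the case where $T$ is qrsp, since both sides satisfy quasisyntomic descent (the right-hand side because quasicoherent cohomology on $\Spf(T)^\syn$ is computed by descent along qrsp covers via \cref{introtec}, and the left-hand side because $R\Gamma_{\qsyn}(T,G)$ is by definition a limit over the quasisyntomic site). In fact, following \cref{verys1}, I would further reduce to $T \in (T)^{G^\vee}_\qrsp$, so that $G^\vee$ sits in a short exact sequence $0 \to G^\vee \to A' \to A \to 0$ of abelian schemes; this is the form in which we have the best hold on $\mathcal{M}(G^\vee)$, namely the formula \cref{atias111}: $\mathcal{M}(G^\vee) \simeq \tau_{\ge 0} R\mathcal{H}om_{(S)_\qrsp}(G^\vee, \Fil^\bullet \Prism_{(\cdot)}[1])$.

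The key computation is then to unwind $R\Gamma(\Spf(T)^\syn, \mathcal{M}(G^\vee)\{1\})$ using the ``divided Frobenius'' formula from \cref{lion}, equation \cref{sick5}: for an effective prismatic $F$-gauge with underlying filtered module $\Fil^\bullet M$, one has $R\Gamma(\Spf(S)^\syn, \mathscr M\{1\}) \simeq \mathrm{fib}(\Fil^1 M\{1\} \xrightarrow{\nu_1 - i} \Fil^0 M\{1\})$. Plugging in $M = \mathcal{M}(G^\vee)$ with the filtered pieces $\Fil^k M = H^2(\Fil^k_\Nyg R\Gamma_\Prism(BG^\vee))$ (equivalently $\mathcal{E}xt^1(G^\vee, \Fil^k_\Nyg \Prism_{(\cdot)})$ by \cref{ext1}, or, via \cref{cafe3}, the $B^2 G^\vee$ description in degree $3$), the fiber $\mathrm{fib}(\Fil^1 M\{1\} \xrightarrow{\varphi_1 - \mathrm{can}} \Fil^0 M\{1\})$ is exactly the left-hand side of the exact sequence in \cref{finiteflat0rev}. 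That proposition identifies this fiber with $(G^\vee)^\vee = G$ as an abelian sheaf in $(S)_\qsyn$ (using $\mathcal{H}^2_\Prism(BG^\vee)\{1\}$, the Nygaard filtration, and the divided Frobenius, with the surjectivity input coming from $\mathcal{E}xt^2(G^\vee,\mathbb{Z}_p(1)) = 0$ in \cref{w}). So, combining \cref{sick5} with \cref{finiteflat0} — after checking that $\mathcal{H}^1_\Prism(BG^\vee) = 0$ holds in the relevant situation, which on $(T)^{G^\vee}_\qrsp$ follows from the abelian scheme resolution and \cref{cafe} — one obtains $R\Gamma(\Spf(T)^\syn, \mathcal{M}(G^\vee)\{1\}) \simeq R\Gamma_{\qsyn}(T, G)$.

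I expect the main obstacle to be the careful bookkeeping needed to match the hypercohomology fiber-sequence formula \cref{sick5} (which is stated for a single qrsp ring and an effective $F$-gauge, with specific identifications of $\Fil^1 M\{1\}$ and $\Fil^0 M\{1\}$ via the Nygaard and canonical maps) against the sheaf-level exact sequence \cref{finiteflat0} in $(S)_\qsyn$, and then taking global sections correctly — i.e.\ showing that the presheaf-level fiber sequence sheafifies to the global statement, and that the $\sharp$-sheafification in the \cref{finiteflatrev}/\cref{finiteflat0rev} statements is compatible with the syntomification-stack description. Concretely, one must verify that $R\Gamma(\Spf(T)^\syn, \mathcal{M}(G^\vee)\{1\})$ equals the fiber of $\mathcal{H}^2_\qsyn(BG^\vee, \Fil^1_\Nyg\Prism_{(\cdot)})\{1\} \to \mathcal{H}^2_\Prism(BG^\vee)\{1\}$ computed in the derived category of quasisyntomic sheaves, and that this fiber, being concentrated in degree $0$ and equal to $G$, has $R\Gamma_\qsyn(T, G)$ as its cohomology. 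A clean way to organize this is to observe (cf.\ \cref{earthq1}, \cref{lion}) that $\mathcal{M}(G^\vee)\{1\}$ is an effective $F$-gauge with Hodge--Tate weights in $[0,1]$ shifted by $\{1\}$, hence with $\Fil^k$ stabilizing, so all the relevant colimits/limits are controlled; then the whole argument is a formal consequence of \cref{verynice} applied to $\mathcal{Y} = BG^\vee$ together with \cref{ext1} (or the $B^2 G^\vee$ variant \cref{cafe3}), with the real content already packaged into \cref{finiteflat0rev}. A secondary subtlety is the compatibility of the $\{1\}$-twist on $\mathcal{M}(G^\vee)$ with the appearance of $\mathbb{Z}_p(1)$ in \cref{verynice}, which is exactly what makes $G$ rather than $G^\vee$ come out on the left — this should be traced through \cref{imp} and \cref{grading}.
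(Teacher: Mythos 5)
Your overall strategy --- reduce to qrsp rings by descent, compute $R\Gamma(\Spf(T)^\syn, \mathcal{M}(G^\vee)\{1\})$ via the fiber formula \cref{sick5} with $n=1$, identify the resulting $\varphi_1-\mathrm{can}$ fiber with $R\Gamma_{\qsyn}(\cdot,G)$ using the earlier syntomic computations, and handle the sheafification by restricting to $(T)^{G^\vee}_{\qrsp}$ --- is indeed the paper's strategy. But the main route you spell out has a genuine gap: you take the filtered pieces of $\mathcal{M}(G^\vee)$ to be $H^2(\Fil^k_{\Nyg}R\Gamma_{\Prism}(BG^\vee))$ (equivalently $\mathcal{E}xt^1(G^\vee,\Fil^k_{\Nyg}\Prism_{(\cdot)})$, citing \cref{ext1}) and then invoke \cref{finiteflat0rev}, whose hypothesis is $\mathcal{H}^1_{\Prism}(BG^\vee)=0$, asserting that this vanishing can be arranged on $(T)^{G^\vee}_{\qrsp}$ from the abelian-scheme resolution and \cref{cafe}. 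That vanishing is false in general and cannot be arranged this way: $\mathcal{H}^1_{\Prism}(BG^\vee)=\mathcal{H}om_{(S)_{\qsyn}}(G^\vee,\Prism_{(\cdot)})$, and already for $G^\vee=\mathbb{Z}/p$ over a qrsp ring $S$ with $p$-torsion in $\Prism_S$ this is $\Prism_S[p]\neq 0$ --- exactly the situation of \cref{pathology}. Embedding $G^\vee$ into abelian schemes does not help: the long exact sequence only shows $\mathcal{H}om(G^\vee,\Prism_{(\cdot)})$ injects into $\ker(\mathcal{M}(A)\to\mathcal{M}(A'))$, which in this example is again nonzero (it is the $H^{-1}$ of $\mathcal{M}(\mathbb{Z}/p)\simeq\mathrm{cofib}(p\colon\mathcal{O}\to\mathcal{O})$). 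For the same reason the identification $\Fil^k\mathcal{M}(G^\vee)\simeq H^2(\Fil^k_{\Nyg}R\Gamma_{\Prism}(BG^\vee))$ is not available: \cref{ext1} is established only for $p$-divisible groups, and for finite locally free $G^\vee$ the gauge $\mathcal{M}(G^\vee)$ genuinely has cohomology in degree $-1$, so it is not an $H^2$ of $BG^\vee$; this is precisely why the paper defines $\mathcal{M}$ of a finite locally free group scheme via $B^2G$.

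The repair is the alternative you mention only parenthetically: work throughout with the $B^2G^\vee$ description (\cref{cafe3}, \cref{cafe4}) and use \cref{finiteflatrev} and \cref{finiteflat}, which need no vanishing hypothesis because $H^{-1}(\mathbb{Z}[B^2G^\vee])=0$ forces $\mathcal{H}^1_{\Prism}(B^2G^\vee)=0$ automatically. That is the paper's proof: \cref{sick5} for $n=1$ together with the sheafification in \cref{cafe4} produce a natural map from the fiber of $\varphi_1-\mathrm{can}$ on $(\tau_{[-3,-2]}\Fil^1_{\Nyg}R\Gamma_{\Prism}(B^2G^\vee_{(\cdot)})\{1\})^{\sharp}[3]$ to $R\Gamma(\Spf(\cdot)^\syn,\mathcal{M}(G^\vee)\{1\})$; \cref{finiteflat} identifies the source with $R\Gamma_{\qsyn}(\cdot,G)$; and on the basis $(T)^{G^\vee}_{\qrsp}$ the presheaf \cref{atias1} is already a sheaf, so the comparison map is an isomorphism there and hence everywhere by quasisyntomic descent. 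One further caution: for the twist bookkeeping you propose to use \cref{grading}, but \cref{grading} is deduced from \cref{dualoo}, whose proof uses \cref{earth} itself, so it cannot be invoked here; the twist is already handled by \cref{verynice}/\cref{sick5} together with \cref{imp} and \cref{w}.
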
{}

\begin{proof}
Note that both sides satisfy quasisyntomic descent, viewed as a functor on quasisyntomic rings. Therefore, it suffices to produce a natural isomorphism when $T$ is qrsp. By the construction of $\mathcal{M}(G)$ via sheafification in \cref{cafe4} and \cref{sick5} for $n=1,$ we have a natural map (see \cref{finiteflat} for the notation of the source) of quasisyntomic sheaves on $(T)_{\qrsp}$
as below.
\begin{equation}\label{vert1}
 \begin{matrix}
\mathrm{Fib}\left((\tau_{[-3,-2]}\mathrm{Fil}^1_{\mathrm{Nyg}}R\Gamma_{\Prism}(B^2 G^\vee_{(\cdot)})\left \{1\right\}^\sharp)[3] \xrightarrow{\varphi_1 - \mathrm{can}} (\tau_{[-3,-2]} R\Gamma_{\Prism}(B^2 G^\vee_{(\cdot)})\left \{1\right\}^\sharp)[3]\right) \\ \downarrow\\

R\Gamma (\mathrm{Spf}(T)^\syn, \mathcal{M}(G^\vee) \left \{1 \right \})
\end{matrix}   
\end{equation}{}
By \cref{finiteflat}, this constructs a natural map of quasisyntomic sheaves on $(T)_\qrsp$ 
\begin{equation}\label{verysick}
 R\Gamma_{\mathrm{qsyn}}(S, G) \to R\Gamma (\mathrm{Spf}(S)^\syn, \mathcal{M}(G^\vee) \left \{1 \right \}),   
\end{equation}where $S \in (T)_\qrsp.$ To prove that \cref{verysick} is an isomorphism, by quasisyntomic descent, it is enough to prove it for $S \in (T)^G_\qrsp$ (\cref{verys1}). But then by the sheafiness of the functor in \cref{atias1}, it follows that the downward vertical map \cref{vert1} is an isomorphism. Therefore, by \cref{finiteflat}, the map \cref{verysick} is an isomorphism. This finishes the proof.
\end{proof}{}

\begin{proposition}[Cohomology with coefficients in Tate modules]\label{jac}
Let $S$ be a quasisyntomic ring and let $G \in \mathrm{BT}(S).$ There is a natural isomorphism
$$R\Gamma_{\mathrm{qsyn}}(S, T_p(G)) \simeq R\Gamma (\mathrm{Spf}(S)^\syn, \mathcal{M}(G^\vee) \left \{1 \right \}).$$
\end{proposition}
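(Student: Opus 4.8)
The plan is to reduce the statement for $p$-divisible groups to the finite locally free case already handled in \cref{earth}, using the relationship $G = \mathrm{colim}\,G[p^n]$ and $T_p(G) = \varprojlim G[p^n]$ on the quasisyntomic site. First I would recall from the excerpt's notational conventions that in $(S)_\qsyn$ one has the exact sequence $0 \to T_p(G) \to \lim_p G \to G \to 0$, but more directly that $T_p(G) \simeq \varprojlim_n G[p^n]$ as an object of $(S)_\qsyn$, where the transition maps are the natural inclusions $G[p^n] \hookrightarrow G[p^{n+1}]$... wait, the correct pro-system is via multiplication by $p$: $G[p^{n+1}] \xrightarrow{p} G[p^n]$. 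So $R\Gamma_{\mathrm{qsyn}}(S, T_p(G)) \simeq R\varprojlim_n R\Gamma_{\mathrm{qsyn}}(S, G[p^n])$, since $R\Gamma_{\mathrm{qsyn}}(S,-)$ commutes with limits.

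Next I would apply \cref{earth} to each finite locally free group scheme $G[p^n]$: since $(G[p^n])^\vee = G^\vee[p^n]$ (Cartier duality commutes with $p^n$-torsion), we obtain
\begin{equation*}
R\Gamma_{\mathrm{qsyn}}(S, G[p^n]) \simeq R\Gamma(\mathrm{Spf}(S)^\syn, \mathcal{M}(G^\vee[p^n])\left\{1\right\}).
\end{equation*}
Taking the homotopy limit over $n$, the left side becomes $R\Gamma_{\mathrm{qsyn}}(S, T_p(G))$. For the right side I would invoke \cref{ecc2}, which says $\mathcal{M}(G^\vee) \xrightarrow{\sim} \varprojlim_n \mathcal{M}(G^\vee[p^n])$ as prismatic $F$-gauges (this uses that $\mathcal{M}(G^\vee)$ is derived $p$-complete together with the fiber sequence \cref{cofiber}); tensoring with the line bundle $\mathcal{O}\{1\}$ and applying $R\Gamma(\mathrm{Spf}(S)^\syn, -)$—which preserves limits—gives $R\Gamma(\mathrm{Spf}(S)^\syn, \mathcal{M}(G^\vee)\{1\}) \simeq \varprojlim_n R\Gamma(\mathrm{Spf}(S)^\syn, \mathcal{M}(G^\vee[p^n])\{1\})$. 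I would also need to check that the transition maps in \cref{ecc2}'s inverse system match the ones coming from the Cartier duals of the maps $G[p^{n+1}] \xrightarrow{p} G[p^n]$; this is a compatibility bookkeeping step, following from functoriality of $\mathcal{M}(-)$ and of \cref{earth} in $G$.

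Combining these two identifications yields the desired isomorphism. The main obstacle I anticipate is not conceptual but rather the careful handling of the pro-system of transition maps: one must confirm that under Cartier duality the inclusion maps $G^\vee[p^n] \hookrightarrow G^\vee[p^{n+1}]$ used implicitly in forming $\mathcal{M}(G^\vee) = \varprojlim \mathcal{M}(G^\vee[p^n])$ are dual to the multiplication-by-$p$ maps $G[p^{n+1}] \to G[p^n]$ that define $T_p(G)$, so that the two limits are taken over genuinely the same cofiltered diagram and the naturality of \cref{earth} makes the square commute. An alternative, perhaps cleaner, route would be to prove both sides satisfy quasisyntomic descent and reduce directly to $S$ qrsp, then use the explicit fiber-sequence description of syntomic cohomology from \cref{p-div} (the $p$-divisible analogue of \cref{finiteflat}) together with \cref{sick5} for $n=1$ applied to $\mathcal{M}(G^\vee)$, mirroring the proof of \cref{earth} line by line but with $T_p(G^{\vee\vee}) = T_p(G)$ in place of $G$; I would likely present this second approach as it avoids the limit-of-pro-systems subtleties, simply citing \cref{p-div} and \cref{ecc2} where \cref{earth} cited \cref{finiteflat} and \cref{verys1}.
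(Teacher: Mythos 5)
Your proposal is correct and matches the paper: the paper's (terse) proof runs the analogue of the argument for \cref{earth} using \cref{p-div} in place of \cref{finiteflat} together with sheafiness of the relevant Nygaard-filtered functor, and explicitly notes the alternative deduction from \cref{ecc}, \cref{ecc2} and \cref{earth} — precisely your two routes, with your preferred second route being the paper's primary one. The limit/duality bookkeeping you flag (inclusions $G^\vee[p^n]\hookrightarrow G^\vee[p^{n+1}]$ dual to multiplication by $p$) indeed works out by functoriality, so there is no gap.
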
{}
\begin{proof}
Similar to the above proof, it suffices to produce a natural isomorphism when $S$ is qrsp. In this case, it follows from an analogous argument by using \cref{p-div} and sheafiness of the functor in \cref{smallworld}. Alternatively, one may directly deduce it from \cref{ecc}, \cref{ecc2} and \cref{earth}.
\end{proof}{}

\begin{construction}\label{impnotation}
 Let $T$ be a quasisyntomic ring and let $\mathcal{M},\,\mathcal{N}$ be two objects of $D_{\mathrm{qc}}(\Spf(T)^\syn).$ Then the functor $(T)_\qrsp \to D(\mathbb Z)$ determined by the association
$$(T)_\qrsp \ni S \mapsto R\Hom_{D_{\mathrm{qc}}(\Spf(S)^\syn)}\left(\mathcal{M}|_{\Spf(S)^\syn)}, \mathcal{N}|_{\Spf(S)^\syn)} \right)$$ is a $D(\mathbb Z)$-valued quasisyntomic sheaf; we denote this sheaf by $R\mathcal{H}om_{\syn} (\mathcal{M}, \mathcal{N}).$ 
\end{construction}{}

\begin{proposition}[Duality compatibility for finite flat group schemes]\label{dualoo}
    Let $T$ be a quasisyntomic ring and $G \in \mathrm{FFG}(T).$ Then $\mathcal{M}(G)$ is a dualizable object of $F\mathrm{\w{-}Gauge}_{\Prism}(T).$ Further, if $G^\vee$ is the Cartier dual of $G,$ then we have a natural isomorphism
$$ \mathcal{M}(G)^* \left \{-1 \right \} [1]\simeq  \mathcal{M}(G^\vee).$$ 
\end{proposition}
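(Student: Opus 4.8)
The plan is to reduce to the qrsp case and then exploit the fact that every $G \in \mathrm{FFG}(S)$ over a qrsp ring $S$ fits, quasisyntomic-locally, into a short exact sequence $0 \to G \to A' \to A \to 0$ with $A', A$ abelian schemes (this is the content of \cref{verys1}, via Raynaud's theorem). Since both $\mathcal{M}(G)^*\{-1\}[1]$ and $\mathcal{M}(G^\vee)$ satisfy quasisyntomic descent as functors of $T$ (using \cref{need} and \cref{impnotation}), it suffices to construct the isomorphism over $S \in (T)^G_\qrsp$, naturally in $S$. In particular, dualizability of $\mathcal{M}(G)$ can be checked locally, and it will follow once we identify $\mathcal{M}(G)^*\{-1\}[1]$ with $\mathcal{M}(G^\vee)$, since the latter is a genuine prismatic $F$-gauge (\cref{fffff}), and a standard argument shows that the prismatic $F$-gauge of a finite flat group scheme over a qrsp ring is a perfect complex — concretely, from the fiber sequence $\mathcal{M}(A) \to \mathcal{M}(A') \to \mathcal{M}(G)$ of \cref{atias}, where $\mathcal{M}(A), \mathcal{M}(A')$ are vector bundles (\cref{starbc}), one sees $\mathcal{M}(G)$ is perfect, hence dualizable in $D_{\mathrm{qc}}(\Spf(S)^\syn)$.

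Next I would produce the comparison map. Over $S \in (T)^G_\qrsp$, dualize the fiber sequence $0 \to G \to A' \to A \to 0$ of fppf (equivalently quasisyntomic) sheaves to get $0 \to A^\vee \to A'^\vee \to G^\vee \to 0$. Applying \cref{atias} (now for $G^\vee$, which also sits in such a sequence since $A^\vee, A'^\vee$ are abelian schemes) yields a fiber sequence $\mathcal{M}(A^\vee) \to \mathcal{M}(A'^\vee) \to \mathcal{M}(G^\vee)$. On the other hand, applying $(-)^*\{-1\}[1]$ to the fiber sequence $\mathcal{M}(A) \to \mathcal{M}(A') \to \mathcal{M}(G)$ and shifting gives a fiber sequence $\mathcal{M}(G)^*\{-1\} \to \mathcal{M}(A')^*\{-1\}[1] \to \mathcal{M}(A)^*\{-1\}[1]$, i.e. after reindexing, $\mathcal{M}(A')^*\{-1\}[1] \to \mathcal{M}(A)^*\{-1\}[1] \to \mathcal{M}(G)^*\{-1\}[2]$... here one must be careful about the direction of the maps: dualizing reverses arrows, so the fiber sequence $\mathcal{M}(A) \to \mathcal{M}(A') \to \mathcal{M}(G)$ becomes $\mathcal{M}(G)^* \to \mathcal{M}(A')^* \to \mathcal{M}(A)^*$, hence, twisting and shifting, $\mathcal{M}(G)^*\{-1\}[1] \to \mathcal{M}(A')^*\{-1\}[1] \to \mathcal{M}(A)^*\{-1\}[1]$. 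The middle and right terms are identified with $\mathcal{M}(A'^\vee)$ and $\mathcal{M}(A^\vee)$ by \cref{abdual}. Thus I have two fiber sequences with identified outer terms (compatibly, by functoriality of \cref{abdual} in abelian schemes and compatibility of the connecting maps with the maps $A'^\vee \to G^\vee$ and $A^\vee \to A'^\vee$), and the induced map on the remaining terms is the desired map $\mathcal{M}(G)^*\{-1\}[1] \to \mathcal{M}(G^\vee)$. That it is an isomorphism follows from the five lemma (in the triangulated/stable setting).

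The main obstacle is checking that the two fiber sequences are genuinely \emph{compatible} — that the square relating the connecting map $\mathcal{M}(A^\vee) \to \mathcal{M}(A'^\vee)$ in the $G^\vee$-sequence to the map $\mathcal{M}(A)^*\{-1\}[1] \to \mathcal{M}(A')^*\{-1\}[1]$ obtained by dualizing commutes, so that one actually obtains a map of fiber sequences. This requires unwinding that the duality isomorphism $\mathcal{M}(A)^*\{-1\} \simeq \mathcal{M}(A^\vee)$ of \cref{abdual} is functorial in the abelian scheme $A$ (it is, being built from the universal line bundle on $A \times A^\vee$ and Künneth, both functorial) and that Cartier duality $A' \to A$ dualizing to $A^\vee \to A'^\vee$ is compatible with the construction. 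Granting this naturality, the five lemma closes the argument. A secondary point worth care: the independence of the construction from the chosen resolution $0 \to G \to A' \to A \to 0$; this is handled automatically since, after establishing $\mathcal{M}(G)^*\{-1\}[1]$ is a prismatic $F$-gauge with the stated local description, the gluing over $(T)^G_\qrsp$ via \cref{need} produces a well-defined global isomorphism, and two such agree by descent once they agree locally, which they do by construction.
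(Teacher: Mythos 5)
Your dualizability argument is essentially the paper's (check locally, use \cref{atias} and \cref{starbc}), but the duality isomorphism has problems. First, a concrete error: the dual of $0 \to G \to A' \to A \to 0$ is $0 \to G^\vee \to A^\vee \to A'^\vee \to 0$ — the kernel of the dual isogeny is the Cartier dual of the kernel — not $0 \to A^\vee \to A'^\vee \to G^\vee \to 0$ as you wrote. In your version $G^\vee$ appears as a quotient rather than a kernel, so \cref{atias} does not apply to it and the claimed fiber sequence $\mathcal{M}(A^\vee) \to \mathcal{M}(A'^\vee) \to \mathcal{M}(G^\vee)$ has the wrong shape; with the correct sequence one gets $\mathcal{M}(A'^\vee) \to \mathcal{M}(A^\vee) \to \mathcal{M}(G^\vee)$, which is what should be compared with the (rotated) dual of $\mathcal{M}(A) \to \mathcal{M}(A') \to \mathcal{M}(G)$. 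This orientation slip is fixable, but it matters for the comparison square you then need.

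The genuine gap is the construction of the \emph{natural} map. Your comparison is local and depends on a chosen resolution: in a stable $\infty$-category the induced map on cofibers is canonical only after specifying a commuting homotopy in the comparison square, and this is exactly the point you "grant" rather than prove. Worse, to produce a morphism in $F\text{-Gauge}_{\Prism}(T) \simeq \lim_{S} F\text{-Gauge}_{\Prism}(S)$ you need a coherently compatible family of local isomorphisms — independence of the resolution, compatibility under base change and on overlaps, with all higher coherences — and "two such agree by descent once they agree locally, which they do by construction" is not an argument, since different resolutions (or different choices of homotopy) a priori give different identifications of the cofiber. The paper sidesteps all of this by constructing a canonical, choice-free global map $\mathcal{M}(G)^*\{-1\}[1] \to \mathcal{M}(G^\vee)$ using the cohomology formula of \cref{earth}, dualizability of $\mathcal{M}(G)$, and the counit of the adjunction in \cref{adjoinnn}, together with the sheafified descriptions of both sides via \cref{cafe4} and \cref{cafe3}; the local embeddings into abelian schemes, \cref{atias} and \cref{abdual}, are used only at the end to check that this already-constructed map is an isomorphism, which is a local verification requiring no coherence. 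To complete your route you would have to supply the missing coherence data (functoriality of \cref{abdual} in the isogeny as a homotopy-coherent statement, independence of resolution, and gluing), or replace the local construction by a global one as in the paper.
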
{}

\begin{proof}
 Dualizability of $\mathcal{M}(G)$ can be checked locally and follows from \cref{atias} and 
\cref{starbc} (\textit{cf}.~\cref{verys1} and \cref{need}).   

For the latter part, since both sides satisfy quasisyntomic descent, it is enough to produce a natural isomorphism when $T$ is qrsp. Note that the functor determined by
$$(T)_\qrsp \ni S \mapsto \Fil^\bullet \Prism_{S}$$ is a sheaf with values in filtered objects of $D(\mathbb Z).$
Therefore, we have a natural isomorphism
$$R\Hom_{(T)_\qrsp}\left(R\mathcal{H}om_{(T)_\qrsp}(\mathbb Z, G^\vee), \Fil^\bullet \Prism_{(\cdot)}[1]\right) \simeq R\mathrm{Hom}_{(T)_\qrsp} \left(G^\vee, \Fil^\bullet \Prism_{(\cdot)}[1]\right),$$
where the $D(\mathbb Z)$-valued sheaf $R\mathcal{H}om_{(T)_\qrsp}(\mathbb Z, G^\vee)$ is determined by the association $$(T)_\qrsp \ni S \mapsto R\Gamma_{\qsyn}(S, G^\vee).$$ By \cref{earth}, we have an isomorphism
\begin{equation*}
 \begin{split}
  R\Hom_{(T)_\qrsp}\left(R\mathcal{H}om_{\syn}(\mathcal{O}, \mathcal{M}(G)\left \{1 \right \}), \Fil^\bullet \Prism_{(\cdot)}[1]\right) \\  \simeq R\Hom_{(T)_\qrsp}\left(R\mathcal{H}om_{(T)_\qrsp}(\mathbb Z, G^\vee), \Fil^\bullet \Prism_{(\cdot)}[1]\right).   
 \end{split}   
\end{equation*}
By dualizability of $\mathcal{M}(G),$ we have a natural isomorphism $$R\mathcal{H}om_{\syn}(\mathcal{O}, \mathcal{M}(G)\left \{1 \right \}) \simeq R\mathcal{H}om_{\syn}(\mathcal{M}(G)^*\left \{-1 \right \}, \mathcal O).$$

Note that we have a natural evaluation map (the counit of the adjunction in 
\cref{adjoinnn})
$$\mathcal{M}(G)^*\left \{-1 \right \}[1] \to R\Hom_{(T)_\qrsp}\left(R\mathcal{H}om_{\syn}(\mathcal{M}(G)^*\left \{-1 \right \}, \mathcal O), \Fil^\bullet \Prism_{(\cdot)}[1]\right)$$ of prismatic weak $F$-gauges over $T$ (\cref{earthq1}). Combining with the above isomorphisms, we have a natural map $$ \mathcal{M}(G)^*\left \{-1 \right \}[1] \to R\mathrm{Hom}_{(T)_\qrsp} \left(G^\vee, \Fil^\bullet \Prism_{(\cdot)}[1]\right)$$ of prismatic weak $F$-gauges over $T.$ Repeating the same argument for any $S \in (T)_\qrsp,$ we obtain a natural map
\begin{equation}\label{naturalmap}
 \mathcal{M}(G_S)^*\left \{-1 \right \}[1] \to R\mathrm{Hom}_{(S)_\qrsp} \left(G_S^\vee, \Fil^\bullet \Prism_{(\cdot)}[1]\right)   
\end{equation} 
of prismatic weak $F$-gauges over $S.$ Note that by \cref{abdual}, \cref{atias} and \cref{verys1}, the prismatic $F$-gauge $\mathcal{M}(G)^*\left \{-1\right\}[1]$ viewed as a sheaf on $(T)_\qrsp$ of filtered module over $\Fil^\bullet \Prism_{(\cdot)}$ equipped with a Frobenius map is sheafification of the functor determined by
$$(T)_\qrsp \ni S \mapsto \tau_{\ge 0}(\mathcal{M}(G_S)^*\left \{-1 \right \}[1]).$$
By \cref{cafe4} and \cref{cafe3}, the prismatic $F$-gauge $\mathcal{M}(G^\vee)$ is obtained from sheafification of the functor determined by 
 $$(T)_\qrsp \ni S \mapsto \tau_{\ge 0}R\mathrm{Hom}_{(S)_\qrsp} \left(G_S^\vee, \Fil^\bullet \Prism_{(\cdot)}[1]\right)$$ of prismatic $F$-gauges over $T.$ Thus, \cref{naturalmap} constructs a natural map
$$\mathcal{M}(G)^*\left \{-1 \right \}[1] \to \mathcal{M}(G^\vee).$$ Since one can locally embed $G$ into abelian schemes, by using \cref{abdual}, \cref{atias} and \cref{verys1}, we see that the above map must be an isomorphism. This finishes the proof.
\end{proof}{}

\begin{proposition}[Duality compatiblity for $p$-divisible groups]\label{grading}
    Let $G$ be a $p$-divisible group of height $h$ over a quasisyntomic ring $T$. If $G^\vee$ is the Cartier dual of $G,$ then we have a natural isomorphism
$$ \mathcal{M}(G^\vee) \simeq \mathcal{M}(G)^* \left \{-1 \right \}$$ of prismatic $F$-gauges over $T.$
\end{proposition}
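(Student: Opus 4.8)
The plan is to reduce to the quasiregular semiperfectoid case and then bootstrap from the duality statement for finite locally free group schemes (\cref{dualoo}) via the presentation $\mathcal{M}(G)\simeq\varprojlim_n\mathcal{M}(G[p^n])$ of \cref{ecc2}. First, both $\mathcal{M}(G^\vee)$ and $\mathcal{M}(G)^*\{-1\}$ satisfy quasisyntomic descent: the former by construction, the latter because $\mathcal{M}(G)$ does (\cref{d}, \cref{pdivbase}) and forming the dual of a vector bundle and applying a Breuil--Kisin twist both commute with pullback. So by \cref{si} it suffices to produce a natural isomorphism when $T$ is qrsp. In that case $\mathcal{M}(G)$ is a vector bundle of rank $h$ on $\Spf(T)^{\syn}$ (\cref{dualo1}, \cref{d}); hence so is $\mathcal{M}(G)^*$, and in particular $\mathcal{M}(G)^*\{-1\}$ is derived $p$-complete, so $\mathcal{M}(G)^*\{-1\}\simeq\varprojlim_n \mathcal{M}(G)^*\{-1\}/p^n$.

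Next I would identify each term of this tower. By \cref{ecc} (the fiber sequence \cref{cofiber}), $\mathcal{M}(G[p^n])\simeq\mathrm{cofib}\bigl(p^n\colon\mathcal{M}(G)\to\mathcal{M}(G)\bigr)=\mathcal{M}(G)/p^n$. Since $\mathcal{M}(G)$ is a vector bundle and each $\mathcal{M}(G[p^n])$ is dualizable (\cref{dualoo}), dualizing this cofiber sequence gives $\mathcal{M}(G[p^n])^*\simeq\mathrm{fib}\bigl(p^n\colon\mathcal{M}(G)^*\to\mathcal{M}(G)^*\bigr)\simeq(\mathcal{M}(G)^*/p^n)[-1]$, so that $\mathcal{M}(G[p^n])^*\{-1\}[1]\simeq\mathcal{M}(G)^*\{-1\}/p^n$. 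On the other hand \cref{dualoo}, applied to the finite locally free group scheme $G[p^n]$, gives $\mathcal{M}(G[p^n])^*\{-1\}[1]\simeq\mathcal{M}\bigl((G[p^n])^\vee\bigr)$, and $(G[p^n])^\vee\simeq G^\vee[p^n]$ by Cartier duality of $p$-divisible groups. Combining, $\mathcal{M}(G)^*\{-1\}/p^n\simeq\mathcal{M}(G^\vee[p^n])$, and \cref{ecc2} applied to $G^\vee$ identifies $\varprojlim_n\mathcal{M}(G^\vee[p^n])$ with $\mathcal{M}(G^\vee)$. Passing to the limit over $n$ then yields $\mathcal{M}(G)^*\{-1\}\simeq\mathcal{M}(G^\vee)$.

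The point requiring care — and the main obstacle — is matching the transition maps of the two towers, so that the limits genuinely agree and the resulting isomorphism is natural in $G$. Concretely, the transition maps in $\{\mathcal{M}(G^\vee[p^n])\}_n$ are induced contravariantly by the inclusions $G^\vee[p^n]\hookrightarrow G^\vee[p^{n+1}]$, which under $G^\vee[p^n]=(G[p^n])^\vee$ are Cartier dual to the multiplication-by-$p$ maps $G[p^{n+1}]\to G[p^n]$; tracing these through the constructions of \cref{cofiber} and \cref{dualoo} — both of which are natural in the group scheme — one checks that on $\{\mathcal{M}(G)^*\{-1\}/p^n\}_n$ they become the reduction maps $\mathcal{M}(G)^*\{-1\}/p^{n+1}\to\mathcal{M}(G)^*\{-1\}/p^n$, whose limit is $\mathcal{M}(G)^*\{-1\}$ by derived $p$-completeness. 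Alternatively, to avoid explicit bookkeeping with inverse limits, one can first assemble the maps of \cref{dualoo} over all $n$ into a single natural map $\mathcal{M}(G)^*\{-1\}\to\mathcal{M}(G^\vee)$ of vector bundles on $\Spf(T)^{\syn}$ and then verify it is an isomorphism after reduction modulo $p$, where it coincides with the isomorphism $\mathcal{M}(G[p])^*\{-1\}[1]\simeq\mathcal{M}(G^\vee[p])$ supplied by \cref{dualoo} together with \cref{cofiber}; since both sides are derived $p$-complete this is enough.
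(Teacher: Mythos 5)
Your proposal is correct and follows essentially the same route as the paper's proof: both arguments use the fiber sequence of \cref{cofiber} to identify $\mathcal{M}(G[p^n])\simeq\mathcal{M}(G)/p^n$, dualize to get $\mathcal{M}(G[p^n])^*\simeq(\mathcal{M}(G)^*/p^n)[-1]$, apply \cref{dualoo} to the finite group schemes $G[p^n]$, and pass to the limit via \cref{ecc2} and derived $p$-completeness. Your preliminary reduction to the qrsp case and the extra discussion of transition maps are additional care the paper does not spell out, but they do not change the argument.
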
{}

\begin{proof}
Let $G_n = G[p^n].$ We have a fiber sequence $G_n \to G \xrightarrow{p^n} G$. By \cref{cofiber}, we have a fiber sequence
$$ \mathcal{M}(G) \xrightarrow{p^n} \mathcal{M}(G) \xrightarrow{} \mathcal{M}(G_n).$$ Dualizing, we obtain that $\mathcal{M}(G_n)^* \simeq (\mathcal{M}(G)^*/p^n) [-1].$ Now, we have
$$\mathcal{M}(G^\vee) \simeq \varprojlim \mathcal{M}(G_n^\vee) \simeq \varprojlim \mathcal{M}(G_n)^* \left \{-1 \right \} [1],$$ where the last step follows from \cref{dualoo}. Further, $$\varprojlim \mathcal{M}(G_n)^* \left \{-1 \right \} [1] \simeq \varprojlim (\mathcal{M}(G)^*\left \{-1 \right \})/p^n \simeq \mathcal{M}(G)^* \left \{ -1 \right \}.$$This finishes the proof.\end{proof}

\begin{proposition}\label{mainthm22}
Let $T$ be a quasisyntomic ring. The prismatic Dieudonn\'e $F$-gauge functor $$\mathcal{M}: \mathrm{FFG}(T)^\mathrm{op} \to F\text{-}\mathrm{Gauge}^{\mathrm{}}_{\Prism}(T)$$ determined by $G \mapsto \mathcal{M}(G)$ is fully faithful.  
\end{proposition}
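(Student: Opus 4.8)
The plan is to build the natural comparison map, reduce full faithfulness to an equivalence of quasisyntomic sheaves, and then devisser onto the $p$-divisible case, where the statement is \cref{sna}.

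\textbf{Step 1: the comparison map.} Using the description $\mathcal{M}(G)\simeq\tau_{\ge 0}R\mathcal{H}om_{S^{\syn}}(G^{\syn},\mathbb{G}_a[1])$ from \cref{stella1} (which, by \cref{ecc22}, holds uniformly for $G$ finite locally free or $p$-divisible), the assignment $G\mapsto\mathcal{M}(G)$ is functorial in $G$ regarded as an abelian sheaf; hence for every quasisyntomic $S$ over $T$ it induces a map $R\mathrm{Hom}_{(S)_{\qsyn}}(H_S,G_S)\to R\mathrm{Hom}_{F\text{-}\mathrm{Gauge}_{\Prism}(S)}(\mathcal{M}(G_S),\mathcal{M}(H_S))$, and therefore, by \cref{impnotation} together with the sheafiness of $S\mapsto R\mathrm{Hom}_{(S)_{\qsyn}}(H_S,G_S)$, a map of $D(\mathbb Z_p)^{\wedge}_p$-valued quasisyntomic sheaves
$$c_{G,H}\colon\ R\mathcal{H}om_{(T)_{\qsyn}}(H,G)\ \longrightarrow\ R\mathcal{H}om_{\syn}(\mathcal{M}(G),\mathcal{M}(H)).$$
Applying $H^{0}R\Gamma((T)_{\qsyn},-)$, the source computes $\mathrm{Hom}_{\mathrm{FFG}(T)}(H,G)$ and the target computes $\mathrm{Hom}_{F\text{-}\mathrm{Gauge}_{\Prism}(T)}(\mathcal{M}(G),\mathcal{M}(H))$, compatibly with $\mathcal{M}$, so it suffices to show that $c_{G,H}$ is an equivalence of sheaves. (This is where one must pass to the full $R\mathcal{H}om$: for finite $G$ the gauge $\mathcal{M}(G)$ is only perfect, not a vector bundle, and really has two nonzero cohomology sheaves because $\Prism_T$ may have $p$-torsion, as in \cref{pathology}; so naive kernel/cokernel descriptions of $\mathrm{Hom}$-groups are unavailable.)

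\textbf{Step 2: localize and resolve.} Being an equivalence of sheaves is local on $(T)_{\qrsp}$, so we may assume $T$ is qrsp, and since the categories $(T)^{G}_{\qrsp},\,(T)^{H}_{\qrsp}$ of \cref{verys1} each form a basis for $(T)_{\qrsp}$ (hence so does their intersection), we may further assume there are short exact sequences $0\to G\to A'\to A\to 0$ and $0\to H\to B'\to B\to 0$ with $A,A',B,B'$ abelian schemes over $T$. Restricting $p$-power torsion gives exact sequences $0\to G\to A'[p^{\infty}]\to A[p^{\infty}]\to 0$ and $0\to H\to B'[p^{\infty}]\to B[p^{\infty}]\to 0$ of abelian sheaves, and by \cref{ecc57} these induce fiber sequences $\mathcal{M}(A[p^{\infty}])\to\mathcal{M}(A'[p^{\infty}])\to\mathcal{M}(G)$ and similarly for $H$. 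Applying $R\mathcal{H}om_{(T)_{\qsyn}}(H,-)$ to the first exact sequence and $R\mathcal{H}om_{\syn}(-,\mathcal{M}(H))$ to the first fiber sequence, the map $c_{G,H}$ fits into a map of fiber sequences between $c_{A'[p^{\infty}],H}$ and $c_{A[p^{\infty}],H}$; repeating in the $H$-variable with the resolution of $H$ (now using $R\mathcal{H}om_{(T)_{\qsyn}}(-,P)$ and $R\mathcal{H}om_{\syn}(\mathcal{M}(P),-)$), we reduce to showing $c_{P,Q}$ is an equivalence when $P,Q$ are both $p$-divisible groups over a qrsp ring. Throughout this dévissage one uses the vanishing statements \cref{ext2van} and \cref{ext2vanab} (and their abelian-scheme analogues) to know that the truncations $\tau_{\ge 0}$ implicit in $\mathcal{M}$ behave exactly, so the fiber sequences persist after truncation.

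\textbf{Step 3: the $p$-divisible case, and the obstacle.} It remains to prove that $R\mathcal{H}om_{(T)_{\qsyn}}(Q,P)\xrightarrow{\ \sim\ }R\mathcal{H}om_{\syn}(\mathcal{M}(P),\mathcal{M}(Q))$ for $p$-divisible groups $P,Q$ over a qrsp $T$. On $H^{0}$ this is exactly the fully faithfulness of \cref{sna}; the statement in all degrees is its derived refinement, which I would extract from the same circle of ideas — both sides are perfect complexes on $\Spf(T)^{\syn}$, and one can write them as limits over the finite levels $Q[p^{n}]$ using \cref{cofiber} and \cref{ecc2}, closing the comparison with \cref{jac}, the duality isomorphism \cref{grading}, and dualizability of $\mathcal{M}(P)$. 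The main obstacle is precisely this last point: upgrading the comparison from $\mathrm{Hom}$ to $R\mathcal{H}om$ and running the entire dévissage derivedly. Once $c_{P,Q}$ is known to be an equivalence for $p$-divisible groups, Steps 1–2 give that $c_{G,H}$ is an equivalence for all $G,H\in\mathrm{FFG}(T)$, and taking $H^{0}R\Gamma$ yields the full faithfulness asserted in \cref{mainthm22}.
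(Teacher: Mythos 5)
Your reduction ultimately rests on a statement you do not prove: that for $p$-divisible groups $P,Q$ over a qrsp ring the comparison map is an equivalence of \emph{full} sheaves $R\mathcal{H}om_{(T)_{\qsyn}}(Q,P)\to R\mathcal{H}om_{\syn}(\mathcal{M}(P),\mathcal{M}(Q))$, i.e.\ an isomorphism on all higher $\mathcal{E}xt$/cohomology sheaves, not just on $\mathcal{H}^0$. This is strictly stronger than \cref{mainthm2}/\cref{sna}, and it does not follow from the tools you cite: \cref{jac}, \cref{grading}, \cref{cofiber}, \cref{ecc2} and dualizability of $\mathcal{M}(P)$ only control connective mapping spaces (the paper's arguments systematically take $\tau_{\ge 0}$ and use connectivity of $G$, $H$ and $\mathcal{M}(G)$ to discard higher degrees). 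Your dévissage in Step 2 genuinely needs the all-degrees statement, because only full $R\mathcal{H}om$'s turn the exact sequences $0\to G\to A'[p^\infty]\to A[p^\infty]\to 0$ into fiber sequences on both sides; so the "derived refinement" you defer in Step 3 is not a technical upgrade but the actual mathematical content, and nothing in the paper (nor in your sketch) verifies that, say, $\mathcal{E}xt^{\ge 1}_{(S)_\qsyn}(Q,P)$ agrees with the corresponding higher cohomology sheaves of $R\mathcal{H}om_{\syn}(\mathcal{M}(P),\mathcal{M}(Q))$. You flag this yourself as "the main obstacle," which is an accurate self-assessment: as written the proof is incomplete at its base case.

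For contrast, the paper never compares full derived Homs and never dévissages inside the fully-faithfulness argument. It computes the connective mapping space directly: since $\mathcal{M}(G)$, $\mathcal{M}(H)$ are effective, Homs can be computed in $F\text{-}\mathrm{Mod}_{\Fil^\bullet\Prism}(T)^{\mathrm{eff}}$; \cref{cocacola} (using the basis $(T)^{G\times H}_{\qrsp}$, where $\mathcal{M}(H)$ is literally $\tau_{\ge 0}R\mathrm{Hom}(H,\Fil^\bullet\Prism_{(\cdot)}[1])$, and connectivity of $\mathcal{M}(G)$ to drop the truncation) replaces $\mathcal{M}(H)$ by the untruncated $R\mathcal{H}om_{(T)_\qrsp}(H,\Fil^\bullet\Prism_{(\cdot)}[1])$; then the adjunction of \cref{adjoinnn}, dualizability, the Cartier-duality formula \cref{dualoo}, and the coefficient formula \cref{earth} convert $\Hom(\mathcal{M}(G),\mathcal{M}(H))$ into $\Hom_{(T)_\qrsp}(H,G)$. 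The dévissage through abelian schemes does occur, but one level down, inside the proofs of \cref{dualoo} and \cref{earth}, where only connective objects are compared. If you want to salvage your route, you would either have to prove the derived statement for $p$-divisible groups by hand (a nontrivial new computation of higher Ext sheaves on both sides) or restructure so that, as in the paper, only connective truncations are ever compared.
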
{}

\begin{proof}
 We use $\Hom$ to denote the connective truncation of $R\Hom$ (i.e., $\tau_{\ge 0} R\Hom$) and refer to it as the mapping space. We would like to prove that the natural map 
\begin{equation}\label{smallworldagain}
\Hom_{\mathrm{FFG}(T)}(H, G) \to \mathrm{Hom}_{F\text{-}\mathrm{Gauge}^{\mathrm{}}_{\Prism}(T)} (\mathcal{M}(G), \mathcal{M}(H))    
\end{equation}{}
is an isomorphism. For notational simplicity, we will sometimes omit the subscript indicating the category where maps are being taken. Note that for any $G \in \mathrm{FFG}(S),$ the prismatic $F$-gauge $\mathcal{M}(G)$ is effective by construction. It follows that the mapping spaces on the right hand side of \cref{smallworldagain} can also be computed in the category $F\text{-}\mathrm{Mod}_{\Fil{^\bullet}\Prism}(T)^{\mathrm{eff}}$ (\cref{smallworagain1}). Note that by \cref{adjoinnn}, $R\mathcal{H}om_{(S)_\qrsp}(H, \mathrm{Fil}^\bullet \Prism_{(\cdot)})$ can naturally be viewed as an object of $F\text{-}\mathrm{Mod}_{\Fil{^\bullet}\Prism}(T)^{\mathrm{eff}}.$
\end{proof}{}

\begin{lemma}\label{cocacola}
We have an isomorphism 
$$\mathrm{Hom}_{F\text{-}\mathrm{Mod}_{\Fil{^\bullet}\Prism}(T)^{\mathrm{eff}}} (\mathcal{M}(G), \mathcal{M}(H))$$ $$ \simeq \Hom_{F\text{-}\mathrm{Mod}_{\Fil{^\bullet}\Prism}(T)^{\mathrm{eff}}}(\mathcal{M}(G), R\mathcal{H}om_{(T)_\qrsp}( H, \mathrm{Fil}^\bullet \Prism_{(\cdot)}[1])).$$    
\end{lemma}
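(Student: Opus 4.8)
The plan is to identify $\mathcal{M}(H)$ with the connective cover $\tau_{\ge 0}R\mathcal{H}om_{(T)_\qrsp}(H, \mathrm{Fil}^\bullet \Prism_{(\cdot)}[1])$ inside $F\text{-}\mathrm{Mod}_{\Fil^\bullet \Prism}(T)^{\mathrm{eff}}$ and then use that $\mathcal{M}(G)$ is connective to remove the truncation. First I would recall from \cref{ant} (and \cref{atias111}) that, as a sheaf of filtered modules on $(T)_\qrsp$ equipped with its Frobenius, the prismatic $F$-gauge $\mathcal{M}(H)$ is the sheafification of the presheaf $S \mapsto \tau_{\ge 0}R\mathrm{Hom}_{(S)_\qrsp}(H, \mathrm{Fil}^\bullet \Prism_{(\cdot)}[1])$; by \cref{cafe3} this is the same as the sheafification of $S \mapsto (\tau_{[-2,-3]}\mathrm{Fil}^\bullet_{\Nyg}R\Gamma_{\Prism}(B^2 H_S))[3]$, which is how $\mathcal{M}(H)$ is defined in \cref{cafe4}. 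The key point is that the object $R\mathcal{H}om_{(T)_\qrsp}(H, \mathrm{Fil}^\bullet \Prism_{(\cdot)}[1])$ of $F\text{-}\mathrm{Mod}_{\Fil^\bullet \Prism}(T)^{\mathrm{eff}}$ from \cref{adjoinnn} already receives a natural map from $\mathcal{M}(H)$, realizing $\mathcal{M}(H)$ as its connective truncation with respect to the canonical $t$-structure on effective objects described in \cref{fl}.

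The main step is therefore to show that this natural map $\mathcal{M}(H) \to R\mathcal{H}om_{(T)_\qrsp}(H, \mathrm{Fil}^\bullet \Prism_{(\cdot)}[1])$ identifies the source with $\tau_{\ge 0}$ of the target in the canonical $t$-structure of \cref{fl}. By \cref{smallworagain1} and \cref{fl}, connectivity of an effective object in $F\text{-}\mathrm{Mod}_{\Fil^\bullet \Prism}(T)^{\mathrm{eff}}$ is tested Nygaard-filtered-level by Nygaard-filtered-level on the site $(T)_\qrsp$, and the truncation functor $\tau_{\ge 0}$ is computed sheafwise. Using the $E_2$-spectral sequence $E_2^{i,j} = \mathrm{Ext}^i_{(S)_\qrsp}(H^{-j}(\mathbb{Z}[B^2 H_S]), \mathrm{Fil}^\bullet \Prism_{(\cdot)}) \Rightarrow H^{i+j}(\mathrm{Fil}^\bullet_{\Nyg}R\Gamma_{\Prism}(B^2 H_S))$ as in the proof of \cref{finiteflatrev}, together with $H^{-1}(\mathbb{Z}[B^2 H])=0$, $H^{-2}(\mathbb{Z}[B^2 H]) \simeq H$, and $H^0(\mathbb{Z}[B^2 H]) \simeq \mathbb{Z}$, one sees that $R\mathcal{H}om_{(T)_\qrsp}(H, \mathrm{Fil}^\bullet \Prism_{(\cdot)}[1])$ has cohomology sheaves only in degrees $\ge -1$, with the degree $-1$ part being $\mathcal{H}om_{(T)_\qrsp}(H, \Fil^\bullet \Prism_{(\cdot)}) = \mathbb{Z}$-contribution-free, and the connective part is exactly captured by the $\tau_{[-2,-3]}\cdots[3]$ construction. (Here one may want to work on the basis $(T)^H_\qrsp$ of \cref{verys1}, where $H$ embeds into abelian schemes, so that the truncation can be controlled via the fiber sequences in \cref{atias}.)

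Granting this identification, the lemma follows formally: for $\mathcal{M}(G) \in F\text{-}\mathrm{Mod}_{\Fil^\bullet \Prism}(T)^{\mathrm{eff}}$ connective (which holds since $\mathcal{M}(G)$ is a vector bundle, hence in particular lies in the connective part — or, more directly, because it too is of the form $\tau_{\ge 0}$ of something by the same discussion), the adjunction between $\tau_{\ge 0}$ and the inclusion of connective objects in the $t$-structure of \cref{fl} gives
\[
\Hom(\mathcal{M}(G), \mathcal{M}(H)) \simeq \Hom(\mathcal{M}(G), \tau_{\ge 0}R\mathcal{H}om_{(T)_\qrsp}(H, \mathrm{Fil}^\bullet \Prism_{(\cdot)}[1])) \simeq \Hom(\mathcal{M}(G), R\mathcal{H}om_{(T)_\qrsp}(H, \mathrm{Fil}^\bullet \Prism_{(\cdot)}[1])),
\]
all mapping spaces taken in $F\text{-}\mathrm{Mod}_{\Fil^\bullet \Prism}(T)^{\mathrm{eff}}$. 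The hard part will be the middle step: verifying that the natural comparison map exhibits $\mathcal{M}(H)$ as precisely the connective truncation — this requires keeping careful track of the $t$-structure of \cref{fl} being compatible with sheafification on $(T)_\qrsp$ and with the spectral sequence computation, and checking that no unexpected negative-degree cohomology sheaves appear (this is exactly where the vanishing $H^{-1}(\mathbb{Z}[B^2 H])=0$ and $H^{-3}(\mathbb{Z}[B^2 H])=0$ enter). Everything else is a formal consequence of the adjunction from \cref{adjoinnn} and the definitions recalled in \cref{ant} and \cref{cafe3}.
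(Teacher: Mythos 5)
Your overall strategy is the same germ as the paper's argument (exhibit $\mathcal{M}(H)$ as a connective truncation of $R\mathcal{H}om_{(T)_\qrsp}(H,\Fil^\bullet\Prism_{(\cdot)}[1])$ and then use connectivity of $\mathcal{M}(G)$ together with the truncation adjunction), but as written there is a genuine gap in the middle step. You invoke ``the canonical $t$-structure of \cref{fl}'' on $F\text{-}\mathrm{Mod}_{\Fil^\bullet \Prism}(T)^{\mathrm{eff}}$ and assert that connectivity is tested on $(T)_\qrsp$ and that ``the truncation functor $\tau_{\ge 0}$ is computed sheafwise.'' But \cref{fl} only provides a $t$-structure on effective prismatic weak $F$-gauges over a fixed qrsp ring, i.e.\ objectwise; it does not give a $t$-structure on the sheaf-level category over a general quasisyntomic $T$, and the claim that $\tau_{\ge 0}$ is computed sectionwise on $(T)_\qrsp$ is false in general: the sectionwise connective cover of a $D(\mathbb Z)$-valued sheaf need not satisfy descent (totalizations do not commute with $\tau_{\ge 0}$), which is precisely why $\mathcal{M}(H)$ is \emph{defined} in \cref{cafe4} by sheafifying the presheaf $S \mapsto \tau_{\ge 0}R\Hom_{(S)_\qrsp}(H,\Fil^\bullet\Prism_{(\cdot)}[1])$, and why its sections are only identified with that formula over the basis $(T)^H_\qrsp$ (\cref{fffff}, \cref{ant}). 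Making your route rigorous would require constructing a well-behaved (e.g.\ suitably complete/hypercomplete, compatible with $(p,\mathscr I)$-completion and the Frobenius structure) $t$-structure on the sheaf category and proving that sheafification of the sectionwise connective cover computes the sheaf-level $\tau_{\ge 0}$; this is exactly the kind of $t$-structure subtlety the paper flags in \cref{introgreentea} and deliberately avoids.

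The paper's proof sidesteps all of this: it restricts to the basis $(T)^{G\times H}_\qrsp$ of \cref{verys1}, where by \cref{ant} both $u^{-1}\mathcal{M}(G)$ and $u^{-1}\mathcal{M}(H)$ are given \emph{without sheafification} by $S \mapsto \tau_{\ge 0}R\Hom_{(S)_\qrsp}(-,\Fil^\bullet\Prism_{(\cdot)}[1])$, passes to the presheaf category $\mathcal{D}$ on this basis (sheaves embed fully faithfully), and there uses the objectwise truncation adjunction -- maps out of the objectwise-connective $u^{-1}\mathcal{M}(G)$ into a sectionwise $\tau_{\ge 0}$ agree with maps into the full object -- before descending back, since $u^{-1}R\mathcal{H}om(H,\Fil^\bullet\Prism_{(\cdot)}[1])$ is a sheaf and restriction to the basis is an equivalence $F\text{-}\mathrm{Mod}_{\Fil^\bullet\Prism}(T)^{\mathrm{eff}}\simeq\mathcal{C}$. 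Your parenthetical suggestion to ``work on the basis $(T)^H_\qrsp$'' is in fact the essential move, not an optional aid; if you reorganize the argument around it (working with $(T)^{G\times H}_\qrsp$ so that both $G$ and $H$ are handled simultaneously, and replacing the sheaf-level $t$-structure by the objectwise adjunction in the presheaf category), the proof closes. The spectral-sequence bookkeeping with $H^{-j}(\mathbb Z[B^2H])$ is not the crux here; it only re-proves \cref{cafe3}.
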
{}

\begin{proof}
We will introduce some notations for the proof of the lemma. Note that the site $(T)_\qrsp^{G \times H}$ (\cref{verys1}) forms a basis for $(T)_\qrsp.$ Let us define $\mathcal{C}$ to be the $\infty$-category of sheaves on $(T)_\qrsp^{G \times H}$ of derived $(p, \mathscr I)$-complete $\mathbb Z$-indexed filtered modules $\mathrm{Fil}^{\bullet} \mathscr{N}$ over the sheaf of filtered ring $\mathrm{Fil}^{\bullet} \Prism_{(\cdot)}$ on $(T)_{\mathrm{qrsp}}^{G\times H}$ equipped with a $\mathrm{Fil}^{\bullet} \Prism_{(\cdot)}$-linear Frobenius map 
$$\varphi:  \mathrm{Fil}^\bullet \mathscr N \to \mathscr{I}^\bullet \Prism_{(\cdot)}\otimes_{\Prism_{(\cdot)}} \mathscr N =: \mathscr{I}^\bullet \mathscr N,$$ with the property that the natural maps $\Fil^i \mathscr N \to \Fil^{i-1}\mathscr N$ are isomorphisms for all $i \le 0.$ The natural restriction functor induces an equivalence $u^{-1}:F\text{-}\mathrm{Mod}_{\Fil{^\bullet}\Prism}(T)^{\mathrm{eff}} \simeq \mathcal{C}.$

Let us define $\mathcal{D}$ to be the analogous presheaf category; there is a fully faithful functor $\mathcal{C} \to \mathcal{D}.$ The left hand side of the lemma is then equivalent to $\mathrm{Hom}_{\mathcal{D}}(u^{-1}\mathcal{M}(G), u^{-1}\mathcal{M}(H)).$ Note that by \cref{ant}, the object $u^{-1}\mathcal{M}(H)$ is given by the following association $$(T)^{G \times H}_{\qrsp} \ni S \mapsto \tau_{\ge 0}R\mathrm{Hom}_{(S)_{\qrsp}}(H, \Fil^\bullet \Prism_{(\cdot)}[1]),$$ and similarly for $u^{-1}\mathcal{M}(G).$ Further, the association $$(T)^{G \times H}_{\qrsp} \ni S \mapsto R\mathrm{Hom}_{(S)_{\qrsp}}(H, \Fil^\bullet \Prism_{(\cdot)}[1]),$$ defines an object of $\mathcal{C},$ which is simply $u^{-1}R\mathcal{H}om_{(T)_\qrsp}(H, \Fil^\bullet \Prism_{(\cdot)})$
from \cref{adjoinnn}. Thus, we have
\begin{align*}
&\mathrm{Hom}_{F\text{-}\mathrm{Mod}_{\Fil{^\bullet}\Prism}(T)^{\mathrm{eff}}} (\mathcal{M}(G), \mathcal{M}(H))\\&\xrightarrow{\sim} \mathrm{Hom}_{\mathcal{D}}(u^{-1}\mathcal{M}(G), u^{-1}\mathcal{M}(H))\\&\xrightarrow{\sim} \mathrm{Hom}_{\mathcal{D}}(u^{-1}\mathcal{M}(G), u^{-1}R\mathcal{H}om (H, \Fil^\bullet \Prism_{(\cdot)}[1]))\\&\xrightarrow{\sim} \mathrm{Hom}_{\mathcal{C}}(u^{-1}\mathcal{M}(G), u^{-1}R\mathcal{H}om (H, \Fil^\bullet \Prism_{(\cdot)}[1]))\\&\xrightarrow{\sim}\Hom_{F\text{-}\mathrm{Mod}_{\Fil{^\bullet}\Prism}(T)^{\mathrm{eff}}}(\mathcal{M}(G), R\mathcal{H}om_{(T)_\qrsp}( H, \mathrm{Fil}^\bullet \Prism_{(\cdot)}[1])). 
\end{align*} In the above, second isomorphism follows because $u^{-1}\mathcal{M}(G)$ is a (pre)sheaf of connective filtered modules; the rest follows from the equivalence $u^{-1}:F\text{-}\mathrm{Mod}_{\Fil{^\bullet}\Prism}(T)^{\mathrm{eff}} \simeq \mathcal{C}.$
\end{proof}{}
Back to proof of the proposition, we have
\begin{align*}
&\Hom(\mathcal{M}(G), R\mathcal{H}om_{(T)_\qrsp}( H, \mathrm{Fil}^\bullet \Prism_{(\cdot)}[1]))\\&\xrightarrow{\sim} \Hom_{(T)_\qrsp}(H, R\mathcal{H}om_{\syn}(\mathcal{M}(G), \mathrm{Fil}^\bullet \Prism_{(\cdot)}[1]))\\&\xrightarrow{\sim}\Hom_{(T)_\qrsp} (H, R\mathcal{H}om_{\syn}(\mathcal{O}, \mathcal{M}(G)^*[1]))\\&\xrightarrow{\sim}\Hom_{(T)_\qrsp}(H, R\mathcal{H}om_{\syn}(\mathcal{O}, \mathcal{M}(G^\vee)\left\{1\right\}))\\&\xrightarrow{\sim}\Hom_{(T)_\qrsp}(H, R\mathcal{H}om_{(T)_\qrsp}(\mathbb Z, G))\\&\xrightarrow{\sim}\Hom_{(T)_\qrsp}(H,G).
\end{align*}
Here, the first isomorphism follows directly from the adjunction in \cref{adjoinnn} (after taking opposite categories into account), the second isomorphism uses dualizability of $\mathcal{M}(G)$, the third isomorphism uses the duality compatibility formula in \cref{dualoo}, the fourth isomorphism uses the formula for cohomology with coefficients in $G$ (\cref{earth}), and the final isomorphism uses the fact that $H$ is connective as a presheaf of abelian groups and $\tau_{\ge 0}R\mathcal{H}om (\mathbb Z, G) \simeq G,$ as presheaves on $(T)_\qrsp.$ These chain of isomorphisms and \cref{cocacola} imply that \cref{smallworldagain} is an isomorphism, which finishes the proof.

\begin{proposition}\label{mainthm2}
Let $S$ be a quasisyntomic ring. The prismatic Dieudonn\'e $F$-gauge functor $$\mathcal{M}: \mathrm{BT}(S)^{\mathrm{op}} \to F\text{-}\mathrm{Gauge}^{\mathrm{vect}}_{\Prism}(S)$$ is fully faithful.  
\end{proposition}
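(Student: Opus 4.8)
The plan is to reduce the statement to the finite locally free case, \cref{mainthm22}, using that a $p$-divisible group is the colimit of its $p$-power torsion and that $\mathcal{M}$ turns this colimit into a limit. First I would record the elementary identification
$$\mathrm{Hom}_{\mathrm{BT}(S)}(H,G)\;=\;\varprojlim_{n}\,\mathrm{Hom}_{\mathrm{FFG}(S)}(H[p^n],G[p^n]),$$
valid because $H=\mathrm{colim}_n H[p^n]$ in the quasisyntomic topos and every map out of the $p^n$-torsion sheaf $H[p^n]$ factors through $G[p^n]$; the transition maps are induced by the inclusions $H[p^n]\hookrightarrow H[p^{n+1}]$. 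By \cref{mainthm22}, each term on the right is $H^0$ of the mapping complex $R\mathrm{Hom}_{\syn}(\mathcal{M}(G[p^n]),\mathcal{M}(H[p^n]))$ (here $\mathcal{M}$ is the finite locally free construction), and moreover that complex is coconnective, since fully faithfulness there asserts that the mapping space is discrete.

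The next step is to replace $\mathcal{M}(G[p^n])$ by $\mathcal{M}(G)$ in the first slot. Applying $R\mathrm{Hom}(-,\mathcal{M}(H[p^n]))$ to the fiber sequence $\mathcal{M}(G)\xrightarrow{p^n}\mathcal{M}(G)\to\mathcal{M}(G[p^n])$ of \cref{cofiber} (where now $\mathcal{M}(G)$ denotes the Dieudonn\'e $F$-gauge of the $p$-divisible group, \cref{d}), and using that multiplication by $p^n$ is nullhomotopic on $\mathcal{M}(H[p^n])=\mathcal{M}(H)/p^n$ — which holds for any $\mathbb{Z}$-linear object, as $p^n$ is already nullhomotopic on $\mathbb{Z}/p^n$ in $D(\mathbb{Z})$ — one obtains
$$R\mathrm{Hom}(\mathcal{M}(G[p^n]),\mathcal{M}(H[p^n]))\;\simeq\;R\mathrm{Hom}(\mathcal{M}(G),\mathcal{M}(H[p^n]))\,\oplus\,R\mathrm{Hom}(\mathcal{M}(G),\mathcal{M}(H[p^n]))[-1].$$
Thus $R\mathrm{Hom}(\mathcal{M}(G),\mathcal{M}(H[p^n]))$ is a retract of a coconnective complex, hence coconnective, and its $H^0$ agrees with $H^0$ of the left-hand side, namely $\mathrm{Hom}_{\mathrm{FFG}(S)}(H[p^n],G[p^n])$; the shifted summand sits in cohomological degrees $\ge 1$ and contributes nothing in degree $0$.

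Finally I would pass to the limit over $n$. Since $\mathcal{M}(H)\simeq\varprojlim_n\mathcal{M}(H[p^n])$ by \cref{ecc2} and $R\mathrm{Hom}(\mathcal{M}(G),-)$ preserves limits, $R\mathrm{Hom}_{\syn}(\mathcal{M}(G),\mathcal{M}(H))\simeq\varprojlim_n R\mathrm{Hom}(\mathcal{M}(G),\mathcal{M}(H[p^n]))$. A limit of coconnective complexes is coconnective, so the mapping space $\mathrm{Map}_{F\text{-}\mathrm{Gauge}_{\Prism}(S)}(\mathcal{M}(G),\mathcal{M}(H))$ is discrete, and the Milnor exact sequence identifies its $H^0$ with $\varprojlim_n H^0 R\mathrm{Hom}(\mathcal{M}(G),\mathcal{M}(H[p^n]))=\varprojlim_n\mathrm{Hom}_{\mathrm{FFG}(S)}(H[p^n],G[p^n])$, the $\varprojlim^1$-term vanishing because the relevant $H^{-1}$'s vanish by coconnectivity. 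By the first step this is $\mathrm{Hom}_{\mathrm{BT}(S)}(H,G)$, and tracking naturality of all identifications through $\mathcal{M}$ (in particular, that the inclusions $H[p^n]\hookrightarrow H[p^{n+1}]$ correspond under $\mathcal{M}$ to the reductions $\mathcal{M}(H)/p^{n+1}\to\mathcal{M}(H)/p^n$, which one checks by comparing the defining fiber sequences) shows $\mathcal{M}$ induces an equivalence on mapping spaces, i.e.\ is fully faithful; that it factors through $F\text{-}\mathrm{Gauge}^{\mathrm{vect}}_{\Prism}(S)$ is \cref{d}.

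The content is all contained in \cref{mainthm22}, \cref{cofiber} and \cref{ecc2}; the only thing requiring care — the main potential obstacle — is the derived-limit bookkeeping: $R\mathrm{Hom}(\mathcal{M}(G),\mathcal{M}(H[p^n]))$ and $R\mathrm{Hom}(\mathcal{M}(G[p^n]),\mathcal{M}(H[p^n]))$ differ by a shifted summand and only coincide in degree $0$, and one must ensure that neither this $[-1]$-summand nor the $\varprojlim^1$-term contaminates $H^0$. The coconnectivity coming from the finite-level full faithfulness is precisely what forces both to vanish, so once that input is in place the argument goes through cleanly.
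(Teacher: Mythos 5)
Your argument is correct, but it is a genuinely different route from the one taken in the paper. The paper proves \cref{mainthm2} by rerunning the strategy of \cref{mainthm22} directly for $p$-divisible groups: it uses the adjunction of \cref{adjoinnn}, dualizability, the duality compatibility \cref{grading}, and the Tate-module cohomology formula \cref{jac} to identify $\Hom(\mathcal{M}(G),\mathcal{M}(H))$ with $\Hom_{(S)_\qrsp}(T_p(H),T_p(G))\simeq \Hom_{\mathrm{BT}(S)}(H,G)$. You instead deduce the $p$-divisible case formally from the finite locally free case: you combine $\Hom_{\mathrm{BT}(S)}(H,G)=\varprojlim_n\Hom_{\mathrm{FFG}(S)}(H[p^n],G[p^n])$ with \cref{cofiber} and \cref{ecc2}, using that $p^n$ is nullhomotopic on $\mathcal{M}(H)/p^n$ (legitimate, since $F\text{-Gauge}_{\Prism}(S)$ is $\mathbb{Z}_p$-linear) to split the relevant fiber sequence, the retract argument to propagate coconnectivity, and the Milnor sequence to kill $\varprojlim^1$. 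The inputs you need from \cref{mainthm22} — discreteness of the mapping space together with the identification of $\pi_0$ — are exactly what its proof establishes, so the reduction is sound; the remaining naturality of the transition maps is routine functoriality of $\mathcal{M}$, as you note. What each approach buys: the paper's argument is uniform with \cref{mainthm22} and produces the intermediate comparison with Tate modules (of independent interest), whereas yours is more formal and avoids invoking \cref{grading} and \cref{jac} altogether; it is also consonant with the paper's own methods, since \cref{grading} itself is proved there by precisely this kind of $\varprojlim_n$ argument from \cref{cofiber}.
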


\begin{proof}
This is proven similarly to the proof of \cref{mainthm22}. Indeed, let $G,H\in \mathrm{BT}(S).$ As before, let $\Hom$ denote the connective truncation of $R\Hom.$
We have
\begin{align*}
 \mathrm{Hom}(\mathcal{M}(G), \mathcal{M}(H))&\xrightarrow{\sim} \Hom(\mathcal{M}(G), R\mathcal{H}om_{(S)_\qrsp}(H, \Fil^\bullet \Prism_{(\cdot)}[1]))\\&\xrightarrow{\sim}\Hom_{(S)_\qrsp} (H, R\mathcal{H}om_{\syn}(\mathcal{O}, \mathcal{M}(G)^*[1]))\\&\xrightarrow{\sim}\Hom_{(S)_\qrsp}(H, R\mathcal{H}om_{\syn}(\mathcal{O}, \mathcal{M}(G^\vee)\left\{1\right\}[1]))\\&\xrightarrow{\sim}\Hom_{(S)_\qrsp}(H[-1], R\mathcal{H}om_{(S)_\qrsp}(\mathbb Z, T_p(G))),
\end{align*}{}
where the first isomorphism follows similar to \cref{cocacola}, the second isomorphism uses the adjunction from \cref{adjoinnn} and dualizability of $\mathcal{M}(G),$ the third isomorphism uses the duality compatibility from \cref{grading} and the fourth one uses \cref{jac}.

Note that we have a fiber sequence $T_p(H) \to \varprojlim_{p} H \to H.$ Using the fact that multiplication by $p$ is an isomorphism on $\varprojlim_p H$ and $R\mathcal{H}om_{(S)_\qrsp}(\mathbb Z, T_p(G))$ is a  derived $p$-complete sheaf on $(S)_\qrsp,$ it further follows that 
\begin{align*}
&\Hom_{(S)_\qrsp}(H[-1], R\mathcal{H}om_{(S)_\qrsp}(\mathbb Z, T_p(G)))\\&\xrightarrow{\sim}  \Hom_{(S)_\qrsp}(T_p(H), R\mathcal{H}om_{(S)_\qrsp}(\mathbb Z, T_p(G)))\\&\xrightarrow{\sim}\Hom_{(S)_\qrsp}(T_p(H), T_p(G)) \\&\xrightarrow{\sim} \Hom_{\mathrm{BT}(S)} (H, G). 
\end{align*} We explain the last isomorphism in detail. Note that, since $T_p(G)$ is derived $p$-complete, by \cref{fight}, we have $R\mathrm{Hom}_{(S)_\mathrm{qsyn}}(T_p(H), T_p(G)) \simeq  R\mathrm{Hom}_{(S)_\mathrm{qsyn}}(H, T_p(G)[1])$. Using the fiber sequence $\varprojlim_{p} G \to G \to T_p(G)[1],$ it suffices to show that $R\mathrm{Hom}(H, \varprojlim_p G)\simeq R\varprojlim_n R\mathrm{Hom}(H[p^n], \varprojlim_p G) \simeq 0$, which is true because multiplication by $p^n$ is an isomorphism on $\varprojlim_p G$ but zero on $H[p^n]$. This finishes the proof.
\end{proof}{}

\begin{remark}[Essential image of $p$-divisible groups, cf.~\cite{kisin}]\label{ias1}
 Let $S$ be a qrsp algebra and let $G$ be a $p$-divisible group over $S.$ By \cref{dualo1}, we know that $\mathcal{M}(G)$ is a vector bundle on $S^{\w{syn}}.$ Using the description from \cref{dualo1} \cref{claritin}, it follows that if we denote the underlying filtered $\Fil^\bullet_\Nyg \Prism_R$-module of $\mathcal{M}(G)$ as $\Fil^\bullet M$, then 
\begin{equation}\label{}
\mathrm{gr}^\bullet M \otimes_{\mathrm{gr}^\bullet_{\Nyg} \Prism_S} S \simeq t_{G^\vee} \oplus \omega_G,   
\end{equation}where as a graded $S$-module, $t_{G^\vee}$ is in weight $0$ and $\omega_G$ is in weight $1.$ By \cref{guolii}, we conclude that $\mathcal{M}(G)$ has Hodge--Tate weights in $[0,1].$ By quasisyntomic descent, if $G$ is a $p$-divisible group over a quasisyntomic ring $T,$ it follows that $\mathcal{M}(G)$ has Hodge--Tate weights in $[0,1].$ Therefore the functor from \cref{mainthm2} refines to a fully faithful functor
\begin{equation}\label{thisisequiv}
 \mathcal{M}: \mathrm{BT}(T)^{\mathrm{op}} \to  F\mathrm{\text{-}{Gauge}}^{\mathrm{vect}}_{[0,1]}(T),   
\end{equation} where the latter denotes the category 
of vector bundles on $\Spf(T)^\syn$ of Hodge--Tate weights in $[0,1].$ The functor in \cref{thisisequiv} is an \textit{equivalence} of categories. Indeed, in order to check that the functor is essentially surjective, by descent, we can reduce to the case when $T$ is a qrsp algebra (see \cite[Prop.~A2]{alb}). In that case, under the equivalence in \cref{comppp}, using the argument in the second paragraph of the proof of \cite[Thm.~4.9.5]{alb}, one can further reduce to the case when $T$ is perfectoid, when the claim follows from \cite[Theorem 17.5.2]{berk}.
\end{remark}

\begin{remark}\label{clari}
Let $A$ be an abelian scheme over a quasisyntomic ring $S.$ Similar to \cref{ias1}, it follows that $\mathcal{M}(A)$ is a vector bundle over $\Spf(S)^\syn$ with Hodge--Tate weights in $[0,1].$    
\end{remark}{}



\begin{remark}[Essential image of finite locally free $p$-power rank group schemes] Let $S$ be a quasisyntomic ring. Let $D_{\w{perf}}(\Spf(S)^{\syn})$ denote the full subcategory of dualizable objects of $D_{\mathrm{qc}}(\Spf(S)^\syn)$. We will determine the full subcategory of $D_{\w{perf}}(\Spf(S)^{\syn})$ spanned by the essential image of finite locally free commutative group schemes of $p$-power rank over $S.$ Let $\mathrm{Vect}^{\mathrm{iso}}_{[0,1]}(S^{\w{syn}})$ be the full subcategory of $D_{\w{perf}}(S^{\w{syn}})$ spanned by objects $M$ satisfying the following properties:

\begin{enumerate}
    \item There exists a quasisyntomic cover $(S_i)_{i \in I}$ of $S$ such that $M_i:=M|_{{S_i}^{\w{syn}}}$ is isomorphic to $\mathrm{cofib}(V_i \xrightarrow{f_i} V'_i),$ for some $V_i, V_i' \in \mathrm{Vect}(S^{\w{syn}})$ and $f_i: V_i \to V'_i.$

\item The vector bundles $V_i, V_i'$ appearing above have Hodge--Tate weights in $[ 0, 1]$ for all $i \in I.$

\item The map $f_i$ appearing above has the property that it is an isomorphism when viewed in the category $\mathrm{Vect}(S^{\w{syn}})[\frac 1 p].$
\end{enumerate}{}
Let $G \in \mathrm{FFG}(S).$ By a result of Raynaud \cite[Thm.~3.1.1]{bbm}, one can Zariski locally realize $G$ as a kernel of an isogeny $A' \to A$ of abelian varieties. By \cref{atias}, it follows that (locally) we have a fiber sequence $$ \mathcal{M}(A) \to \mathcal{M}(A') \to \mathcal{M}(G).$$ Since (\cref{clari}), $\mathcal{M}(A)$ and $\mathcal{M}(A')$ are vector bundles of Hodge--Tate weights in $[0,1],$ it follows that $\mathcal{M}(G)$ indeed lies in $\mathrm{Vect}^{\mathrm{iso}}_{[0,1]}(S^{\w{syn}}).$ Now we can formulate the following.

\begin{proposition}\label{laststatement}
   The functor
\begin{equation}\label{ias}
 \mathcal{M}: \mathrm{FFG}(S)^{\mathrm{op}} \to \mathrm{Vect}^{\mathrm{iso}}_{[0,1]}(S^{\syn})\end{equation}induces an equivalence of categories.
\end{proposition}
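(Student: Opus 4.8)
The plan is to prove that the functor $\mathcal{M}\colon \mathrm{FFG}(S)^{\mathrm{op}} \to \mathrm{Vect}^{\mathrm{iso}}_{[0,1]}(S^{\syn})$ of \cref{ias} is an equivalence by establishing full faithfulness and essential surjectivity separately, reducing both to the quasiregular semiperfectoid case and then to abelian schemes. Full faithfulness is already \cref{mainthm22} (together with the observation, made just before the statement, that $\mathcal{M}(G)$ lands in $\mathrm{Vect}^{\mathrm{iso}}_{[0,1]}(S^{\syn})$), so the only new content is essential surjectivity. First I would reduce essential surjectivity to the case where $S$ is qrsp: both sides of \cref{ias} satisfy quasisyntomic descent (the left-hand side because $\mathrm{FFG}$ is an fppf/quasisyntomic stack by descent for finite locally free group schemes, the right-hand side by \cref{introtec} together with the quasisyntomic-local nature of conditions (1)--(3) defining $\mathrm{Vect}^{\mathrm{iso}}_{[0,1]}$), and by construction the conditions (1)--(3) are designed so that an object of $D_{\mathrm{perf}}(S^{\syn})$ lying in $\mathrm{Vect}^{\mathrm{iso}}_{[0,1]}(S^{\syn})$ becomes, after a quasisyntomic cover, the cofiber of a map of vector bundles with Hodge--Tate weights in $[0,1]$ that is an isomorphism after inverting $p$.

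Next, working over a qrsp ring $S$, let $M \in \mathrm{Vect}^{\mathrm{iso}}_{[0,1]}(S^{\syn})$; after a further quasisyntomic cover (which is harmless by descent, since $\mathrm{FFG}$ glues) we may assume $M \simeq \mathrm{cofib}(V \xrightarrow{f} V')$ with $V, V' \in \mathrm{Vect}(S^{\syn})$ of Hodge--Tate weights in $[0,1]$ and $f$ an isomorphism after inverting $p$. By \cref{ias1} (the essential surjectivity half of \cref{thisisequiv}), $V \simeq \mathcal{M}(G)$ and $V' \simeq \mathcal{M}(G')$ for $p$-divisible groups $G, G'$ over $S$, uniquely up to isomorphism. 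By full faithfulness in \cref{mainthm2}, the map $f\colon \mathcal{M}(G) \to \mathcal{M}(G')$ comes from a unique map $\psi\colon G' \to G$ of $p$-divisible groups. The condition that $f$ is an isomorphism after inverting $p$ translates, via \cref{mainthm2}, into the statement that $\psi$ is an isogeny: indeed $f[\tfrac1p]$ being an isomorphism of vector bundles on $S^{\syn}[\tfrac1p]$ forces $\psi$ to be an isomorphism on generic fibres in the appropriate sense, hence to have finite locally free kernel $K := \ker(\psi) \in \mathrm{FFG}(S)$, and $\psi$ fits into a short exact sequence $0 \to K \to G' \to G \to 0$ (using that $\psi$ is an isogeny, so surjective with finite flat kernel). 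I would make this precise by invoking \cref{ecc57}: the fiber sequence $K \to G' \xrightarrow{\psi} G$ induces a fiber sequence $\mathcal{M}(G) \to \mathcal{M}(G') \to \mathcal{M}(K)$ of prismatic $F$-gauges, so $\mathcal{M}(K) \simeq \mathrm{cofib}(\mathcal{M}(G) \xrightarrow{f} \mathcal{M}(G')) \simeq M$. Thus $M$ is in the essential image of $\mathcal{M}$.

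The remaining point is to check that the identification $M \simeq \mathcal{M}(K)$ is compatible as one varies over the quasisyntomic cover used to trivialize $M$ — i.e., that the locally-constructed $K$'s glue to a global $G \in \mathrm{FFG}(S)$ over the original $S$. This follows from full faithfulness of $\mathcal{M}$ (\cref{mainthm22}) together with quasisyntomic descent for $\mathrm{FFG}$: on overlaps the two descriptions of $M$ give, via full faithfulness, canonical gluing isomorphisms of the local $K$'s satisfying the cocycle condition, so by descent they come from a global object. I expect the main obstacle to be the middle step: carefully justifying that a map $\psi\colon G' \to G$ of $p$-divisible groups over a general qrsp (hence possibly quite singular, with $p$-torsion in $\Prism_S$) ring whose associated map of prismatic Dieudonn\'e $F$-gauges is an isomorphism after inverting $p$ is genuinely an isogeny with finite locally free kernel, and that the cofiber of $f$ on the $F$-gauge side really is $\mathcal{M}$ of that kernel — this requires knowing that $\mathrm{coker}(\psi)$ in the category of fppf sheaves is trivial (so $\psi$ is an epimorphism of $p$-divisible groups) and that $\ker(\psi)$ is representable by a finite locally free scheme, which one can extract from the fact that $f$ induces isomorphisms on the Hodge--Tate and \'etale realizations away from $p$ combined with the faithfulness statements already available. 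Everything else is formal bookkeeping with descent and the fiber sequences \cref{ecc,ecc57} already established.

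\begin{proof}
By the remarks preceding the statement, $\mathcal{M}$ does land in $\mathrm{Vect}^{\mathrm{iso}}_{[0,1]}(S^{\syn})$, and full faithfulness is \cref{mainthm22}. It remains to prove essential surjectivity. Both the source and target of \cref{ias} satisfy quasisyntomic descent in $S$: for the source this is descent for finite locally free commutative group schemes, and for the target this is \cref{introtec} together with the fact that conditions (1)--(3) defining $\mathrm{Vect}^{\mathrm{iso}}_{[0,1]}(S^{\syn})$ are quasisyntomic-local. Hence we may assume $S$ is qrsp.

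Let $M \in \mathrm{Vect}^{\mathrm{iso}}_{[0,1]}(S^{\syn})$. After a quasisyntomic cover $S \to S'$, which is permissible by descent, we may write $M|_{{S'}^{\syn}} \simeq \mathrm{cofib}(V \xrightarrow{f} V')$ with $V, V' \in \mathrm{Vect}({S'}^{\syn})$ of Hodge--Tate weights in $[0,1]$ and $f$ an isomorphism in $\mathrm{Vect}({S'}^{\syn})[\tfrac1p]$. By the essential surjectivity in \cref{ias1}, there are $p$-divisible groups $G, G'$ over $S'$ with $\mathcal{M}(G) \simeq V$ and $\mathcal{M}(G') \simeq V'$. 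By \cref{mainthm2}, $f$ corresponds to a unique map $\psi\colon G' \to G$; since $f$ becomes an isomorphism after inverting $p$, the map $\psi$ is an isogeny, with kernel $K := \ker(\psi) \in \mathrm{FFG}(S')$ and a short exact sequence $0 \to K \to G' \xrightarrow{\psi} G \to 0$. By \cref{ecc57}, this yields a fiber sequence $\mathcal{M}(G) \xrightarrow{f} \mathcal{M}(G') \to \mathcal{M}(K)$, so $\mathcal{M}(K) \simeq M|_{{S'}^{\syn}}$.

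Finally, over the Cech nerve of $S \to S'$, full faithfulness of $\mathcal{M}$ (\cref{mainthm22}) supplies canonical gluing data for the objects $K$ obtained over each term, satisfying the cocycle condition; by quasisyntomic descent for $\mathrm{FFG}$ these descend to an object $G_0 \in \mathrm{FFG}(S)$ with $\mathcal{M}(G_0) \simeq M$. Hence $\mathcal{M}$ is essentially surjective, completing the proof.
\end{proof}
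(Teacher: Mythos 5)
Your proof is correct, and its local engine coincides with the paper's: present $M$ quasisyntomic-locally as $\mathrm{cofib}(\mathcal{M}(G)\to \mathcal{M}(G'))$ via \cref{ias1}, recognize the map as induced by an isogeny of $p$-divisible groups with finite locally free kernel, and conclude $\mathcal{M}(\mathrm{kernel})\simeq M$ from \cref{ecc57}. Where you genuinely diverge is the globalization. The paper never glues locally constructed group schemes: it takes the globally defined quasisyntomic sheaf $T^0(M)=\tau_{\ge 0}R\mathcal{H}om_{\syn}(\mathcal{O},M\{1\})$ (\cref{impnotation}) as a candidate for the Cartier dual of the sought group scheme, and uses the Tate-module formula \cref{jac} together with the fiber sequence of Tate modules attached to the isogeny to identify this sheaf locally with $H^\vee$; representability in $\mathrm{FFG}(S)$ and the comparison $\mathcal{M}^\vee(T^0(M))\simeq M$ are then local verifications for an object that already exists globally, so no cocycle bookkeeping is needed. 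You instead construct the kernel only over a cover and glue it by hand, using base-change compatibility of $\mathcal{M}$ and full faithfulness (\cref{mainthm22}, \cref{mainthm2}) over the terms of the \v{C}ech nerve to produce descent data, and then invoking effectivity of quasisyntomic descent for $\mathrm{FFG}$. This is a valid, more hands-on route; two points deserve care. First, descent of finite locally free group schemes along quasisyntomic covers (which are only $p$-completely faithfully flat, not fppf) is a nontrivial input you should cite (cf.~\cite{alb}), although the paper's own reduction ``we may work locally'' leans on essentially the same fact. Second, your claim that invertibility of $f$ after inverting $p$ forces $\psi$ to be an isogeny with finite locally free kernel is asserted with the same brevity as in the paper; if you want it airtight, observe that the cofiber is perfect and $p$-power torsion, so $p^N$ factors through $f$, and full faithfulness then yields $\phi$ with $\phi\psi=p^N$ and $\psi\phi=p^N$, a standard criterion for an isogeny (or quote the corresponding statement from \cite{alb}). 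In exchange, the paper's route produces a canonical, choice-free quasi-inverse $M\mapsto (T^0(M))^\vee$ (consonant with the adjunction remark following \cref{mainthm22}), while yours bypasses \cref{jac} and \cref{earth} at this step at the cost of an explicit descent argument.
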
{}
\begin{proof}
Let us introduce some notations for the proof. For $M \in \mathrm{Vect}^{\mathrm{iso}}_{[0,1]}(S^{\w{syn}}),$ we define $T(M)$ to be the $D(\mathbb{Z})$-valued sheaf on $(S)_\qsyn$ given by $R\mathcal{H}om_{\syn}(\mathcal{O}, M \left \{1 \right \})$ (see \cref{impnotation}). We will denote $T^0 (M):= \tau_{\ge 0} T(M),$ which is a sheaf of abelian groups on $(S)_\qsyn.$ For $G \in \mathrm{FFG}(S),$ we denote $\mathcal{M}^\vee (G):= \mathcal{M}(G^\vee).$

It would be enough to prove that if $M \in \mathrm{Vect}^{\mathrm{iso}}_{[0,1]}(S^{\syn}),$ then the quasisyntomic sheaf $T^0 (M)$ is representable by a group scheme and lies in $\mathrm{FFG}(S),$ and $\mathcal{M}^\vee (T^0 (M)) \simeq M.$ To this end, by descent (see the proof of \cite[Prop.~A2]{alb}), we may work locally and assume without loss of generality that there exists $V, V' \in \mathrm{Vect}(S^{\w{syn}})$ with Hodge--Tate weights in $[0,1]$ and a map $f: V \to V'$ such that we have a fiber sequence $V \to V' \to M$ and $f$ is an isomorphism when viewed in the category $\mathrm{Vect}(S^{\w{syn}})[\frac 1 p]$. By construction, we have a fiber sequence \begin{equation}\label{fuld1}
  T(V) \to T(V') \to T(M)  
 \end{equation}By \cref{ias1}, the map $f$ corresponds to an isogeny $\underline{f}: G' \to G$ of $p$-divisible groups such that $\mathcal{M}(G'), \mathcal{M}(G)$ identify with $V',V$ respectively. We have a fiber sequence 
\begin{equation}\label{pii}
H^\vee \to G^\vee \to G'^\vee  
\end{equation}
where $H$ is a finite locally free commutative group scheme of $p$-power rank. In particular, it follows that $H^\vee$ is killed by a power of $p.$ Applying derived $p$-completion to \cref{pii}, we obtain a fiber sequence $$H^\vee \to T_p(G^\vee)[1] \to T_p(G'^\vee)[1],$$ which maybe rewritten as a fiber sequence 
\begin{equation}\label{ias22}
 T_p(G^\vee) \to T_p(G'^\vee) \to H^\vee   
\end{equation}of quasisyntomic sheaves of abelian groups on $S$.

 Using \cref{fuld1}, \cref{ias22} and \cref{jac}, it follows that the $D(\mathbb Z)$-valued sheaf on $(S)_\qsyn$ determined by $$(S)_\qsyn \ni A \mapsto R\Gamma_{\qsyn}(A, H^\vee)$$ is naturally isomorphic to $T(M).$ This shows that $T^0 (M) \simeq H^\vee$ as quasisyntomic sheaf of abelian groups. Now, $\mathcal{M}^\vee (T^0 (M)) \simeq \mathcal{M}^\vee(H^\vee) \simeq \mathcal{M}(H) \simeq \mathrm{cofib} (\mathcal{M}(G) \to \mathcal{M}(G')),$ where the last isomorphism follows from the fiber sequence $H \to G' \to G$ and \cref{ecc57}. Since $\mathcal{M}(G'), \mathcal{M}(G)$ naturally identify with $V',V$ respectively, and we have a fiber sequence $V \to V' \to M,$ we see that $\mathcal{M}(H) \simeq M.$ Thus, we obtain $\mathcal{M}^\vee (T^0 (M)) \simeq M,$ which finishes the proof.
\end{proof}

\end{remark}

\begin{remark}
In a future joint work with Madapusi, we will show that the  natural functor from the category $\mathrm{Vect}^{\mathrm{iso}}_{[0,1]}(S^{\syn})$ to the category of $p$-power torsion perfect complexes on $\Spf(S)^\syn$ with Tor-amplitiude in homological degrees $[0,1]$ and with Hodge--Tate weights in $[0,1]$ is an equivalence.
\end{remark}

\begin{remark}[\'Etale realizations of prismatic Dieudonn\'e $F$-gauges]We now discuss how to recover local systems (on the generic fiber) associated to $p$-divisible groups or finite locally free commutative group schemes of $p$-power rank $G$ from their prismatic Dieudonn\'e $F$-gauge $\mathcal{M}(G).$ 

Let $S$ be a quasisyntomic ring. As constructed in \cite[Cons.~6.3.1]{fg}, there is a symmetric monoidal \'etale realization functor
$$T_{\mathrm{\acute{e}t}}: \mathrm{Perf}(\Spf(S)^\syn) \to D^b_{\mathrm{lisse}} (\mathrm{Spa}(S[ 1 /p], S), \mathbb{Z}_p),$$ where we think of $\mathrm{Spa}(S[1/p], S)$ as a presheaf on affinoid perfectoid spaces over $\mathbb{Q}_p$ and $ D^b_{\mathrm{lisse}} (\mathrm{Spa}(S[ 1 /p], S), \mathbb{Z}_p)$ is the full subcategory of $D^b_{\mathrm{pro}\w{-}\mathrm{\acute{e}t}}(\mathrm{Spa}(S[1/p], S), \mathbb{Z}_p)$ with lisse cohomology (see \cite[3.1]{bscam}). Note that any $\mathcal{F} \in D^b_{\mathrm{lisse}}(\mathrm{Spa}(S[1/p], S), \mathbb{Z}_p)$ automatically satisfies $v$-descent (in the derived sense). Let us denote $\Spf(S)_{\eta}:= \mathrm{Spa}(S[1/p], S).$ For any $G \in \mathrm{FFG}(S),$ we will identify $\Te(\mathcal{M}(G)\left \{1 \right \})$ with the generic fiber $G^\vee_{\eta}$ of the Cartier dual $G^\vee$.
\end{remark}{}
\begin{proposition}\label{finalprop}
Let $S$ be a quasisyntomic ring and $G \in \mathrm{FFG}(S)$. In the above set up, we have an isomorphism
$$\Te(\mathcal{M}(G)\left \{1 \right \}) \simeq G^\vee_{\eta}.$$   
\end{proposition}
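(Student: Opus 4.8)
The plan is to reduce the statement to the already-established identification of étale cohomology via the formula from \cref{earth}, together with compatibility of $\Te$ with syntomic cohomology of Breuil--Kisin twisted coefficients. First I would recall from \cite[Cons.~6.3.1]{fg} (see also \cite[Thm.~1.2]{bscam}) that the étale realization functor $\Te$ is compatible with taking cohomology: for $\mathcal{F} \in \mathrm{Perf}(\Spf(S)^\syn)$ one has $R\Gamma(\Spf(S)_\eta, \Te(\mathcal{F})) \simeq R\Gamma(\Spf(S)^\syn, \mathcal{F})[1/p]$ after inverting $p$, but more usefully that $\Te$ is defined so that $\Te(\mathcal{O}\{n\})$ recovers the Tate twist $\mathbb{Z}_p(n)$ on the generic fiber and that $\Te$ intertwines $R\mathcal{H}om_\syn$ with $R\mathcal{H}om$ on the pro-étale site. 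Granting this, the key computation is that $R\mathcal{H}om_\syn(\mathcal{O}, \mathcal{M}(G)\{1\})$ computes $R\Gamma_\qsyn(S, G^\vee)$ by \cref{earth} (applied with the roles of $G$ and $G^\vee$ suitably swapped, i.e.\ $R\Gamma_\qsyn(S, G^\vee) \simeq R\Gamma(\Spf(S)^\syn, \mathcal{M}(G)\{1\})$ since $(G^\vee)^\vee = G$).

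The main steps, in order, would be: (1) By quasisyntomic descent and $v$-descent for $D^b_{\mathrm{lisse}}$, reduce to the case where $S$ is qrsp, and further, using \cref{verys1}, to $S \in (S)^G_\qrsp$ so that $G$ sits in an isogeny sequence of abelian schemes $0 \to G \to A' \to A \to 0$; this lets us propagate the statement from $p$-divisible groups. (2) Establish the $p$-divisible group analogue first: for $H \in \mathrm{BT}(S)$, show $\Te(\mathcal{M}(H)\{1\}) \simeq T_p(H^\vee)$ viewed as a lisse $\mathbb{Z}_p$-sheaf on the generic fiber. This follows from \cref{jac}, which gives $R\Gamma_\qsyn(S, T_p(H^\vee)) \simeq R\Gamma(\Spf(S)^\syn, \mathcal{M}(H)\{1\})$, combined with the fact that $\Te$ of a vector bundle with Hodge--Tate weights in $[0,1]$ recovers the associated $p$-adic Tate module (this is essentially the content of \cite{bscam} for prismatic $F$-crystals, refined to $F$-gauges in \cite[Thm.~6.6.13]{fg}, together with the classical fact that $T_p$ of a $p$-divisible group is the generic-fiber étale realization of its Dieudonné $F$-gauge). (3) For $G \in \mathrm{FFG}(S)$, apply $\Te$ to the fiber sequence $\mathcal{M}(A) \to \mathcal{M}(A') \to \mathcal{M}(G)$ from \cref{atias} (after twisting by $\{1\}$), obtaining $\Te(\mathcal{M}(A)\{1\}) \to \Te(\mathcal{M}(A')\{1\}) \to \Te(\mathcal{M}(G)\{1\})$; by step (2) applied to the $p$-divisible groups $A[p^\infty], A'[p^\infty]$ (note $\mathcal{M}(A) \simeq \mathcal{M}(A[p^\infty])$ and likewise for $A'$, cf.\ \cref{starbc}), the first two terms are $T_p(A^\vee), T_p(A'^\vee)$; taking cofibers and comparing with the Cartier-dual isogeny sequence $0 \to T_p(A^\vee) \to T_p(A'^\vee) \to G^\vee_\eta \to 0$ (valid on the generic fiber since $G^\vee$ is étale-locally on $\Spf(S)_\eta$ the cokernel, being killed by a power of $p$) identifies $\Te(\mathcal{M}(G)\{1\})$ with $G^\vee_\eta$. (4) Check naturality and independence of the chosen isogeny presentation, which follows from functoriality of $\Te$ and of $\mathcal{M}$, together with the descent reductions.

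The hard part will be step (2): pinning down precisely that $\Te \circ \mathcal{M}$ on $p$-divisible groups agrees with the classical generic-fiber étale local system $G \mapsto T_p(G^\vee_\eta)$, rather than merely with something abstractly isomorphic to it. This requires carefully tracing through the definition of $\Te$ in \cite[Cons.~6.3.1]{fg} — in particular that it is built from the syntomic $\mathbb{Z}_p(1)$ via the formula \cref{verynice} — and matching it against \cref{jac}/\cref{earth}, which were proved using exactly the same fiber-sequence description of $\mathbb{Z}_p(1)$; so in fact the compatibility should be essentially tautological once the two constructions are lined up, but making this precise (including the comparison of the étale realization of $\mathcal{O}\{1\}$ with $\mathbb{Z}_p(1)$ and the symmetric monoidality needed to handle the twist) is where the real bookkeeping lies. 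One should also confirm that $\mathcal{M}(G)\{1\}$ lands in $\mathrm{Perf}(\Spf(S)^\syn)$ so that $\Te$ applies, which is immediate from \cref{dualoo} (dualizability) and \cref{ias1}/\cref{clari} (it is a perfect complex with Hodge--Tate weights in $[0,1]$, being the cofiber of a map of vector bundles).
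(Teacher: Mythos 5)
Your plan inverts the paper's and the inversion exposes a real gap at precisely the step you flag as ``the hard part.'' Step (2) does not follow from \cref{jac}: that statement identifies the \emph{global} syntomic cohomology $R\Gamma(\Spf(S)^\syn, \mathcal{M}(H)\{1\})$ over the integral base with $R\Gamma_{\qsyn}(S, T_p(H^\vee))$, and no amount of such cohomological information determines the lisse sheaf $\Te(\mathcal{M}(H)\{1\})$ on the generic fiber (syntomic cohomology and generic-fiber \'etale cohomology do not even agree in general, only in a range of degrees). The other ingredient you invoke --- ``the classical fact that $T_p$ of a $p$-divisible group is the generic-fiber \'etale realization of its Dieudonn\'e $F$-gauge'' --- is, in this framework, exactly the statement being proven: the $p$-divisible case is the limit version of \cref{finalprop}, which the paper deduces \emph{from} the finite locally free case afterwards. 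As written your step (2) is therefore circular, unless you import the Ansch\"utz--Le Bras \'etale comparison together with a nontrivial compatibility between their realization and the functor $\Te$ of \cite[Cons.~6.3.1]{fg} (say via \cref{iasbeer} and \cite[Thm.~6.6.13]{fg}), none of which you actually supply; the ``essentially tautological bookkeeping'' you defer is the entire content of the proof.

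What closes the gap --- and is the paper's argument --- is to unwind $\Te$ directly and uniformly for finite locally free $G$: one uses the formula $\Te(\mathcal{M}(G)) \simeq u^*(\mathcal{M}(G)[1/\mathscr I]^{\wedge}_p)^{\varphi=1}$ for an adjunction $u^* \dashv u_*$ between quasisyntomic sheaves and $v$-sheaves on $\Spf(S)_\eta$; the Bhatt--Scholze \'etale comparison giving $u_*\mathbb{Z}_p \simeq (\Prism_{(\cdot)}[1/\mathscr I]^{\wedge}_p)^{\varphi=1}$; the vanishing $\mathcal{E}xt^2(G_\eta, \mathbb{Z}_p)=0$ (via almost purity) so that taking $\varphi$-fixed points is compatible with the truncation defining $\mathcal{M}(G)$; and the identification $\Te(\mathcal{O}\{1\}) \simeq \mathbb{Z}_p(1)$ (obtained by applying the resulting formula to $\mu_{p^n}$ and using symmetric monoidality). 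This yields $\Te(\mathcal{M}(G)\{1\}) \simeq \tau_{\ge 0}R\mathcal{H}om(G_\eta, \mathbb{Z}_p(1)[1]) \simeq G^\vee_\eta$ with no need for your steps (1), (3), (4): Raynaud's isogeny presentation plays no role here, and the $p$-divisible case then follows by taking limits --- the opposite order to your plan. (Your step (3) would in any case also require the identification $\mathcal{M}(A)\simeq \mathcal{M}(A[p^\infty])$, which \cref{starbc} does not provide.)
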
{}
\begin{proof}
Let $D((S)_\qrsp, \mathbb{Z}_p)$ denote the category of $p$-complete objects in the derived category of $\mathbb{Z}_p$-modules on the site $(S)_\qrsp$ and let $D_{v}(\mathrm{Spf}(S)_\eta, \mathbb{Z}_p)$ denote the category of $p$-complete objects in the derived category of $\mathbb{Z}_p$-modules on the $v$-site of $\mathrm{Spf}(S)_\eta.$ There is a canonical functor $u^*: D((S)_\qrsp, \mathbb{Z}_p) \to D_{v}(\mathrm{Spf}(S)_\eta, \mathbb{Z}_p)$ determined by (sheafifying) $\mathcal{F} \mapsto \mathcal{G}:= \left ( \mathrm{Spa}(R, R^+) \mapsto \mathcal{F}(R^+)  \right).$ We note that $u^*$ has a right adjoint $u_*: D_{v}(\mathrm{Spf}(S)_\eta, \mathbb{Z}_p) \to D((S)_\qrsp, \mathbb{Z}_p)$ determined by
$$\mathcal{G} \mapsto \mathcal{F}:= \left((S)_\qrsp \ni R \mapsto \mathcal{F}(\mathrm{Spa}(R_{\mathrm{perfd}}[1/p], R_{\mathrm{perfd}} )\right )).$$ In the above, $R_{\mathrm{perfd}}$ is the universal perfectoid ring that receives a map from $R$ (see \cite[Thm.~1.12]{BS19}). The fact that the above assignment defines a sheaf follows from the observations that $(S)_\qrsp \ni R \mapsto R_{\mathrm{perfd}}$ takes $v$-covers to $v$-covers and preserves Tor-independent pushouts -- the first observation follows from \cite[Lem.~8.8, Prop.~8.10]{BS19}, where the conclusions remain valid by replacing arc-covers by $v$-covers, and the second observation follows from \cite[Prop.~8.13]{BS19}. Note that by construction, $u^*u_*$ is naturally isomorphic to identity.

By construction of the functor $\Te$ (see \cite[Cons.~6.3.1,~6.3.2]{fg}, \cite[Cor.~3.7]{BS19}), it follows that 
\begin{equation}\label{rainy}
\Te(\mathcal{M}(G)) \simeq u^*( \mathcal{M}(G)  [1/ \mathscr I]^{\wedge}_p)^{\varphi = 1}   
\end{equation}
Using the Artin--Schreier sequence for the tilt of the structure sheaf of $\Spf(S)_{\eta}$ along with \cite[Prop.~8.5, Prop.~8.8]{diamond}, it follows that $u_* \mathbb{Z}/p^n \simeq \mu_* \mathbb{Z}/p^n$ (in the category $D((S)_\qrsp, \mathbb{Z}_p)$) where $\mu_* \mathbb{Z}/p^n$ denotes (quasisyntomic sheafification of) the derived pushforward along the map $\mu: \Spf(S)_{\eta, \mathrm{\acute{e}t}} \to \Spf(S)_{\mathrm{\acute{e}t}}$ of topoi. By \cite[Thm.~9.1]{BS19}, we have an isomorphism $u_* \mathbb{Z}/p^n \simeq (\Prism_{(\cdot)}[1/ \mathscr I]/p^n)^{\varphi= 1}.$ Passing to limits, we have 
\begin{equation}\label{hon}
 u_* \mathbb{Z}_p \simeq (\Prism_{(\cdot)}[1/ \mathscr I]^{\wedge}_p)^{\varphi= 1}.   
\end{equation}
 Note that we have $\mathcal{E}xt^2 (G_{\eta}, \mathbb{Z}_p)=0$; in fact, since every finite locally free commutative group scheme of $p$-power rank over a base where $p$ is invertible must be finite \'etale\footnote{To deduce this, by \cite[Tag~02GM,~Tag~02VN]{stacks}, one can reduce to the base being an algebraically closed field, which is classical, e.g., see \cite[\S~14.4]{waterhouse}. }, by the proof of \cref{w}, for any affinoid perfectoid space $X$ over $\Spf(S)_\eta$, a class in $\mathrm{Ext}^2 (G_{\eta, X}, \mathbb{Z}_p)$ is killed by a finite etale cover of $X,$ which must again be affinoid perfectoid by the almost purity theorem \cite[Thm.~1.10]{pspaces}. This implies that
$\mathcal{H}^2 (u^*R\mathcal{H}om (G, u_* \mathbb{Z}_p))=0$ via adjunction and the fact that $u^*u_* \simeq \mathrm{id}$.
Therefore, we have a fiber sequence of (derived) sheaves 
\begin{equation*}\begin{split}
    \tau_{\ge 0}u^*R\mathcal{H}om (G, u_*\mathbb{Z}_p[1]) \to \tau_{\ge 0}   u^*R\mathcal{H}om (G, \Prism_{(\cdot)}[1/\mathscr I]^{\wedge}_p[1]) \\ \xrightarrow{\varphi - 1}\tau_{\ge 0} u^*   R\mathcal{H}om (G, \Prism_{(\cdot)}[1/\mathscr I]^{\wedge}_p[1]).
\end{split}{}
\end{equation*}    
By construction of $\mathcal{M}(G)$, and the above fiber sequence, it follows that $$u^* (\mathcal{M} (G) [1/ \mathscr I]^{\wedge}_p)^{\varphi = 1} \simeq  \tau_{\ge 0}u^*R\mathcal{H}om (G, u_*\mathbb{Z}_p[1]).$$ 
Combining with \cref{rainy}, and using adjunction, this implies that
\begin{equation}\label{rainy1}
 \Te (\mathcal{M}(G)) \simeq    \tau_{\ge 0}R\mathcal{H}om (G_\eta, \mathbb{Z}_p[1]).
\end{equation} By \cref{hon}, $\Te(\mathcal{O}) \simeq \mathbb{Z}_p.$ Applying \cref{rainy1} to $G= \mu_{p^n}$ and taking limits, we obtain
$\Te(\mathcal{O}\left \{-1\right \}) \simeq \mathbb{Z}_p(-1).$ Since $\Te$ is symmetric monoidal, we have $\Te(\mathcal{O}\left \{1\right \}) \simeq \mathbb{Z}_p(1).$ Using the fact that $\Te$ is symmetric monoidal again, we have an isomorphism
\begin{equation}\label{rainy11}
 \Te (\mathcal{M}(G)\left \{1\right \}) \simeq    \tau_{\ge 0}R\mathcal{H}om (G_\eta, \mathbb{Z}_p(1)[1]).
\end{equation}

Further, since $G_\eta$ is killed by a power of $p$, and $\mathbb{Z}_p$ is $p$-torsion free, we have an isomorphism of sheaves $\tau_{\ge 0} R\mathcal{H}om_{\Spf(S)_{\eta, v}}(G_\eta, \mathbb{Z}_p[1]  ) \simeq \mathcal{E}xt^1_{\Spf(S)_{\eta, v}}(G_\eta, \mathbb{Z}_p ).$ Finally, since $G^\vee_{\eta} \simeq \mathcal{E}xt^1_{\Spf(S)_{\eta, v}}(G_\eta, \mathbb{Z}_p(1) )$ (cf.~\cref{imp}), we obtain the desired statement.
\end{proof}{}

\begin{remark}We end this paper by noting some consequences of \cref{finalprop}. By using \cref{ecc} and taking limits, we obtain that (cf.~\cite[\S~5.4]{alb})\footnote{Note that the statement we prove is formulated at the derived level, and therefore is stronger than loc.~cit.} for a quasisyntomic ring $S$, and a $G \in \mathrm{BT}(S)$, we have 
$$\Te(\mathcal{M}(G)\left \{1 \right \}) \simeq R\varprojlim \Te(\mathcal{M}(G)\left \{1 \right \})/p^n \simeq R\varprojlim \Te(\mathcal{M}(G)\left \{1 \right \}/p^n) \simeq R\lim G_\eta^\vee[p^n] \simeq T_p(G^\vee_{\eta}),$$ where the last isomorphism follows from repleteness of the $v$-site (see proof of \cite[Prop.~14.10]{diamond} and \cref{notconven} (9)). Further, note that using \cite[Prop.~2.14, Cor.~3.7]{bscam}, for $G \in \mathrm{FFG}(S),$ we obtain
$$R\Gamma_{\qsyn}(\Spf(S), \mathcal{M}(G) \left \{1 \right \}[1/\mathscr I]^\wedge _p)^{\varphi= 1} \simeq R\Gamma_{{\mathrm{pro}\w{-}\mathrm{\acute{e}t}}}(\mathrm{Spf}(S)_\eta, G^\vee_{\eta}),$$ and similarly, for $G \in \mathrm{BT}(S),$ we obtain
$$R\Gamma_{\qsyn}(\Spf(S), \mathcal{M}(G) \left \{1 \right \}[1/\mathscr I]^\wedge _p)^{\varphi= 1} \simeq R\Gamma_{{\mathrm{pro}\w{-}\mathrm{\acute{e}t}}}(\mathrm{Spf}(S)_\eta, T_p(G^\vee_{\eta})).$$

\end{remark}{}
\newpage

\bibliographystyle{amsalpha}
\bibliography{main}

\end{document}